\theoremstyle{plain}
\newtheorem{theorem}{Theorem}[section]
\newtheorem{proposition}[theorem]{Proposition}
\newtheorem{lemma}[theorem]{Lemma}
\newtheorem{corollary}[theorem]{Corollary}
\theoremstyle{definition}
\newtheorem{definition}[theorem]{Definition}
\newtheorem{remark}[theorem]{Remark}
\newtheorem{step}{Step}
\newcommand{\R}{\ensuremath{\mathbb{R}}}
\newcommand{\N}{\ensuremath{\mathbb{N}}}
\newcommand{\Prob}{\ensuremath{\mathcal{P}}}
\newcommand{\Dom}{\mathcal{D}}
\newcommand{\comp}{{\mathrm{c}}}
\newcommand{\ac}{{\mathrm{ac}}}
\newcommand{\Id}{\mathrm{Id}}
\newcommand{\tran}{{\mathrm{t}}}
\renewcommand{\det}{{\mathrm{det}}}
\newcommand{\Ent}{{\mathrm{Ent}}}
\newcommand{\CD}{\mathrm{CD}}
\newcommand{\TCD}{\mathrm{TCD}}
\newcommand{\RCD}{\mathrm{RCD}}
\newcommand{\meas}{\mathfrak{m}}
\newcommand{\Ric}{\mathrm{Ric}}
\newcommand{\vol}{\mathrm{vol}}
\renewcommand{\d}{\ensuremath{\mathrm{d}}}
\newcommand{\e}{\ensuremath{\mathrm{e}}}
\newcommand{\FF}{\mathcal{F}}
\newcommand{\GG}{\mathcal{G}}
\newcommand{\BB}{\mathsf{B}}
\newcommand{\II}{\mathsf{I}}
\newcommand{\JJ}{\mathsf{J}}
\newcommand{\RR}{\mathsf{R}}
\newcommand{\TT}{\mathsf{T}}
\DeclareMathOperator{\proj}{proj}
\DeclareMathOperator{\supp}{spt}
\DeclareMathOperator{\sing}{sing}
\DeclareMathOperator{\trace}{trace}
\DeclareMathOperator*{\esssup}{ess\,sup}
\newcommand{\MB}[1]{}
\newcommand{\SO}[1]{\textcolor{orange}{#1}}
\title{Optimal transport and timelike lower Ricci curvature bounds on Finsler spacetimes}
\author{Mathias Braun\thanks{Fields Institute for Research in Mathematical Sciences, 222 College Street, Toronto, Ontario M5T 3J1, Canada. \texttt{braun@math.toronto.edu}}{ }, Shin-ichi Ohta\thanks{Department of Mathematics, Osaka University, Osaka 560-0043, Japan. \texttt{s.ohta@math.sci.osaka-u.ac.jp}}}
\date{May 2023}
\begin{document}
\setlength{\abstitleskip}{-\absparindent}
\abslabeldelim{\ \ }

\maketitle

\begin{abstract}
We prove that a Finsler spacetime endowed with a smooth reference measure whose induced weighted Ricci curvature $\Ric_N$ is bounded from below by a real number $K$ in every timelike direction satisfies the timelike curvature-dimension condition $\smash{\mathrm{TCD}_q(K,N)}$ for all $q\in (0,1)$.
A nonpositive-dimensional version ($N \le 0$) of this result is also shown.
Our discussion is based on the solvability of the Monge problem with respect to the $q$-Lorentz--Wasserstein distance as well as the characterization of $q$-geodesics of probability measures.
One consequence of our work is the sharp timelike Brunn--Minkowski inequality in the Lorentz--Finsler case.
\end{abstract}



\MB{\textbf{Metaremark:} I've gone through the introduction, have made sure all numbered equations and references are referred to at least once, and finished the proof of Thm \ref{Th:TCD-Ric}. Please check the latter, but otherwise I'm finished.}

\MB{I'm finished with the proof now. It is quite condensed as of now, so please check and correct if necessary. Especially, I'm not sure if the limit argument for $N=0$ is correct, I might have overlooked something.}

\section{Introduction}

The investigation of lower Ricci curvature bounds on generalizations of Lorentzian manifolds is making breathtaking progress in recent time.
Most importantly, McCann \cite{Mc} pioneered  the equivalence between the lower weighted timelike Ricci curvature bound and the convexity of an entropy functional in terms of optimal transport theory (see \cite{mondino-suhr} as well),
and then in \cite{cavalletti2020} Cavalletti--Mondino developed the synthetic theory of non-smooth measured Lorentzian spaces with timelike Ricci curvature bounded below on top of the framework of Lorentzian length spaces introduced by Kunzinger--S\"amann \cite{kunzinger2018} (see also \cite{braun2022,minguzzi-suhr} for recent alternatives to either approaches). 

This development is reminiscent of (and motivated by) the highly successful theory of \emph{curvature-dimension condition} $\CD(K,N)$ for metric measure spaces developed by Lott, Sturm, Villani, and many others (we refer to \cite{villani2009} for a good overview).
This condition is defined by a convexity property of an entropy functional along geodesics in the Wasserstein space of probability measures (in other words, along optimal transports with respect to a specific cost function). For a weighted Riemannian manifold $\smash{(M,g,\e^{-\psi}\,\vol_g)}$, where $\psi$ is a weight function, $\CD(K,N)$ is equivalent to the lower bound $\smash{\Ric_N \ge K}$ for the weighted Bakry--Émery Ricci tensor $\smash{\Ric_N}$ \cite{bakry}.
Thus, a metric measure space satisfying $\CD(K,N)$ can be regarded as a non-smooth space with $\Ric_N \ge K$.

In the Lorentzian setting, the characterizations \cite{Mc,mondino-suhr} yield in particular a non-smooth analogue of the Hawking--Penrose strong energy condition \cite{hawking, hawking-penrose}. This property is regarded as the reason for which gravity is attractive, and not repulsive \cite{carroll}. A second physically relevant consequence is the prediction of singularities in the presence of e.g.~trapped surfaces. In fact, through their approach Cavalletti--Mondino obtained a synthetic Hawking singularity theorem which generalizes classical \cite{hawking, hawking1973} and recent \cite{Case,GW2014,graf,kunzinger} results for high and low regularity spacetimes, cf.~\cite{braun2022b}. This shows the strength of these tools and makes them interesting both in smooth and non-smooth settings. 

In the Lorentzian framework of \emph{weighted Finsler spacetimes}, some singularity theorems, comparison theorems, and a timelike splitting theorem have been established in \cite{LMO1,LMO2,LMO3}. As seen before, such results are implied by time\-like curvature bounds. It is thus natural to expect that the known characterization of the lower weighted timelike Ricci curvature bound $\Ric_N \ge K$ by the \emph{timelike curvature-dimension condition} $\TCD(K,N)$ (here we suppress the dependence on a parameter $q \in (0,1)$ for simplicity) can be generalized to weighted Finsler spacetimes (equipped with an appropriate reference measure).

The aim of this paper is to establish this equivalence (Theorems~\ref{th:Ric-TCD}, \ref{Th:TCD-Ric}), followed by the timelike Brunn--Minkowski inequality as a  geometric application (Theorem~\ref{Th:Brunn Minkowski}). Our characterization not only covers $N\in [n,+\infty]$, but also the case of non-positive ``dimension'' $N \in (-\infty,0]$ (going back to \cite{ohta-negative,ohta-needle} in the Riemannian or Finsler framework), which is new even in the Lorentzian case.
We refer to \cite{DDMT,WoWy2016,WoWy2018} for some works on this range of $N$.

Our characterization of $\Ric_N \ge K$ by $\TCD(K,N)$ has two major contributions. On the one hand, it enlarges the availability of $\TCD(K,N)$ to Lorentz--Finsler manifolds; the ``sharp'' $\TCD$ condition from Definition \ref{df:TCD}, in contrast to its entropic or reduced counterparts, is new even in the genuine Lorentzian setting.
Lorentz--Finsler manifolds form a much wider class than Lo\-rentz\-ian manifolds and allow for more room in physical interpretations.
In fact, they play a key role in several approaches to gravity, e.g.~fields in media, or models for dark matter or dark energy,  and have resolved some important difficulties; see \cite{Beem,hohmann, matsumoto, Min-cone, szilasi} and the references therein.
Then the validity of $\TCD(K,N)$ is obviously meaningful progress.
On the other hand, this validity strongly motivates the study of some appropriate ``Lorentzian'' condition in addition to $\TCD(K,N)$ in order to discuss genuinely Lorentzian properties, namely an analogue of the \emph{Riemannian curvature-dimension condition} $\RCD(K,N)$ for metric measure spaces in the Lorentzian context. One motivation behind the introduction of $\RCD(K,N)$ was that Finsler manifolds can satisfy $\CD(K,N)$. Precisely, the equivalence between $\CD(K,N)$ and $\Ric_N \ge K$ holds true also in the Finsler context \cite{ohta2009,Obook}. To discuss genuinely Riemannian properties such as isometric splitting theorems, as obtained in \cite{gigli2013}, the linearity of the heat flow was added to $\CD(K,N)$ as a further hypothesis \cite{AGS,AGS2, EKS,gigli2015}. On Finsler manifolds, this linearity is equivalent to the manifold being Riemannian.

Besides the above characterization of $\TCD(K,N)$, we show that synthetic timelike sectional curvature bounds (from below or above) in terms of triangle comparison \cite{alexander2008,harris1982,kunzinger2018, minguzzi-suhr} rule out non-Lorentzian Lorentz--Finsler manifolds (Proposition~\ref{pr:triangle}). Moreover, we provide a simpler proof of the absolute continuity of intermediate measures along optimal transports following the idea of Figalli--Juillet \cite{figalli2008} relying on the measure contraction property.

This article is organized as follows.
In Section~\ref{sc:pre} we review the basics of Lorentz--Finsler geometry (including variation formulae of arclength and the invalidity of triangle comparison).
Section~\ref{Sec:Behavior} is concerned with the Lorentz--Finsler distance function; we especially show the semi-convexity and the failure of semi-concavity on timelike cut loci.
In Section~\ref{sc:geod} we introduce the $q$-Lorentz--Wasserstein distance for $q \in (0,1]$ and study the associated optimal transports ($q$-geodesics).
The useful notion of $q$-separation will be introduced following \cite{Mc}.
Then Section~\ref{sc:Ric-TCD} is devoted to the first half of our main result, namely we establish that $\Ric_N \ge K$ in the timelike directions implies $\TCD(K,N)$.
We also show the timelike Brunn--Minkowski inequality as an application.
Finally, in Section~\ref{sc:TCD-Ric} we show the converse implication from $\TCD(K,N)$ to $\Ric_N \ge K$. 

\paragraph{Acknowledgments}
MB was supported by the Fields Institute for Research in Mathematical Sciences. 
He thanks Osaka University for its kind hospitality during the workshop ``Geometry and Probability 2022''. 
SO was supported in part by JSPS Grant-in-Aid for Scientific Research (KAKENHI) 19H01786, 22H04942.
He thanks the Fields Institute for its kind hospitality during the workshop ``Aspects of Ricci Curvature Bounds''. 
The authors are sincerely grateful to Robert McCann and Ettore Minguzzi for several helpful discussions about this project, and to the Erwin Schr\"odinger International Institute
for Mathematics and Physics for its kind hospitality during the workshop ``Non-regular Spacetime Geometry''.

\section{Finsler spacetimes}\label{sc:pre}

We review the theory of Finsler spacetimes required to develop our program. We refer the reader to \cite{beem1996,hawking1973,oneill1983} for the basics of Lorentzian geometry, 
and to \cite{Min-spray,minguzzi2015c,Min-causality,Min-Rev} for some generalizations including Lorentz--Finsler manifolds.

Throughout the paper,
let $M$ be a connected  $C^{\infty}$-manifold without boundary of dimension $\dim M = n\ge 2$. Given a local coordinate system $\smash{(x^\alpha)_{\alpha=1}^n}$ on an open set $U\subset M$,
we will represent a tangent vector $v \in T_xM$ with $x \in U$ as
$\smash{v =\sum_{\alpha=1}^n v^{\alpha}\, (\partial/\partial x^{\alpha})\vert_x}$.

\subsection{Preliminaries for Lorentz--Finsler manifolds}\label{Sec:Preliminaries}

The following notations follow \cite{LMO1, LMO2, LMO3} except that $\dim M$ has been set to $n+1$ therein.

\begin{definition}[Lorentz--Finsler structure]
\label{df:LFstr}
A \emph{Lorentz--Finsler structure} of $M$ is a function
$L:TM \longrightarrow \R$ satisfying the following conditions.
\begin{enumerate}[label=\textnormal{(\arabic*)}]
\item $L \in C^{\infty}(TM \setminus 0)$, where $0$ denotes the zero section.

\item $L(c\,v)=c^2\, L(v)$ for all $v \in TM$ and all $c>0$.

\item For any $v \in TM \setminus 0$, the symmetric matrix
\begin{equation}\label{eq:g_ij}
\big( g_{\alpha \beta}(v) \big)_{\alpha,\beta=1}^n
 :=\bigg( \frac{\partial^2 L}{\partial v^\alpha \partial v^\beta}(v) \bigg)_{\alpha,\beta=1}^n
\end{equation}
is non-degenerate with signature $(-,+,\ldots,+)$.
\end{enumerate}
Then we call $(M,L)$ a \emph{Lorentz--Finsler manifold}.
\end{definition}

We say that $(M,L)$ is \emph{reversible} if $L(-v)=L(v)$ for all $v\in TM$.
Given any $v \in T_xM \setminus \{0\}$, we define a Lorentzian metric $g_v$ on $T_xM$
by using \eqref{eq:g_ij} as
\begin{equation}\label{eq:g_v}
g_v \Bigg( \sum_{\alpha=1}^n a^\alpha\, \frac{\partial}{\partial x^\alpha}\bigg|_x,
 \sum_{\beta=1}^n b^\beta\, \frac{\partial}{\partial x^\beta}\bigg|_x \Bigg)
 :=\sum_{\alpha,\beta=1}^n g_{\alpha \beta}(v)\, a^\alpha\, b^\beta.
\end{equation}
Then we have $g_v(v,v)=2\,L(v)$ by Euler's homogeneous function theorem. 

A tangent vector $v \in TM $ is said to be \emph{timelike} (or \emph{null}, respectively)
if $L(v)<0$ (or $L(v)=0$, respectively).
We say that $v$ is \emph{lightlike} if it is null and nonzero,
and \emph{causal} (or \emph{non-spacelike}) if it is timelike or lightlike
(i.e.\ $L(v) \le 0$ and $v \neq 0$).
\emph{Spacelike} vectors are those for which $L(v)>0$ or $v=0$.
We denote by $\smash{\Omega'_x \subset T_xM}$ the set of timelike vectors, and we set
$\smash{\Omega' :=\bigcup_{x \in M} \Omega'_x}$.
For later use, we define, for causal vectors $v$,
\begin{equation}\label{eq:LtoF}
F(v) :=\sqrt{-2\,L(v)} =\sqrt{-g_v(v,v)}.
\end{equation}

Next we introduce the covariant derivative and the Ricci curvature. Given any $v\in TM\setminus 0$ and any $\alpha,\beta,\delta =1,2,\ldots,n$, define
\[ \overline{\Gamma}{}^{\alpha}_{\beta \delta} (v)
 :=\frac{1}{2} \sum_{\lambda=1}^n g^{\alpha \lambda}(v)
\, \bigg( \frac{\partial g_{\lambda \delta}}{\partial x^{\beta}}(v) +\frac{\partial g_{\beta \lambda}}{\partial x^{\delta}}(v)
 -\frac{\partial g_{\beta \delta}}{\partial x^{\lambda}}(v) \bigg), \]
where $\smash{(g^{\alpha \beta}(v))_{\alpha,\beta=1}^n}$ is the inverse matrix of $\smash{(g_{\alpha\beta}(v))_{\alpha,\beta=1}^n}$, 
\[ G^{\alpha}(v) :=\frac{1}{2}\sum_{\beta,\delta=1}^n \overline{\Gamma}{}^{\alpha}_{\beta \delta}(v)\, v^{\beta} \,v^{\delta},
 \qquad N^{\alpha}_{\beta}(v) :=\frac{\partial G^{\alpha}}{\partial v^{\beta}}(v) \]
with the convention $G^{\alpha}(0) := N^{\alpha}_{\beta}(0):=0$, and
\begin{align}\label{eq:Gamma}
\begin{split}
\Gamma^{\alpha}_{\beta \delta}(v) &:=\overline{\Gamma}{}^{\alpha}_{\beta \delta}(v)
 -\frac{1}{2}\sum_{\lambda,\mu=1}^n g^{\alpha \lambda}(v)\,
 \bigg( \frac{\partial g_{\lambda \delta}}{\partial v^{\mu}}(v)\, N^{\mu}_{\beta}(v)\\
 &\qquad\qquad  +\frac{\partial g_{\beta \lambda}}{\partial v^{\mu}}(v)\, N^{\mu}_{\delta}(v)
 -\frac{\partial g_{\beta \delta}}{\partial v^{\mu}}(v)\, N^{\mu}_{\lambda}(v) \bigg).
 \end{split}
\end{align}
Then the \emph{covariant derivative} of a vector field
$\smash{X=\sum_{\alpha=1}^n X^{\alpha}\, (\partial/\partial x^{\alpha})}$ in the direction $v \in T_xM$ with reference vector $w \in T_xM \setminus \{0\}$ is defined as
\[ D_v^w X :=\sum_{\alpha,\beta=1}^n
 \bigg( v^{\beta}\, \frac{\partial X^{\alpha}}{\partial x^{\beta}}(x)
 +\sum_{\delta=1}^n \Gamma^{\alpha}_{\beta \delta}(w)\, v^{\beta}\, X^{\delta}(x) \bigg)\,
 \frac{\partial}{\partial x^{\alpha}} \bigg|_x. \]
We remark that the functions $\smash{\Gamma^{\alpha}_{\beta \delta}}$ in \eqref{eq:Gamma}
are the coefficients of the \emph{Chern connection} (or \emph{Chern--Rund connection}). In the Lorentzian case, $\smash{g_{\alpha\beta}}$ is constant in each tangent space
(thus $\smash{\Gamma^{\alpha}_{\beta\delta}=\overline{\Gamma}{}^{\alpha}_{\beta\delta}}$)
and the covariant derivative does not depend on the choice of a reference vector.

The \emph{geodesic equation} for $L$ is written as $\smash{D^{\dot{\gamma}}_{\dot{\gamma}} \dot{\gamma} = 0}$.
A $C^{\infty}$-vector field $J$ along a geodesic $\gamma$ is called a \emph{Jacobi field}
if it satisfies $\smash{D^{\dot{\gamma}}_{\dot{\gamma}} D^{\dot{\gamma}}_{\dot{\gamma}} J +R_{\dot{\gamma}}(J) =0}$. Here  the \emph{curvature tensor} is defined by setting, for  $v,w \in T_xM$,
\[ R_v(w):=\sum_{\alpha,\beta=1}^n R^{\alpha}_{\beta}(v)\, w^{\beta}\, \frac{\partial}{\partial x^{\alpha}}, \]
where
\begin{align*}
R^{\alpha}_{\beta}(v) &:=2\,\frac{\partial G^{\alpha}}{\partial x^{\beta}}(v)
 -\sum_{\delta=1}^n \bigg( \frac{\partial N^{\alpha}_{\beta}}{\partial x^{\delta}}(v)\, v^{\delta}
 -2\,\frac{\partial N^{\alpha}_{\beta}}{\partial v^{\delta}}(v)\, G^{\delta}(v) \bigg)\\
&\qquad\qquad -\sum_{\delta=1}^n N^{\alpha}_{\delta}(v)\, N^{\delta}_{\beta}(v). 
\end{align*}
A Jacobi field is also characterized as the variational vector field of a geodesic variation.
Note that $R_v(w)$ is linear in $w$, thereby $R_v\colon T_xM \longrightarrow T_xM$ is an endomorphism
for each $v \in T_xM$.

\begin{definition}[Ricci curvature]\label{df:curv}
For $v \in T_xM$, we define the \emph{Ricci curvature}
\textnormal{(}or \emph{Ricci scalar}\textnormal{)} of $v$ as the trace of $R_v$, i.e. $\mathrm{Ric}(v):=\mathrm{trace}(R_v)$.
\end{definition}

We remark that $\mathrm{Ric}(c\,v)=c^2\, \mathrm{Ric}(v)$ for every  $c>0$.
For later use, we also recall that
\begin{equation}\label{eq:R_v}
R_v(v)=0, \qquad
g_v\big( w_1,R_v(w_2) \big) =g_v \big( R_v(w_1),w_2 \big)
\end{equation}
for any $v \in \Omega'_x$ and $w_1,w_2 \in T_xM$ (see \cite[Proposition~2.4]{minguzzi2015c}, \cite[Proposition 3.4]{LMO1}).

\subsection{Preliminaries for Finsler spacetimes}\label{ssc:Fspacetime}

\begin{definition}[Finsler spacetime]\label{df:spacetime}
If a Lorentz--Finsler manifold $(M,L)$ admits a smooth timelike vector field $X$,
then $(M,L)$ is said to be \emph{time oriented} \textnormal{(}by $X$\textnormal{)}.
A time oriented Lorentz--Finsler manifold is called a \emph{Finsler spacetime}.
\end{definition}

In such a Finsler spacetime,
a causal vector $v \in T_xM$ is said to be \emph{future-directed}
if it lies in the same connected component of $\smash{\overline{\Omega}'_x \setminus \{0\}}$ as $X(x)$.
We denote by $\Omega_x \subset \Omega'_x$ the set of future-directed timelike vectors,
and define
\[ \Omega :=\bigcup_{x \in M} \Omega_x, \qquad
 \overline{\Omega} :=\bigcup_{x \in M} \overline{\Omega}_x, \qquad
 \overline{\Omega} \setminus 0 :=\bigcup_{x \in M} (\overline{\Omega}_x \setminus \{0\}). \]
A $C^1$-curve in $(M,L)$ is said to be \emph{timelike} (or \emph{causal}, respectively)
if its tangent vector is always timelike (or causal, respectively). All causal curves will be future-directed unless otherwise indicated.

We recall some fundamental concepts in causality theory.
Given $x,y \in M$, we write $x \ll y$ (or $x<y$, respectively)
if there is a future-directed timelike (or causal, respectively) curve from $x$ to $y$, and $x \le y$ means that $x=y$ or $x<y$.
Then the \emph{chronological past}
and the \emph{chronological future} of $x$ are defined by
\[ I^-(x):=\{ y \in M \mid y \ll x\}, \qquad I^+(x):=\{ y \in M \mid x \ll y\}, \]
and its \emph{causal past} and its \emph{causal future} are defined by
\[ J^-(x):=\{ y \in M \mid y \le x\}, \qquad J^+(x):=\{ y \in M \mid x \le y\}. \]
For a general set $S\subset M$, we define $I^-(S)$, $I^+(S)$, $J^-(S)$, and $J^+(S)$ analogously. For a Borel probability measure $\mu$ on $M$, we abbreviate $\smash{I^-(\mu) := I^-(\supp\mu)}$ in terms of its support $\supp\mu\subset M$, and analogously for $I^+(\mu)$, $J^-(\mu)$, and $J^+(\mu)$.

\begin{definition}[Causality conditions]\label{df:causal}
Let $(M,L)$ be a Finsler spacetime.
\begin{enumerate}[label=\textnormal{(\arabic*)}]
\item $(M,L)$ is said to be \emph{chronological} if $x \notin I^+(x)$ for all $x\in M$.
\item We say that $(M,L)$ is \emph{causal} if there is no closed causal curve.
\item $(M,L)$ is said to be \emph{strongly causal} if, for all $x \in M$,
every neighborhood $U$ of $x$ contains another neighborhood $V$ of $x$
such that no causal curve intersects $V$ more than once.
\item We say that $(M,L)$ is \emph{globally hyperbolic}
if it is strongly causal and, for any $x,y \in M$, $J^+(x) \cap J^-(y)$ is compact.
\end{enumerate}
\end{definition}

We also define the \emph{Lorentz--Finsler distance} $l(x,y)$ for $x,y \in M$ (also called the \emph{time separation function}) by
\[ l(x,y) :=\sup_{\gamma} \mathfrak{L}(\gamma), \qquad
 \mathfrak{L}(\gamma) :=\int_0^1 F\big( \dot{\gamma}(t) \big) \,\d t, \]
where $\gamma\colon [0,1] \longrightarrow M$ runs over all causal curves from $x$ to $y$
(recall \eqref{eq:LtoF} for the definition of $F$).
A causal curve attaining the above supremum is said to be \emph{maximizing}.
Similarly to \cite{Mc}, we set $l(x,y):=-\infty$ if there is no causal curve from $x$ to $y$ (namely $x \not< y$).
(Note that we set $l(x,y)=0$ for $x \not< y$ in \cite{LMO1,LMO2,LMO3}.) In general, the distance function $l$ is only lower semi-continuous on $\{l \ge 0\}$ \cite[Proposition~6.7]{minguzzi2015c} and can be infinite.
In globally hyperbolic Finsler spacetimes,
$l$ is finite and continuous on $\{l \ge 0\}$ and upper semi-continuous on $M \times M$  \cite[Proposition~6.8]{minguzzi2015c}, 
and any pair of points $x,y \in M$ with $x<y$ admits a maximizing geodesic from $x$ to $y$
\cite[Propositions~6.9]{minguzzi2015c}. More details about the regularity of $l$ under global hyperbolicity are deferred to Section \ref{Sec:Behavior}.

To consider the dual structure to $L$ and the Legendre transform
(see \cite{Min-cone}, \cite[\S3.1]{Min-causality}, or \cite[\S4.4]{LMO2} for further discussions),
define the \emph{polar cone} to $\Omega_x$ by
\[ \Omega^*_x :=\big\{ \zeta \in T_x^*M \mid
 \zeta(v)<0\ \text{for all}\ v \in \overline{\Omega}_x \setminus \{0\} \big\}. \]
This is an open convex cone in $T_x^*M$.
For $\zeta \in \Omega^*_x$, we define
\[ L^*(\zeta) := -\frac{1}{2}\,\big( \sup \zeta(v) \big)^2
 =-\frac{1}{2}\,\inf  \big( \zeta(v) \big)^2, \]
where the supremum and the infimum run over $v \in F^{-1}(1) \cap \Omega_x$.
Then we have the \emph{reverse Cauchy--Schwarz inequality}
\begin{equation}\label{eq:rCS}
4\,L^*(\zeta)\, L(v) \le \big( \zeta(v) \big)^2
\end{equation}
for every $v \in \Omega_x$ and every $\zeta \in \Omega^*_x$,
as well as the following variational definition of the Legendre transform.

\begin{definition}[Legendre transform]\label{df:Leg}
The \emph{Legendre transform}
$\mathscr{L}^*\colon \Omega^*_x \longrightarrow \Omega_x$ is defined as the map sending $\zeta \in \Omega^*_x$
to the unique element $v \in \Omega_x$ satisfying $L(v)=L^*(\zeta)=\zeta(v)/2$.
We also define $\mathscr{L}^*(0):=0$.
\end{definition}

A coordinate expression of the Legendre transform is given by
\begin{equation*}
\mathscr{L}^*(\zeta)
 =\sum_{\alpha=1}^n \frac{\partial L^*}{\partial \zeta_{\alpha}}(\zeta)\, \frac{\partial}{\partial x^{\alpha}},
 \qquad \text{where}\,\ \zeta =\sum_{\alpha=1}^n \zeta_{\alpha} \,\d x^{\alpha}.
\end{equation*}
The inverse map $\mathscr{L}:=(\mathscr{L}^*)^{-1}$ is written in the same way as
\[
\mathscr{L}(v) =\sum_{\alpha=1}^n \frac{\partial L}{\partial v^{\alpha}}(v) \,\d x^{\alpha}.
\]
We refer to \cite{LMO2,Min-cone,Min-causality}
for some basic properties of the Legendre transforms.

\subsection{Variations of arclength}

Here we present variation formulas for arclength in the Lorentz--Finsler setting.
Although they are obtained by the standard arguments, we give outlines for completeness.
We refer to \cite{oneill1983} for the semi-Riemannian case (including Lorentzian manifolds) and to \cite[Chapter~5]{BCSbook} or \cite[Chapter~7]{Obook} for the Finsler case.

Let $\sigma\colon [a,b] \times (-\varepsilon,\varepsilon) \longrightarrow M$ be a $C^{\infty}$-variation such that $\sigma_s(t):=\sigma(t,s)$ is a timelike curve for each $s \in (-\varepsilon,\varepsilon)$.
We set
\[ \mathfrak{L}(s) :=\mathfrak{L}(\sigma_s) =\int_a^b F(\dot{\sigma}_s) \,\d t. \]
For brevity, we define the tangent and variation vector fields of $\sigma$ as $T:=\partial_t \sigma$ and $V:=\partial_s \sigma$, respectively.
We assume that $T$ vanishes nowhere.

\begin{lemma}[First variation]\label{lm:1st-var}
For $\sigma$ and $\mathfrak{L}$ as above, we have
\begin{align*}
\mathfrak{L}'(s) &=
-\bigg[ g_T\bigg( V,\frac{T}{F(T)} \bigg)(\cdot,s) \bigg]_a^b
+\int_a^b g_T\bigg( V,D^T_T\bigg[ \frac{T}{F(T)} \bigg] \bigg)(t,s) \,\d t \\
&= -\int_a^b g_T\bigg( D^T_T V,\frac{T}{F(T)} \bigg)(t,s) \,\d t.
\end{align*}
In particular, if $\sigma_s$ is a geodesic, then
\[ \mathfrak{L}'(s) =
-\frac{1}{F(\dot{\sigma}_s)}\, \Big[ g_T(V,T)(\cdot,s) \Big]_a^b. \]
\end{lemma}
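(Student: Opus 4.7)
The plan is to differentiate under the integral sign and then use the Chern connection to rewrite $\partial_s F(T)$ in a covariant form. Writing $F(T)^2 = -g_T(T,T) = -2L(T)$, differentiation in $s$ gives $2F(T)\,\partial_s F(T) = -\partial_s\bigl[g_T(T,T)\bigr]$. To interpret the right hand side via the Chern connection, note that in local coordinates $\partial_s T^\alpha$ differs from the components of $D_V^T T$ only by Christoffel terms $\Gamma^\alpha_{\beta\delta}(T)V^\beta T^\delta$ symmetric in $\beta,\delta$, while the extra $(\partial g_{\alpha\beta}/\partial v^\gamma)(T)$-terms arising from differentiating $g_T$ take the shape $C_T(\partial_s T,T,T)$, which vanishes since the Cartan tensor satisfies $C_v(\cdot,v,v)=0$ by Euler homogeneity of $g_v(v,v)$. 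Therefore $\partial_s\bigl[g_T(T,T)\bigr] = 2\,g_T(T,D_V^T T)$, and by torsion-freeness of the Chern connection (which, combined with the commutation $\partial_s T = \partial_t V$, yields $D_V^T T = D_T^T V$), this equals $2\,g_T(T,D_T^T V)$. Dividing by $F(T)$ and integrating over $[a,b]$ produces the second displayed equality.

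For the first equality I would integrate by parts the integrand $g_T(D_T^T V, T/F(T))$. The almost $g$-compatibility of the Chern connection gives
\[ \partial_t\, g_T\bigl(V,T/F(T)\bigr) = g_T\bigl(D_T^T V, T/F(T)\bigr) + g_T\bigl(V, D_T^T[T/F(T)]\bigr) + 2\,C_T\bigl(D_T^T T, V, T/F(T)\bigr). \]
The Cartan correction again vanishes because one of its slots carries $T/F(T)$, proportional to the reference vector $T$, so $C_T(\cdot,\cdot,T)=0$. Integrating from $a$ to $b$ and rearranging then yields the first claimed expression for $\mathfrak{L}'(s)$.

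Finally, for the geodesic case one has $D_T^T T = 0$ along $\sigma_s$. The same Cartan cancellation shows $\partial_t[F(T)^2] = -2\,g_T(T,D_T^T T) = 0$, so $F(T) = F(\dot\sigma_s)$ is constant in $t$. Then
\[ D_T^T\bigl[T/F(T)\bigr] = \frac{D_T^T T}{F(T)} - \frac{T}{F(T)^2}\,\partial_t F(T) = 0, \]
which kills the integral term and produces the stated formula $\mathfrak{L}'(s) = -F(\dot\sigma_s)^{-1}\,[g_T(V,T)]_a^b$.

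The main obstacle, and essentially the only nontrivial point, is the careful bookkeeping of the Cartan tensor corrections to the $g$-compatibility of the Chern connection: each correction term that appears in the computation above must be verified to vanish because one of its three arguments is proportional to the reference vector $T$. Once this is in place, the whole derivation reduces to routine applications of the chain rule and integration by parts, exactly parallel to the Finsler (positive-definite) case treated in \cite{BCSbook,Obook}.
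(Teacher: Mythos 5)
Your proof is correct and follows essentially the same route the paper takes: the authors simply cite the Finsler first-variation formula (e.g.\ \cite[Proposition~7.1]{Obook}) and note the sign change $F^2=-g_v(v,v)$, and your argument is exactly a worked-out version of that adaptation, relying on the same two facts — almost $g$-compatibility of the Chern connection with the Cartan corrections killed whenever a slot carries the reference vector, and torsion-freeness giving $D^T_V T = D^T_T V$.
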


\begin{proof}
This is shown in the same way as the Finsler case, cf.~e.g.~\cite[Proposition~7.1]{Obook}, by recalling $F^2(v)=-g_v(v,v)$ for every $v \in \Omega$ in \eqref{eq:LtoF}.
\end{proof}

To write down the second variation, we define the \emph{index form} for $C^1$-vector fields $V,W$ along a timelike geodesic $\gamma\colon [a,b] \longrightarrow M$ as
\[ I(V,W) := -\frac{1}{F(\dot{\gamma})} \int_a^b \Big( g_{\dot{\gamma}}(D^{\dot{\gamma}}_{\dot{\gamma}}V,D^{\dot{\gamma}}_{\dot{\gamma}}W) -g_{\dot{\gamma}}\big( R_{\dot{\gamma}}(V),W \big) \Big) \,\d t, \]
see \cite[Chapter~10]{beem1996} for the Lorentzian case.
Note that $I(V,W)=I(W,V)$ and, if $V$ is a Jacobi field, then we have
\begin{equation}\label{eq:index-Jacobi}
I(V,W)= -\frac{1}{F(\dot{\gamma})}\,
\Big[ g_{\dot{\gamma}}(D^{\dot{\gamma}}_{\dot{\gamma}}V,W) \Big]_a^b.
\end{equation}

\begin{proposition}[Second variation]\label{pr:2nd-var}
For $\sigma$ and $\mathfrak{L}$ as above, suppose that $\gamma:=\sigma_0$ is a geodesic.
Then we have
\begin{align*}
\mathfrak{L}''(0) &=
I(V_0,V_0) -\bigg[ g_{\dot{\gamma}} \bigg( D^T_V V(\cdot,0),\frac{\dot{\gamma}}{F(\dot{\gamma})} \bigg) \bigg]_a^b -\int_a^b \frac{(\partial_s[F(T)](t,0))^2}{F(\dot{\gamma}(t))} \,\d t \\
&= I(V_0^{\perp},V_0^{\perp}) -\bigg[ g_{\dot{\gamma}} \bigg( D^T_V V(\cdot,0),\frac{\dot{\gamma}}{F(\dot{\gamma})} \bigg) \bigg]_a^b,
\end{align*}
where we set $V_0:=V(\cdot,0)$ and
\[ V_0^{\perp}:=V_0+g_{\dot{\gamma}}\bigg( V_0,\frac{\dot{\gamma}}{F(\dot{\gamma})} \bigg) \,\frac{\dot{\gamma}}{F(\dot{\gamma})} \]
is the projection of $V_0$ to the $g_{\dot{\gamma}}$-orthogonal subspace to $\dot{\gamma}$.
\end{proposition}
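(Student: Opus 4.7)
The proof proceeds by differentiating the first variation formula of Lemma \ref{lm:1st-var} once more, following the Finsler argument of \cite[Proposition~7.2]{Obook} with signature adjustments due to the Lorentzian nature of $g_T$; compare also \cite[Chapter~10]{beem1996} for the Lorentzian case. The only structural difference from the Lorentzian formula is the extra integral term $\int (\partial_s F(T))^2/F(\dot{\gamma})\,\d t$, which arises from differentiating the normalising factor $1/F(T)$ already present in Lemma \ref{lm:1st-var}.

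Concretely, I would start from
\[
\mathfrak{L}'(s) = -\int_a^b g_T\bigl(D^T_T V, T/F(T)\bigr)(t,s)\,\d t
\]
and differentiate in $s$ at $s=0$. The product rule yields three types of contributions: (a) differentiating $D^T_T V$, (b) differentiating the remaining $T$, and (c) differentiating $1/F(T)$. Since $[V,T]=0$, one has $D^T_V T = D^T_T V$, and the curvature identity $D^T_V D^T_T T = D^T_T D^T_V T + R_T(V)$ combined with the geodesic equation $D^{\dot{\gamma}}_{\dot{\gamma}}\dot{\gamma}=0$ at $s=0$ converts (a)+(b) into the integrand of the index form $I(V_0,V_0)$ plus an exact derivative in $t$. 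Integration by parts along the reference geodesic (where metric compatibility of $D^{\dot{\gamma}}$ is exact, because the Cartan correction contracts $\dot{\gamma}$ against $D^{\dot{\gamma}}_{\dot{\gamma}}\dot{\gamma}=0$) produces $I(V_0,V_0) - [g_{\dot{\gamma}}(D^T_V V, \dot{\gamma}/F(\dot{\gamma}))]_a^b$. Contribution (c) furnishes the remaining integral term with the stated sign, yielding the first identity.

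For the second identity, decompose $V_0 = V_0^\perp + V_0^\parallel$ with $V_0^\parallel := -g_{\dot{\gamma}}(V_0, \dot{\gamma}/F(\dot{\gamma}))\,\dot{\gamma}/F(\dot{\gamma})$. Using \eqref{eq:R_v} and metric compatibility along $\dot{\gamma}$, one checks that $I(V_0^\perp, V_0^\parallel)=0$, hence $I(V_0,V_0) = I(V_0^\perp, V_0^\perp) + I(V_0^\parallel, V_0^\parallel)$. Setting $f := -g_{\dot{\gamma}}(V_0, \dot{\gamma}/F(\dot{\gamma}))$ and exploiting $g_{\dot{\gamma}}(D^{\dot{\gamma}}_{\dot{\gamma}}V_0^\perp, \dot{\gamma}) = 0$ (which follows from differentiating the relation $g_{\dot{\gamma}}(V_0^\perp, \dot{\gamma}) = 0$ along the geodesic), a short computation gives $\partial_s[F(T)](t,0) = f'(t)$, whence
\[
\int_a^b \frac{(\partial_s[F(T)](t,0))^2}{F(\dot{\gamma}(t))}\,\d t = I(V_0^\parallel, V_0^\parallel).
\]
Subtracting this from the first identity leaves precisely $I(V_0^\perp, V_0^\perp)$, proving the perpendicular form.

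The main obstacle is tracking the reference-vector dependence of the Chern connection: differentiating $g_T$ in $s$ a priori produces Cartan-tensor corrections of the form $C_T(D^T_V T, \cdot, \cdot)$. At $s=0$ every such term inherits $\dot{\gamma}$ in at least one slot (through the $T/F(T)$-factor in the first variation and its derivatives), and therefore vanishes by $2$-homogeneity of $L$. Verifying that all Finsler-specific corrections die on the reference geodesic is the one non-routine ingredient; once this is in place, the remainder is the standard Lorentzian second-variation computation.
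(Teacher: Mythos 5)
Your proof is correct and follows essentially the same route as the paper's for the first identity: differentiate the first variation formula, split off the Cartan corrections to $g_T$ (which vanish on the base geodesic because every such term carries $\dot{\gamma}$ in one slot and $C_v(v,\cdot,\cdot)=0$ by $0$-homogeneity of $g$), identify the term coming from $\partial_s[1/F(T)]$ as the extra integral, and integrate by parts to obtain the index form plus the boundary term. For the second identity, however, you take a slightly different packaging than the paper. The paper, following an exercise in Bao--Chern--Shen, compares the integrands directly: it asserts
$g_{\dot{\gamma}}(D^{\dot{\gamma}}_{\dot{\gamma}} V_0^{\perp},D^{\dot{\gamma}}_{\dot{\gamma}} V_0^{\perp})
= g_{\dot{\gamma}}(D^{\dot{\gamma}}_{\dot{\gamma}} V_0,D^{\dot{\gamma}}_{\dot{\gamma}} V_0)
+\big(\partial_s[F(T)](\cdot,0)\big)^2$
and $g_{\dot{\gamma}}(R_{\dot{\gamma}}(V_0^{\perp}),V_0^{\perp})=g_{\dot{\gamma}}(R_{\dot{\gamma}}(V_0),V_0)$, then substitutes. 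You instead decompose $V_0=V_0^{\perp}+V_0^{\parallel}$, use the bilinearity of $I$ together with \eqref{eq:R_v} and metric compatibility along $\dot{\gamma}$ to get $I(V_0^{\perp},V_0^{\parallel})=0$, and compute $I(V_0^{\parallel},V_0^{\parallel})=\int_a^b (\partial_s[F(T)])^2/F(\dot{\gamma})\,\d t$ via the identity $\partial_s[F(T)](t,0)=f'(t)$. The two presentations amount to the same computations; yours is arguably cleaner conceptually (splitting the index form through orthogonality) while the paper's is a direct pointwise check. Your claim that the extra integral is the ``only structural difference from the Lorentzian formula'' is a harmless overstatement --- an analogous term appears in the Lorentzian second variation as well --- but this is a side remark and does not affect the argument.
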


\begin{proof}
We can again argue similarly to the Finsler case by taking care of the signature, let us explain along the lines of \cite[Theorem~7.6]{Obook} and \cite[\S5.2]{BCSbook}.
By differentiating $\mathfrak{L}'(s)$ in Lemma~\ref{lm:1st-var} and by $F^2(T)=-g_T(T,T)$, we find
\begin{align}
\begin{split}
\mathfrak{L}''(0) &=
-\frac{1}{F(\dot{\gamma})} \int_a^b \Big( g_{\dot{\gamma}}\big(D^T_V[D^T_T V],\dot{\gamma}\big) +g_{\dot{\gamma}}\big(D^{\dot{\gamma}}_{\dot{\gamma}}V_0,D^{\dot{\gamma}}_{\dot{\gamma}}V_0\big)\Big) \,\d t\\
&\qquad\qquad -\int_a^b \frac{(\partial_s[F(T)](t,0))^2}{F(\dot{\gamma}(t))} \,\d t.
\label{eq:L''}
\end{split}
\end{align}
Then, choosing local coordinates $(x^{\alpha})_{\alpha=1}^n$ such that $\dot{\gamma}=\partial/\partial x^1$, $g_{11}(\dot{\gamma})=-1$, $g_{ii}(\dot{\gamma})=1$ for $i=2,\dots, n$, and $g_{\alpha\beta}(\dot{\gamma})=0$ for distinct $\alpha,\beta=1,\dots,n$, we have
\[ g_{\dot{\gamma}}\big(D^T_V[D^T_T V]-D^T_T[D^T_V V],\dot{\gamma}\big)
=-\sum_{\alpha,\beta=1}^n R^1_{\alpha\beta 1}(\dot{\gamma}) \,V_0^{\alpha}\, V_0^{\beta}. \]
Now we apply the skew-symmetry \cite[(5.11)]{Obook} to see
\[ \sum_{\alpha,\beta=1}^n R^1_{\alpha\beta 1}(\dot{\gamma}) \,V_0^{\alpha}\, V_0^{\beta}
=\sum_{\alpha,\beta,\delta=1}^n g_{\alpha\delta}(\dot{\gamma})\, R^{\delta}_{\beta}(\dot{\gamma})\, V_0^{\alpha}\, V_0^{\beta}
=g_{\dot{\gamma}}\big( R_{\dot{\gamma}}(V_0),V_0 \big) \]
(thus \cite[(7.2)]{Obook} holds as it is).
Plugging these into \eqref{eq:L''} yields the first equation.

The second equation can be derived following \cite[Exercise~5.2.6]{BCSbook}.
Precisely, we have
\[ g_{\dot{\gamma}}(D^{\dot{\gamma}}_{\dot{\gamma}} V_0^{\perp},D^{\dot{\gamma}}_{\dot{\gamma}} V_0^{\perp})
= g_{\dot{\gamma}}(D^{\dot{\gamma}}_{\dot{\gamma}} V_0,D^{\dot{\gamma}}_{\dot{\gamma}} V_0) +\big( \partial_s[F(T)](\cdot,0) \big)^2 \]
by calculation and
\[ g_{\dot{\gamma}}\big( R_{\dot{\gamma}}(V_0^{\perp}),V_0^{\perp} \big)
= g_{\dot{\gamma}}\big( R_{\dot{\gamma}}(V_0),V_0 \big) \]
by \cite[Proposition~3.4]{LMO1}.
This completes the proof.
\end{proof}

\subsection{Invalidity of triangle comparisons}

Besides Ricci curvature bounds,
synthetic lower and upper sectional curvature bounds can be formulated in terms of triangle comparisons,
akin to celebrated theories of Alexandrov spaces and CAT$(k)$-spaces
arising from the Alexandrov--Toponogov triangle comparison theorems in Riemannian geometry, cf.~e.g.~\cite{BBI}.
However, similarly to Alexandrov spaces and CAT$(k)$-spaces, those curvature bounds by triangle comparisons rule out genuinely non-Lorentzian Lorentz--Finsler structures.
This means that, in particular, the splitting theorem in \cite{BORS} is not applicable to this framework.

A Lorentzian analogue to the triangle comparison property can be given as follows along \cite[Definition~4.7]{kunzinger2018}.
We denote by $\mathbb{M}^2_k$ the two-dimensional model space of constant sectional curvature $k \in \mathbb{R}$,
and by $\smash{\bar{l}}$ the distance function on it.
Then, for any $a,b,c>0$ with $a+b \le c$ and $\smash{c<\pi/\sqrt{k}}$ (or $c<\pi/\sqrt{-k}$, respectively) if $a+b=c$ and $k>0$ (or $a+b<c$ and $k<0$, respectively),
there exist $\smash{\bar{x},\bar{y},\bar{z} \in \mathbb{M}^2_k}$ such that $\smash{\bar{x} \ll \bar{y} \ll \bar{z}}$ and
\[ \bar{l}(\bar{x},\bar{y})=a, \qquad
\bar{l}(\bar{y},\bar{z})=b, \qquad
\bar{l}(\bar{x},\bar{z})=c. \]
(See the realizability lemmas in \cite[Lemma~2.1]{alexander2008} and \cite[Lemma~4.6]{kunzinger2018}).
Such a triangle $\triangle(\bar{x},\bar{y},\bar{z})$ is unique up to isometry and called a \emph{comparison triangle} of $\triangle(x,y,z)$ in $(M,L)$ when $l(x,y)=a$, $l(y,z)=b$, and $l(x,z)=c$.

We say that $(M,L)$ has \emph{timelike curvature bounded below  by $k \in \R$ in the sense of triangle comparison} if every point in $M$ admits a neighborhood $U$ satisfying the following:
For any $x,y,z \in U$ with $x \ll y \ll z$, a comparison triangle $\triangle(\bar{x},\bar{y},\bar{z}) \subset \mathbb{M}^2_k$ and for any $p,q$ on (the sides of) $\triangle(x,y,z)$ and the corresponding points $\bar{p},\bar{q}$ on (the sides of)  $\triangle(\bar{x},\bar{y},\bar{z})$, we have $l(p,q) \le \bar{l}(\bar{p},\bar{q})$. By reversing the latter inequality, we define $(M,L)$ to have \emph{timelike curvature bounded above by $k$ in the sense of triangle comparison}.

The investigation of triangle comparison theorems in the Lorentzian setting goes back to \cite{harris1982}. We refer to \cite{alexander2008} for the equivalence between triangle comparison properties and the corresponding sectional curvature bounds for general semi-Riemannian manifolds
(see also \cite[Example~4.9]{kunzinger2018}).

\begin{proposition}[Triangle comparison implies Lorentzianity]\label{pr:triangle}
Let $(M,L)$ be globally hyperbolic.
If $(M,L)$ has timelike curvature bounded below or above by some $k \in \R$ in the sense of triangle comparison, then $g_v=2L$ holds on $\Omega_x$ for every $v \in \Omega_x$ and every $x \in M$.
In particular, $L$ is necessarily quadratic in $\Omega_x$ for every $x \in M$.
\end{proposition}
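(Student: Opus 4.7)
The plan is to localize the triangle-comparison hypothesis at a fixed point $x \in M$ and reduce, via a blow-up at $x$, to the flat Lorentz--Minkowski--Finsler tangent space $(T_xM, L_x)$ with $L_x := L\vert_{T_xM}$. Rescaling $(M, L)$ by $\lambda > 0$ sends the curvature bound $k$ to $k/\lambda^2$; as $\lambda \to \infty$, the rescaled manifolds converge locally to $(T_xM, L_x)$, and global hyperbolicity together with the continuity of $l$ let the one-sided triangle-comparison inequality pass to the limit as triangle comparison with $k = 0$ in the flat model, where $l(a,b) = F(b-a)$ for $b - a \in \Omega_x$ and geodesics are straight line segments.

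In $(T_xM, L_x)$, for a triangle $a \ll b \ll c$ with $v_1 := b-a$, $v_2 := c-b \in \Omega_x$ and interior points $p = a + tv_1$, $q = b + sv_2$ (for $t, s \in (0,1)$), the Lorentzian law of cosines applied to the comparison triangle in $\mathbb{M}^2_0$ gives
\[
\bar{l}(\bar p, \bar q)^2 = (1-t)^2 F(v_1)^2 + s^2 F(v_2)^2 + (1-t) s\, \Phi(v_1, v_2),
\]
with $\Phi(v,w) := F(v+w)^2 - F(v)^2 - F(w)^2$; meanwhile, $l(p,q)^2 = F((1-t) v_1 + s v_2)^2$ expands to the same three leading terms but with $\Phi((1-t) v_1, s v_2)$ in place of $(1-t)s\, \Phi(v_1, v_2)$. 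Hence the one-sided triangle-comparison inequality reduces to $\Phi((1-t) v_1, s v_2) \lessgtr (1-t) s\, \Phi(v_1, v_2)$, which, by the $2$-homogeneity $\Phi(\mu v, \mu w) = \mu^2 \Phi(v,w)$ and the substitution $r := \sqrt{(1-t)/s}$, becomes $\Phi(rv, w/r) \lessgtr \Phi(v, w)$ for every $r > 0$. Iterating this inequality with $r$ replaced by $1/r$ and the arguments by $(rv, w/r)$ yields the reverse direction, so equality must hold: $\Phi(rv, w/r) = \Phi(v, w)$ for all $r > 0$ and all $v, w \in \Omega_x$.

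Differentiating this identity in $r$ and rearranging yields $r^4[g_u(v,v) - g_v(v,v)] = g_u(w, w) - g_w(w, w)$ with $u := rv + w/r$. As $r \to \infty$, $g_u = g_{v + w/r^2}$ by $0$-homogeneity of $g$ in its reference vector; Euler's identity applied to the $0$-homogeneous Hessian $g_{\alpha\beta}$ kills the $1/r^2$ Taylor coefficient, and the next-order term, controlled by the totally symmetric fourth-derivative tensor $S(v)_{\alpha\beta\gamma\delta} := \partial^4 L(v)/\partial v^\alpha \partial v^\beta \partial v^\gamma \partial v^\delta$, yields the identity $\tfrac{1}{2}\, S(v)(v, v, w, w) = g_v(w, w) - g_w(w, w)$. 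Taylor-expanding in $w = v + \varepsilon w'$, the right-hand side is $O(\varepsilon^3)$ by the Hessian definition of $g_v$, whereas the left-hand side reduces to $\tfrac{1}{2} \varepsilon^2\, S(v)(v, v, w', w')$, as higher-order Euler symmetries kill the $\varepsilon^0, \varepsilon^1$ coefficients. Matching forces $S(v)(v, v, w', w') = 0$ for $v, w' \in \Omega_x$; polarization combined with $\Omega_x$ being an open cone extends this to $S \equiv 0$, so $g_v$ is independent of $v \in \Omega_x$, and $2$-homogeneity pins down $L\vert_{\Omega_x}$ to be quadratic, i.e., $g_v = 2L$ on $\Omega_x$. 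The main obstacle is the blow-up step: carefully passing the (a priori local) one-sided triangle-comparison inequality to the flat tangent model and justifying convergence of comparison triangles as $\lambda \to \infty$.
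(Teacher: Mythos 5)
Your overall strategy diverges from the paper's after the common blow-up reduction. The paper uses two specific triangles $\triangle(0,v,v+w)$ and $\triangle(0,t\,w,v+t\,w)$ to extract directly the algebraic identity $L(v+t\,w)=(1-t)L(v)+t\,L(v+w)-(1-t)\,t\,L(w)$ and then differentiates twice in $t$ to obtain $g_v(w,w)=2L(w)$. You instead derive the cleaner functional equation $\Phi(rv,w/r)=\Phi(v,w)$ for all $r>0$ and then pass to a differential identity. Your derivation up to
\[
r^4\big[g_u(v,v)-g_v(v,v)\big]=g_u(w,w)-g_w(w,w),\qquad u:=rv+w/r,
\]
and the limit identity
\[
\tfrac12\,S(v)(v,v,w,w)=g_v(w,w)-g_w(w,w)
\]
is correct. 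However, the last paragraph contains a genuine gap. Writing $w=v+\varepsilon w'$, you correctly observe that Euler's identity kills the $\varepsilon^0$ and $\varepsilon^1$ coefficients of the left-hand side; but the \emph{same} Euler argument applied once more shows that the $\varepsilon^2$ coefficient $S(v)(v,v,w',w')=-\partial^3 L(v)(v,w',w')$ also vanishes for \emph{every} Lorentz--Finsler structure, since $\partial^2 L$ is $0$-homogeneous and hence $\partial^3 L(v)(v,\cdot,\cdot)\equiv 0$. Thus ``matching forces $S(v)(v,v,w',w')=0$'' is a tautology and carries no information, and the subsequent polarization claim ``$\Rightarrow S\equiv 0$'' is a non sequitur: one cannot reconstruct a symmetric $4$-tensor from the restriction to the diagonal $2$-$v$-$2$-$w'$ configuration, and this restriction vanishes for any $L$, quadratic or not. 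The conclusion that $g_v$ is $v$-independent therefore does not follow from the step you wrote.

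The fix is short and in fact makes the proof shorter: the quantity $S(v)(v,v,w,w)=\partial^4 L(v)(v,v,w,w)$ is identically zero, because $S(v)(v,\cdot,\cdot,\cdot)=-\partial^3 L(v)(\cdot,\cdot,\cdot)$ by Euler applied to the $(-1)$-homogeneous $\partial^3 L$, and $\partial^3 L(v)(v,\cdot,\cdot)=0$ by Euler applied to the $0$-homogeneous $\partial^2 L$. Hence the left-hand side of your derived identity vanishes, and you conclude directly $g_v(w,w)=g_w(w,w)=2L(w)$ for all $v,w\in\Omega_x$, which is the claim. Alternatively, pushing your Taylor expansion one order further (to $\varepsilon^3$) would give $\partial^3 L(v)(w',w',w')=0$ and hence, by polarization, $\partial^3 L(v)\equiv 0$, which also suffices; this at least uses the identity non-trivially. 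Either way, the fourth-derivative tensor $S$ is unnecessary. The paper's two-triangle argument avoids this entire discussion and is considerably more elementary.
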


\begin{proof}
We first consider the lower curvature bound.
Fix arbitrary $x \in M$ and $v,w \in \Omega_x$.
By taking a scaling limit of the triangle comparison inequality on $M$, we find that the curvature bound also holds for triangles in the tangent space $T_xM$ with $k=0$ (i.e.~$(T_xM,L)$ is nonnegatively curved).
Now, as a comparison triangle for $\triangle(0,v,v+w)$,
take $\triangle(0,\bar{v},\bar{v}+\bar{w})$ in the Minkowski space $(\mathbb{M}^2_0,\langle \cdot,\cdot \rangle)$.
Then for every $t \in (0,1)$ we obtain
\[
\langle \bar{v}+t\,\bar{w},\bar{v}+t\,\bar{w} \rangle
= (1-t)\,\langle \bar{v},\bar{v} \rangle +t\,\langle \bar{v}+\bar{w},\bar{v}+\bar{w} \rangle
 -(1-t)\,t\,\langle \bar{w},\bar{w} \rangle.
\]
Therefore, it follows from the hypothesis that
\begin{align*}
2\,L(v+t\,w)
&\ge (1-t)\,\langle \bar{v},\bar{v} \rangle +t\,\langle \bar{v}+\bar{w},\bar{v}+\bar{w} \rangle
 -(1-t)\,t\,\langle \bar{w},\bar{w} \rangle \\
&= 2\,(1-t)\,L(v) +2\,t\,L(v+w) -2\,(1-t)\,t\,L(w).
\end{align*}
On the other hand, considering the triangle $\triangle(0,t\,w,v+t\,w)$ and noticing
\[ t\,w + t\,\big( (v+t\,w) -t\,w \big) =t\,(v+w), \]
we observe
\[ L\big( t\,(v+w) \big) \ge (1-t)\,L(t\,w) +t\,L(v+t\,w) -(1-t)\,t\,L(v), \]
which is rewritten as
\[ L(v+t\,w) \le (1-t)\,L(v) +t\,L(v+w) -(1-t)\,t\,L(w). \]
This yields
\[ L(v+t\,w) =(1-t)\,L(v) +t\,L(v+w) -(1-t)\,t\,L(w), \]
and hence we have
\[ g_v(w,w) =\frac{\d^2}{\d t^2}\bigg|_0 L(v+t\,w) =2L(w). \]
This completes the proof for the lower curvature bound.

The case of upper curvature bound can be established in the same way by reversing the inequalities.
\end{proof}

\section[Behavior of the Lorentz--Finsler distance function]{Behavior of the Lorentz--Finsler distance\\ function}\label{Sec:Behavior}

From now on, we assume $(M,L)$ to be a globally hyperbolic Finsler spacetime. Under this standing assumption, in this section we study the behavior of the Lorentz--Finsler distance function $l$, 
and introduce the Lagrangian and the Hamiltonian induced from the $q$-th power of $F=\sqrt{-2\,L}$ for $q\in (0,1]$.
We follow the exposition of \cite[Sections~2, 3]{Mc}.

\subsection{Lagrangians and Hamiltonians}\label{Sec:Lorentz-Finsler distance}

We first introduce the \emph{$q$-dependent Lagrangian} $\mathcal{L}_q$ as in \cite{Mc} (see also \cite{suhr2018} for the case of $q=1$).
Let $q \in (0,1]$, and define $\mathcal{L}_q\colon TM\longrightarrow (-\infty,0]\cup\{+\infty\}$ as
\begin{equation*}
\mathcal{L}_q(v):=
\begin{cases}
 -q^{-1}\,\big(\!-\!2\,L(v)\big)^{q/2} 
 & \text{if } v \in \overline{\Omega}, \\
 +\infty
 & \text{otherwise}.
\end{cases}
\end{equation*}
Observe that $\smash{\mathcal{L}_q(v) =-q^{-1}\,F(v)^q}$ for $\smash{v\in\overline{\Omega}}$. 
Its convex dual is the \emph{$q$-dependent Hamiltonian} $\mathcal{H}_q\colon T^*M \longrightarrow [0,+\infty]$ given by
\begin{equation}\label{Eq:q-Hamiltonian}
    \mathcal{H}_q(\zeta) := \sup_{v \in TM} \big(\zeta(v) - \mathcal{L}_q(v)\big),
\end{equation}
where $\zeta(v)$ is the usual duality pairing.
When $q\in (0,1)$ and $p<0$ denotes its dual exponent, i.e.\ $q^{-1}+p^{-1}=1$, we have
\begin{equation}\label{eq:H_q}
    \mathcal{H}_q(\zeta) = \begin{cases} -p^{-1}\,\big(\!-\!2\,L^*(\zeta)\big)^{p/2} & \text{if } \zeta \in \Omega^*,\\
    +\infty & \text{otherwise},
    \end{cases}
\end{equation}
while for $q=1$ it is the $\{0,+\infty\}$-valued indicator function of a solid hyperboloid $\{L^* \le -1/2 \} \subset \Omega^*$; see Corollary~\ref{Cor:Classical Lagrangian}.

In view of \eqref{Eq:q-Hamiltonian}, we can show an analogue of $\mathscr{L}=(\mathscr{L}^*)^{-1}$.
Furthermore, choosing $q \in (0,1)$ makes the Lagrangian $\mathcal{L}_q$ strictly convex on $\Omega_x$.
Precisely, we have the following result.

\begin{lemma}[Duality and convexity of $\mathcal{L}_q$]\label{Le:Convex Lagrangian}
Fix $q \in (0,1)$ and $x \in M$.
\begin{enumerate}[label=\textnormal{(\roman*)}]
\item
The maps
\[
v \,\longmapsto\, \sum_{\alpha=1}^n \frac{\partial \mathcal{L}_q}{\partial v^{\alpha}}(v) \,\d x^{\alpha}, \qquad
\zeta \,\longmapsto\, \sum_{\alpha=1}^n \frac{\partial \mathcal{H}_q}{\partial \zeta_{\alpha}}(\zeta)\, \frac{\partial}{\partial x^{\alpha}}
\]
are the inverses of each other between $\Omega_x$ and $\Omega^*_x$.

\item
The Lagrangian $\mathcal{L}_q$ is convex on $T_xM$ and strictly convex on $\Omega_x$.
\end{enumerate}
\end{lemma}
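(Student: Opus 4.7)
The plan is to reduce both claims to explicit coordinate computations, working from the formulas $\mathcal{L}_q(v) = -q^{-1}(-2L(v))^{q/2}$ on $\overline{\Omega}_x$ and $\mathcal{H}_q(\zeta) = -p^{-1}(-2L^*(\zeta))^{p/2}$ on $\Omega_x^*$ (see \eqref{eq:H_q}), and relating everything to the classical Legendre maps $\mathscr{L}, \mathscr{L}^*$ from Definition~\ref{df:Leg}. Throughout I would use that $\mathscr{L}$ and $\mathscr{L}^*$ are positively $1$-homogeneous (a direct consequence of $L$ and $L^*$ being $2$-homogeneous) and that $F^* \circ \mathscr{L} = F$, where $F^* := \sqrt{-2L^*}$.

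For part~(i), the chain rule yields
\[
\sum_{\alpha=1}^n \frac{\partial \mathcal{L}_q}{\partial v^\alpha}(v)\, \d x^\alpha = F(v)^{q-2}\,\mathscr{L}(v), \qquad
\sum_{\alpha=1}^n \frac{\partial \mathcal{H}_q}{\partial \zeta_\alpha}(\zeta)\, \frac{\partial}{\partial x^\alpha} = F^*(\zeta)^{p-2}\,\mathscr{L}^*(\zeta);
\]
in particular the first map carries $\Omega_x$ to $\Omega_x^*$ and the second carries $\Omega_x^*$ to $\Omega_x$. Given $v \in \Omega_x$, I set $\zeta := F(v)^{q-2}\mathscr{L}(v)$; by $1$-homogeneity, $F^*(\zeta) = F(v)^{q-1}$ and $\mathscr{L}^*(\zeta) = F(v)^{q-2}\,v$. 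The conjugate-exponent identity $p^{-1}+q^{-1}=1$ rearranges to $(q-1)(p-2)=2-q$, giving
\[
F^*(\zeta)^{p-2}\,\mathscr{L}^*(\zeta) = F(v)^{(q-1)(p-2)+(q-2)}\,v = v,
\]
and the reverse composition is verified identically.

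For part~(ii), differentiating $\mathcal{L}_q$ twice and using $\partial^2 L/\partial v^\alpha \partial v^\beta = g_{\alpha\beta}(v)$ together with Euler's identity $\sum_\beta g_{\alpha\beta}(v)\,v^\beta = \partial L/\partial v^\alpha(v)$ gives, for any $w \in T_xM$,
\[
\sum_{\alpha,\beta=1}^n \frac{\partial^2 \mathcal{L}_q}{\partial v^\alpha \partial v^\beta}(v)\,w^\alpha w^\beta = F(v)^{q-4}\bigl[(2-q)\,g_v(v,w)^2 + F(v)^2\,g_v(w,w)\bigr].
\]
Decomposing $w = c\,v + w^\perp$ with $g_v(v, w^\perp)=0$ produces
\[
g_v(v,w)^2 - g_v(v,v)\,g_v(w,w) = -g_v(v,v)\,g_v(w^\perp,w^\perp) \ge 0,
\]
i.e.\ $g_v(v,w)^2 + F(v)^2\,g_v(w,w) \ge 0$; combined with the nonnegative remainder $(1-q)g_v(v,w)^2$, this shows the Hessian is positive semidefinite. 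If both summands vanish, then $g_v(v,w)=0$ (so $c=0$) and $g_v(w^\perp,w^\perp)=0$ (so $w^\perp=0$, since $g_v$ is positive definite on $v^\perp$), forcing $w=0$; hence $\mathcal{L}_q$ is strictly convex on $\Omega_x$. Finally, continuity of $\mathcal{L}_q$ on $\overline{\Omega}_x$ (with value $0$ on the lightcone) and the convention $\mathcal{L}_q \equiv +\infty$ off the closed convex cone $\overline{\Omega}_x$ extend convexity to all of $T_xM$.

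The main obstacle is the twofold bookkeeping of exponents and signs. In part~(i), one must track that $p<0$ when $q \in (0,1)$ and verify the precise cancellation $(q-1)(p-2)+(q-2)=0$, which encodes the Legendre duality between the conjugate pair $(\mathcal{L}_q,\mathcal{H}_q)$. In part~(ii), the genuinely Lorentzian inequality $g_v(v,w)^2 + F(v)^2\,g_v(w,w) \ge 0$, with equality iff $w \parallel v$, is the geometric heart of strict convexity; the strict inequality $2-q > 1$, available only for $q < 1$, is precisely what upgrades convexity to strict convexity.
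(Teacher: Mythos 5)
Your proof is correct and follows essentially the same route as the paper's: in (i) both compute the maps via the chain rule as $F(v)^{q-2}\mathscr{L}(v)$ and $F^*(\zeta)^{p-2}\mathscr{L}^*(\zeta)$, invoke $1$-homogeneity and $\mathscr{L}^*=\mathscr{L}^{-1}$, and close with the exponent identity $(q-1)(p-2)+(q-2)=0$; in (ii) both arrive at the same Hessian formula and reduce positive definiteness to the Lorentzian reverse Cauchy--Schwarz inequality for $g_v$. The only stylistic difference is that in (ii) you produce the key inequality $g_v(v,w)^2 + F(v)^2 g_v(w,w)\ge 0$ directly via the orthogonal decomposition $w=c\,v+w^\perp$, whereas the paper organizes the argument as a case split on the sign of $g_v(w,w)$ and then quotes reverse Cauchy--Schwarz in the timelike case; your version is a touch more self-contained but mathematically equivalent.
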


\begin{proof}
(i)
Observe that
\[
\sum_{\alpha=1}^n \frac{\partial \mathcal{L}_q}{\partial v^{\alpha}}(v) \,\d x^{\alpha}
= F(v)^{q-2} \sum_{\alpha=1}^n \frac{\partial L}{\partial v^{\alpha}}(v) \,\d x^{\alpha}
=F(v)^{q-2}\, \mathscr{L}(v),
\]
and similarly
\begin{equation*}
\sum_{\alpha=1}^n \frac{\partial \mathcal{H}_q}{\partial \zeta_{\alpha}}(\zeta)\, \frac{\partial}{\partial x^{\alpha}}
=F^*(\zeta)^{p-2}\, \mathscr{L}^*(\zeta)
\end{equation*}
for $v \in \Omega_x$ and $\zeta \in \Omega^*_x$,
where $\smash{F^*(\zeta):=\sqrt{-2\,L^*(\zeta)}}$.
Since $\mathscr{L}^*=\mathscr{L}^{-1}$ and both involved maps are positively $1$-homogeneous, we obtain the claim by noting that
\[
\big( F(v)^{q-1} \big)^{p-2} \, F(v)^{q-2} =1.
\]

(ii)
Let $v \in \Omega_x$.
Recalling the definition of $g_{\alpha \beta}(v)$ in \eqref{eq:g_ij}, we observe
\[
\frac{\partial \mathcal{L}_q}{\partial v^{\alpha} \,\partial v^{\beta}}(v)
=F(v)^{q-4}\, \bigg( (2-q)\, \frac{\partial L}{\partial v^{\alpha}}(v)\, \frac{\partial L}{\partial v^{\beta}}(v)
+F(v)^2\, g_{\alpha \beta}(v) \bigg).
\]
Now note that, by writing $w\in T_x M$ as  $\smash{w=\sum_{\alpha=1}^n w^{\alpha} (\partial/\partial x^{\alpha})\vert_x}$,
\[
\sum_{\alpha=1}^n \frac{\partial L}{\partial v^{\alpha}}(v)\, w^{\alpha}
=g_v(v,w), \qquad
\sum_{\alpha,\beta=1}^n g_{\alpha \beta}(v)\, w^{\alpha}\, w^{\beta} =g_v(w,w).
\]
Thus, if $g_v(w,w) \ge 0$ we immediately find
\[
\sum_{\alpha,\beta=1}^n \frac{\partial \mathcal{L}_q}{\partial v^{\alpha} \partial v^{\beta}}(v)\, w^{\alpha}\, w^{\beta} \ge 0.
\]
If instead $g_v(w,w)<0$, i.e.~$w$ is $g_v$-timelike, then the reverse Cauchy--Schwarz inequality for $g_v$ yields
\[
\big( g_v(v,w) \big)^2
\ge g_v(v,v)\, g_v(w,w)
= -F(v)^2\, g_v(w,w).
\]
Since the identity $g_v(v,w)=0$ holds only when $w=0$ or $g_v(w,w)>0$, we get the  positive-definiteness.
Hence $\mathcal{L}_q$ is strictly convex on $\Omega_x$ and convex on $T_xM$ (by the convexity of the set $\Omega_x$). \end{proof}

In the case of $q=1$, note that $\mathcal{L}_1=-F$ is positively $1$-homogeneous and fails to be strictly convex in radial directions. 
In order to formulate the corresponding next result, recall that a function $u\colon T_xM \longrightarrow [-\infty,+\infty]$ is said to be \emph{subdifferentiable} at $v \in T_xM$ with a \emph{subgradient} $\zeta \in T_x^*M$ if $u(v) \in \R$ and
\[
u(v+w) \ge u(v) +\zeta(w) +o(|w|_h)
\]
holds for $w \in T_xM$ close to $0$, where $h$ is an auxiliary Riemannian metric.
We denote by $\partial_\bullet u(v)$ the set of all subgradients.
(See \eqref{eq:subgrad} below for the case of functions on $M$ and the concept of superdifferentiability.)

\begin{corollary}[The case of $q=1$]\label{Cor:Classical Lagrangian}
Fix $x \in M$.
\begin{enumerate}[label=\textnormal{(\roman*)}]
\item The Lagrangian $\mathcal{L}_1$ is convex on $T_xM$.

\item We have
\begin{equation*}
\mathcal{H}_1(\zeta) =\begin{cases}
0 & \text{if } \zeta \in \Omega_x^* \text{ and } L^*(\zeta) \le -1/2,\\
+\infty & \text{otherwise}.
\end{cases}
\end{equation*}

\item
$\smash{\mathcal{L}_1\vert_{T_xM}}$ is not subdifferentiable on $\smash{(\mathcal{L}_1\vert_{T_xM})^{-1}(0) \setminus \{0\}}$.
At $0 \in T_xM$, $\smash{\mathcal{L}_1\vert_{T_xM}}$ is subdifferentiable with
\[
\partial_\bullet (\mathcal{L}_1\vert_{T_xM})(0) =\mathcal{H}_1^{-1}(0) \cap T_x^*M.
\]
\end{enumerate}
\end{corollary}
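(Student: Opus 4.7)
The plan is to handle the three parts in turn, leveraging throughout that $\mathcal{L}_1\vert_{\overline{\Omega}_x} = -F$ is positively $1$-homogeneous with convex effective domain $\overline{\Omega}_x$.

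For part (i), the cleanest route is to pass to the limit $q \uparrow 1$ from Lemma~\ref{Le:Convex Lagrangian}(ii). For $v \in \overline{\Omega}_x$ one has $\mathcal{L}_q(v) = -q^{-1} F(v)^q \to -F(v) = \mathcal{L}_1(v)$, and off $\overline{\Omega}_x$ all $\mathcal{L}_q$ as well as $\mathcal{L}_1$ equal $+\infty$, so pointwise convergence holds. Since a pointwise limit of convex functions is convex, (i) follows. Alternatively, one could argue directly by combining positive $1$-homogeneity with the reverse triangle inequality $F(v+w) \ge F(v)+F(w)$ for $v,w \in \overline{\Omega}_x$.

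For part (ii), I would write $\mathcal{H}_1(\zeta) = \sup_{v \in \overline{\Omega}_x}(\zeta(v) + F(v))$, since points outside $\overline{\Omega}_x$ contribute $-\infty$. The integrand is positively $1$-homogeneous in $v$ and vanishes at $v=0$, so by scaling the supremum equals either $0$ or $+\infty$. The value $0$ holds precisely when $-\zeta(v) \ge F(v)$ for all $v \in \overline{\Omega}_x$. When $\zeta \in \Omega^*_x$, the reverse Cauchy--Schwarz inequality~\eqref{eq:rCS} combined with the variational definition of $F^* = \sqrt{-2L^*}$ yields $F^*(\zeta) = \inf_{v \in \Omega_x}(-\zeta(v)/F(v))$, so the condition becomes $F^*(\zeta) \ge 1$, i.e.~$L^*(\zeta) \le -1/2$. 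If instead $\zeta \notin \Omega^*_x$, then either there exists $v_0 \in \overline{\Omega}_x \setminus \{0\}$ with $\zeta(v_0)>0$, in which case scaling pushes the supremum to $+\infty$, or $\zeta$ lies on $\partial \Omega^*_x$, where a sequence of unit timelike $v_k$ approaching a lightlike $v_0$ with $\zeta(v_0)=0$ forces $\zeta(v_k)+F(v_k) \to 1 > 0$; either way $\mathcal{H}_1(\zeta) = +\infty$.

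Part (iii) is the most delicate, and the main obstacle is the square-root behavior of $F$ at lightlike directions. To rule out subgradients at a lightlike $v \ne 0$, I would pick any $w$ with $v+tw$ timelike for small $t>0$ (for instance any $w \in \Omega_x$). The Taylor expansion $L(v+tw) = t\,g_v(v,w) + O(t^2)$, which uses $L(v)=0$ together with the Euler identity $\partial_v L(v)[w] = g_v(v,w)$, combined with the timelike condition forces $g_v(v,w) < 0$; hence $\mathcal{L}_1(v+tw) = -\sqrt{-2g_v(v,w)}\,\sqrt{t} + O(t)$ as $t \downarrow 0$. A hypothetical subgradient $\zeta$ would require $\mathcal{L}_1(v+tw) \ge t\,\zeta(w) + o(t)$, which is impossible because the left side decays like $-\sqrt{t}$ while the right side is of order $t$. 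At $v=0$, the $1$-homogeneity of $\mathcal{L}_1$ on its effective domain lets me rescale $w \mapsto tw$ in the subgradient inequality and let $t \downarrow 0$, reducing it to the global estimate $\zeta(w) \le \mathcal{L}_1(w)$ for every $w \in T_xM$; this is precisely $\mathcal{H}_1(\zeta) \le 0$, and since $\mathcal{H}_1 \ge 0$ holds automatically (take $v=0$ in the definition), the subdifferential at $0$ equals $\mathcal{H}_1^{-1}(0) \cap T_x^*M$, as claimed.
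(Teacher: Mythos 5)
Your proof is correct and follows essentially the same route as the paper's: (i) by passing to the limit $q\to 1$, (ii) via the homogeneity dichotomy and the reverse Cauchy--Schwarz inequality, and (iii) via the square-root degeneracy of $F$ at the light cone together with scaling at the origin; your repackaging of the subdifferential at $0$ through $\mathcal{H}_1(\zeta)\le 0$ plus the automatic $\mathcal{H}_1\ge 0$ is a slightly cleaner way of organizing the paper's two-sided verification. One small inaccuracy in (ii): a sequence of \emph{unit} timelike vectors $v_k$ cannot converge to a lightlike $v_0$, since $F$ is continuous and $F(v_0)=0\ne 1$. The correct way to treat $\zeta\in\partial\Omega_x^*$ with $\zeta(v_0)=0$ for some lightlike $v_0$ is to perturb $v_0$ by a fixed $u_0\in\Omega_x$, i.e.\ consider $v_0+t\,u_0$ for $t\downarrow 0$: then $F(v_0+t\,u_0)\sim\sqrt{-2\,t\,g_{v_0}(v_0,u_0)}$ is of order $\sqrt{t}$, which dominates $\zeta(v_0+t\,u_0)=t\,\zeta(u_0)=O(t)$, so $\zeta(v)+F(v)>0$ for some $v$ and hence $\mathcal{H}_1(\zeta)=+\infty$ by scaling.
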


\begin{proof}
(i)
This follows from the convexity of $\mathcal{L}_q$ by letting $q \to 1$.

(ii)
This is a standard fact following from \eqref{Eq:q-Hamiltonian} together with the reverse Cauchy--Schwarz inequality, or from \eqref{eq:H_q} by letting $p \to -\infty$.

(iii)
The failure of the subdifferentiability of $\mathcal{L}_1$ (or the superdifferentiability of $F$) on $\smash{(\mathcal{L}_1\vert_{T_xM})^{-1}(0) \setminus \{0\}}$ is a consequence of the fact that, for $s>0$,
\[
\frac{\partial}{\partial t}\sqrt{t^2 -s^2}
=\frac{t}{\sqrt{t^2 -s^2}}
\to +\infty \qquad \textnormal{as }t \downarrow s.
\]

At $0 \in T_xM$, on the one hand, $\zeta \in \Omega_x^*$ with $L^*(\zeta) \le -1/2$ satisfies, by the reverse Cauchy--Schwarz inequality \eqref{eq:rCS},
\[
\zeta(v) \le -\sqrt{4\,L(v)\,L^*(\zeta)}
=\mathcal{L}_1(v)\, \sqrt{-2\,L^*(\zeta)}
\le \mathcal{L}_1(v)
\]
for arbitrary $v \in T_xM$, which implies $\smash{\zeta \in \partial_\bullet (\mathcal{L}_1\vert_{T_xM})(0)}$.
On the other hand, if $\smash{\zeta \in \partial_\bullet (\mathcal{L}_1\vert_{T_xM})(0)}$, then
\[
\zeta(v) =\frac{\zeta(c\,v)}{c} \le \frac{\mathcal{L}_1(c\,v) +o(c\,|v|_h)}{c}
\to -1\qquad\textnormal{as }c \downarrow 0
\]
for any $v \in F^{-1}(1) \cap \Omega_x$. Thus  $L^*(\zeta) \le -1/2$ by the definition of $L^*(\zeta)$.
\end{proof}

\subsection{The $q$-Lorentz--Finsler distance functions}\label{Sub:LF distance}

For $q\in (0,1]$, the \emph{$q$-action} of a Lipschitz curve $\gamma \colon [0,1] \longrightarrow M$ induced by $\mathcal{L}_q$ is
\[ \mathcal{A}_q(\gamma)
 :=\int_0^1 \mathcal{L}_q(\dot{\gamma}) \,\d t, \]
and we define $l_q \colon M\times M \longrightarrow [0,
+\infty]\cup\{-\infty\}$ by
\begin{equation}\label{Eq:lq Lorentz-Finsler distance}
l_q(x,y) :=-\inf_{\gamma} \mathcal{A}_q(\gamma),
\end{equation}
where $\gamma\colon[0,1]\longrightarrow M$ runs over all Lipschitz curves with $\gamma(0)=x$ and $\gamma(1)=y$.
We call $l_q$ the \emph{$q$-Lorentz--Finsler distance function};
for $q=1$, $l_1$ coincides with the Lorentz--Finsler distance function $l$ introduced in Subsection \ref{ssc:Fspacetime}.

For notational simplicity, we will adopt the conventions:
\begin{align}\label{Eq:Conventions}
(-\infty) + \infty := -\infty, \qquad
(-\infty)^q = (-\infty)^{1/q} := -\infty.
\end{align}
Then $l$ satisfies the \emph{reverse triangle inequality}
\begin{align}\label{eq:reverse triangle}
    l(x,y) \geq l(x,z) + l(z,y),
\end{align}
and, for every $x,y\in M$ with $l(x,y)>-\infty$ and every $q\in (0,1)$, we find 
\begin{align}\label{Eq:Lorentz-Finsler distance}
l(x,y)=\big( q\,l_q(x,y) \big)^{1/q}
\end{align}
(by re-parametrizing an appropriate curve $\gamma$ so that $\mathcal{L}_1(\dot{\gamma})$ is constant almost everywhere).
In particular, the sets $\{l_q > 0\}$ and $\{l_q \geq 0\}$ do not depend on the exponent $q\in(0,1]$.

The hypothesized global hyperbolicity of $(M,L)$ entails finer properties of the Lorentz--Finsler distance function $l$. First of all, by the relation \eqref{Eq:Lorentz-Finsler distance}, we immediately observe the following.

\begin{lemma}[Continuity of $l_q$]\label{Le:l continuous}
For each $q\in (0,1]$, $l_q$ is finite and continuous on $\{l_q\geq 0\}$, and upper semi-continuous on $M \times M$.
\end{lemma}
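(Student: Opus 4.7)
The plan is to reduce everything to the known properties of $l = l_1$, which by \cite[Proposition~6.8]{minguzzi2015c} is finite and continuous on $\{l \ge 0\}$ and upper semi-continuous on $M \times M$ under our standing global hyperbolicity assumption. The link is the identity \eqref{Eq:Lorentz-Finsler distance}, which reads $l_q = l^q/q$ on $\{l \ge 0\}$; combined with the equality $\{l_q \ge 0\} = \{l \ge 0\}$ recorded just above the statement and the convention \eqref{Eq:Conventions} that puts $l_q \equiv -\infty$ off this common set, one may write $l_q = \Phi \circ l$ on all of $M \times M$, where $\Phi\colon \{-\infty\} \cup [0,\infty) \longrightarrow \{-\infty\} \cup [0,\infty)$ is defined by $\Phi(t) := t^q/q$ for $t \ge 0$ and $\Phi(-\infty) := -\infty$. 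This $\Phi$ is non-decreasing, continuous on its domain, and restricts to a continuous self-map of $[0,\infty)$.

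The finiteness and continuity of $l_q$ on $\{l_q \ge 0\}$ then follow immediately from the composition, the corresponding properties of $l$ on $\{l \ge 0\}$, and the inclusion $\Phi([0,\infty)) \subseteq [0,\infty)$. For the upper semi-continuity on all of $M \times M$, one fixes $(x,y)$ and a sequence $(x_k,y_k) \to (x,y)$; the monotonicity and continuity of $\Phi$ together with upper semi-continuity of $l$ yield $\limsup_k l_q(x_k,y_k) = \limsup_k \Phi(l(x_k,y_k)) \le \Phi(\limsup_k l(x_k,y_k)) \le \Phi(l(x,y)) = l_q(x,y)$. When $l(x,y) = -\infty$, this chain remains consistent because upper semi-continuity of $l$, combined with the fact that $l$ takes no value in $(-\infty, 0)$, forces $l(x_k,y_k) = -\infty$ eventually, so both sides equal $-\infty$.

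No substantial obstacle is expected; the lemma is essentially a change-of-scale through the continuous non-decreasing map $\Phi$. The only delicate point is the bookkeeping of the extended value $-\infty$ inherited from the convention \eqref{Eq:Conventions}, which is cleanly handled by extending $\Phi$ continuously to $\{-\infty\}$ as above.
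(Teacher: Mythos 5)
Your proof is correct and follows the same route the paper intends: the lemma is stated without proof as an immediate consequence of the identity $l_q = l^q/q$ on $\{l \ge 0\}$ together with the known properties of $l$ from \cite[Proposition~6.8]{minguzzi2015c}, and your $\Phi$-composition argument is exactly the careful spelling-out of that reduction (including the correct handling of the value $-\infty$, which the paper leaves implicit).
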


For analyzing the behavior of $l$, we introduce the singular set.

\begin{definition}[Singular set of $l$]\label{Def:sing l}
We define $\sing (l)$ as the set of all pairs $(x,y)\in M\times M$ such that $l(x,y) \leq 0$ or no maximizing geodesic contains $x$ and $y$ in its relative interior.
\end{definition}

Then, thanks to Lemma~\ref{Le:l continuous},
it is not difficult to show that $\sing(l)$ is a closed set; see \cite[Theorem~3.6]{Mc} for instance.


The following two lemmas are shown in exactly the  same way as \cite[Lemmas~2.4, 2.5]{Mc}.
We remark that the limit curve theorems in \cite{beem1996} generalize to our Lorentz--Finsler framework \cite[Proposition~6.1]{minguzzi2015c}.

\begin{lemma}[Continuity of intermediate points]\label{Le:zs}
For every $t\in[0,1]$ and every $(x,y)\in (M\times M)\setminus \sing (l)$, there exists a unique point $z\in M$, henceforth denoted by $z_t(x,y)$, such that
\begin{align*}
l(x,z) = t\,l(x,y), \qquad
l(z,y) = (1-t)\,l(x,y).
\end{align*}
Moreover, $z_t(x,y)$ depends smoothly on $(t,x,y) \in [0,1]\times ((M\times M)\setminus \sing (l))$.
\end{lemma}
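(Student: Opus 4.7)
The plan is to define $z_t(x,y)$ as the point at parameter $t$ along the essentially unique maximizing geodesic from $x$ to $y$, and then to read off smoothness from that of the Lorentz--Finsler exponential map.

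First I would use that $(x,y)\notin\sing(l)$ to produce a maximizing geodesic $\gamma\colon[-\varepsilon,1+\varepsilon]\longrightarrow M$, parametrized affinely so that $F(\dot\gamma)$ is constant, with $\gamma(0)=x$, $\gamma(1)=y$, and set $z_t:=\gamma(t)$. Existence is then immediate: one has $\mathfrak{L}(\gamma\vert_{[0,t]})=t\,l(x,y)$ and $\mathfrak{L}(\gamma\vert_{[t,1]})=(1-t)\,l(x,y)$, so $l(x,z_t)\ge t\,l(x,y)$ and $l(z_t,y)\ge(1-t)\,l(x,y)$; the reverse triangle inequality \eqref{eq:reverse triangle} forces both to be equalities.

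Uniqueness is the main obstacle. Suppose some $z'\in M$ satisfies the same two equations. By global hyperbolicity there exist maximizing geodesics $\alpha$ from $x$ to $z'$ and $\beta$ from $z'$ to $y$; the hypotheses on $z'$ ensure that their concatenation is a maximizing causal curve from $x$ to $y$ of total $F$-length $l(x,y)$. A corner-smoothing argument, based on the strict convexity of $\mathcal{L}_q$ on $\Omega_x$ recorded in Lemma~\ref{Le:Convex Lagrangian}, shows this concatenation must in fact be smooth, hence a geodesic $\tilde\gamma$. Thus $\tilde\gamma$ is a second maximizing geodesic from $x$ to $y$ passing through $z'$. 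Because the original $\gamma$ extends strictly past both endpoints as a maximizing geodesic, $y$ lies before the first conjugate/cut point of $x$ along $\gamma$, so $\exp_x$ is a local diffeomorphism from a neighborhood of $\{s\,\dot\gamma(0):s\in[0,1]\}$ onto a neighborhood of the image of $\gamma$. This forces $\tilde\gamma=\gamma$ and hence $z'=\gamma(t)=z_t$.

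Smoothness of $z_t(x,y)$ is then essentially free: locally on $(M\times M)\setminus\sing(l)$ we may write $z_t(x,y)=\exp_x\bigl(t\,v(x,y)\bigr)$ with $v(x,y):=\exp_x^{-1}(y)\in\Omega_x$ defined via the inverse function theorem, and both constituents depend smoothly on their arguments. The substantive difficulty is therefore the uniqueness step, which hinges on smoothness of length maximizers (to rule out corners) and on the regularity of $\exp_x$ away from the cut locus, the latter being exactly what the hypothesis $(x,y)\notin\sing(l)$ provides.
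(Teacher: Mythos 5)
Your overall plan — existence via the extendable maximizing geodesic, uniqueness by showing a second intermediate point would force a second maximizing geodesic from $x$ to $y$, and smoothness via the exponential map and the inverse function theorem — is the right one and matches the approach of McCann's Lemma~2.4, which the paper cites for this result.

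There is, however, a logical gap in the uniqueness step. You argue that since $y$ lies before the first conjugate/cut point of $x$ along $\gamma$, the map $\exp_x$ is a local diffeomorphism near the radial segment $\{s\,\dot\gamma(0):s\in[0,1]\}$, and that ``this forces $\tilde\gamma=\gamma$.'' But a local diffeomorphism near $\dot\gamma(0)$ only rules out preimages of $y$ \emph{close to} $\dot\gamma(0)$; it does not by itself preclude a maximizing geodesic $\tilde\gamma$ with $\dot{\tilde\gamma}(0)$ far from $\dot\gamma(0)$. What you actually need is the stronger fact that $y$ being strictly before the cut point of $x$ means $\gamma$ is the \emph{unique} maximizer from $x$ to $y$, which immediately gives $\tilde\gamma=\gamma$; the local-diffeomorphism statement (no conjugate points) is what you want for the smoothness paragraph, not for uniqueness. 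Alternatively, the concatenation trick you already invoked handles this cleanly: since $(x,y)\notin\sing(l)$, $\gamma$ extends past $y$ to some $y'$ while remaining maximizing, so the curve $\tilde\gamma$ followed by $\gamma|_{[y,y']}$ is a maximizing causal curve from $x$ to $y'$ and hence smooth, forcing $\dot{\tilde\gamma}(1)$ to be proportional to $\dot\gamma(1)$ and therefore $\tilde\gamma=\gamma$. Once this step is repaired the argument goes through, and the existence and smoothness parts are fine as written.
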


\begin{lemma}[Compactness of intermediate point sets]\label{Le:Zs}
For $x,y\in M$, $t\in [0,1]$, and $E\subset M\times M$, we set 
\begin{align*}
Z_t(x,y) := \big\lbrace z\in M \mid l(x,z) = t\,l(x,y),\,l(z,y) = (1-t)\,l(x,y)\big\rbrace
\end{align*}
if $l(x,y) \geq 0$ and $Z_t(x,y) := \emptyset$ otherwise, as well as
\begin{align*}
Z_t(E) := \bigcup_{(x,y)\in E} Z_t(x,y),\qquad
Z(E) := \bigcup_{t\in[0,1]} Z_t(E).
\end{align*}
Then $Z(E)$ is precompact if $E$ is. If $E$ is compact, so are $Z(E)$ and $Z_t(E)$.
\end{lemma}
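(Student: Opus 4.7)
The strategy, following \cite[Lemma~2.5]{Mc} up to the relevant changes for the Finsler setting, is to first bound $Z(E)$ inside a manifestly compact set arising from global hyperbolicity, and then use upper semi-continuity of $l$ together with the reverse triangle inequality to verify closedness.

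First I would show the inclusion $Z_t(x,y) \subset J^+(x) \cap J^-(y)$ whenever $l(x,y) \ge 0$ and $t \in [0,1]$: any $z \in Z_t(x,y)$ satisfies $l(x,z),\, l(z,y) \ge 0$ by construction, hence $x \le z \le y$. Passing to the union over $(x,y) \in E$, one obtains
\[
Z(E) \subset J^+\big( \overline{\pi_1(E)} \big) \cap J^-\big( \overline{\pi_2(E)} \big),
\]
where $\pi_1,\pi_2$ denote the coordinate projections of $M \times M$. For precompact $E$, the two projections have compact closures, and then the standard fact that in a globally hyperbolic spacetime $J^+(K_1) \cap J^-(K_2)$ is compact for compact $K_1, K_2$ (which remains valid in the Lorentz--Finsler framework via \cite[Proposition~6.8]{minguzzi2015c} and the closedness of causal relations) yields precompactness of $Z(E)$ and of each $Z_t(E)$.

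To upgrade this to compactness when $E$ itself is compact, it remains to prove that $Z(E)$ and $Z_t(E)$ are closed. For $Z_t(E)$, take a sequence $z_k \in Z_t(x_k, y_k)$ with $(x_k, y_k) \in E$ and $z_k \to z$; after extracting a subsequence we may assume $(x_k, y_k) \to (x,y) \in E$. Upper semi-continuity of $l$ on $M \times M$ together with $l(x_k, y_k) \ge 0$ gives $l(x,y) \ge 0$, so $l$ is continuous at $(x,y)$ by the continuity of $l$ on $\{l \ge 0\}$, and in particular $l(x_k, y_k) \to l(x,y)$. Upper semi-continuity then yields
\[
l(x,z) \ge \limsup_{k\to\infty} l(x_k, z_k) = t\, l(x,y), \qquad l(z,y) \ge (1-t)\, l(x,y).
\]
Combining these two inequalities with the reverse triangle inequality~\eqref{eq:reverse triangle} forces
\[
l(x,y) \ge l(x,z) + l(z,y) \ge t\, l(x,y) + (1-t)\, l(x,y) = l(x,y),
\]
so equality holds throughout, proving $z \in Z_t(x,y) \subset Z_t(E)$. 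The argument for $Z(E)$ is analogous after additionally extracting a convergent subsequence $t_k \to t \in [0,1]$.

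The only mildly delicate point is the last equality step: one must simultaneously pass to the limit in both $l(x_k, z_k)$ and $l(z_k, y_k)$ with only one-sided semi-continuity, which is precisely where the reverse triangle inequality is needed to pin down both sides to their required values. Everything else reduces to standard compactness arguments in globally hyperbolic Finsler spacetimes.
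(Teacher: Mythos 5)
Your proof is correct and follows the same strategy that the paper defers to (McCann's \cite[Lemma 2.5]{Mc}): first confine $Z(E)$ inside a causal diamond $J^+(K_1)\cap J^-(K_2)$ to get precompactness, then use semi-continuity of $l$ together with the reverse triangle inequality to verify closedness. The semi-continuity/reverse-triangle argument for closedness is clean and handles the borderline case $l(x,y)=0$ correctly, since both $l(x,z)$ and $l(z,y)$ are shown nonnegative before the triangle inequality is applied, so the conventions \eqref{Eq:Conventions} play no role.

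One small inaccuracy in citation: compactness of $J^+(K_1)\cap J^-(K_2)$ for compact $K_1,K_2$ in the globally hyperbolic Finsler setting does not follow from \cite[Proposition~6.8]{minguzzi2015c} (which concerns continuity and finiteness of $l$). The standard proof extracts limit causal curves from $x_k\in K_1$ through $z_k$ to $y_k\in K_2$, for which the appropriate reference is the limit curve theorem \cite[Proposition~6.1]{minguzzi2015c} --- exactly the result the paper highlights in the remark immediately preceding the lemma. With that citation corrected, the argument is complete.
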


\begin{remark}\label{Re:Zs s-intermediate points}
Take $x,y \in M$ with $l(x,y) > 0$.
For $t \in (0,1)$, by concatenating timelike geodesics, note that $z\in Z_t(x,y)$ if and only if $z$ is the $t$-interme\-di\-ate point of a timelike maximizing geodesic from $x$ to $y$.
Similarly, we have $Z_0(x,y) = \{x\}$ and $Z_1(x,y) = \{y\}$ since maximizing curves do not change the causal type.
\end{remark}




\subsection{Convexity, concavity, and smoothness}\label{Sec:Convex analytic}

\begin{definition}[Semi-convexity and semi-concavity]\label{Def:Semiconvexity}
Let $U\subset M$ be open.
A function $u\colon U \longrightarrow\R$ is said to be \emph{semi-convex} if for some smooth Rieman\-nian metric $h$ on $M$ there exists $c\in\R$ such that
\begin{align*}
    \liminf_{w\to 0} \frac{u(\exp^h_x(w)) + u(\exp^h_x (-w)) - 2\,u(x)}{2\,\vert w\vert_h^2} \geq c
\end{align*}
for every $x\in U$, where $w \in T_xM$.
The largest possible such constant $c$ is called the \emph{semi-convexity constant} of $u$ on $U$. Lastly, we call $u$ \emph{semi-concave} if $-u$ is semi-convex.
\end{definition}

We remark that, if $U$ is relatively compact, the semi-convexity is independent of the choice of $h$, whereas the semi-convexity constant depends on $h$.

\begin{proposition}[Semi-convexity of $l$]\label{Pr:Semiconvexity Lorentz-Finsler distance}
Let $h$ be a smooth Riemannian metric on $M$, and let $(x,y) \in\{l>0\}$. Then
\begin{align*}
    c(x,y) := \liminf_{w\to 0} \frac{l(\exp_x^h(w),y) + l(\exp_x^h(-w), y) - 2\,l(x,y)}{2\,\vert w\vert_h^2}
\end{align*}
is real-valued, where $w \in T_xM$.
Moreover, the quantity $c(x,y)$ depends continuously on $(x,y)\in \{l>0\}$.
\end{proposition}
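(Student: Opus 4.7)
The plan is to construct a smooth ``lower barrier'' for $l(\cdot,y)$ at $x$ via the second variation of arclength, and then establish continuity by a limit-curve argument. Since $(M,L)$ is globally hyperbolic and $l(x,y)>0$, there exists a maximizing timelike geodesic $\gamma\colon[0,1]\to M$ from $x$ to $y$. Using $h$-parallel transport $\Pi_t^h\colon T_xM\to T_{\gamma(t)}M$ along $\gamma$, the natural variation
\[
\sigma_w(t) := \exp_{\gamma(t)}^h\bigl((1-t)\,\Pi_t^h(w)\bigr), \qquad |w|_h<\delta,
\]
produces a smooth family of timelike curves from $\exp_x^h(w)$ to $y$, reducing to $\gamma$ at $w=0$; the parameter $\delta>0$ is chosen so that $\sigma_w$ remains timelike on $[0,1]$.

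Setting $u(w):=\mathfrak{L}(\sigma_w)$, a Taylor expansion yields
\[
u(w) = l(x,y) + \ell(w) + \tfrac{1}{2}\,Q(w,w) + o(|w|_h^2),
\]
with $\ell$ the linear form produced by Lemma \ref{lm:1st-var} and $Q$ the quadratic form obtainable from Proposition \ref{pr:2nd-var}. Because $\sigma_w$ is causal from $\exp_x^h(w)$ to $y$, we have $l(\exp_x^h(w),y)\ge u(w)$, and summing the expansion for $w$ and $-w$ cancels the linear term:
\[
l(\exp_x^h(w),y)+l(\exp_x^h(-w),y)-2\,l(x,y) \ge Q(w,w)+o(|w|_h^2) \ge -C(x,y)\,|w|_h^2+o(|w|_h^2),
\]
for a constant $C(x,y)\in\R$ bounding $Q$ from below on the $h$-unit sphere of $T_xM$. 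Hence $c(x,y)\ge -C(x,y)/2>-\infty$. To see $c(x,y)<+\infty$, I would test the $\liminf$ along the specific family $w=s\,\dot\gamma(0)$: extending $\gamma$ slightly past its endpoints as a maximizing geodesic (possible under global hyperbolicity) gives the explicit linear relation $l(\gamma(t),y)=(1-t)\,l(x,y)$, which, combined with the facts that $\exp_x^h(\pm s\,\dot\gamma(0))$ approximates $\gamma(\pm s)$ to leading order and that the reverse triangle inequality \eqref{eq:reverse triangle} controls $l$ above and below, yields a finite upper bound on the quotient.

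The continuity of $c(x,y)$ on $\{l>0\}$ is approached by exploiting the stability of maximizing geodesics under perturbation of endpoints. The limit-curve machinery available in the Lorentz--Finsler setting together with Lemmas \ref{Le:zs}--\ref{Le:Zs} allow one to select, for each $(x',y')$ close to $(x,y)$, a maximizing geodesic $\gamma_{x',y'}$ converging to $\gamma$, carry out the same barrier construction, and track the continuous dependence of the resulting data (the linear form $\ell$, the quadratic form $Q$, and the associated lower constant $C$) on $(x',y')$.

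The main obstacle will be handling points $(x,y)\in\sing(l)$, where several maximizing geodesics coexist: the lower barrier from any single geodesic already gives a valid lower bound on $c(x,y)$, but the actual value of $c(x,y)$ is governed by a worst-case choice, so continuity requires an upper semi-continuous selection argument among maximizing geodesics together with uniform control of the $o(|w|_h^2)$ remainder across nearby base points.
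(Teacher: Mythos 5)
Your barrier construction is essentially the one used in the paper: take a maximizing timelike geodesic $\gamma$ from $x$ to $y$, produce a smooth competitor family through $\exp^h_{\gamma(t)}$ of a vector field that vanishes at $t=1$ and equals $w$ at $t=0$, use $l \ge \mathfrak{L}$, and let the second variation formula from Proposition~\ref{pr:2nd-var} supply the quadratic lower bound. The only cosmetic difference is that you parallel-transport $w$ with respect to $h$, whereas the paper uses the $g_{\dot\gamma}$-parallel field $P$ (which lets Proposition~\ref{pr:2nd-var} be applied more directly, since the index form $I$ is phrased in $g_{\dot\gamma}$-terms), and the paper varies a scalar $s$ along a fixed unit $w$ rather than treating $w$ itself as the variation parameter. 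These choices do not affect correctness.

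Where your proposal genuinely diverges is in the finiteness and continuity arguments, and both have gaps. For $c(x,y)<+\infty$ you propose testing along $w = s\,\dot\gamma(0)$ and invoke \emph{``extending $\gamma$ slightly past its endpoints as a maximizing geodesic (possible under global hyperbolicity).''} This is false in general: global hyperbolicity guarantees existence of maximizers between causally related points, not that a given maximizer extends as a maximizer. Precisely when $(x,y)\in\sing(l)\cap\{l>0\}$ (non-unique maximizers, or $y$ conjugate to $x$ along $\gamma$) such a maximizing extension fails, and then the identity $l(\gamma(t),y)=(1-t)\,l(x,y)$ does not hold for $t<0$ and the one-sided reverse triangle inequality only produces a lower bound on $l(\gamma(-\alpha s),y)$, not the upper bound you would need. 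The paper avoids this entirely by identifying $c(x,y)$ with
\[
\inf_{w,\gamma}\bigg( I(W^\perp,W^\perp)-\bigg[g_{\dot\gamma}\bigg(D^T_V V(\cdot,0),\frac{\dot\gamma}{l(x,y)}\bigg)\bigg]_0^1\bigg),
\]
the infimum being taken over $h$-unit $w\in T_xM$ and over the (compact, by Lemma~\ref{Le:Zs}) set of maximizing geodesics $\gamma$ from $x$ to $y$; finiteness and continuity are then simultaneous consequences of this formula, since the integrand depends smoothly on $(x,y,\dot\gamma(0),w)$. Your continuity sketch — tracking a converging family of maximizing geodesics via limit curve theorems — is in the right direction but, as you yourself flag, does not resolve the selection issue at $\sing(l)$, where the set of maximizers only varies upper semi-continuously; the paper's formulation handles all of $\{l>0\}$ uniformly and does not need Lemma~\ref{Le:zs} (which is restricted to $(M\times M)\setminus\sing(l)$). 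In short: the core semi-convexity estimate is right and matches the paper's, but you should replace the maximizing-extension step with the paper's infimum identity (or give a different two-sided argument) before the real-valuedness and continuity claims are established.
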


\begin{proof}
Let $\gamma\colon [0,1] \longrightarrow M$ be a maximizing geodesic from $x$ to $y$.
Take $w \in T_xM$ with $|w|_h=1$ and extend it to a vector field $P$ along $\gamma$ which is parallel with respect to $g_{\dot{\gamma}}$, namely $\smash{D^{\dot{\gamma}}_{\dot{\gamma}}P = 0}$ and $P(0)=w$.
Consider the vector field
\[ W(t):=(1-t)P(t) \]
along $\gamma$ and define a smooth variation $\sigma\colon [0,1] \times (-\varepsilon,\varepsilon) \longrightarrow M$ of $\gamma$, where $\varepsilon>0$ is a fixed small parameter, by
\[ \sigma_s(t) =\sigma(t,s) :=\exp^h_{\gamma(t)}\big( sW(t) \big). \]
Note that $\sigma_0=\gamma$ and $\sigma_s(1)=\gamma(1)=y$ for all $s\in (-\varepsilon,\varepsilon)$.
Set
\[ \mathfrak{L}(s) :=\int_0^1 F(\dot{\sigma}_s) \,\d t
 =-\mathcal{A}_1(\sigma_s). \]
Then it follows from \eqref{Eq:lq Lorentz-Finsler distance} that
\[ \frac{l(\sigma_s(0),y)+l(\sigma_{-s}(0),y)-2\,l(x,y)}{2\,s^2}
\ge \frac{\mathfrak{L}(s)+\mathfrak{L}(-s) -2\,\mathfrak{L}(0)}{2\,s^2}, \]
and the right-hand side smoothly depends on $x$, $y$, $\dot{\gamma}(0)$, and $w$.
Precisely, it is written by the second variation formula from Proposition~\ref{pr:2nd-var} as
\[ I(W^{\perp},W^{\perp}) -\bigg[ g_{\dot{\gamma}}\bigg( D^T_V V(\cdot,0), \frac{\dot{\gamma}}{l(x,y)} \bigg) \bigg]_0^1, \]
where $T:=\partial_t \sigma$, $V:=\partial_s \sigma$, and
\[ W^{\perp} :=W +g_{\dot{\gamma}}\bigg( W,\frac{\dot{\gamma}}{l(x,y)} \bigg) \,\frac{\dot{\gamma}}{l(x,y)}. \]
Observing that
\[ c(x,y)=\inf_{w,\gamma} \bigg( I(W^{\perp},W^{\perp}) -\bigg[ g_{\dot{\gamma}}\bigg( D^T_V V(\cdot,0), \frac{\dot{\gamma}}{l(x,y)} \bigg) \bigg]_0^1 \bigg) \]
shows the claim, where the infimum runs over all  $w \in T_xM$ with $|w|_h=1$ and over all maximizing geodesics $\gamma\colon[0,1]\longrightarrow M$ from $x$ to $y$.
\end{proof}

The following characterization of timelike cut loci plays an important role in the sequel. 
We refer to \cite[Theorem~3.5]{Mc} for the Lorentzian case and to \cite[Lemma~3.1]{ohta2009} for the Finsler case.

\begin{theorem}[Semi-concavity fails on timelike cut loci]\label{th:semiconc}
If $(x,y)\in \{l>0\} \cap \sing(l)$, then we have
\begin{equation*}
\limsup_{w \to 0 \in T_xM} \frac{l(\exp_x^h(w), y) +  l(\exp_x^h(-w),y) -2\,l(x,y)}{2\,\vert w\vert_h^2} = +\infty.
\end{equation*}
\end{theorem}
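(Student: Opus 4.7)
The plan is to split on the structure of maximizing geodesics from $x$ to $y$. Because $l(x,y)>0$ and $(M,L)$ is globally hyperbolic, some timelike maximizing geodesic $\gamma\colon[0,1]\longrightarrow M$ exists with $\gamma(0)=x$, $\gamma(1)=y$, and $F(\dot\gamma)\equiv l(x,y)$; moreover $(x,y)\in\sing(l)$ forces every such maximizer to be non-extendable as a maximizing geodesic past $y$ or before $x$. This dichotomizes into (a) at least two distinct maximizing geodesics from $x$ to $y$, or (b) a unique such maximizer along which $y$ is conjugate to $x$ in the Lorentz--Finsler sense.

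For case (a), let $\gamma_1,\gamma_2\colon[0,1]\longrightarrow M$ be distinct maximizing geodesics with $v_i:=\dot\gamma_i(0)$, so $v_1\ne v_2$ while $F(v_1)=F(v_2)=l(x,y)$. Fix $w\in T_xM$ and, for each $i$, construct a smooth timelike variation $\sigma^i$ of $\gamma_i$ whose variation field $V^i$ satisfies $V^i(0)=w$ and $V^i(1)=0$; its $F$-length bounds $l(\exp_x^h(sw),y)$ from below for small $s>0$, and Lemma~\ref{lm:1st-var} yields
\[ l(\exp_x^h(sw),y) \,\ge\, l(x,y) \,+\, s\,\frac{g_{v_i}(w,v_i)}{l(x,y)} \,+\, o(s) \qquad\text{as }s\to0^+. \]
Applying this to $\gamma_1$ with $+sw$ and to $\gamma_2$ with $-sw$ and adding produces
\[ l(\exp_x^h(sw),y) + l(\exp_x^h(-sw),y) - 2\,l(x,y) \,\ge\, \frac{s}{l(x,y)}\,\bigl(g_{v_1}(w,v_1)-g_{v_2}(w,v_2)\bigr) + o(s). \]
Since $\mathscr L$ restricts to a bijection $\Omega_x\to\Omega_x^*$, the inequality $v_1\ne v_2$ entails $\mathscr L(v_1)\ne\mathscr L(v_2)$; hence the bracketed linear functional in $w$ is nonzero and can be made strictly positive by a suitable choice of $w$. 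For such $w$, the symmetric second difference grows at least linearly in $s$, so dividing by $2\,|sw|_h^2=2s^2|w|_h^2$ and letting $s\downarrow 0$ drives the ratio to $+\infty$, which already suffices for the claimed limsup.

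Case (b) carries the main technical difficulty. By the conjugacy of $y$ to $x$ along $\gamma$, pick a nonzero Jacobi field $J$ with $J(0)=J(1)=0$. Following the scheme of \cite[Theorem~3.5]{Mc} in the Lorentzian case and its Finsler Riemannian-signature counterpart \cite[Lemma~3.1]{ohta2009}, one exploits the singularity of the Lorentz--Finsler exponential map at $\dot\gamma(0)$ to construct, for each large $N\in\N$, perturbations $w_N\in T_xM$ with $|w_N|_h\to 0$ and two-sided comparison curves from $\exp_x^h(\pm w_N)$ to $y$ whose combined $F$-length exceeds $2\,l(x,y)+N\,|w_N|_h^2$, forcing the limsup to be at least $N/2$ for every $N$. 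The main obstacle here is that in Lorentz--Finsler geometry the covariant derivative and the curvature endomorphism $R_v$ depend on a reference vector; this is handled by recasting the classical index-form argument in terms of the $g_{\dot\gamma}$-self-adjointness \eqref{eq:R_v} of $R_{\dot\gamma}$ and combining the second variation formula (Proposition~\ref{pr:2nd-var}) with the Jacobi identity \eqref{eq:index-Jacobi}. Apart from this bookkeeping the argument mirrors its Lorentzian predecessor and is expected to close without essential new input.
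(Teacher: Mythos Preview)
Your case (a) is correct and in fact slightly cleaner than the paper's version. The paper moves along one maximizing geodesic $\eta$ (getting the exact value $l(\eta(t),y)-l(x,y)=-t$ on one side) and then invokes the first variation along the \emph{other} geodesic $\gamma$ on the opposite side; the strict inequality $g_v(v,w)<-1$ from the reverse Cauchy--Schwarz inequality then drives the quotient to $+\infty$. Your symmetric use of first variation along $\gamma_1$ and $\gamma_2$ together with the injectivity of $\mathscr L$ on $\Omega_x$ accomplishes the same thing with less asymmetry; both arguments are equivalent in content.

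Your case (b), however, is only a plan, and it glosses over the one genuinely Lorentz--Finsler contribution that does \emph{not} appear in the Lorentzian argument of \cite{Mc}. The paper makes the construction fully explicit: with $w:=D^{\dot\gamma}_{\dot\gamma}J(0)$ and the $g_{\dot\gamma}$-parallel field $P$ with $P(0)=w$, one sets $W(t):=(1-t)P(t)$ and $J_\varepsilon:=J+\varepsilon W$, then varies along the $L$-geodesics $\xi_{J_\varepsilon(t)}$. The second variation (Proposition~\ref{pr:2nd-var}) produces two terms. The index term gives, via \eqref{eq:index-Jacobi} and $I(J,J)=0$,
\[
I(J_\varepsilon,J_\varepsilon)=\frac{2\,g_{\dot\gamma}(w,w)}{\varepsilon\,l(x,y)}+I(W,W),
\]
and $g_{\dot\gamma}(w,w)>0$ since $w\neq 0$ is $g_{\dot\gamma}$-orthogonal to $\dot\gamma$; this is the $1/\varepsilon$ divergence you allude to but do not exhibit. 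The second term is the boundary contribution $g_{\dot\gamma}(D^T_V V(0,0),\dot\gamma(0))$, and here the reference-vector dependence you mention actually materializes: because $D^V_V V=0$ (the variation is through $L$-geodesics), one gets
\[
g_{\dot\gamma}\big(D^T_V V(0,0),\dot\gamma(0)\big)=-\varepsilon^2\,\mathcal T_{\dot\gamma(0)}(w),
\]
the \emph{$\mathbf T$-curvature} (tangent curvature), which is identically zero in the Lorentzian case but not in genuine Lorentz--Finsler geometry. It is of order $\varepsilon^2$ and hence harmless against the $1/\varepsilon$ term, but it must be identified and bounded; your statement that the argument ``is expected to close without essential new input'' is too optimistic on this point. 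This $\mathbf T$-curvature correction is precisely the new bookkeeping you flagged abstractly, and it deserves to be named.
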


\begin{proof}
Note that either there are two distinct maximizing geodesics from $x$ to $y$, or $y$ is the first conjugate point of $x$ along a unique maximizing geodesic that connects $x$ to $y$.

In the former case, let $\gamma,\eta\colon [0,l(x,y)] \longrightarrow M$ be distinct maximizing geodesics of unit speed from $x$ to $y$ (i.e.\ $F(\dot{\gamma})=1$).
Set $v:=\dot{\gamma}(0)$, $w:=\dot{\eta}(0)$, and $y_{\varepsilon}:=\gamma(l(x,y)-\varepsilon)$ for small $\varepsilon >0$.
Then, on the one hand, we deduce from the reverse triangle inequality and the first variation formula from Lemma~\ref{lm:1st-var} that
\begin{align*}
l\big( \eta(-t),y \big) -l(x,y)
&\ge l\big( \eta(-t),y_{\varepsilon} \big) +\varepsilon - (l(x,y_{\varepsilon}) +\varepsilon ) \\
&= g_v(v,-t\,w) +O(t^2).
\end{align*}
On the other hand,
\[ l\big( \eta(t),y \big) -l(x,y) = -t. \]
Thus we have
\begin{align*}
&\frac{l(\eta(t),y) +l(\eta(-t),y) -2\,l(x,y)}{t^2}\\
&\qquad\qquad \ge -\frac{g_v(v,w)+1}{t} +\frac{1}{t^2}\,O(t^2)
\to +\infty\qquad\textnormal{as }t\to 0
\end{align*}
since by the reverse Cauchy--Schwarz inequality \eqref{eq:rCS},
\[ g_v(v,w)=\mathscr{L}(v)(w)
< -\sqrt{4\,L(v)\,L(w)} =-1. \]

In the latter case, note first that, along the argument in the proof of \cite[Theorem~3.5]{Mc}, it is sufficient to show
\[ \limsup_{w \to 0 \in T_xM} \frac{l(\xi_w(1), y) +  l(\xi_w(-1),y) -2\,l(x,y)}{2\,\vert w\vert_h^2} = +\infty, \]
where $\xi_w\colon [-1,1]\longrightarrow M$ denotes the geodesic with $\smash{\dot{\xi}_w(0)=w}$ with respect to $L$ (where the values $\xi_w(\pm 1)$ exist provided  $|w|_h$ is sufficiently small).
Let $J$ be a Jacobi field along the unique maximizing geodesic $\gamma\colon [0,1] \longrightarrow M$ from $x$ to $y$ vanishing only at $0$ and $1$.
Then $\smash{w:=D^{\dot{\gamma}}_{\dot{\gamma}}J(0) \in T_xM \setminus \{0\}}$ and, in the same manner as in the proof of Proposition \ref{Pr:Semiconvexity Lorentz-Finsler distance}, we consider the vector field $P$ along $\gamma$ with $\smash{D^{\dot{\gamma}}_{\dot{\gamma}}P = 0}$ and $P(0)=w$, and set 
\[
W(t):=(1-t)P(t).
\]
We further define $J_{\varepsilon}:=J+\varepsilon W$ for small $\varepsilon>0$.
Observe that $J_{\varepsilon}(0)=\varepsilon w$ and $J_{\varepsilon}(1)=0$.
We remark that $J$ is $g_{\dot{\gamma}}$-orthogonal to $\dot{\gamma}$ since it is a Jacobi field vanishing at $t=0$ and $t=1$, and that
\[ g_{\dot{\gamma}}\big( P(t),\dot{\gamma}(t) \big)
= g_{\dot{\gamma}}\big( w,\dot{\gamma}(0) \big)
= \frac{\d}{\d t}\bigg\vert_0 g_{\dot{\gamma}}(J,\dot{\gamma}) =0 \]
for all $t\in[0,1]$.
Hence, $J_{\varepsilon}$ is $\smash{g_{\dot{\gamma}}}$-orthogonal to $\dot{\gamma}$.
Now, for a small parameter $\varepsilon>0$ we consider the variation $\sigma\colon [0,1] \times (-\varepsilon,\varepsilon) \longrightarrow M$ given by
\[ \sigma_s(t) =\sigma(t,s) :=\xi_{J_{\varepsilon}(t)}(s). \]
Note that $\sigma_s(0)=\xi_w(\varepsilon\, s)$.
Setting again
\begin{equation*}
\mathfrak{L}(s) :=\int_0^1 F(\dot{\sigma}_s) \,\d t,
\end{equation*}
we observe from the second variation formula from Proposition~\ref{pr:2nd-var} that
\begin{align*}
&\frac{l(\xi_w(\varepsilon\, s), y) +l(\xi_w(-\varepsilon\, s),y) -2\,l(x,y)}{2\,(\varepsilon\, s)^2} \\
&\qquad\qquad \ge \frac{\mathfrak{L}''(0)}{\varepsilon^2} +\frac{o(\varepsilon^2)}{\varepsilon^2}
= \frac{I(J_{\varepsilon},J_{\varepsilon})}{\varepsilon^2} +\frac{g_{\dot{\gamma}}(D^T_V V(0,0),\dot{\gamma}(0))}{\varepsilon^2 \,F(\dot{\gamma})} +\frac{o(\varepsilon^2)}{\varepsilon^2},
\end{align*}
where $T=\partial_t \sigma$ and $V:=\partial_s \sigma$.
As $J$ is a Jacobi field, the first summand of the latter term is rewritten with the help of \eqref{eq:index-Jacobi} as
\[
\frac{I(J,J)}{\varepsilon^2} +2\,\frac{I(J,W)}{\varepsilon} +I(W,W)
 = \frac{2\,g_{\dot{\gamma}}(w,w)}{\varepsilon\, l(x,y)} +I(W,W).
\]
Note also that, since $w \neq 0$ and $\smash{g_{\dot{\gamma}}\big(w,\dot{\gamma}(0)\big)=0}$, we have $\smash{g_{\dot{\gamma}}(w,w)>0}$.
As for the second summand above, we directly infer from the definition of $\sigma$ that $\smash{D^V_V V=0}$.
Then we have
\[ g_{\dot{\gamma}} \big( D^T_V V(0,0),\dot{\gamma}(0) \big)
= g_{\dot{\gamma}} \big( D^T_V V(0,0) -D^V_V V(0,0),\dot{\gamma}(0) \big)
=-\varepsilon^2\, \mathcal{T}_{\dot{\gamma}(0)}(w), \]
where
\[ \mathcal{T}_v(w):= \sum_{\alpha,\beta,\delta,\lambda=1}^n g_{\alpha \lambda}(v)\,\big(\Gamma^{\alpha}_{\beta \delta}(w) -\Gamma^{\alpha}_{\beta \delta}(v) \big)\, w^{\beta}\, w^{\delta}\, v^{\lambda} \]
is called the \emph{$\mathbf{T}$-curvature} (or the \emph{tangent curvature}) measuring the variation of tangent spaces; see \cite[\S6.3.2, \S8.3.3]{Obook}. Therefore, we deduce that
\begin{align*}
&\frac{l(\xi_w(\varepsilon\, s), y) +l(\xi_w(-\varepsilon\, s),y) -2\,l(x,y)}{2\,(\varepsilon\, s)^2}
\\
&\qquad\qquad \ge \frac{2\,g_{\dot{\gamma}}(w,w)}{\varepsilon\, l(x,y)} +I(W,W) -\frac{\mathcal{T}_{\dot{\gamma}(0)}(w)}{F(\dot{\gamma})} +\frac{o(\varepsilon^2)}{\varepsilon^2} \to +\infty
\end{align*}
as $\varepsilon\to 0$.
This completes the proof.
\end{proof}

Finally, we address smoothness properties of $l_q$.
We start with the usual case of $q=1$ in Theorem~\ref{Th:Lorentz distance}, based on which we establish the case of $q\in (0,1)$ in Corollary \ref{Cor:Twist}.
From Lemma~\ref{Le:l continuous}, we already know that $l_q$ is upper semi-continuous on $M\times M$ and continuous on $\{l\geq 0\}$.
In addition, recall from Subsection~\ref{Sub:LF distance} that the singular set $\sing(l)$ from Definition~\ref{Def:sing l} is closed.

For a function $u\colon M \longrightarrow [-\infty,+\infty]$,
we say that $u$ is \emph{subdifferentiable} at $x \in M$ with a \emph{subgradient} $\zeta \in T_x^*M$ if $u(x) \in \R$ and
\begin{equation}\label{eq:subgrad}
u\big( \exp_x(v) \big) \ge u(x) +\zeta(v) +o(|v|_h)
\end{equation}
for $v \in T_xM$ close to $0$, where $h$ is an auxiliary Riemannian metric.
The set of subgradients of $u$ at $x$ will be denoted by $\partial_\bullet u(x)$.
The \emph{superdifferentiability} and \emph{supergradients} $\partial^\bullet u(x)$ are defined in the same way with the reverse inequality.

\begin{theorem}[Smoothness of $l$]\label{Th:Lorentz distance}
The Lorentz--Finsler distance function $l$ satisfies the following properties.
\begin{enumerate}[label=\textnormal{(\roman*)}]
    \item It is smooth precisely on the complement of $\sing(l)$.
    
    \item It is locally Lipschitz and locally semi-convex on the open superlevel set $\{l>0\}$ of $M \times M$.
    Furthermore, for $(x,y)\in \{l>0\}$ and any maximizing geodesic $\gamma\colon [0,l(x,y)] \longrightarrow M$ of unit speed from $x$ to $y$, we have
    \[
    \mathscr{L}\big( \dot{\gamma}(0) \big) \in \partial_\bullet \big(l(\cdot,y)\big)(x), \qquad
    -\mathscr{L}\big( \dot{\gamma}\big( l(x,y) \big) \big) \in \partial_\bullet \big(l(x,\cdot)\big)(y).
    \]
    
    \item For $(x,y)\in \{l=0\}$, $l(\cdot,y)$ is not superdifferentiable at $x$ unless $x=y$, and $\smash{\partial^\bullet\big(l(\cdot,x)\big)(x)=\mathcal{H}_1^{-1}(0) \cap T_x^*M}$.
\end{enumerate}
\end{theorem}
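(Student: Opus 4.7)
My plan is to dispose of part (ii) first, derive part (i) from it together with Proposition \ref{Pr:Semiconvexity Lorentz-Finsler distance} and Theorem \ref{th:semiconc}, and establish part (iii) through an explicit local analysis. The main obstacle will be the non-superdifferentiability claim in part (iii) for $(x,y)\in\{l=0\}$ with $x\neq y$, which requires a square-root blow-up argument near the null cone.

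For part (ii), semi-convexity on $\{l>0\}$ is immediate from Proposition \ref{Pr:Semiconvexity Lorentz-Finsler distance}: its constant $c(x,y)$ is real-valued and continuous, hence locally bounded, and together with the finiteness and continuity of $l$ from Lemma \ref{Le:l continuous} this upgrades to local Lipschitz continuity on $\{l>0\}$. To identify the subgradient, I would fix a unit-speed maximizing geodesic $\gamma\colon[0,l(x,y)]\to M$ from $x$ to $y$, a vector $w\in T_xM$, let $P$ denote its $g_{\dot\gamma}$-parallel transport along $\gamma$, and consider the variation
\begin{align*}
\sigma(t,s) := \exp^h_{\gamma(t)}\bigl(s(1-t/l(x,y))\,P(t)\bigr),
\end{align*}
which fixes $y$, starts at $\exp^h_x(sw)$, and stays timelike for $|s|$ small. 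Combining $l(\exp^h_x(sw),y)\geq\mathfrak{L}(\sigma_s)$ with the first variation formula from Lemma \ref{lm:1st-var} at $\sigma_0=\gamma$ gives $\mathfrak{L}'(0)=g_{\dot\gamma(0)}(\dot\gamma(0),w)=\mathscr{L}(\dot\gamma(0))(w)$, the boundary contribution at $t=l(x,y)$ vanishing since $y$ is held fixed. Expanding $\mathfrak{L}(\sigma_s)=l(x,y)+s\,\mathscr{L}(\dot\gamma(0))(w)+O(s^2)$ then yields the desired subgradient inequality. The symmetric variation that moves $y$ along a parallel extension produces $-\mathscr{L}(\dot\gamma(l(x,y)))\in\partial_\bullet l(x,\cdot)(y)$, the sign flip coming from the boundary term at $t=l(x,y)$ in Lemma \ref{lm:1st-var}.

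For part (i), on $(M\times M)\setminus\sing(l)$ the unique unit-speed maximizing geodesic $\gamma$ between $x$ and $y$ has no conjugate point, so $\Psi(x',v):=(x',\exp_{x'}(v))$ is a local diffeomorphism from a neighborhood of $(x,\dot\gamma(0))\in TM$ onto a neighborhood of $(x,y)\in M\times M$. Inverting produces a smooth $v=v(x',y')\in\Omega_{x'}$ with $\exp_{x'}(v)=y'$, so $l(x',y')=F(v(x',y'))$ is smooth. Conversely, smoothness at $(x,y)$ implies in particular that $l(\cdot,y)$ is semi-concave near $x$; when $l(x,y)>0$ this contradicts the infinite blow-up in Theorem \ref{th:semiconc} unless $(x,y)\notin\sing(l)$. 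When $l(x,y)\leq 0$ one automatically has $(x,y)\in\sing(l)$, and smoothness would furnish a supergradient of $l(\cdot,y)$ at $x$, which part (iii) precludes (for $x=y$ the superdifferential is a proper convex set rather than a singleton; for $x\neq y$ it is empty).

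For part (iii), first consider $x=y$. Using the exponential chart at $x$ and the reverse radial geodesic, a local analysis gives
\begin{align*}
l(\exp_x(v),x) = F(-v)+o(|v|_h) \quad\text{for } v\in -\overline{\Omega}_x \text{ small,}
\end{align*}
with $F(-v):=0$ on the null cone, while $l(\exp_x(v),x)=-\infty$ otherwise. The superdifferentiability inequality $l(\exp_x(v),x)\leq\zeta(v)+o(|v|_h)$ then reduces, via the substitution $w:=-v$ and positive $1$-homogeneity of $F$, to $\zeta(w)\leq-F(w)$ for every $w\in\overline{\Omega}_x$; the converse follows from the reverse Cauchy--Schwarz inequality \eqref{eq:rCS}, and Corollary \ref{Cor:Classical Lagrangian} identifies the resulting set with $\mathcal{H}_1^{-1}(0)\cap T_x^*M$. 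For $(x,y)\in\{l=0\}$ with $x\neq y$, let $\gamma\colon[0,1]\to M$ be a maximizing null geodesic from $x$ to $y$ and set $u:=\dot\gamma(0)$. Choose $e\in T_xM$ with $g_u(u,e)>0$; then for small $s>0$, the vector $u-se$ is future-timelike with $F(u-se)^2=-2s\,g_u(u,e)+O(s^2)$. A local comparison of $l$ to the tangent-space time separation in the exponential chart at $x$ yields $l(\exp_x(se),y)\geq c\sqrt{s}$ for some $c>0$ and small $s>0$. Since $|se|_h=O(s)$, any supergradient $\zeta$ would force $c\sqrt{s}\leq s\,\zeta(e)+o(s)$, a contradiction upon dividing by $\sqrt{s}$ and letting $s\downarrow 0$.
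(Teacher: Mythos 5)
Your proof is essentially correct and follows the same route as the paper, which in turn defers most of the work to McCann's proof of \cite[Theorem~3.6]{Mc}: local diffeomorphism of the exponential map for smoothness in (i), first/second variation together with Proposition~\ref{Pr:Semiconvexity Lorentz-Finsler distance} and Theorem~\ref{th:semiconc} for (ii), and a square-root blow-up near the null cone for (iii), the latter packaged in the paper as Corollary~\ref{Cor:Classical Lagrangian}(iii).

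Two small points deserve attention. First, a sign: from $L(u-se)=-s\,g_u(u,e)+O(s^2)$ and $F^2=-2L$ one gets $F(u-se)^2=2s\,g_u(u,e)+O(s^2)$, not $-2s\,g_u(u,e)+O(s^2)$; this does not affect the argument. Second, and more substantively, when $x\neq y$ the point $y$ need not lie in a normal neighborhood of $x$, so the asserted ``local comparison of $l$ to the tangent-space time separation'' giving $l(\exp_x(se),y)\geq c\sqrt{s}$ is not directly available. The fix is standard but should be stated: choose $\epsilon>0$ small so that $\gamma(\epsilon)=\exp_x(\epsilon u)$ lies in a normal neighborhood of $x$, note that $\epsilon u - se$ is future-timelike for small $s>0$ (since $L(\epsilon u - se)\approx -s\epsilon\,g_u(u,e)<0$) so that $l(\exp_x(se),\gamma(\epsilon))\approx F(\epsilon u - se)\approx\sqrt{2\epsilon\,g_u(u,e)\,s}$, and then invoke the reverse triangle inequality \eqref{eq:reverse triangle} together with $l(\gamma(\epsilon),y)\geq 0$ to conclude $l(\exp_x(se),y)\geq l(\exp_x(se),\gamma(\epsilon))\geq c\sqrt{s}$. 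With these corrections the proposal is complete.
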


\begin{proof}
(i) and (ii) are seen in the same way as \cite[Theorem~3.6]{Mc} with the help of Proposition~\ref{Pr:Semiconvexity Lorentz-Finsler distance} and Theorem~\ref{th:semiconc}.
(We remark that $-\mathscr{L}(\dot{\gamma}(l(x,y)))$ can be regarded as $\overline{\mathscr{L}}(\dot{\overline{\gamma}}(0))$ for the Legendre transform $\overline{\mathscr{L}}$ with respect to the \emph{reverse Lorentz--Finsler structure} $\overline{L}(v):=L(-v)$ and the reverse curve $\overline{\gamma}(t):=\gamma(l(x,y)-t)$ which is a maximizing geodesic with respect to $\overline{L}$.)
(iii) also follows from a similar argument to \cite[Theorem~3.6]{Mc} by virtue of Corollary~\ref{Cor:Classical Lagrangian}(iii).
\end{proof}

We recall from \eqref{Eq:Lorentz-Finsler distance} that $l_q(x,y)=l(x,y)^q/q$ provided $l(x,y)>-\infty$.

\begin{corollary}[Properties of $l_q$]\label{Cor:Twist}
For $q\in (0,1)$, $l_q$ satisfies the following.
\begin{enumerate}[label=\textnormal{(\roman*)}]
    \item $l_q$ is smooth precisely on $(M \times M) \setminus \sing(l)$ and is locally Lipschitz and locally semi-convex on $\{l>0\}$.
    
    \item Given $y\in M$, if $l_q(\cdot,y)$ has a supergradient $\zeta \in T_x^*M$,
    then $l(x,y)>0$ necessarily holds and we have
    \[
    y=\exp_x \Bigg( \sum_{\alpha=1}^n \frac{\partial \mathcal{H}_q}{\partial \zeta_{\alpha}}(\zeta) \frac{\partial}{\partial x^{\alpha}} \Bigg).
    \]
    
    \item The following matrix is non-degenerate for $(x,y) \in (M\times M)\setminus \sing(l)$:
    \[ \bigg( \frac{\partial^2 l_q}{\partial x^\alpha \partial y^\beta}(x,y) \bigg)_{\alpha,\beta=1}^n.
    \]
    
    \item For every $(x,y)\in \{l>0\} \cap \sing(l)$ and every smooth Riemannian metric $h$ on $M$, we have
    \begin{equation*}
        \sup_{0<\vert w\vert_h < 1} \frac{l_q(\exp_x^h(w),y) + l_q(\exp_x^h(-w),y) - 2\,l_q(x,y)}{2\,\vert w\vert_h^2} = +\infty.
    \end{equation*}
\end{enumerate}
\end{corollary}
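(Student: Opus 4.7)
The plan is to exploit the identity $l_q=l^q/q$ valid on $\{l>0\}$ (from \eqref{Eq:Lorentz-Finsler distance}) together with the regularity results for $l$ from Theorem \ref{Th:Lorentz distance} and the Legendre duality of Lemma \ref{Le:Convex Lagrangian}. Items (i) and (iv) reduce to chain-rule or Taylor-expansion arguments, while (ii) contains the main new content, and (iii) then follows from (ii) by the inverse function theorem.

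For item (i), write $\phi(t):=t^q/q$, which is smooth on $(0,\infty)$ with $\phi'>0$. Smoothness and local Lipschitz continuity of $l_q=\phi\circ l$ on the relevant open set transfer immediately from $l$. For local semi-convexity on $\{l>0\}$, I would use the formal chain rule
\[
D^2 l_q=\phi''(l)\,Dl\otimes Dl+\phi'(l)\,D^2 l
\]
together with the fact that on compact subsets of $\{l>0\}$, $l$ is bounded away from $0$, $Dl$ is bounded by the local Lipschitz property, and $D^2 l\ge -c\,\mathrm{Id}$ in the distributional sense; a standard mollification in local charts makes this rigorous. For item (iv), set $a:=l(\exp_x^h(w),y)$, $b:=l(\exp_x^h(-w),y)$, and $c:=l(x,y)>0$. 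Local Lipschitzness bounds $|a-c|$ and $|b-c|$ linearly in $|w|_h$, so a Taylor expansion of $\phi$ around $c$ gives
\[
\phi(a)+\phi(b)-2\phi(c)=\phi'(c)(a+b-2c)+O(|w|_h^2);
\]
since $\phi'(c)=c^{q-1}>0$, the unboundedness of the $l_q$-second-difference quotient on $\{0<|w|_h<1\}$ reduces to Theorem \ref{th:semiconc} applied to $l$.

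For item (ii), the core step is to rule out $l(x,y)\le 0$. The case $l(x,y)=-\infty$ is excluded by the requirement $l_q(x,y)\in\R$ in the definition of superdifferentiability. If $l(x,y)=0$, I would pick any $w\in\Omega_x$ and let $v:=-\varepsilon w$ for small $\varepsilon>0$; then $\exp_x(v)\ll x\le y$, so $\exp_x(v)\ll y$, and the reverse triangle inequality together with the leading-order expansion $l(\exp_x(v),x)=\varepsilon F(w)+O(\varepsilon^2)$ yields $l_q(\exp_x(v),y)\ge \varepsilon^q F(w)^q/q+O(\varepsilon^{q+1})$. Superdifferentiability would demand $l_q(\exp_x(v),y)\le -\varepsilon\,\zeta(w)+o(\varepsilon)$; dividing by $\varepsilon^q$ and letting $\varepsilon\downarrow 0$ produces the contradiction $F(w)^q/q\le 0$, since $\varepsilon^{1-q}\to 0$. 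Once $l(x,y)>0$ is established, $l_q=l^q/q$ holds near $(x,y)$, and the chain rule yields $\tilde\zeta:=l(x,y)^{1-q}\zeta$ as a supergradient of $l(\cdot,y)$ at $x$. Combined with the semi-convexity of $l(\cdot,y)$ and Theorem \ref{Th:Lorentz distance}(ii), this forces $l(\cdot,y)$ to be differentiable at $x$ with $\tilde\zeta=\mathscr{L}(\dot{\gamma}(0))$ for the unique unit-speed maximizing geodesic $\gamma$ from $x$ to $y$. Setting $v:=l(x,y)\,\dot{\gamma}(0)$ and using the positive $1$-homogeneity of $\mathscr{L}$, one computes
\[
\zeta=l(x,y)^{q-1}\mathscr{L}\big(\dot{\gamma}(0)\big)=F(v)^{q-2}\mathscr{L}(v)=\sum_{\alpha=1}^n\frac{\partial\mathcal{L}_q}{\partial v^\alpha}(v)\,\d x^\alpha,
\]
so by Lemma \ref{Le:Convex Lagrangian}(i), $v=\sum_\alpha(\partial\mathcal{H}_q/\partial\zeta_\alpha)(\zeta)\,(\partial/\partial x^\alpha)$, and $y=\exp_x(v)$ since $\gamma$ is a geodesic.

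Item (iii) follows: on $(M\times M)\setminus\sing(l)$, $l_q$ is smooth by (i), and by (ii) the map $y\mapsto\zeta(x,y)$ (with $\zeta_\alpha=\partial l_q/\partial x^\alpha$) factors as the composition $y\mapsto \exp_x^{-1}(y)\mapsto\zeta$, where the first factor is a local diffeomorphism away from conjugate points (ensured by $(x,y)\notin\sing(l)$) and the second is the $\mathcal{L}_q$-Legendre transform $\Omega_x\to\Omega_x^*$ (a diffeomorphism by Lemma \ref{Le:Convex Lagrangian}(i)). The Jacobian of this composition coincides with $(\partial^2 l_q/\partial x^\alpha\partial y^\beta)_{\alpha,\beta}$, proving non-degeneracy. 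I expect the main obstacle to be the first half of (ii), where superdifferentiability at $l(x,y)=0$ is excluded by the faster-than-linear blow-up $l^q\gg\varepsilon$ near the null cone for $q\in(0,1)$; the remaining steps are essentially chain-rule computations and Legendre duality.
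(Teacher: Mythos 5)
Your proof is correct, and it shares the paper's overall strategy of reducing $l_q$ to $l$ via $l_q=l^q/q$ on $\{l>0\}$ and then invoking Theorem \ref{Th:Lorentz distance} and Lemma \ref{Le:Convex Lagrangian}. The genuine differences are in items (ii) and (iii). For the first half of (ii), the paper defers to Theorem \ref{Th:Lorentz distance}(iii), which concerns $l$ rather than $l_q$; to make that reference airtight one still has to argue that superdifferentiability of $l_q(\cdot,y)$ at $x$ with $l(x,y)=0$ forces $0\in\partial^\bullet\big(l(\cdot,y)\big)(x)$ (because $(q\,\zeta(v))^{1/q}=o(|v|_h)$ for $q<1$), hence $x=y$, and then observe separately that $0\notin\mathcal{H}_1^{-1}(0)$ since $0\notin\Omega_x^*$, contradicting the description of $\partial^\bullet\big(l(\cdot,x)\big)(x)$. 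Your direct argument --- a past-directed perturbation $\exp_x(-\varepsilon w)$ with $w$ timelike, against which the reverse triangle inequality gives $l_q(\exp_x(-\varepsilon w),y)\ge\varepsilon^q F(w)^q/q+O(\varepsilon^{q+1})$ while a putative supergradient would force this quantity to be $O(\varepsilon)$ --- treats $x=y$ and $x\neq y$ uniformly and exposes precisely how $q<1$ is used, so it is a cleaner route for this step. For (iii), the paper derives the non-degeneracy by feeding a degenerate direction into the first variation formula (Lemma \ref{lm:1st-var}); you instead factor the map $y\longmapsto\d\big(l_q(\cdot,y)\big)_x$ through $\exp_x^{-1}$ and the $\mathcal{L}_q$-Legendre transform and cite the strict convexity from Lemma \ref{Le:Convex Lagrangian}, which is the standard ``twist from a strictly convex Lagrangian'' argument; both routes are valid, yours being more structural and the paper's more computational. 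Items (i) and (iv) coincide in substance with the paper's proof.
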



\begin{proof}
(i)
This is shown along the same lines as \cite[Corollary~3.7]{Mc} thanks to Theorem~\ref{Th:Lorentz distance}(i), (ii).

(ii)
It follows from Theorem~\ref{Th:Lorentz distance}(iii) that $l(x,y)>0$.
Then, together with the subdifferentiability asserted in Theorem~\ref{Th:Lorentz distance}(ii), $l(\cdot,y)$ is differentiable at $x$ and we have the claim by noting that
\[
\zeta =\d \big(l_q(\cdot,y)\big)_x =l(x,y)^{q-1} \cdot \d\big(l(\cdot,y)\big)_x
\]
and, recalling Lemma~\ref{Le:Convex Lagrangian},
\[
\sum_{\alpha=1}^n \frac{\partial \mathcal{H}_q}{\partial \zeta_{\alpha}}(\zeta)\, \frac{\partial}{\partial x^{\alpha}}
=F^*(\zeta)^{p-2}\, \mathscr{L}^*(\zeta)
=l(x,y)\, \frac{\mathscr{L}^*(\zeta)}{F^*(\zeta)}
=l(x,y)\ \dot{\gamma}(0).
\]

(iii)
Note first that $\smash{(x^{\alpha})_{\alpha=1}^n}$ and $\smash{(y^{\beta})_{\beta=1}^n}$ are local coordinate systems around $x$ and $y$, respectively.
If the matrix in question is degenerate, then there exists $v \in T_xM \setminus \{0\}$ such that
\[
\sum_{\alpha=1}^n v^\alpha\, \frac{\partial^2 l_q}{\partial x^\alpha \partial y^\beta}(x,y)=0
\]
for all $\beta$.
By the first variation formula from Lemma~\ref{lm:1st-var}, this implies that the function
$\smash{f(z):=g_{w_z}(v,w_z)=\mathscr{L}(w_z)(v)}$ with $\smash{w_z:=\exp_x^{-1}(z)}$ satisfies $\d f_y=0$, which cannot happen.

(iv) 
This is an immediate consequence of Theorem~\ref{th:semiconc}.
\end{proof}

The non-degeneracy in (iii) corresponds to a  \emph{twist condition} for $l^q$, cf.~\cite{villani2009}.


\section{Geodesics of probability measures}\label{sc:geod}

Now we study the spacetime geometry of the space $\Prob(M)$ of Borel probability measures on $M$.
This geometry is described by the concept of $q$-geodesics taken from \cite[Definition 1.1]{Mc}, cf.~Definition \ref{Def:q-geodesics}.
We continue assuming the global hyperbolicity of $(M,L)$.

In Section \ref{Sec:Lorentz-Wasserstein} we recall basics of optimal transport in Lorentzian geometry (cf.~e.g.~\cite{cavalletti2020,eckstein2017,Mc}), which carry over to the Lorentz--Fins\-ler setting with no change.
Then we show $q$-geodesics to exist in large generality in Section \ref{sub:Existence q geodesics}.
Finally, based on the duality theory outlined in Section \ref{Sec:Kantorovich duality}, in Section \ref{Sec:Characterization} we address the uniqueness of $q$-geodesics (and simultaneously the one of couplings attaining the supremum in \eqref{Eq:lq distance}).
Here we follow \cite[Sections~2, 4, 5]{Mc}.

\subsection{The $q$-Lorentz--Wasserstein distance and $q$-geodesics}\label{Sec:Lorentz-Wasserstein}

Let $\proj_i\colon M\times M\longrightarrow M$ denote the projection map defined by $\smash{\proj_i(x) := x_i}$, where $i = 1,2$; similarly, for $i,j = 1,2,3$ with $i\neq j$ we will define $\smash{\proj_{ij}}\colon M\times M\times M\longrightarrow M\times M$ by $\smash{\proj_{ij}(x) := (x_i,x_j)}$. As said, $\Prob(M)$ is the space of Borel probability measures on $M$.
Let $\Prob_\comp(M)$ be the set of all $\mu\in\Prob(M)$ with compact support $\supp\mu\subset M$.
Given any Borel measure $\meas$ on $M$, let $\Prob^\ac(M,\meas)$ denote the space of all $\meas$-absolutely continuous elements of $\Prob(M)$, and set $\Prob_\comp^\ac(M,\meas) := \Prob^\ac(M,\meas) \cap \Prob_\comp(M)$. 

Given $\mu,\nu\in \Prob(M)$, let $\Pi(\mu,\nu)$ be  the set of all couplings $\pi$ of $\mu$ and $\nu$.
We call such a coupling \emph{causal} if $\pi[\{l\geq 0\}]=1$, and \emph{chronological} if $\pi[\{l>0\}]=1$.
By Lemma \ref{Le:l continuous}, $\pi$ is causal if and only if $\supp\pi \subset \{l\geq 0\}$. 

Throughout, we fix $q\in (0,1]$, and we state explicitly when $q<1$ is assumed in addition.
Recalling \eqref{Eq:Lorentz-Finsler distance} and \eqref{Eq:Conventions}, let $\smash{\ell_q}\colon \Prob(M)\times \Prob(M) \longrightarrow [0,+\infty] \cup \{-\infty\}$ denote the \emph{$q$-Lo\-rentz--Wasserstein distance} 
\begin{equation}\label{Eq:lq distance}
    \ell_q(\mu,\nu) := \sup_{\pi\in \Pi(\mu,\nu)} \bigg(\!\int_{M\times M} l(x,y)^q\,\d \pi(x,y)\bigg)^{1/q}.
\end{equation}
Our choice of the cost function $l^q$ which can drop to $-\infty$ ensures  $\smash{\ell_q(\mu,\nu)\geq 0}$ if and only if $\mu$ and $\nu$ admit a causal coupling. 

We term $\pi\in \Pi(\mu,\nu)$ \emph{$\ell_q$-optimal} if it is causal and it attains the supremum in \eqref{Eq:lq distance}.
We occasionally employ the subsequent standard result, which follows from Lemma \ref{Le:l continuous} and \cite[Theorem 4.1]{villani2009}.

\begin{lemma}[Existence of $\ell_q$-optimal couplings]\label{Le:Existence optimal couplings} 
Assume $\mu,\nu\in\mathcal{P}_\comp(M)$ to obey  $\ell_q(\mu,\nu)\geq 0$. Then there exists an $\smash{\ell_q}$-optimal coupling of $\mu$ and $\nu$.
\end{lemma}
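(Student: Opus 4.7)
The plan is to invoke the classical direct method of optimal transport: combine weak compactness of the space of couplings with upper semi-continuity of the cost functional. Since both $\mu$ and $\nu$ are compactly supported, every $\pi\in\Pi(\mu,\nu)$ is concentrated on the compact set $\supp\mu\times\supp\nu$, so $\Pi(\mu,\nu)$ is tight and hence weakly compact by Prokhorov's theorem. By Lemma~\ref{Le:l continuous} the Lorentz--Finsler distance $l$ is upper semi-continuous on $M\times M$; together with the convention that $l=-\infty$ off the causal relation, this makes $\{l\ge 0\}$ closed in $M\times M$. In particular the set of causal couplings
\[
\Pi_\comp(\mu,\nu) := \bigl\{\pi\in\Pi(\mu,\nu) : \pi[\{l\ge 0\}]=1\bigr\}
\]
is weakly closed, hence weakly compact, and by the hypothesis $\ell_q(\mu,\nu)\ge 0$ it is non-empty.

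Next I would rewrite the supremum in \eqref{Eq:lq distance} as a supremum over $\Pi_\comp(\mu,\nu)$: by the convention $(-\infty)^q=-\infty$ from \eqref{Eq:Conventions}, any non-causal coupling contributes $\int l^q\,\d\pi=-\infty$, while the existence of a causal coupling ensures that the supremum is non-negative and thus can only be attained on $\Pi_\comp(\mu,\nu)$. On the compact set $\supp\mu\times\supp\nu$ the cost $l^q$ is non-negative, upper semi-continuous, and uniformly bounded above by a finite constant. A standard Portmanteau-type argument then shows that the functional $\pi\mapsto \int l^q\,\d\pi$ is upper semi-continuous on $\Pi_\comp(\mu,\nu)$ endowed with the weak topology.

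Finally, picking any maximizing sequence $(\pi_n)_{n\in\N}\subset\Pi_\comp(\mu,\nu)$ and extracting a weakly convergent subsequence with limit $\pi^\star\in\Pi_\comp(\mu,\nu)$, upper semi-continuity yields
\[
\int l^q\,\d\pi^\star \;\ge\; \limsup_{n\to\infty}\int l^q\,\d\pi_n \;=\; \ell_q(\mu,\nu)^q,
\]
while the reverse inequality is built into the definition of the supremum. Hence $\pi^\star$ is $\ell_q$-optimal. No genuine obstacle is anticipated; the only piece of bookkeeping to watch is the treatment of the value $-\infty$ of $l^q$, which is precisely why restricting to $\Pi_\comp(\mu,\nu)$ at the outset is the cleanest move.
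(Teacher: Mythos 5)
Your proof is correct and is essentially the argument that the paper compresses into a single citation of \cite[Theorem~4.1]{villani2009} together with Lemma~\ref{Le:l continuous}: that theorem is precisely the compactness-plus-upper-semicontinuity direct method you spell out. The only bookkeeping worth noting is that upper semi-continuity of $l^q$ (with the $(-\infty)^q=-\infty$ convention) follows from that of $l$ and the monotone continuity of $r\mapsto r^q$ on $[0,+\infty)$, which you use implicitly and is fine.
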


\begin{definition}[Geodesics of probability measures]\label{Def:q-geodesics} Given any $q\in (0,1]$, a family $(\mu_t)_{t\in[0,1]}$ in $\Prob(M)$ is called a \emph{$q$-geo\-desic} if
\begin{enumerate}[label=\textnormal{(\arabic*)}]
    \item $\ell_q(\mu_0,\mu_1)$ is positive and finite, and
    \item for every $s,t\in [0,1]$ with $s<t$, 
\begin{equation*}
    \ell_q(\mu_s,\mu_t) = (t-s)\,\ell_q(\mu_0,\mu_1).
\end{equation*}
\end{enumerate}
\end{definition}

Akin to the corresponding property of Wasserstein distances, $\ell_q$ satisfies the \emph{reverse} triangle inequality.
This follows from a standard gluing argument, cf.~e.g.~\cite[Theorem 13]{eckstein2017}.
To analyze $q$-geodesics, we also need to study  cases of equality.
This is collected in  Proposition \ref{Pr:Reverse triangle} taken from \cite[Proposition 2.9]{Mc}.
One immediate consequence of it is that the property of having endpoints with compact support propagates through the interior of $q$-geodesics, cf.~Corollary \ref{Cor:Compact support}, a proof of which we give for convenience.

Recall Lemma~\ref{Le:Zs} for the definitions of $Z_s(E)$ and $Z(E)$.

\begin{proposition}[Reverse triangle inequality and cases of equality]\label{Pr:Reverse triangle} 
For every  $\mu_1,\mu_2,\mu_3\in\Prob(M)$, we have
\begin{equation}\label{Eq:Reverse lp}
\ell_q(\mu_1,\mu_3) \geq \ell_q(\mu_1,\mu_2) + \ell_q(\mu_2,\mu_3).
\end{equation}
Moreover, if $\ell_q(\mu_1,\mu_2) + \ell_q(\mu_2,\mu_3) \neq \pm\infty$ according to our conventions \eqref{Eq:Conventions} and there are Borel sets $X_1,X_3\subset M$ with $\mu_1[X_1]=\mu_3[X_3]=1$ yet $\mu_2[Z(X_1\times X_3)]<1$, then the inequality in \eqref{Eq:Reverse lp} is strict.

Conversely, assume that 
\begin{enumerate}[label=\textnormal{(\arabic*)}]
\item $q<1$, 
\item $\smash{\ell_q(\mu_1,\mu_3)}$ is positive and finite,
\item equality holds in \eqref{Eq:Reverse lp}, and 
\item $\mu_1$ and $\mu_2$ as well as $\mu_2$ and $\mu_3$ admit $\ell_q$-optimal couplings
\textnormal{(}e.g.~when $\mu_1$, $\mu_2$, and $\mu_3$ are compactly supported, cf.~Lemma \textnormal{\ref{Le:Existence optimal couplings})}.
\end{enumerate} 
Then there exists a measure $\omega\in \Prob(M\times M\times M)$ such that $\smash{\pi_{ij} := (\proj_{ij})_\sharp\pi}$ is an $\ell_q$-optimal coupling of its marginals $\mu_i$ and $\mu_j$ for every $i,j \in \{1,2,3\}$ with $i<j$, and every $(x,z,y)\in\supp \omega$ satisfies
\begin{equation*}
l(x,z) = t\,l(x,y),\qquad
l(z,y) = (1-t)\,l(x,y),
\end{equation*}
where $t := \ell_q(\mu_1,\mu_2)/\ell_q(\mu_1,\mu_3)$.
Furthermore, if $\pi_{13}$ is concentrated on a Borel set $E\subset M\times M$, then $\mu_2$ is concentrated on $Z_t(E)$.
In particular, if $Z_t(x,y)$ is a singleton $\{z_t(x,y)\}$ for $\pi_{13}$-a.e.~$(x,y)\in M\times M$, then 
\begin{equation*}
    \omega = (z_0, z_t, z_1)_\sharp\pi_{13},\qquad
    \mu_2 = (z_t)_\sharp\pi_{13}.
\end{equation*}
\end{proposition}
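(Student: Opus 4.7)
\medskip

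\noindent\textbf{Proof plan.} The inequality \eqref{Eq:Reverse lp} will be obtained by the standard gluing construction. Given $\mu_1,\mu_2,\mu_3\in\Prob(M)$, I would first treat the trivial cases where at least one of $\ell_q(\mu_1,\mu_2)$, $\ell_q(\mu_2,\mu_3)$ equals $-\infty$ (so that the right-hand side equals $-\infty$ under \eqref{Eq:Conventions}) or where the right-hand side is $+\infty$ (which forces $\ell_q(\mu_1,\mu_3)=+\infty$ by the construction below). Otherwise pick causal couplings $\pi_{12}\in\Pi(\mu_1,\mu_2)$ and $\pi_{23}\in\Pi(\mu_2,\mu_3)$ that are either $\ell_q$-optimal or, if such couplings do not exist, approach the supremum in \eqref{Eq:lq distance}. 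Disintegrate both along $\mu_2$ and glue to obtain $\omega\in\Prob(M\times M\times M)$ with $\smash{(\proj_{12})_\sharp\omega=\pi_{12}}$ and $\smash{(\proj_{23})_\sharp\omega=\pi_{23}}$. Then $\smash{\pi_{13}:=(\proj_{13})_\sharp\omega\in\Pi(\mu_1,\mu_3)}$, and by \eqref{eq:reverse triangle} we have $l(x,y)\geq l(x,z)+l(z,y)$ for $\omega$-a.e.\ $(x,z,y)$. Raising to the $q$-th power and applying the reverse Minkowski inequality in $L^q(\omega)$ (valid for $q\in(0,1]$ on nonnegative functions, with equality for $q=1$) yields
\[
\ell_q(\mu_1,\mu_3)\geq \bigg(\!\int l(x,y)^q\,\d\omega\bigg)^{\!1/q}\geq \bigg(\!\int l(x,z)^q\,\d\omega\bigg)^{\!1/q}+\bigg(\!\int l(z,y)^q\,\d\omega\bigg)^{\!1/q}.
\]
Taking suprema over the choices of $\pi_{12},\pi_{23}$ gives \eqref{Eq:Reverse lp}.

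\medskip

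\noindent For the strict inequality assertion, suppose the right-hand side of \eqref{Eq:Reverse lp} is finite and pick Borel sets $X_1,X_3$ with $\mu_1[X_1]=\mu_3[X_3]=1$ such that $\mu_2[Z(X_1\times X_3)]<1$. Since $\omega$ has its first and third marginals concentrated on $X_1$ and $X_3$ respectively, the set
\[
B:=\big\{(x,z,y)\in X_1\times M\times X_3\mid z\notin Z(x,y)\big\}
\]
satisfies $\omega[B]\geq 1-\mu_2[Z(X_1\times X_3)]>0$. On $B$, either $(x,y)\notin\{l\geq 0\}$ (which can be ruled out since then the glued integral would already be $-\infty$) or the reverse triangle inequality \eqref{eq:reverse triangle} is strict because $z$ fails to lie on a maximizing causal curve from $x$ to $y$; cf.\ Remark~\ref{Re:Zs s-intermediate points} and Lemma~\ref{Le:zs}. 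Either way the first estimate above becomes strict, proving the claim.

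\medskip

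\noindent The rigidity half of the proposition is the main obstacle, and I expect it to proceed from the observation that, under the standing assumptions (1)--(4), the glued measure $\omega$ produced from $\ell_q$-optimal couplings $\pi_{12}$, $\pi_{23}$ realizes equality in both of the above inequalities. The reverse Minkowski inequality for $q\in(0,1)$ is an equality precisely when the two integrands are proportional $\omega$-almost everywhere. Combined with the strictness analysis above, equality in \eqref{eq:reverse triangle} forces, for $\omega$-a.e.\ $(x,z,y)$, the existence of some $s=s(x,z,y)\in[0,1]$ with $z\in Z_s(x,y)$; that is, $l(x,z)=s\,l(x,y)$ and $l(z,y)=(1-s)\,l(x,y)$. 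Proportionality of $l(x,\cdot)$ and $l(\cdot,y)$ in $L^q(\omega)$ together with the definition of $t=\ell_q(\mu_1,\mu_2)/\ell_q(\mu_1,\mu_3)\in(0,1)$ pins down $s(x,z,y)=t$ on a set of full $\omega$-measure; a direct computation of $\int l(x,z)^q\,\d\omega$ then shows that $\pi_{13}$ itself is $\ell_q$-optimal, whence all three projections $\pi_{ij}$ are optimal for their marginals.

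\medskip

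\noindent Finally, if $\pi_{13}$ is concentrated on a Borel set $E\subset M\times M$, the identity $l(x,z)=t\,l(x,y)$, $l(z,y)=(1-t)\,l(x,y)$ means $z\in Z_t(x,y)$ for $\omega$-a.e.\ $(x,z,y)$, so that $\mu_2$ is concentrated on $Z_t(E)$. Under the singleton hypothesis, the map $(x,y)\mapsto z_t(x,y)$ is well defined $\pi_{13}$-a.e., and uniqueness of the disintegration forces $\omega=(z_0,z_t,z_1)_\sharp\pi_{13}$ and hence $\mu_2=(z_t)_\sharp\pi_{13}$. The delicate point throughout will be keeping careful track of $\pm\infty$ conventions \eqref{Eq:Conventions} so that the strict/equality case analysis of reverse Minkowski is not spoiled by measurability issues on $\{l<0\}$.
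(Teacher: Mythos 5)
The paper does not supply its own proof of Proposition~\ref{Pr:Reverse triangle}; it delegates to \cite[Proposition~2.9]{Mc}, and your gluing-plus-reverse-Minkowski strategy is precisely the approach used there. In that sense you are on the right track, but there are two places where the argument as written has a genuine hole, and one point that should be made explicit.

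\textbf{Strictness.} Your chain shows, for a fixed glued $\omega$ built from causal $\pi_{12},\pi_{23}$, that
\[
\ell_q(\mu_1,\mu_3)\geq\bigg(\!\int l(x,y)^q\,\d\omega\bigg)^{\!1/q}>\bigg(\!\int l(x,z)^q\,\d\omega\bigg)^{\!1/q}+\bigg(\!\int l(z,y)^q\,\d\omega\bigg)^{\!1/q}.
\]
But the right-hand side only \emph{lower bounds} $\ell_q(\mu_1,\mu_2)+\ell_q(\mu_2,\mu_3)$ if $\pi_{12},\pi_{23}$ are $\ell_q$-optimal, and the strictness clause of the proposition does \emph{not} assume optimal couplings exist (that hypothesis only appears in item (4), governing the converse). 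When you pass to a sequence $\pi_{12}^n,\pi_{23}^n$ approaching the respective suprema, the uniform bound $\omega^n[B]\ge 1-\mu_2[Z(X_1\times X_3)]>0$ is correct, but the mass can concentrate where the pointwise defect $l(x,y)-l(x,z)-l(z,y)$ is arbitrarily small, so the integral gap need not stay bounded away from zero in the limit. Your sketch therefore does not close the argument in the absence of optimal couplings; one either needs a compactness/upper-semicontinuity argument producing a genuine maximizer, or a quantitative lower bound on the defect over $B$, neither of which you supply.

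\textbf{Order of deductions in the rigidity part.} The cleanest logic is: the equality hypothesis (3) collapses the whole chain $\ell_q(\mu_1,\mu_3)\geq(\int l(x,y)^q)^{1/q}\geq(\int l(x,z)^q)^{1/q}+(\int l(z,y)^q)^{1/q}=\ell_q(\mu_1,\mu_2)+\ell_q(\mu_2,\mu_3)$ into a string of equalities. This immediately yields both that $\pi_{13}$ is $\ell_q$-optimal and that reverse Minkowski and the pointwise reverse triangle inequality are saturated $\omega$-a.e. Equality in reverse Minkowski then gives $l(x,z)=\lambda\,l(z,y)$ $\omega$-a.e.\ for a \emph{single constant} $\lambda$; combining with $l(x,y)=l(x,z)+l(z,y)$ gives the intermediate-point identity with a constant $s=\lambda/(1+\lambda)$, and integrating $l(x,z)^q=s^q\,l(x,y)^q$ over the (now known to be optimal) $\pi_{13}$ identifies $s=t$. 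As written you seem to try to infer $s=t$ and the optimality of $\pi_{13}$ from the proportionality alone, which is circular; it works, but you should make the direction of implication explicit.

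\textbf{From a.e.\ to the support.} The proposition asserts the identity for \emph{every} $(x,z,y)\in\supp\omega$, not merely $\omega$-a.e. You should observe that $\omega$ is concentrated on $\{(x,z,y):l(x,z)\ge 0,\,l(z,y)\ge 0,\,l(x,y)\ge 0\}$, which is closed by upper semi-continuity of $l$ (see Lemma~\ref{Le:l continuous}), and on this set $l$ is continuous in each of the relevant pairs of variables; hence the set where $l(x,z)=t\,l(x,y)$ and $l(z,y)=(1-t)\,l(x,y)$ is relatively closed, and an $\omega$-full closed set must contain $\supp\omega$.
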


\begin{corollary}[Interpolants inherit compact support]\label{Cor:Compact support}
Let $(\mu_t)_{t\in[0,1]}$ be a $q$-geodesic in $\Prob(M)$ for some $q\in (0,1)$.
If the endpoints $\mu_0$ and $\mu_1$ are compactly supported, so is the intermediate point $\mu_t$ for every $t\in(0,1)$; more precisely, $\supp \mu_t \subset Z_t(\supp \mu_0\times\supp \mu_1)$.
\end{corollary}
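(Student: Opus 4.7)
The plan is to apply Proposition \ref{Pr:Reverse triangle} to the triple $(\mu_0, \mu_t, \mu_1)$ to obtain directly a concentration statement for $\mu_t$, from which the claimed support inclusion will follow once I verify closedness of the relevant target set. Three of the four hypotheses of that proposition are essentially free: the hypothesis $q < 1$ is assumed; $\smash{\ell_q(\mu_0, \mu_1)}$ is positive and finite by Definition \ref{Def:q-geodesics}(1); and equality in the reverse triangle inequality \eqref{Eq:Reverse lp} holds because
\begin{equation*}
\ell_q(\mu_0, \mu_t) + \ell_q(\mu_t, \mu_1) = t\,\ell_q(\mu_0, \mu_1) + (1-t)\,\ell_q(\mu_0, \mu_1) = \ell_q(\mu_0, \mu_1)
\end{equation*}
by the $q$-geodesic property.

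The substantive work is checking the fourth hypothesis: existence of $\ell_q$-optimal couplings of $(\mu_0, \mu_t)$ and of $(\mu_t, \mu_1)$. By Lemma \ref{Le:Existence optimal couplings}, this reduces to establishing that $\mu_t$ is compactly supported. To see this, note that $\ell_q(\mu_0, \mu_t) > 0$ combined with the conventions \eqref{Eq:Conventions} forces the existence of at least one causal coupling $\pi_{0t}\in\Pi(\mu_0,\mu_t)$: any non-causal coupling yields cost $-\infty$, so a non-negative supremum requires causal couplings to exist. The second marginal $\mu_t$ of $\pi_{0t}$ is then concentrated on $J^+(\supp\mu_0)$. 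Arguing symmetrically with a causal coupling of $\mu_t$ and $\mu_1$ gives concentration on $J^-(\supp\mu_1)$, so $\mu_t$ is concentrated on $A := J^+(\supp\mu_0) \cap J^-(\supp\mu_1)$. Global hyperbolicity forces $A$ to be compact, whence $\supp\mu_t \subset A$ is compact.

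Having secured all four hypotheses, I apply Proposition \ref{Pr:Reverse triangle} with the compact Borel set $E := \supp\mu_0 \times \supp\mu_1$, on which any $\ell_q$-optimal coupling between $\mu_0$ and $\mu_1$ is trivially concentrated. The parameter in the proposition evaluates to $\ell_q(\mu_0, \mu_t)/\ell_q(\mu_0, \mu_1) = t$, and its conclusion is that $\mu_t$ is concentrated on $Z_t(E)$. By Lemma \ref{Le:Zs}, compactness of $E$ ensures that $Z_t(E)$ is compact, hence closed, so concentration upgrades to the desired set-theoretic inclusion $\supp \mu_t \subset Z_t(\supp \mu_0 \times \supp \mu_1)$.

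The main point requiring care is the passage from individual causal diamonds $J^+(x) \cap J^-(y)$ being compact, which is the definition of global hyperbolicity given in Definition \ref{df:causal}(4), to the compactness of $J^+(K_1) \cap J^-(K_2)$ between compact sets $K_1, K_2$. This is not stated explicitly in the excerpt, but it is a standard consequence of global hyperbolicity obtained by combining the closedness of $\{l\geq 0\}$ (granted by upper semi-continuity of $l$ in Lemma \ref{Le:l continuous}) with compactness of elementary diamonds, and the same argument carries over verbatim from the Lorentzian case to the Lorentz--Finsler setting.
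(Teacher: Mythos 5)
Your proof is correct, but your route to the preliminary compactness of $\supp\mu_t$ is genuinely different from the paper's. The paper first applies the \emph{first} part of Proposition~\ref{Pr:Reverse triangle} (the strict-inequality statement, in contrapositive form) together with Lemma~\ref{Le:Zs} to conclude $\mu_t[Z(E)]=1$ with $E=\supp\mu_0\times\supp\mu_1$, which gives compactness of $\supp\mu_t$ as a subset of the compact set $Z(E)$; only then does it invoke Lemma~\ref{Le:Existence optimal couplings} and the latter part of Proposition~\ref{Pr:Reverse triangle}. You instead argue directly from causality: $\ell_q(\mu_0,\mu_t)>0$ forces a causal coupling, so $\mu_t$ is concentrated on $J^+(\supp\mu_0)\cap J^-(\supp\mu_1)$, which is compact under global hyperbolicity. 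Both arguments are valid. The paper's version stays within lemmas stated in the article and immediately lands on the sharper set $Z(E)$; yours is perhaps more elementary in spirit and avoids invoking the strict-inequality half of Proposition~\ref{Pr:Reverse triangle}, at the cost of importing the standard (but not stated in this paper) fact that causal envelopes of compact sets between compact sets are compact in globally hyperbolic cone structures — a result which does hold in the Lorentz--Finsler setting (cf.\ Minguzzi's causality theory for closed cone structures) and which you rightly flag as the one external ingredient. The concluding step, applying the latter part of Proposition~\ref{Pr:Reverse triangle} with $E=\supp\mu_0\times\supp\mu_1$ and using the compactness (hence closedness) of $Z_t(E)$ from Lemma~\ref{Le:Zs} to upgrade concentration to a support inclusion, agrees with the paper.
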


\begin{proof}
Lemma \ref{Le:Zs} yields the compactness of $Z(E)$ and $Z_t(E)$, where we set $E:= \supp\mu_0\times\supp\mu_1$.
Since $\smash{\ell_q(\mu_0,\mu_1)\in (0,+\infty)}$, applying the first part of Pro\-position~\ref{Pr:Reverse triangle} yields $\mu_t[Z(E)]=1$.
In particular, $\mu_t$ has compact support.
Then, moreover, it follows from Lemma~\ref{Le:Existence optimal couplings} that $\mu_0$ and $\mu_t$ as well as $\mu_t$ and $\mu_1$ admit $\ell_q$-optimal couplings.
Hence, by the latter part of Proposition \ref{Pr:Reverse triangle}, we obtain in fact $\mu_t[Z_t(E)]=1$. 
\end{proof}

\subsection{Existence of $q$-geodesics}\label{sub:Existence q geodesics}

Employing a standard measurable selection argument \cite{ambrosio2013,villani2009}, we now construct $q$-geodesics by taking a chronological $\ell_q$-optimal coupling $\pi$ between appropriate endpoints $\mu_0$ and $\mu_1$ under consideration, connecting $\pi$-a.e.~$(x,y)\in M\times M$ by a maximizing timelike geodesic, and a lifting procedure.

We need the subsequent technical Lemma \ref{Le:Measurable extension}; its proof uses  Lemma \ref{Le:l continuous} and Remark \ref{Re:Zs s-intermediate points} and is analogous to its Lorentzian version \cite[Lemma 2.8]{Mc}.
Recall that a real-valued map defined on a topological space $X$ is termed \emph{universally measurable} if for every Borel probability measure $\mu$ on $X$, it is measurable with respect to the completion of the Borel $\sigma$-algebra of $X$ with respect to $\mu$.

\begin{lemma}[Selecting intermediate points on the timelike cut locus]\label{Le:Measurable extension}
For every $t\in[0,1]$, the map $z_t\colon (M\times M)\setminus \sing(l) \longrightarrow M$ from Lemma \textnormal{\ref{Le:zs}} has a non-relabeled universally measurable extension $z_t\colon \{l>0\}\longrightarrow M$ such that for every $(x,y)\in \{l>0\}$, $t\longmapsto z_t(x,y)$ is a maximizing geodesic from $x$ to $y$.
\end{lemma}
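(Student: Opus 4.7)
The plan is to produce the extension by measurably selecting, for each $(x,y) \in \{l>0\}$, a single maximizing causal geodesic $\gamma_{x,y}\colon [0,1]\longrightarrow M$ from $x$ to $y$ (with constant Lagrangian speed), and then setting $z_t(x,y) := \gamma_{x,y}(t)$. On the complement of $\sing(l)$, uniqueness of the maximizing geodesic (built into the definition of $\sing(l)$) forces any such selector to coincide with the smooth $z_t$ from Lemma~\ref{Le:zs}, and by construction $t \longmapsto z_t(x,y)$ is a maximizing geodesic for every $(x,y)\in \{l>0\}$.

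To organize the selection, equip $M$ with an auxiliary complete Riemannian metric and regard $X := C([0,1], M)$ with the compact-open topology as a Polish space. Define
\[
\Gamma := \big\{ (x,y,\gamma) \in \{l>0\} \times X : \gamma(0)=x,\ \gamma(1)=y,\ \gamma \text{ is a maximizing causal geodesic}\big\},
\]
and its fibers $\Gamma(x,y) := \{\gamma\in X : (x,y,\gamma)\in \Gamma\}$. By \cite[Proposition~6.9]{minguzzi2015c}, $\Gamma(x,y)\neq \emptyset$ whenever $l(x,y) > 0$. Global hyperbolicity confines every $\gamma \in \Gamma(x,y)$ to the compact set $J^+(x)\cap J^-(y)$, and the constant-speed parametrization provides a uniform Lipschitz bound against the reference Riemannian metric. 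Arzel\`a--Ascoli then yields compactness of $\Gamma(x,y)$.

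The main step is to argue that $\Gamma$ is closed in $\{l>0\}\times X$. Suppose $(x_n,y_n,\gamma_n)\in \Gamma$ with $(x_n,y_n)\to (x,y)$ in $\{l>0\}$ and $\gamma_n\to\gamma$ uniformly. The Lorentz--Finsler limit curve theorem \cite[Proposition~6.1]{minguzzi2015c} ensures that $\gamma$ is a causal curve from $x$ to $y$, while continuity of $l$ on $\{l\ge 0\}$ (Lemma~\ref{Le:l continuous}), combined with upper semi-continuity of $\mathfrak{L}$ under uniform limits of causal curves and the identity $\mathfrak{L}(\gamma_n)=l(x_n,y_n)$, forces $\mathfrak{L}(\gamma)=l(x,y)$. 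Hence $\gamma\in\Gamma(x,y)$, so $\Gamma$ is closed, hence Borel.

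With $\Gamma$ closed and compact-valued, a standard measurable selection theorem—for instance Kuratowski--Ryll-Nardzewski, or the Jankov--von Neumann selection theorem applied to the analytic (indeed Borel) set $\Gamma$—yields a universally measurable selector $(x,y)\longmapsto\gamma_{x,y}\in\Gamma(x,y)$, and $z_t(x,y):=\gamma_{x,y}(t)$ delivers the required extension. The main technical obstacle is the closedness of $\Gamma$: one must preclude that a uniform limit of maximizing causal curves collapses its causal character or leaks length at the endpoints. Both contingencies are ruled out by the standing global hyperbolicity assumption, through the limit curve theorem and the continuity properties recorded in Lemma~\ref{Le:l continuous}.
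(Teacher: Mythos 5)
Your approach---selecting, for each $(x,y)\in\{l>0\}$, a maximizing geodesic from $x$ to $y$ via a measurable selection theorem applied to the graph of the multifunction $(x,y)\longmapsto\Gamma(x,y)$, and then evaluating at time $t$---is the same in spirit as the paper's (which follows \cite[Lemma~2.8]{Mc}), and the reduction to a selector of geodesics rather than of intermediate points is exactly what makes the consistency ``$t\longmapsto z_t(x,y)$ is a maximizing geodesic'' automatic. The overall structure is sound.

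There is, however, a genuine gap in your compactness argument for the fibers $\Gamma(x,y)$, and the same gap infects your closedness argument for $\Gamma$ (which relies on limit curve theorems that themselves presuppose uniform Lipschitz control). You assert that ``the constant-speed parametrization provides a uniform Lipschitz bound against the reference Riemannian metric,'' but confinement to the compact set $J^+(x)\cap J^-(y)$ together with $F(\dot\gamma)\equiv l(x,y)$ does \emph{not} bound $\vert\dot\gamma\vert_h$: at each point $p$ the level set $\{v\in\Omega_p : F(v)=c\}$ is a hyperboloid sheet, noncompact in $T_pM$, and as $v$ ranges over it near the light cone $\vert v\vert_h\to+\infty$ while $F(v)$ stays fixed. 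The correct argument bounds the initial velocities $\dot\gamma(0)\in T_xM$ directly: if a sequence $v_n:=\dot\gamma_n(0)$ with $F(v_n)=l(x,y)$ and $\exp_x(v_n)=y$ had $\vert v_n\vert_h\to\infty$, then $v_n/\vert v_n\vert_h$ would accumulate on a null direction $\bar v\in\partial\Omega_x$, and the non-imprisonment property of globally hyperbolic spacetimes (null geodesics leave every compact set) would force $\exp_x(v_n)$ out of $J^+(x)\cap J^-(y)$, contradicting $\exp_x(v_n)=y$. Once the initial velocities are uniformly bounded (even locally uniformly in $(x,y)$), smooth dependence of the geodesic flow on initial data upgrades $C^0$-convergence of the $\gamma_n$ to $C^\infty$-convergence, which simultaneously yields the Arzel\`a--Ascoli compactness you need and makes closedness of $\Gamma$ immediate: a $C^\infty$-limit of affinely parametrized geodesics is an affinely parametrized geodesic, and the maximizing property passes to the limit by the continuity of $l$ from Lemma~\ref{Le:l continuous}, without separately invoking upper semi-continuity of $\mathfrak{L}$ or worrying that the limiting parametrization might drift. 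As written, your appeal to the ``constant-speed'' Lipschitz bound would also be needed to legitimize the limit curve theorem step, so the gap is load-bearing and should be filled.

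A minor remark: uniqueness of the maximizing geodesic on $(M\times M)\setminus\sing(l)$ is not literally ``built into'' Definition~\ref{Def:sing l}; it follows from the relative-interior condition via the usual broken-geodesic argument (two distinct maximizers through $x$ and $y$ would permit a non-smooth maximizer through their common extension, which is impossible). Worth a sentence, since you use it to identify the selector with $z_t$ of Lemma~\ref{Le:zs}.
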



\begin{corollary}[Existence of $q$-geodesics] 
Let $\mu_0,\mu_1\in\Prob(M)$, and let $\pi \in \Pi(\mu_0,\mu_1)$ be a chronological $\smash{\ell_q}$-optimal coupling with finite total $\smash{\ell_q}$-cost. Then the following hold.
\begin{enumerate}[label=\textnormal{(\roman*)}]
\item The assignment $\mu_t := (z_t)_\sharp\pi$, where $z_t$ is the map provided by Lemma \textnormal{\ref{Le:Measurable extension}}, defines a $q$-geodesic $(\mu_t)_{t\in[0,1]}$ connecting $\mu_0$ to $\mu_1$. 
\item For every $s,t\in[0,1]$ with $s<t$, $(z_s, 
z_t)_\sharp\pi$ is a chronological $\smash{\ell_q}$-optimal coupling of the members $\mu_s$ and $\mu_t$ of that $q$-geodesic.
\end{enumerate}
\end{corollary}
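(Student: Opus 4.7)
The plan is to build the interpolants directly as push-forwards along the universally measurable section $z_t$ supplied by Lemma~\ref{Le:Measurable extension}, and then to sandwich $\ell_q(\mu_s,\mu_t)$ between an explicit coupling bound from below and the reverse triangle inequality from above. Since $\pi$ is chronological we have $\pi[\{l>0\}]=1$, so $\mu_t := (z_t)_\sharp\pi$ is a well-defined Borel probability measure on $M$ for each $t\in [0,1]$. The endpoint conditions $\mu_0$ and $\mu_1$ are recovered using $z_0(x,y)=x$ and $z_1(x,y)=y$, which hold $\pi$-a.e.\ by the last assertion of Lemma~\ref{Le:Measurable extension}.

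For $0\leq s<t\leq 1$, the map $r \longmapsto z_r(x,y)$ is by construction a maximizing geodesic from $x$ to $y$, so that for $\pi$-a.e.~$(x,y)$,
\begin{equation*}
l\big( z_s(x,y), z_t(x,y) \big) = (t-s)\, l(x,y) > 0.
\end{equation*}
In particular, $\pi_{st} := (z_s,z_t)_\sharp\pi \in \Pi(\mu_s,\mu_t)$ is a chronological coupling, and integrating the $q$-th power of the above displayed identity against $\pi$ yields the lower bound
\begin{equation*}
\ell_q(\mu_s,\mu_t)^q \,\geq\, \int_{M\times M} l(z_s,z_t)^q\,\d\pi \,=\, (t-s)^q\,\ell_q(\mu_0,\mu_1)^q.
\end{equation*}

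To match this from above, I would invoke Proposition~\ref{Pr:Reverse triangle} twice along the chain $\mu_0,\mu_s,\mu_t,\mu_1$ and then apply the lower bound just established to each of the two outer pieces, yielding
\begin{equation*}
\ell_q(\mu_0,\mu_1) \,\geq\, \ell_q(\mu_0,\mu_s) + \ell_q(\mu_s,\mu_t) + \ell_q(\mu_t,\mu_1) \,\geq\, s\,\ell_q(\mu_0,\mu_1) + \ell_q(\mu_s,\mu_t) + (1-t)\,\ell_q(\mu_0,\mu_1).
\end{equation*}
Because $\ell_q(\mu_0,\mu_1)$ is positive and finite by hypothesis, rearranging gives $\ell_q(\mu_s,\mu_t) \leq (t-s)\,\ell_q(\mu_0,\mu_1)$, and the two bounds force equality. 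This proves that $(\mu_t)_{t\in[0,1]}$ is a $q$-geodesic, which is assertion (i). For (ii), the coupling $\pi_{st}$ constructed above is chronological by construction and its $\ell_q$-cost equals $(t-s)\,\ell_q(\mu_0,\mu_1) = \ell_q(\mu_s,\mu_t)$, so it is $\ell_q$-optimal. The only subtlety I anticipate, rather than a genuine obstacle, is the bookkeeping of measurability, since $z_t$ is merely universally measurable on $\{l>0\}$; this is dispatched by the chronology $\pi[\{l>0\}]=1$ together with the continuity of $l$ on $\{l\geq 0\}$ from Lemma~\ref{Le:l continuous}, which make the integrand $l(z_s,z_t)^q$ measurable in the completed sense required to legitimize the push-forward and the integral above.
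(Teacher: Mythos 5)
Your proof is correct and follows essentially the same approach as the paper's: push forward $\pi$ under $(z_s,z_t)$ to get a chronological coupling, integrate $l(z_s,z_t)^q = (t-s)^q\,l^q$ to obtain the lower bound $\ell_q(\mu_s,\mu_t)\geq (t-s)\,\ell_q(\mu_0,\mu_1)$, and use the reverse triangle inequality (Proposition~\ref{Pr:Reverse triangle}) to force equality. You merely spell out the triangle-inequality chain $\mu_0,\mu_s,\mu_t,\mu_1$ and the resulting sandwich more explicitly than the paper's terse "this estimate combined with the reverse triangle inequality forces... equality."
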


\begin{proof}
The coupling $(z_s, z_t)_\sharp\pi$ of $\mu_s$ and $\mu_t$ is well-defined and chronological by Lemma \ref{Le:Measurable extension}.
In addition,
\begin{align*}
\ell_q(\mu_s,\mu_t)^q &\geq \int_{M\times M} l\big(z_s(x,y), z_t(x,y)\big)^q\,\d\pi(x,y)\\
&= (t-s)^q\int_{M\times M} l(x,y)^q\,\d\pi(x,y)\\
&= (t-s)^q\,\ell_q(\mu_0,\mu_1)^q.
\end{align*}
Since $s$ and $t$ are arbitrary, this estimate combined with the reverse triangle inequality from Proposition \ref{Pr:Reverse triangle} forces the inequality appearing in the above computation to become equality.
Thus, $(\mu_t)_{t\in[0,1]}$ is a $q$-geodesic, and $(z_s,z_t)_\sharp\pi$ is an $\smash{\ell_q}$-optimal coupling of $\mu_s$ and $\mu_t$. 
\end{proof}

\subsection{Kantorovich duality}\label{Sec:Kantorovich duality}

Now we revisit the duality theory developed in \cite[Section 4]{Mc} in the Lorentzian setting (see also \cite[\S2.4]{cavalletti2020}). 
Therein, the additional difficulty is that $l$ drops to $-\infty$ at points not in causal relation, hence the infimum in \eqref{Eq:Kantorovich} below might not be attained in general (see \cite{kell2020} for an example).
To ensure the existence of the latter, cf.~Theorem \ref{Th:Inf attained}, the $q$-separation property stated in Definition \ref{Def:q-separated} below has been introduced in \cite[Definition 4.1]{Mc}.
Lemma \ref{Le:Existence separation} will show that this notion is not void: $q$-separation holds for every pair of measures for which \emph{every} transport stays away from the light cone in a uniform way. This property is in practice easier to verify than $q$-separation, yet it does not suffice to consider only such measures. Indeed, the  intermediate points $\mu_s$ and $\mu_t$ of a $q$-geodesic $(\mu_t)_{t\in[0,1]}$ will not satisfy this condition in general as their supports typically overlap for $s,t\in[0,1]$ close to each other. On the other hand, $q$-separation is well-behaved along $q$-geodesics, cf.~Proposition \ref{Pr:Star-shaped}. 

Since $l$ is bounded from above on compact subsets of $M\times M$, the standard Kan\-toro\-vich duality \cite[Theorem 5.10]{villani2009} yields the following.




\begin{proposition}[Basic duality]\label{Pr:Duality}
For every $\mu,\nu\in\Prob_\comp(M)$ we have
\begin{equation}\label{Eq:Kantorovich}
q^{-1}\, \ell_q(\mu_0,\mu_1)^q = \inf_{(u,v)}\bigg(\int_M u\,\d\mu + \int_M v\,\d\nu \bigg),
\end{equation}
where $(u,v)\colon M \longrightarrow \R^2$ runs over all pairs of functions in $L^1(M,\mu)\times L^1(M,\nu)$ satisfying $\smash{l^q/q \leq u\oplus v}$ on $M\times M$, where 
\begin{equation*}
(u\oplus v)(x,y) := u(x) + v(y).    
\end{equation*}
Moreover, given any $X\supset \supp\mu$ and $Y\supset\supp\nu$, the infimum in \eqref{Eq:Kantorovich} is unchanged if we restrict it to those pairs of lower semi-continuous functions $u\in L^1(M,\mu)$ and $v\in L^1(M,\nu)$ such that $u={^{(l^q)}}v$ on $X$ and $v= u^{(l^q)}$ on $Y$, where
\begin{align}\label{Eq:Transform}
\begin{split}
^{(l^q)}v(x) &:= \sup_{y\in Y} \big(q^{-1}\,l(x,y)^q - v(y)\big),\\
u^{(l^q)}(y) &:= \sup_{x\in X} \big(q^{-1}\,l(x,y)^q -  u(x)\big).
\end{split}
\end{align}
\end{proposition}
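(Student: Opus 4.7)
The plan is to reduce the claim to the standard Kantorovich duality as stated in \cite[Theorem~5.10]{villani2009} by passing from the maximization of $l^q$ to the minimization of its negative. Concretely, I introduce the cost $c \colon M \times M \longrightarrow (-\infty,+\infty]$ defined by $c(x,y) := -q^{-1}\,l(x,y)^q$, with the convention that $c(x,y) = +\infty$ whenever $l(x,y) = -\infty$. Since Lemma~\ref{Le:l continuous} asserts that $l$ is upper semi-continuous on $M \times M$ and finite on $\{l \geq 0\}$, the composition with $t \longmapsto -t^q/q$ (monotonically decreasing on $[0,+\infty)$ and sending $-\infty$ to $+\infty$) yields that $c$ is lower semi-continuous. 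Moreover, as $\supp \mu$ and $\supp \nu$ are compact, $l$ is bounded above on $K := \supp\mu \times \supp\nu$, so $c$ is bounded below on $K$.

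Given these properties, \cite[Theorem~5.10]{villani2009} applies to the cost $c$ and the marginals $\mu,\nu$, giving
\[
\inf_{\pi \in \Pi(\mu,\nu)} \int_{M\times M} c \,\d \pi = \sup_{(\phi,\psi)} \bigg( \int_M \phi \,\d \mu + \int_M \psi \,\d \nu \bigg),
\]
the supremum being over $(\phi,\psi) \in L^1(M,\mu)\times L^1(M,\nu)$ with $\phi \oplus \psi \leq c$. Substituting $u := -\phi$ and $v := -\psi$ transforms the admissibility condition to $u \oplus v \geq q^{-1}\,l^q$, and using the definition of $\ell_q$ in \eqref{Eq:lq distance} (together with the trivial observation that $\ell_q$-optimal $\pi$ may be assumed to be causal, so that the integrand $l^q/q$ is $\pi$-a.s.\ real-valued), the identity \eqref{Eq:Kantorovich} follows at once upon multiplying by $-1$.

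To obtain the second part, I invoke the standard $c$-conjugation procedure. Given an admissible pair $(u,v)$, I set $\tilde{v} := u^{(l^q)}\vert_Y$ and then $\tilde{u} := {^{(l^q)}}\tilde{v}\vert_X$, extending by, say, $+\infty$ off $X$ and $Y$. By definition of the transforms in \eqref{Eq:Transform} and the admissibility $u \oplus v \geq q^{-1}\,l^q$, I have $\tilde{v} \leq v$ on $Y$ and $\tilde{u} \leq u$ on $X$, while the pair $(\tilde u,\tilde v)$ remains admissible; thus the total cost $\int \tilde u\,\d \mu + \int \tilde v\,\d \nu$ does not exceed the original one. A further application of the transforms fixes $(\tilde u, \tilde v)$, so the identities $\tilde u = {^{(l^q)}}\tilde v$ on $X$ and $\tilde v = \tilde u^{(l^q)}$ on $Y$ hold. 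Lower semi-continuity of $\tilde u$ and $\tilde v$ comes from the fact that they are (minus) $c$-concave functions in the sense of \cite[Chapter~5]{villani2009}: writing $-\tilde u(x) = \inf_{y \in Y}\big( c(x,y) + \tilde v(y) \big)$ and approximating $c$ from above on the relevant compact sets by continuous costs allows one to identify $-\tilde u$ with an infimum of continuous functions, whence upper semi-continuous, so $\tilde u$ is lower semi-continuous; likewise for $\tilde v$.

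The main delicate point is this last lower semi-continuity claim, since $l^q$-transforms of arbitrary measurable functions need not be semi-continuous in either direction. The argument is completed by noting that the $l^q$-transforms $\tilde u$ and $\tilde v$ are automatically bounded above on their domains (by $q^{-1}\,(\sup_K l)^q - \inf v$, say), hence $L^1$ with respect to $\mu$ and $\nu$, and that the regularization step above can be performed within $K$ without altering the infimum in \eqref{Eq:Kantorovich}.
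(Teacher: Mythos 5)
Your first step is correct and is the approach the paper itself takes: the paper merely cites \cite[Theorem~5.10]{villani2009} after noting that $l$ is bounded above on compacta, and you have correctly filled in the details of that reduction. Defining the Villani-style cost $c = -q^{-1}\,l^q$ with the convention $c = +\infty$ on $\{l = -\infty\}$, and observing lower semi-continuity of $c$ from the upper semi-continuity of $l$ in Lemma~\ref{Le:l continuous}, together with the lower bound on $c$ over $\supp\mu\times\supp\nu$, is exactly what is needed, and the sign-flip substitution $u = -\phi$, $v = -\psi$ turns Villani's duality into \eqref{Eq:Kantorovich}.

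The conjugation bookkeeping in the second step is also fine ($\tilde v \leq v$, $\tilde u \leq u$ on the supports, $(\tilde u,\tilde v)$ remains admissible, the pair is a fixed point of double conjugation). The genuine gap is the lower semi-continuity claim, and the specific argument you give fails. You propose ``approximating $c$ from above on the relevant compact sets by continuous costs,'' but $c$ is \emph{lower} semi-continuous, and a lower semi-continuous function can only be approximated by continuous functions \emph{from below} (monotone increasing limit), not from above. Even setting that aside, the sign works against you: the cost $q^{-1}\,l^q$ is \emph{upper} semi-continuous, and with $Y$ compact a supremum $x \longmapsto \sup_{y\in Y}\bigl(q^{-1}\,l(x,y)^q - v(y)\bigr)$ of a jointly upper semi-continuous integrand produces an \emph{upper} semi-continuous function (when $v$ is lower semi-continuous), so one does not obtain lower semi-continuity of the transforms by a direct computation of this sort. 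The paper's intent is that the conjugate-pair reformulation (including the asserted semi-continuity) is itself supplied by the relevant parts of Villani's Theorem~5.10; you should cite that directly, or else replace your approximation argument with one that respects the direction of semi-continuity of the cost.
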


Note that the transforms in \eqref{Eq:Transform} depend on $X$ and $Y$, respectively.

\begin{definition}[$q$-separation]\label{Def:q-separated}
We term  $(\mu,\nu)\in \Prob_\comp(M)\times\Prob_\comp(M)$  \emph{$q$-separated} \textnormal{(}\emph{by} $(\pi,u,v)$\textnormal{)} if there exist $\pi\in \Pi(\mu,\nu)$ and lower semi-continuous functions $u\colon \supp\mu\longrightarrow \R\cup\{+\infty\}$ and $v\colon \supp\nu\longrightarrow \R\cup\{+\infty\}$ with the following properties.
\begin{enumerate}[label=\textnormal{(\arabic*)}]
\item We have $l^q/q \le u \oplus v$ on $\supp\mu\times\supp\nu$.
\item The equality set
\begin{equation}\label{Eq:Equality set S}
E := \big\lbrace(x,y)\in\supp\mu\times\supp\nu \mid u(x) + v(y) = q^{-1}\,l(x,y)^q\big\rbrace
\end{equation}
satisfies $\supp\pi \subset E \subset \{l>0\}$.
\end{enumerate}
\end{definition}

Note that the lower semi-continuity of $u$ and $v$ plus the upper semi-continuity of $l$ imply the compactness of $E$, cf.~Theorem \ref{Th:Inf attained} below.

We omit the technical proof of the following Theorem \ref{Th:Inf attained}, which can be found in \cite[Theorem 4.3]{Mc}.
It transfers \emph{verbatim} to the Lorentz--Finsler case by employing Theorem \ref{Th:Lorentz distance}, Corollary \ref{Cor:Twist}, as well as an auxiliary smooth Riemannian metric on $M$.

To formulate it, we recall that a set $E\subset M\times M$ is \emph{$l^q$-cyclically monotone} (cf.~e.g.~\cite[Definition 2.6]{cavalletti2020} and \cite[Definition 5.1]{villani2009}) if for every $k\in\N$, every $(x_1,y_1),\dots,(x_k,y_k)\in E$, and every permutation $\sigma\in \mathfrak{S}_k$ we have
\begin{equation*}
\sum_{i=1}^k l(x_i,y_i)^q \geq \sum_{i=1}^k l(x_i,y_{\sigma(i)})^q.
\end{equation*}
Every $\smash{\ell_q}$-optimal coupling is concentrated on an $l^q$-cyclically monotone set, but unlike the case of real-valued cost functions \cite[Theorem 5.10]{villani2009}, the converse requires an additional hypothesis on the  coupling, cf.~\cite[Proposition 2.8]{cavalletti2020}.

\begin{theorem}[Duality by $q$-separation]\label{Th:Inf attained}
Assume $(\mu,\nu)\in\Prob_\comp(M) \times\Prob_\comp(M)$  to be $q$-separated by $(\pi,u,v)$.
Then the following hold.
\begin{enumerate}[label=\textnormal{(\roman*)}]
\item We have $\smash{u= {^{(l^q)}}v}$ on $\supp\mu$ and $\smash{v=u^{(l^q)}}$ on $\supp\nu$ according to \eqref{Eq:Transform}.
\item The equality set $E$ from \eqref{Eq:Equality set S} is compact and $l^q$-cyclically monotone.
\item The coupling $\pi$ is $\smash{\ell_q}$-optimal, while $(u,v)$ minimizes the right-hand side of \eqref{Eq:Kantorovich}.
\item The functions $u$ and $v$ extend to semi-convex Lipschitz functions on open neighborhoods of $\supp \mu$ and $\supp \nu$, respectively, with Lipschitz and semi-convexity constants estimated by those of $\smash{l^q\vert_E}$.
\end{enumerate}
\end{theorem}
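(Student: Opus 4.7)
The plan is to treat (i)--(iii) as fairly formal consequences of the definition of $q$-separation combined with Proposition \ref{Pr:Duality}, and reserve the main effort for the regularity statement (iv). For (i), the defining inequality $u \oplus v \ge q^{-1}l^q$ on $\supp\mu\times\supp\nu$ immediately yields $u(x) \ge {}^{(l^q)}v(x)$ for every $x \in \supp\mu$; conversely, a tube-lemma argument using the compactness of $\supp\nu$ shows $\supp\mu = \proj_1(\supp\pi)$, so every such $x$ admits some $y\in\supp\nu$ with $(x,y)\in\supp\pi\subset E$, giving $u(x) = q^{-1}l(x,y)^q - v(y) \le {}^{(l^q)}v(x)$. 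The dual identity $v = u^{(l^q)}$ is symmetric. For (ii), compactness of $E$ follows from the lower semi-continuity of $u \oplus v - q^{-1}l^q$ on the compact product $\supp\mu\times\supp\nu$ (the inequality $\ge 0$ already holding everywhere), and $l^q$-cyclic monotonicity is the usual chain $\sum_i q^{-1}l(x_i,y_i)^q = \sum_i[u(x_i)+v(y_i)] = \sum_i[u(x_i)+v(y_{\sigma(i)})] \ge \sum_i q^{-1}l(x_i,y_{\sigma(i)})^q$ valid for every $\sigma\in\mathfrak{S}_k$ and every $(x_1,y_1),\dots,(x_k,y_k)\in E$. For (iii), the identity $\int_M u\,\d\mu + \int_M v\,\d\nu = q^{-1}\int_{M\times M} l^q\,\d\pi$ sandwiched between the two sides of \eqref{Eq:Kantorovich} forces $\pi$ to be $\ell_q$-optimal and $(u,v)$ to realise the infimum in the dual simultaneously.

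The real work lies in (iv). My plan is to take as candidate extensions the one-sided transforms $\tilde u := {}^{(l^q)}v$ and $\tilde v := u^{(l^q)}$ viewed as functions on (open subsets of) $M$, which by (i) agree with $u$ and $v$ on the respective supports. The key geometric input is that $E$ is a compact subset of the open set $\{l>0\}$, so I can enlarge it to a slightly larger compact neighbourhood $K\subset\{l>0\}$ on which Corollary \ref{Cor:Twist}(i) supplies a uniform Lipschitz constant and a uniform semi-convexity constant for $l^q$, measured against an auxiliary smooth Riemannian metric $h$. Restricted to an open neighbourhood of $\supp\mu$ on which the relevant (near-)maximisers $y^\ast$ satisfy $(x,y^\ast)\in K$, the envelope $\tilde u(x) = \sup_{y\in\supp\nu}\bigl(q^{-1}l(x,y)^q - v(y)\bigr)$ then inherits the semi-convexity and Lipschitz constants of $l^q\vert_K$ via the standard stability property that a supremum of uniformly Lipschitz and uniformly semi-convex functions is itself Lipschitz and semi-convex with the same constants.

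The most delicate step is the ``stability of maximisers'' just alluded to: producing an open neighbourhood $U$ of $\supp\mu$ such that for every $x\in U$ the supremum defining $\tilde u(x)$ is attained, and only nearly attained, at points $y^\ast\in\supp\nu$ with $(x,y^\ast)\in K$. For $x\in\supp\mu$ this is automatic from (i), since every such maximiser $y^\ast$ satisfies $(x,y^\ast)\in E\subset K$, and the upper semi-continuity of the integrand together with compactness of $\supp\nu$ ensures maximisers exist. For $x$ slightly off $\supp\mu$ the claim should follow by a compactness-plus-contradiction argument: any sequence of near-maximisers $y_n^\ast$ corresponding to $x_n\to x_0\in\supp\mu$ must, along a subsequence, converge to a true maximiser $y^\ast\in E_{x_0}$ by upper semi-continuity of $l^q$ and lower semi-continuity of $v$, and continuity of $l$ on $\{l\ge 0\}$ then places $(x_n,y_n^\ast)$ in $K$ for all large $n$. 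The symmetric argument delivers the corresponding statement for $\tilde v$, completing (iv). The hard part throughout is bookkeeping: extracting quantitative constants for $\tilde u$ and $\tilde v$ that depend only on $l^q\vert_E$, rather than on the possibly much larger $K$, which will require choosing $K$ to shrink to $E$ and invoking the continuity of the Lipschitz and semi-convexity bounds of $l^q$ at $E$ via Corollary \ref{Cor:Twist}.
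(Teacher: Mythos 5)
Your proposal is consistent with the route the paper indicates: the paper does not spell out a proof but refers to \cite[Theorem~4.3]{Mc} and notes that it transfers verbatim using Theorem~\ref{Th:Lorentz distance}, Corollary~\ref{Cor:Twist}, and an auxiliary Riemannian metric, which are exactly the ingredients you invoke. Your treatment of (i)--(iii) is routine and correct, and your reduction of (iv) to a stability-of-near-maximizers argument over a compact neighbourhood $K\subset\{l>0\}$ of $E$, combined with the locality of the Lipschitz/semi-convexity constants of $l^q$ from Corollary~\ref{Cor:Twist}(i), is the expected mechanism; the one place to flesh out is the lower bound $\liminf_n\tilde u(x_n)\ge u(x_0)$ (obtained by testing against a fixed $y'$ with $(x_0,y')\in E$ and using continuity of $l$ on $\{l>0\}$), without which the limit of near-maximizers cannot be identified as a genuine maximizer, but this is a minor completion of the sketch rather than a genuine gap.
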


\begin{lemma}[Existence of $q$-separation]\label{Le:Existence separation}
Let  $\mu,\nu\in\Prob_\comp(M)$ satisfy  $\supp\mu \times\supp \nu \subset\{l>0\}$. Then $(\mu,\nu)$ is $q$-se\-parated.
\end{lemma}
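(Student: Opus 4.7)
The plan is to reduce the statement to a direct application of the Kantorovich duality in Proposition~\ref{Pr:Duality}. The whole point of the hypothesis $\supp\mu \times \supp\nu \subset \{l>0\}$ is that the usual obstruction to $q$-separation -- namely that the cost $l^q/q$ drops to $-\infty$ off the causal diamond -- is absent on the relevant product, so every step of the classical duality argument goes through, and the resulting equality set is automatically contained in $\{l>0\}$.

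First I would check the preliminary geometric facts. Since $l$ is continuous on $\{l\ge 0\}$ by Lemma~\ref{Le:l continuous}, the set $\{l>0\}$ is open in $M\times M$. Hence the compact set $\supp\mu \times \supp\nu$ is contained in an open set on which $l$ is continuous, and $l$ attains a strictly positive minimum there. In particular every coupling of $\mu$ and $\nu$ is chronological (hence causal), so $\ell_q(\mu,\nu)\ge 0$, and Lemma~\ref{Le:Existence optimal couplings} furnishes an $\ell_q$-optimal coupling $\pi\in\Pi(\mu,\nu)$.

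Next I would invoke Proposition~\ref{Pr:Duality} with the choices $X:=\supp\mu$ and $Y:=\supp\nu$. It produces lower semi-continuous functions $u\in L^1(M,\mu)$ and $v\in L^1(M,\nu)$ attaining the infimum in \eqref{Eq:Kantorovich}, satisfying $l^q/q \le u\oplus v$ on $M\times M$ together with the $l^q$-conjugacy relations $u={}^{(l^q)}v$ on $\supp\mu$ and $v=u^{(l^q)}$ on $\supp\nu$. Combining optimality of $\pi$ and of the pair $(u,v)$ with the pointwise inequality gives
\[
\int_{M\times M}\!\big(u\oplus v - q^{-1} l^q\big)\,\d\pi
= \int_M u\,\d\mu + \int_M v\,\d\nu - q^{-1}\ell_q(\mu,\nu)^q = 0,
\]
so the nonnegative integrand vanishes $\pi$-almost everywhere.

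Finally, I would check that the equality set $E$ from \eqref{Eq:Equality set S} actually contains $\supp\pi$ and sits inside $\{l>0\}$. On $\supp\mu\times\supp\nu$ the function $u\oplus v - q^{-1}l^q$ is lower semi-continuous (it is the sum of the lower semi-continuous function $u\oplus v$ and the continuous function $-q^{-1}l^q$ on this compact piece of $\{l\ge 0\}$), so $E$ is closed, and the $\pi$-a.e.~equality above promotes to the containment $\supp\pi\subset E$. The second inclusion $E\subset \{l>0\}$ is immediate from the standing hypothesis $\supp\mu\times\supp\nu\subset\{l>0\}$. Thus $(\pi,u,v)$ is a $q$-separating triple for $(\mu,\nu)$. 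The only step requiring genuine care is the verification that the $(u,v)$ extracted from Proposition~\ref{Pr:Duality} are lower semi-continuous and that $E$ is closed -- everything else is bookkeeping.
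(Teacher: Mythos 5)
Your proposal is correct and follows essentially the same route as the paper's proof: both exploit that $l$ is continuous, bounded, and bounded away from zero on the compact set $\supp\mu\times\supp\nu$, and then extract $q$-separating potentials from Kantorovich duality. One small caveat: you attribute the \emph{attainment} of the dual infimum to Proposition~\ref{Pr:Duality}, but as stated that proposition only asserts the value of the infimum and that it is unchanged under restriction to conjugate lower semi-continuous pairs --- it does not assert attainment. For attainment you should cite the underlying duality theorem for bounded continuous costs (as the paper does, via \cite[Theorem 5.10]{villani2009}); here $l^q$ is continuous on the compact product $\supp\mu\times\supp\nu$, so the infimum is attained by a conjugate pair $(u,v)$ that is even uniformly continuous, in particular lower semi-continuous, and the rest of your argument goes through unchanged.
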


\begin{proof}
By Lemma \ref{Le:l continuous}, the function $l$ is continuous, bounded, and bounded away from zero on $\supp \mu \times\supp \nu$.
Thus the supremum in \eqref{Eq:lq distance} and, by  \cite[Theorem 5.10]{villani2009}, the infimum in \eqref{Eq:Kantorovich} are attained by some $\pi\in\Pi(\mu,\nu)$ and uniformly continuous functions $u\colon\supp \mu \longrightarrow \R$ and $v\colon \supp \nu \longrightarrow \R$ of the form \eqref{Eq:Transform}, respectively.
This duality also implies $\pi$ to be concentrated on the equality set $E$ as in \eqref{Eq:Equality set S}, and $E\subset \supp \mu \times\supp \nu \subset \{l>0\}$. 
\end{proof}

\subsection{Characterization of $q$-geodesics}\label{Sec:Characterization}

Now we turn to the description of $q$-geodesics between $q$-separated endpoints, ultimately stated in Corollary \ref{Cor:Char geos}. To this aim, a central part is the solvability of the $\ell_q$-Monge problem shown in Theorem \ref{Th:Char optimal maps}. This is the Lorentz--Finsler analogue of the Brenier--McCann theorem in Riemannian geometry \cite{brenier1991, Mc2} and of the Lorentzian result \cite{Mc}. Comparable results are obtained in \cite{suhr2018} under certain geometric conditions on the support of the target measure.

Since this subsection constitutes a central part of our work, and in order to emphasize the difference between the Lorentz--Finsler and the genuine Lorentzian settings, we provide full proofs even if they are essentially identical to those in \cite[Section~5]{Mc}.

We start by collecting various properties linked to intermediate points of geodesics in $M$ and $\Prob(M)$.
Proposition \ref{Pr:Lagrangian trajectories} and Lemma \ref{Le:Variational char} are based on the following trivial consequence of \eqref{eq:reverse triangle}: for every triple $(x,z,y)\in M\times M\times M$ and every $t\in (0,1)$,
\begin{equation}\label{Eq:lq inequ with s}
    l(x,y) \geq t\,\frac{l(x,z)}{t} + (1-t)\,\frac{l(z,y)}{1-t}.
\end{equation}

\begin{proposition}[Lagrangian trajectories do not cross]\label{Pr:Lagrangian trajectories} Let $q\in (0,1)$, $t\in(0,1)$, and $x,x',y,y'\in M$ with $l(x,y) >0$ and $l(x',y')>0$. If the set $Z_t(x,y)$ intersects $Z_t(x',y')$ yet
\begin{equation}\label{Eq:other inequality}
    l(x,y')^q + l(x',y)^q \leq l(x,y)^q + l(x',y')^q,
\end{equation}
then we have $x=x'$ and $y=y'$. 
\end{proposition}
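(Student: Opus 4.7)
The plan is to combine the reverse triangle inequality \eqref{eq:reverse triangle} through the shared intermediate point with the strict concavity of $s \mapsto s^q$ on $(0,\infty)$, and then to straighten out a broken maximizing curve using the fact that causal maximizers are pregeodesics. To begin, fix $z \in Z_t(x,y) \cap Z_t(x',y')$ and set $a := l(x,y) > 0$ and $b := l(x',y') > 0$. The defining relations of $Z_t$ yield $l(x,z) = ta$, $l(z,y) = (1-t)a$, $l(x',z) = tb$, and $l(z,y') = (1-t)b$; applying \eqref{eq:reverse triangle} along the broken paths $x \to z \to y'$ and $x' \to z \to y$ then gives
\begin{equation*}
l(x,y') \ge ta + (1-t)b, \qquad l(x',y) \ge tb + (1-t)a.
\end{equation*}

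Raising these bounds to the power $q \in (0,1)$ and invoking the concavity of $s \mapsto s^q$ on $(0,\infty)$, I would sum them to obtain
\begin{equation*}
l(x,y')^q + l(x',y)^q \ge (ta + (1-t)b)^q + (tb + (1-t)a)^q \ge a^q + b^q,
\end{equation*}
where the final step is strict unless $a = b$, thanks to the strict concavity of $s \mapsto s^q$ together with $t \in (0,1)$. Comparing with the standing hypothesis \eqref{Eq:other inequality} forces equality throughout, so $a = b =: \ell$, and both reverse triangle inequalities are saturated: $l(x,y') = l(x,z) + l(z,y') = \ell$ and $l(x',y) = l(x',z) + l(z,y) = \ell$.

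It remains to upgrade these identities to $x = x'$ and $y = y'$. For the latter, I would choose a unit-speed maximizing geodesic $\tilde\gamma \colon [0,\ell] \longrightarrow M$ from $x$ to $y$ with $\tilde\gamma(t\ell) = z$ and a unit-speed maximizing geodesic $\tilde\eta \colon [0,(1-t)\ell] \longrightarrow M$ from $z$ to $y'$; both are furnished by global hyperbolicity. The concatenation of $\tilde\gamma\vert_{[0, t\ell]}$ with $\tilde\eta$ is then a unit-speed causal curve from $x$ to $y'$ of total length $\ell = l(x,y')$, hence itself a maximizing causal curve. By the standard Lorentz--Finsler fact that such curves are smooth reparametrized geodesics (cf.\ \cite{minguzzi2015c}), this concatenation must be $C^1$ at $z$, which yields $\dot{\tilde\gamma}(t\ell) = \dot{\tilde\eta}(0)$. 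Uniqueness for the geodesic flow then forces $\tilde\eta(s) = \tilde\gamma(t\ell + s)$ for $s \in [0,(1-t)\ell]$, and evaluating at the endpoint gives $y' = y$. A completely symmetric argument, now gluing a unit-speed maximizing geodesic from $x'$ to $z$ with $\tilde\gamma\vert_{[t\ell,\ell]}$ and using ODE uniqueness backwards, yields $x = x'$. I expect the corner-straightening step to be the main subtlety, since it rests on promoting the classical pregeodesic characterization of maximizing causal curves to the Lorentz--Finsler framework.
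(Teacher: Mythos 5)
Your proof is correct and follows essentially the same route as the paper's: reverse triangle inequality through the shared point $z$ combined with strict concavity of $r \mapsto r^q$ forces $l(x,y) = l(x',y')$ and saturates the triangle inequalities, after which the geometric rigidity is extracted from the fact that maximizing causal curves are smooth geodesics. The paper packages that final step via Remark~\ref{Re:Zs s-intermediate points} and a ``five points on one maximizing geodesic'' argument, whereas you invoke corner-straightening plus ODE uniqueness directly, but these are two presentations of the same underlying uniqueness facts for maximizing geodesics.
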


\begin{proof} Given any $z\in Z_t(x,y)\cap Z_t(x',y')$, the inequality \eqref{Eq:lq inequ with s} for the triple $(x,z,y')$, our conventions \eqref{Eq:Conventions}, and the concavity of $r\longmapsto r^q$ on $[0,+\infty)$ yield
\begin{align}\label{Eq:other inequality 2}
\begin{split}
    l(x,y')^q &\geq \big(l(x,z)+l(z,y') \big)^q\\
    &=\big(t\,l(x,y) + (1-t)\,l(x',y')\big)^q \\
    &\geq t\,l(x,y)^q + (1-t)\,l(x',y')^q,
\end{split}
\end{align}
and analogously 
\begin{equation}\label{Eq:other inequality 3}
    l(x',y)^q \geq  t\,l(x',y')^q + (1-t)\,l(x,y)^q.
\end{equation}
This gives $l(x,y') > 0$ and $l(x',y)> 0$.
Furthermore, by \eqref{Eq:other inequality}, summing up the previous inequalities forces equalities all over \eqref{Eq:lq inequ with s} --- namely for the triples $(x,z,y')$ and $(x',z,y)$ ---, \eqref{Eq:other inequality 2}, and \eqref{Eq:other inequality 3}.
By Remark \ref{Re:Zs s-intermediate points}, equality in \eqref{Eq:lq inequ with s} implies $z$ to lie in the interior of a maximizing geodesic from $x$ to $y'$ and in the interior of a maximizing geodesic from $x'$ to $y$. Hence, all five points under consideration lie on the same maximizing geodesic $\gamma\colon[0,1]\longrightarrow M$.
Finally, identity in the second inequality of \eqref{Eq:other inequality 2}, together with \emph{strict} concavity of $r\longmapsto r^q$ on $[0,+\infty)$, implies $l(x,y) = l(x',y')$.
Since $z$ divides the segments of $\gamma$ from $x$ to $y$ and from $x'$ to $y'$ in the same ratio, we obtain $x=x'$ and $y=y'$. 
\end{proof}

\begin{theorem}[Continuous inverse maps]\label{Cor:Continuous inverse maps}
Let $q\in (0,1)$ and $t\in (0,1)$.
Suppose $(\mu_0,\mu_1)\in \mathcal{P}_\comp(M)\times\Prob_\comp(M)$  to be $q$-separated.
Then there exists a continuous map $W\colon Z_t(E) \longrightarrow E$ such that $z_t\circ W$ is the identity map on $z_t(E)$, and whenever $\mu_t\in\mathcal{P}(M)$ is a $t$-intermediate point of a $q$-geodesic from $\mu_0$ to $\mu_1$, $W_\sharp\mu_t$ belongs to $\Pi(\mu_0,\mu_1)$ and maximizes $\ell_q(\mu_0,\mu_1)$.
Here $E\subset\supp\mu_0 \times\supp\mu_1$ denotes the equality set from \eqref{Eq:Equality set S}, and $z_t$ is the  map given by Lemma \textnormal{\ref{Le:Measurable extension}}.
\end{theorem}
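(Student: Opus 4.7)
My plan is to build $W$ as a set-theoretic inverse of $z_t$ by combining the $l^q$-cyclical monotonicity provided by $q$-separation with the non-crossing property from Proposition~\ref{Pr:Lagrangian trajectories}. First, I would invoke Theorem~\ref{Th:Inf attained} to record that the equality set $E$ associated with $(\pi,u,v)$ is compact and $l^q$-cyclically monotone. Applied to two points, cyclical monotonicity reads
\[
l(x,y')^q + l(x',y)^q \le l(x,y)^q + l(x',y')^q
\]
for every $(x,y),(x',y') \in E$, which is exactly the hypothesis \eqref{Eq:other inequality} of Proposition~\ref{Pr:Lagrangian trajectories}. That proposition then yields the following non-crossing principle: whenever $(x,y),(x',y') \in E$ satisfy $Z_t(x,y) \cap Z_t(x',y') \neq \emptyset$, one has $(x,y)=(x',y')$.

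This uniqueness allows me to define $W\colon Z_t(E)\longrightarrow E$ unambiguously by setting $W(z)$ to be the unique $(x,y) \in E$ with $z \in Z_t(x,y)$. By construction $z \in Z_t(W(z))$; moreover, for $z = z_t(x,y) \in z_t(E)$ we have $z \in Z_t(x,y)$, hence $W(z)=(x,y)$ and $z_t(W(z)) = z$, as required. Continuity of $W$ will follow by a closed-graph argument: both $E$ and $Z_t(E)$ are compact (by Theorem~\ref{Th:Inf attained} and Lemma~\ref{Le:Zs}), and $l\vert_E$ is bounded away from zero, so it suffices to show that if $z_n \to z$ in $Z_t(E)$ and $W(z_n) = (x_n,y_n) \to (x,y) \in E$ along a subsequence, then $W(z)=(x,y)$. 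Passing to the limit in the identities $l(x_n,z_n) = t\,l(x_n,y_n)$ and $l(z_n,y_n) = (1-t)\,l(x_n,y_n)$, using continuity of $l$ on $\{l \ge 0\}$ (Lemma~\ref{Le:l continuous}), yields $z \in Z_t(x,y)$. Since also $z \in Z_t(W(z))$, the non-crossing principle forces $W(z)=(x,y)$.

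Now let $\mu_t$ be a $t$-intermediate point of a $q$-geodesic from $\mu_0$ to $\mu_1$. Corollary~\ref{Cor:Compact support} shows $\mu_t$ is compactly supported, whence Lemma~\ref{Le:Existence optimal couplings} supplies the optimal couplings required by the converse direction of Proposition~\ref{Pr:Reverse triangle}. That proposition yields $\omega \in \Prob(M\times M\times M)$ with middle marginal $\mu_t$ such that $\pi_{13} := (\proj_{13})_\sharp\omega$ is an $\ell_q$-optimal coupling of $\mu_0$ and $\mu_1$ supported on triples $(x,z,y)$ with $z \in Z_t(x,y)$ and $l(x,y)>0$. The Kantorovich duality \eqref{Eq:Kantorovich}, together with the inequality $l^q/q \le u \oplus v$ with equality precisely on $E$, shows that every $\ell_q$-optimal coupling of $\mu_0$ and $\mu_1$ is concentrated on $E$; thus $\omega$ is concentrated on triples $(x,z,y)$ with $(x,y) \in E$ and $z \in Z_t(x,y)$, where $W(z)=(x,y)$ by construction. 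Consequently $W_\sharp\mu_t = W_\sharp(\proj_2)_\sharp\omega = (\proj_{13})_\sharp\omega = \pi_{13}$, which is $\ell_q$-optimal. The principal obstacle is the well-definedness and continuity of $W$ in the first two paragraphs; the final identification of $W_\sharp\mu_t$ with $\pi_{13}$ is then clean bookkeeping.
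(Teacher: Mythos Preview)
Your proof is correct and matches the paper's approach: well-definedness of $W$ via $l^q$-cyclical monotonicity of $E$ and Proposition~\ref{Pr:Lagrangian trajectories}, continuity by sequential compactness of $E$, and the identification of $W_\sharp\mu_t$ with $\pi_{13}$ via Proposition~\ref{Pr:Reverse triangle} and duality. The only minor difference is that the paper invokes \cite[Lemma~3.1]{ahmad2011} to write $\omega=(W_0,\Id,W_1)_\sharp\mu_t$ before projecting, whereas you observe directly that $W\circ\proj_2=\proj_{13}$ holds $\omega$-a.e., which is arguably cleaner.
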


\begin{proof}
Recall that $E$ is compact and $l^q$-cyclically monotone by Theorem \ref{Th:Inf attained}.
Furthermore, $E\subset \{l>0\}$  by Definition \ref{Def:q-separated}.
Given any $z\in Z_t(E)$ we define $W(z) := (x,y)$, where $(x,y) \in E$ is chosen such that $z\in Z_t(x,y)$ ac\-cor\-ding to the definition of $Z_t(E)$ from Lemma \ref{Le:Zs}. 

We claim that $W$ is well-defined and continuous.
If $z\in Z_t(x,y) \cap Z_t(x',y')$ for  $(x,y),(x',y')\in E$, the $l^q$-cyclical monotonicity of $E$ yields \eqref{Eq:other inequality}, whence $x=x'$ and $y=y'$ by Proposition \ref{Pr:Lagrangian trajectories}.
Concerning continuity, let $(z_k)_{k\in \N}$ be a sequence in $Z_t(E)$ converging to $z\in Z_t(E)$, and consider a fixed  
non-relabeled subsequence of $(W(z_k))_{k\in\N}$.
Then the compactness of $E$ entails the existence of a limit $(x',y')\in E$ of a further non-relabeled subsequence of the latter. By the above well-definedness proof, $(x',y')$ necessarily coincide with the unique pair $(x,y)\in E$ with $z\in Z_t(x,y)$, i.e.~with $W(z)$.
Since the initial subsequence was arbitrary, the continuity of $W$ follows. 
The construction also ensures that $W$ is the right-inverse of $z_t$ on $z_t(E)$.

Finally, let $\mu_t$ be as hypothesized.
Since the pair $(\mu_0,\mu_1)$ is $q$-separated and $l$ is bounded from above on the compact set $\supp\mu_0\times\supp\mu_1$ by Lemma \ref{Le:l continuous}, we have $\ell_q(\mu_0,\mu_1)\in (0,+\infty)$; in particular $\ell_q(\mu_0,\mu_t) > 0$ and $\ell_q(\mu_t,\mu_1) >0$.
Hence, by applying Corollary \ref{Cor:Compact support}, Lemma \ref{Le:Zs}, and Lemma \ref{Le:Existence optimal couplings}, $\mu_0$ and $\mu_t$ as well as $\mu_t$ and $\mu_1$ admit $\smash{\ell_q}$-optimal couplings.
Proposition \ref{Pr:Reverse triangle} thus yields $\omega\in \mathcal{P}(M\times M\times M)$ with marginals $\mu_0$, $\mu_t$, and $\mu_1$ such that $\pi_{ij} := (\proj_{ij})_\sharp\omega$ is $\smash{\ell_q}$-optimal for $\mu_i$ and $\mu_j$ for every $i,j = 0,t,1$ with $i<j$.
By duality, cf.~Theorem \ref{Th:Inf attained} and Proposition \ref{Pr:Duality}, $\pi_{01}$ is concentrated on $E$.
By Proposition \ref{Pr:Reverse triangle}, $\mu_t = (\proj_t)_\sharp\omega$ is concentrated on $Z_t(E)$, which is compact by Lemma \ref{Le:Zs}.
Setting $\smash{W_i := \proj_i \circ W}$ for $i=0,1$, the above argument gives the concentration of $\omega$ on $W_0(Z_t(E))\times Z_t(E) \times W_1(Z_t(E))$.
This implies $\omega = (W_0, \Id, W_1)_\sharp\mu_t$ by \cite[Lemma 3.1]{ahmad2011}, and hence $\pi_{01}= W_\sharp \mu_t$. 
\end{proof}

\begin{lemma}[Variational characterization of geodesic endpoints]\label{Le:Variational char}
Let $q\in (0,1)$ and $\gamma\colon [0,1]\longrightarrow M$ be a 
 maximizing timelike geodesic.
Then for every $t\in (0,1)$ and $x\in M$, we have
\begin{equation}\label{Eq:identity lx1}
    l\big( x,\gamma(1)\big)^q \geq t^{1-q}\,l\big( x,\gamma(t)\big)^q + (1-t)^{1-q}\,l\big( \gamma(t),\gamma(1)\big)^q.
\end{equation}
Equality holds therein if and only if $x=\gamma(0)$.
\end{lemma}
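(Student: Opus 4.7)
The plan is to combine the reverse triangle inequality with strict concavity of $s\longmapsto s^q$ on $[0,+\infty)$, and then to extract the equality case from the smoothness of maximizing causal curves at concatenation points. Write $a := l(x,\gamma(t))$, $b := l(\gamma(t),\gamma(1))$, and $c := l(x,\gamma(1))$; since $\gamma$ is a maximizing timelike geodesic, $b>0$ and $b = (1-t)\,l(\gamma(0),\gamma(1))$. If $x \not\le \gamma(t)$, then $l(x,\gamma(t)) = -\infty$, and by the conventions \eqref{Eq:Conventions} the right-hand side of \eqref{Eq:identity lx1} equals $-\infty$, so the inequality is trivial. Hence we may assume $a\geq 0$, and then \eqref{eq:reverse triangle} yields $c \geq a+b > 0$.

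For the inequality itself, concavity of $s\longmapsto s^q$ on $[0,+\infty)$ applied with weights $t$ and $1-t$ to the values $a/t$ and $b/(1-t)$ gives
\[
(a+b)^q \;\geq\; t\bigl(a/t\bigr)^q + (1-t)\bigl(b/(1-t)\bigr)^q \;=\; t^{1-q}\, a^q + (1-t)^{1-q}\, b^q,
\]
which together with $c^q \geq (a+b)^q$ establishes \eqref{Eq:identity lx1}.

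For the equality characterization, the \emph{if} direction is immediate: if $x=\gamma(0)$, the maximality of $\gamma$ gives $a = t\,l(\gamma(0),\gamma(1))$, $b=(1-t)\,l(\gamma(0),\gamma(1))$, and $c=l(\gamma(0),\gamma(1))$, so both inequalities above are sharp. Conversely, if equality holds in \eqref{Eq:identity lx1}, then both $c = a+b$ and (by strict concavity of $s\longmapsto s^q$ on $(0,+\infty)$) $a/t = b/(1-t)$. Combined with $b = (1-t)\,l(\gamma(0),\gamma(1))$, this forces $l(\gamma(0),\gamma(1)) = c$, $a = tc$, and $b = (1-t)c$. By global hyperbolicity, there exists a unit-speed maximizing geodesic $\eta\colon[0,c]\longrightarrow M$ from $x$ to $\gamma(1)$, and the saturated reverse triangle inequality $c = a+b$ forces $\eta(tc) = \gamma(t)$. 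Reparameterizing $\gamma$ with unit speed on $[0,c]$, both curves hit $\gamma(t)$ at parameter $tc$. Concatenating $\eta|_{[0,tc]}$ with $\gamma|_{[tc,c]}$ produces a causal curve from $x$ to $\gamma(1)$ of length $c$, hence a maximizing causal curve; by the first variation formula (Lemma~\ref{lm:1st-var}) it must be a geodesic and thus smooth at $\gamma(t)$, so $\dot\eta(tc) = \dot\gamma(tc)$. Uniqueness of solutions to the geodesic ODE then forces $\eta \equiv \gamma$ on $[0,c]$; in particular $x = \eta(0) = \gamma(0)$.

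The main obstacle is not the inequality, which is routine, but the rigidity in the equality case: one needs \emph{both} sharp inequalities simultaneously in order to pin down $x=\gamma(0)$, and the crucial mechanism is that a maximizing causal concatenation between timelike-related endpoints is automatically a $C^\infty$ geodesic, which matches the tangent vectors at the joint and forces $\eta$ to coincide with $\gamma$.
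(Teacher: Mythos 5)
Your overall route matches the paper's: get \eqref{Eq:identity lx1} from the reverse triangle inequality combined with (strict) concavity of $r\mapsto r^q$, then use the saturated reverse triangle inequality and the strict concavity to force $x=\gamma(0)$ in the equality case. However, there is a genuine gap in one step of the equality argument. You write that global hyperbolicity gives a unit-speed maximizing geodesic $\eta\colon[0,c]\to M$ from $x$ to $\gamma(1)$, and that the saturation $c=a+b$ ``forces $\eta(tc)=\gamma(t)$.'' That claim is unjustified: a maximizing geodesic from $x$ to $\gamma(1)$ need not pass through $\gamma(t)$. Saturation of the reverse triangle inequality tells you that \emph{some} maximizing causal curve from $x$ to $\gamma(1)$ passes through $\gamma(t)$ (cf.\ Remark~\ref{Re:Zs s-intermediate points}), but in principle there may be several maximizing geodesics between $x$ and $\gamma(1)$, and there is no reason that the one you selected hits $\gamma(t)$ at parameter $tc$. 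As written, the concatenation ``$\eta|_{[0,tc]}$ with $\gamma|_{[tc,c]}$'' is not even a well-defined connected curve without that claim.

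The repair is straightforward and brings you in line with the paper's argument. Once you have $a=l(x,\gamma(t))>0$, take $\eta$ to be a unit-speed maximizing geodesic from $x$ to $\gamma(t)$ (domain $[0,a]$), and concatenate it with $\gamma|_{[t,1]}$ (unit-speed on $[a,c]$, using $a=tc$ and $b=(1-t)c$). This concatenation has Lorentz--Finsler length $a+b=c=l(x,\gamma(1))$, hence is a maximizing causal curve; by the first variation formula it is a $C^\infty$ geodesic, so in particular $\dot\eta(a)=\dot\gamma(a)$. Since this new geodesic coincides with $\gamma$ on $[a,c]$, uniqueness of solutions of the geodesic equation forces it to agree with $\gamma$ on all of $[0,c]$, whence $x=\gamma(0)$. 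The paper phrases the same idea more briefly: equality in \eqref{Eq:lq inequ with s} implies, via Remark~\ref{Re:Zs s-intermediate points}, that $\gamma(t)$ lies in the interior of \emph{some} maximizing geodesic from $x$ to $\gamma(1)$, and as $\gamma(t)$ is also interior to $\gamma$, the two geodesics must coincide; strict concavity then pins $x=\gamma(0)$. With the indicated correction your proof is equivalent; without it, the step ``$\eta(tc)=\gamma(t)$'' does not follow from what precedes it.
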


\begin{proof}
The claimed bound \eqref{Eq:identity lx1} follows from \eqref{Eq:lq inequ with s} for $(x,\gamma(t),\gamma(1))$ and the concavity of the function $r \longmapsto r^q$.
Moreover, it is clear that $x=\gamma(0)$ achieves equality therein.
Now, assume that equality holds in \eqref{Eq:identity lx1} for some $x\in M$.
Then equality holds in \eqref{Eq:lq inequ with s} for $(x,\gamma(t),\gamma(1))$, which implies some timelike maximizing geodesic ending at $\gamma(1)$ to pass through $x$ and $\gamma(t)$.
As $\gamma(t)$ lies in the interior of $\gamma$, this geodesic necessarily coincides with $\gamma$,
and equality in \eqref{Eq:identity lx1} and the strict concavity of $r\longmapsto r^q$ on $[0,+\infty)$ force $x=\gamma(0)$. 
\end{proof}

\begin{proposition}[Star-shapedness of $q$-separation]\label{Pr:Star-shaped}
Let $q\in (0,1)$, $\mu_0,\mu_1$ $\in\mathcal{P}_\comp(M)$ and $(\mu_t)_{t\in[0,1]}$ be a $q$-geodesic from $\mu_0$ to $\mu_1$.
If $(\mu_0,\mu_1)$ is $q$-separated, then so is $(\mu_s,\mu_t)$ for every $s,t\in[0,1]$ with $s<t$.
\end{proposition}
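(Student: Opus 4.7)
My plan is to adapt the proof of \cite[Proposition~5.9]{Mc} from the Lorentzian to the Lorentz--Finsler setting. All analytic ingredients used there---Kantorovich duality from Proposition~\ref{Pr:Duality}, the reverse triangle inequality from Proposition~\ref{Pr:Reverse triangle}, Theorem~\ref{Cor:Continuous inverse maps}, and the variational rigidity of Lemma~\ref{Le:Variational char}---are already in place in our framework, so the translation should be essentially mechanical. Denote the $q$-separation data of $(\mu_0, \mu_1)$ by $(\pi, u, v)$ with equality set $E \subset \{l>0\}$. The strategy is to transport the Kantorovich potentials $(u, v)$ forward along the $q$-geodesic via Hopf--Lax-type formulas to obtain $(u_s, v_t)$ on $(\supp\mu_s, \supp\mu_t)$, and to produce an adapted $\ell_q$-optimal coupling $\pi_{st}$ via a geodesic lifting.

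For the coupling, I would iterate Proposition~\ref{Pr:Reverse triangle} on the triples $(\mu_0, \mu_s, \mu_1)$ and $(\mu_0, \mu_t, \mu_1)$---whose hypothesis is met, since the $q$-geodesic property forces $\ell_q(\mu_0, \mu_s) + \ell_q(\mu_s, \mu_1) = \ell_q(\mu_0, \mu_1) \in (0, +\infty)$---and glue the resulting measures to produce $\omega\in\mathcal{P}(M^4)$ with marginals $\mu_0, \mu_s, \mu_t, \mu_1$, concentrated on quadruples $(x, z_s, z_t, y)$ lying on a common maximizing timelike geodesic with $z_r \in Z_r(x, y)$. Taking $\pi_{st} := (\proj_{23})_\sharp\omega$, a direct calculation yields
\begin{equation*}
\int_{M\times M} l^q\,\d\pi_{st} = (t-s)^q\int_{M\times M} l(x, y)^q\,\d(\proj_{14})_\sharp\omega = (t-s)^q\,\ell_q(\mu_0,\mu_1)^q = \ell_q(\mu_s, \mu_t)^q,
\end{equation*}
so $\pi_{st}$ is $\ell_q$-optimal and $\supp\pi_{st}\subset\{l>0\}$; Corollary~\ref{Cor:Compact support} yields $\mu_s, \mu_t\in\Prob_\comp(M)$. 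The potentials would then be constructed via Hopf--Lax-type infimal convolutions: define $v_t$ on a neighborhood of $\supp\mu_t$ as an infimum over $y\in\supp\mu_1$ of $v(y)$ plus a suitably rescaled $l^q$-kernel whose precise form is dictated by Lemma~\ref{Le:Variational char}, and set $u_s := {}^{(l^q)}v_t$ on $\supp\mu_s$ via \eqref{Eq:Transform}. Both functions are lower semi-continuous by the upper semi-continuity of $l$ combined with the compactness of $\supp\mu_0$ and $\supp\mu_1$; the bound $u_s\oplus v_t \geq q^{-1}l^q$ is built into the transforms; and tightness along $\supp\pi_{st}$ follows from the equality case of Lemma~\ref{Le:Variational char} together with the homogeneity $l(z_s, z_t) = (t-s)\, l(x, y)$.

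The main obstacle is verifying the final requirement of Definition~\ref{Def:q-separated}, namely that the full equality set $E_{st}$ of $(u_s, v_t)$ lies in $\{l>0\}$. To this end, I would pick an arbitrary $(a, b) \in E_{st}$, select a minimizer $y^\star \in \supp\mu_1$ in the defining infimum for $v_t(b)$ (existing by compactness and lower semi-continuity of the integrand), and use the strict-inequality clause of Lemma~\ref{Le:Variational char} to force $b = z_t(x^\star, y^\star)$ and, symmetrically, $a = z_s(x^\star, y^\star)$ for some $(x^\star, y^\star)\in E$; Proposition~\ref{Pr:Lagrangian trajectories} rules out spurious intersections of distinct Lagrangian trajectories sharing an intermediate point. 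Then $l(a, b) = (t-s)\, l(x^\star, y^\star) > 0$, since $(x^\star, y^\star)\in E \subset \{l>0\}$, which completes the verification that $(\pi_{st}, u_s, v_t)$ $q$-separates $(\mu_s, \mu_t)$.
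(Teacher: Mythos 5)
Your overall skeleton is right and matches the paper's strategy (transport the potentials along the geodesic, use Theorem~\ref{Cor:Continuous inverse maps} and the rigidity in Lemma~\ref{Le:Variational char}), but there is a concrete gap in the coupling step. You apply Proposition~\ref{Pr:Reverse triangle} separately to the triples $(\mu_0,\mu_s,\mu_1)$ and $(\mu_0,\mu_t,\mu_1)$, obtaining $\omega_s,\omega_t\in\mathcal{P}(M^3)$, and then claim to "glue" them to a single $\omega\in\mathcal{P}(M^4)$ concentrated on quadruples lying on a common maximizing geodesic. For that gluing to make sense you would need $(\proj_{13})_\sharp\omega_s=(\proj_{13})_\sharp\omega_t$, i.e.\ that both produce the \emph{same} $\ell_q$-optimal coupling of $(\mu_0,\mu_1)$. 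But the proposition's hypotheses only put $\mu_0,\mu_1\in\mathcal{P}_\comp(M)$, not $\mathcal{P}_\comp^\ac(M,\meas)$, so the $\ell_q$-optimal coupling of $\mu_0,\mu_1$ may fail to be unique, and Proposition~\ref{Pr:Reverse triangle} gives no control over which one it returns in each application. Even restricting attention to a fixed $q$-geodesic, the fact that $W^{(s)}_\sharp\mu_s=W^{(t)}_\sharp\mu_t$ is not obvious without already knowing the $q$-separation of intermediate pairs, so the gluing argument is circular as stated.

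The paper sidesteps this entirely. First, it observes that it suffices to treat $(\mu_0,\mu_t)$ and $(\mu_t,\mu_1)$ (apply the result to the reparametrized sub-geodesic to recover a general $(\mu_s,\mu_t)$), which spares you the four-fold coupling altogether. Second, rather than first producing a coupling and then potentials, it defines the coupling \emph{directly from} $\mu_t$ via the continuous inverse map of Theorem~\ref{Cor:Continuous inverse maps}, namely $\pi_t:=(W_1,\Id)_\sharp\mu_t$, and then writes the potentials by explicit composition with $W_2$:
\[
u_t:=t^{q-1}u,\qquad
v_t:=t^{q-1}\,v\circ W_2-t^{q-1}(1-t)^{1-q}\,q^{-1}\,l^q\circ(\Id,W_2),
\]
so that lower semi-continuity (in fact continuity) is immediate from the continuity of $W$. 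Your Hopf--Lax infimal-convolution formulation does in fact coincide with this pointwise formula on $Z_t(E)$ (one checks, as a consequence of the $q$-separation of $(\mu_0,\mu_1)$ and the general version of \eqref{Eq:identity lx1}, that the infimum is attained exactly at $y=W_2(z)$), but that coincidence is not self-evident and should be spelled out if you prefer to phrase $v_t$ as an infimum. I would recommend adopting the explicit composition form and the reduction to $(\mu_0,\mu_t)$ and $(\mu_t,\mu_1)$; with those changes the rest of your outline (using Lemma~\ref{Le:Variational char} for the equality set, Proposition~\ref{Pr:Lagrangian trajectories} to rule out crossings, and $\supp\pi_t\subset E_t\subset\{l>0\}$) goes through as you describe.
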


\begin{proof}
It clearly suffices to prove the $q$-separation of $(\mu_0,\mu_t)$ and $(\mu_t,\mu_1)$ for every $t\in (0,1)$.
We concentrate on  $(\mu_0,\mu_t)$; $(\mu_t,\mu_1)$ is treated analogously by taking Remark \ref{Re:modified pis us vs} into account (or reduced to the reverse Lorentz--Finsler structure).

Assume the pair $(\mu_0,\mu_1)$ to be $q$-separated by $(\pi,u,v)$.
Theorem \ref{Th:Inf attained} implies the continuity of $u$ and $v$ on $\supp\mu_0$ and $\supp\mu_1$, respectively.
Denote by $E$ the equality set from \eqref{Eq:Equality set S} relative to $(\mu_0,\mu_1)$,
let $W\colon Z_t(E)\longrightarrow E$ be the continuous map provided by Theorem \ref{Cor:Continuous inverse maps}, and define $\smash{W_i := \proj_i\circ W}$ for $i=1,2$.
We claim the $q$-separation of $(\mu_0,\mu_t)$ by $(\pi_t, u_t,v_t)$, where
\begin{align}\label{Eq:pis us vs}
\begin{split}
    \pi_t &:= (W_1, \Id)_\sharp\mu_t,\\
    u_t &:= t^{q-1}\,u,\\
    v_t &:= t^{q-1}\, v \circ W_2 - t^{q-1}\,(1-t)^{1-q}\,q^{-1}\,l^q\circ (\Id, W_2).
    \end{split}
\end{align}

It follows from Theorem \ref{Cor:Continuous inverse maps} that $\pi_t \in \Pi(\mu_0,\mu_t)$.
By Proposition \ref{Pr:Reverse triangle} and the compactness of $Z_t(E)$ from Lemma \ref{Le:Zs}, we find $\supp\mu_t \subset Z_t(E)$.
Since $\smash{(\Id, W_2)\big(Z_t(E)\big)}\subset \{ l>0 \}$, Theorem \ref{Cor:Continuous inverse maps} and Lemma \ref{Le:l continuous} then imply the continuity of $u_t$ and $v_t$ on $\supp\mu_0$ and $\supp\mu_t$, respectively.
Now, given any $x\in\supp\mu_0$, $z \in \supp\mu_t$, and $y:=W_2(z)$, the hypothesized $q$-separation of $(\mu_0,\mu_1)$ entails
\begin{equation*}
    u(x) + v(y) \geq q^{-1}\,l(x,y)^q.
\end{equation*}
By Remark \ref{Re:Zs s-intermediate points} and Lemma \ref{Le:Variational char}, this yields
\begin{align*}
    t^{q-1}\,u(x) + t^{q-1}\,v(y)
    &\geq q^{-1}\,t^{q-1}\,l(x,y)^q\\
    &\geq q^{-1}\,l(x,z)^q + q^{-1}\,t^{q-1}\,(1-t)^{1-q}\,l(z,y)^q.
\end{align*}
Thus, we have
\begin{equation}\label{Eq:inequ us vs}
    u_t(x) + v_t(z) \geq q^{-1}\,l(x,z)^q.
\end{equation}
Moreover, we deduce from Lemma \ref{Le:Variational char} and $W(z) \in E$ that equality holds in \eqref{Eq:inequ us vs} if and only if $x = W_1(z)$.
This proves $\pi_t$ to be concentrated on
\begin{equation*}
    E_t := \big\lbrace (x,z) \in \supp \mu_0  \times \supp \mu_t  \mid u_t(x) + v_t(z) = q^{-1}\,l(x,z)^q\big\rbrace,
\end{equation*}
which is compact by the compactness of $\supp\mu_0 \times \supp\mu_t$ (cf.~Corollary \ref{Cor:Compact support}) 
and by Lemma \ref{Le:l continuous}.
Hence, we have $\supp\pi_t \subset E_t$.
It remains to prove $E_t \subset \{l>0\}$.
For any $(x,z) \in E_t$, recall that $x=W_1(z)$ necessarily holds, and $W(Z_t(E))\subset E\subset \{l>0\}$ yields $l(W_1(z),W_2(z)) > 0$.
Since $z\in Z_t(E)$, this implies $l(x,z) = t\,l(W_1(z),W_2(z))>0$. 
\end{proof}

\begin{remark}\label{Re:modified pis us vs} 
To prove the $q$-separation of $(\mu_t,\mu_1)$, the appropriate replacements of the quantities from \eqref{Eq:pis us vs} are
\begin{align*}
    \pi_t' &:= (\Id,W_2)_\sharp \mu_t,\\
    u_t' &:= (1-t)^{q-1}\,u\circ W_1 - (1-t)^{q-1}\,t^{1-q}\,q^{-1}\,l^q\circ(W_1, \Id),\\
    v_t' &:= (1-t)^{q-1}\,v.
\end{align*}
\end{remark}

Now we recall the definition of sub- and superdifferentiability of a function $u\colon M\longrightarrow [-\infty,+\infty]$ before Theorem \ref{Th:Lorentz distance}.
For $x\in M$ we write $x\in \Dom(\d u)$ if $u(x)\in\R$ and $u$ is differentiable at $x$.
The latter  is equivalent to $u$ being both sub- and superdifferentiable at $x$, in which case $\smash{\partial_\bullet u(x) = \partial^\bullet u(x) = \{\d u(x)\}}$. Finally, $\smash{\Dom(\d^2 u)}$ is the set of all $x\in \Dom(\d u)$ at which $u$ is  twice differentiable.

In the rest of this section, let $\meas$ be an arbitrary measure on $M$ comparable with the Lebesgue measure in each local coordinate system. It is only used to distinguish $\meas$-negligible sets in view of  differentiability properties: for example, by Alexandrov's theorem, semi-convex or semi-concave functions are twice differentiable $\meas$-a.e.\ (see \cite{bangert,ohta-uni} for the Riemannian and Finsler cases, as well as \cite[Chapter 14]{villani2009} for more details).

In the sequel, we adopt the following conventions without further notice.
Given any $X\subset M$, every semi-convex Lipschitz function $u\colon X \longrightarrow \R$ (may and) will be treated as the restriction of an appropriate semi-convex Lipschitz function defined on a neighborhood of $X$, cf.~Theorem \ref{Th:Inf attained}.
Moreover, any map defined on $\Dom(\d u)$ will be extended to $M$ by an arbitrarily chosen constant.
Our results will not depend on the respective choices of extensions.

We also make use of the reverse Lorentz--Finsler strucure $\overline{L}(v):=L(-v)$ (briefly mentioned in the proof of Theorem \ref{Th:Lorentz distance}), equipped with the time orientation that $v \in TM$ is future-directed timelike for $\overline{L}$ if and only if $-v$ is future-directed timelike for $L$.
Note that, given a timelike geodesic $\gamma\colon[0,1]\longrightarrow M$ for $L$, its reverse curve defined by $\overline{\gamma}(t):=\gamma(-t)$ is a timelike geodesic for $\overline{L}$.

\begin{lemma}[Transport maps and Jacobi fields]\label{Le:Maps Jacobian}
Let $q\in (0,1)$, $X,Y \subset M$ be compact, $U\subset M$ be a given relatively compact neighborhood of $X$,
and $u \colon U \longrightarrow \R$ be semi-convex and Lipschitz continuous with  
 \begin{equation}\label{Eq:u ulq assumption}
     u(x) \geq {}^{(l^q)}(u^{(l^q)})(x)
 \end{equation}
  for every $x\in U$ according to \eqref{Eq:Transform} defined in terms of $X$ and $Y$.
  For $t \in [0,1]$, define $\FF_t \colon M \longrightarrow M$ and $\GG_t \colon M \longrightarrow M$ by
  \begin{align}\label{Eq:Fs definition}
  \begin{split}
      \FF_t(x) &:= \exp_x\!\big(t\,F^*(\d u)^{p-2}\,\mathscr{L}^*(\d u)\big),\\
      \GG_t(y) &:= \overline{\exp}_y\!\big(\!-\!t\,F^*(-\d u^{(l^q)})^{p-2}\,\mathscr{L}^*(-\d u^{(l^q)})\big),
      \end{split}
  \end{align}
  provided $x\in \Dom(\d u)$ and $y\in \Dom(\d u^{(l^q)})$, respectively, where $\smash{\overline{\exp}_y}$ denotes the exponential map for the reverse structure $\overline{L}$.
 \begin{enumerate}[label=\textnormal{(\roman*)}]
     \item For every $(x,y)\in U\times Y$, we have
     \begin{equation}\label{eq:u+v}
         u(x) + u^{(l^q)}(y) \geq q^{-1}\,l(x,y)^q.
     \end{equation}
     Also, assuming that equality holds at $(x,y)\in (X\times Y)\cap \{l>0\}$,
     we have $y= \FF_1(x)$ if $x\in\Dom(\d u)$, and $(x,y)\notin \sing(l)$ if $\smash{x\in \Dom(\d^2 u)}$.
     Analogously, $\smash{y\in \Dom(\d  u^{(l^q)})}$ implies $x = \GG_1(y)$, and $\smash{y\in \Dom(\d^2 u^{(l^q)})}$ gives $(x,y)\notin \sing(l)$.
     \item For $\meas$-a.e.~$x\in X \cap \{\d u \neq 0\}$, the 
     derivative $\d\FF_t(x) \colon T_xM \longrightarrow T_{\FF_t(x)}M$ 
     exists, depends smoothly on $t\in[0,1]$, and $t \longmapsto \d\FF_t(x)(w)$ is a Jacobi field along the geodesic $\gamma_x(t):=\FF_t(x)$ for every $w \in T_xM$.
     \item 
     For $\meas$-a.e.~$x \in X \cap \{\d u \neq 0\}$ and every $w \in T_xM$, we have
\[
D^{\dot{\gamma}_x}_{\dot{\gamma}_x} \big[ \d\FF_t(x)(w) \big]_{0}
= F^*\big( \d u(x) \big)^{p-2} \, \nabla^2 u(w) +\d[F^*(\d u)^{p-2}](w) \,\nabla u(x),
\]
where $\nabla u:=\mathscr{L}^*(\d u)$ is the \emph{gradient vector} and $\nabla^2 u \colon T_xM \longrightarrow T_xM$ is the \emph{Hessian} defined by
\begin{equation}\label{Eq:HESSIAN}
\nabla^2 u(w) :=D_w^{\nabla u}(\nabla u).
\end{equation}
 \end{enumerate}
\end{lemma}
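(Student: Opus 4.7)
The plan is to treat the three parts of the lemma in the stated order, leveraging the regularity tools from Section~\ref{Sec:Behavior} together with Alexandrov's theorem for semi-convex functions. For part~(i), the inequality \eqref{eq:u+v} follows by combining \eqref{Eq:u ulq assumption} with the definition \eqref{Eq:Transform}:
\[ u(x) + u^{(l^q)}(y) \geq {}^{(l^q)}(u^{(l^q)})(x) + u^{(l^q)}(y) \geq q^{-1}\,l(x,y)^q. \]
Assuming equality holds at $(x,y) \in (X\times Y)\cap \{l>0\}$, the function $x' \longmapsto u(x') - q^{-1}\,l(x',y)^q$ attains a local minimum at $x$ relative to $U$. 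If $x\in \Dom(\d u)$, this forces $\d u(x) \in \partial^\bullet\big(q^{-1}\,l(\cdot,y)^q\big)(x)$; Corollary~\ref{Cor:Twist}(ii), combined with the Legendre-dual expression from Lemma~\ref{Le:Convex Lagrangian}(i), then identifies $y = \FF_1(x)$. If moreover $x\in \Dom(\d^2 u)$, the upper second-order difference of $q^{-1}\,l(\cdot,y)^q$ at $x$ is dominated by that of $u$, which is locally $O(|w|_h^2)$ (by twice differentiability) and uniformly bounded on the Lipschitz region further away; this contradicts Corollary~\ref{Cor:Twist}(iv) unless $(x,y)\notin \sing(l)$. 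The corresponding statements involving $u^{(l^q)}$ and $\GG_1$ follow by applying the same argument to the reverse Lorentz--Finsler structure $\overline{L}$.

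For part~(ii), Alexandrov's theorem provides twice differentiability of $u$ at $\meas$-a.e.\ $x\in U$, hence at $\meas$-a.e.\ $x\in X$. For such $x$ with $\d u(x)\neq 0$, I will argue that equality in \eqref{eq:u+v} is attained at some $y_x\in Y$ with $l(x,y_x)>0$: the map $y \longmapsto q^{-1}\,l(x,y)^q - u^{(l^q)}(y)$ is upper semi-continuous on the compact set $Y$ by Lemma~\ref{Le:l continuous} and the lower semi-continuity of $u^{(l^q)}$, hence attains its supremum, and \eqref{Eq:u ulq assumption} must then be an equality at $y_x$. Part~(i) identifies $y_x = \FF_1(x)$ and places $(x,\FF_1(x))$ off $\sing(l)$, so by Corollary~\ref{Cor:Twist}(i) the exponential map $\exp_x$ is a diffeomorphism near $t\,V(x)$ for all $t\in[0,1]$, where $V(x):= F^*(\d u)^{p-2}\,\mathscr{L}^*(\d u)$. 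Smoothness of $\d\FF_t(x)$ in $t$ is then immediate. The Jacobi-field property is obtained from the smooth variation $\sigma(t,s) := \exp_{x_s}\!\big(t\,V(x_s)\big)$ with $x_s := \exp_x^h(s\,w)$: each $t$-slice is a geodesic, so its variational vector field $\partial_s\sigma(t,s)|_{s=0} = \d\FF_t(x)(w)$ is a Jacobi field along $\gamma_x$ by the standard characterization in the Lorentz--Finsler setting.

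For part~(iii), torsion-freeness of the Chern connection applied to $\sigma$ yields $D^T_T V(0,0) = D^T_V T(0,0)$, with $T:=\partial_t\sigma$ and $V:=\partial_s\sigma$. At $t=0$, the vector field $s\longmapsto T(0,s) = V(x_s)$ lies along $s\longmapsto x_s$, and its covariant derivative at $s=0$ in direction $w$ with reference vector $\dot\gamma_x(0)=V(x)$ equals $D^{V(x)}_w V$. Since the Chern connection coefficients $\Gamma^\alpha_{\beta\delta}(v)$ are invariant under positive rescaling of $v$ (by zero-homogeneity of $g_v$), one has $D^{V(x)}_w = D^{\nabla u}_w$. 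Applying the Leibniz rule to $V = F^*(\d u)^{p-2}\,\nabla u$ and inserting the definition \eqref{Eq:HESSIAN} of $\nabla^2 u$ produces the stated expression.

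The main delicacy will be part~(iii): the Finsler-specific dependence of the Chern connection on a reference vector must be tracked carefully, and the positive-homogeneity invariance of $\Gamma$ is what permits the crucial switching between $\dot\gamma_x$ and $\nabla u$ as reference vectors. A secondary subtlety arises in part~(ii), where one must verify that $\meas$-a.e.\ $x\in X\cap\{\d u\neq 0\}$ admits an equality-attaining $y_x\in Y$ with $l(x,y_x)>0$, so that part~(i) becomes applicable and the cut locus is provably avoided.
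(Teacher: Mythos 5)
Your treatment of parts (i) and (iii) matches the paper's. In (i), you correctly derive the inequality from the definitions, extract the supergradient condition from the local minimum, invoke Corollary~\ref{Cor:Twist}(ii) together with Lemma~\ref{Le:Convex Lagrangian}(i) for $y=\FF_1(x)$, and use the second-order bound from twice differentiability together with Corollary~\ref{Cor:Twist}(iv) to exclude $\sing(l)$. In (iii), the key ingredients — torsion-freeness of the Chern connection and the $0$-homogeneity of $\Gamma^\alpha_{\beta\delta}$ that permits replacing the reference vector $\dot\gamma_x(0)$ by the positively proportional $\nabla u(x)$ — are exactly those the paper uses.

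Part (ii), however, has a genuine gap. You assert that because $y\longmapsto q^{-1}l(x,y)^q - u^{(l^q)}(y)$ attains its supremum at some $y_x\in Y$, ``\eqref{Eq:u ulq assumption} must then be an equality at $y_x$.'' This does not follow: that supremum is by definition ${}^{(l^q)}(u^{(l^q)})(x)$, and the hypothesis only grants $u(x)\geq {}^{(l^q)}(u^{(l^q)})(x)$. Nothing forces the inequality to saturate at a generic $x$, and if $u(x)$ strictly exceeds ${}^{(l^q)}(u^{(l^q)})(x)$ then equality in \eqref{eq:u+v} is never achieved for any $y\in Y$, so your detour through part (i) to place $(x,\FF_1(x))$ off $\sing(l)$ collapses. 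Moreover, this detour is not needed for what (ii) claims. The paper argues directly: Alexandrov's theorem gives twice differentiability of $u$ at $\meas$-a.e.\ $x$; for such $x$ with $\d u(x)$ in $\Omega^*_x$, the vector $F^*(\d u)^{p-2}\mathscr{L}^*(\d u)$ is (approximately) differentiable at $x$, so $\d\FF_t(x)$ exists by the chain rule through the globally smooth $\exp$ — no local-diffeomorphism or cut-locus-avoidance input is required — and $t\longmapsto\d\FF_t(x)(w)$ is the variational vector field of a geodesic variation, hence a Jacobi field, hence smooth in $t$. The cut-locus avoidance you aim for in (ii) is part of what \emph{(i)} already delivers under the equality hypothesis, and it is used for the Monge--Amp\`ere equation elsewhere, not for the existence or Jacobi character of $\d\FF_t(x)$.
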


\begin{proof}
(i)
The first claim \eqref{eq:u+v} is a consequence of the definition \eqref{Eq:Transform} and \eqref{Eq:u ulq assumption}.
We turn to the equality case.
When $x\in \Dom(\d u)$, the first claim and our assumption on $x$ and $y$ entail that, for every sufficiently small $w \in T_xM$, 
\begin{align*}
    q^{-1}\,l\big(\exp_x^h(w), y \big)^q &\leq u\big(\exp_x^h(w) \big) + u^{(l^q)}(y)\\
    &= u(x) + \d u(w) + u^{(l^q)}(y) + o(\vert w \vert_h)\\
    &= q^{-1}\,l(x,y)^q + \d u(w) + o(\vert w \vert_h).
    \end{align*}
In other words, $\smash{q^{-1}\,l(\cdot,y)^q}$ is super\-dif\-feren\-tiable at $x$ with a supergradient $\d u\in T_x^*M$.
Hence, Corollary \ref{Cor:Twist}(ii) implies $y=\FF_1(x)$ as claimed.

Next, let $\smash{x\in \Dom(\d^2 u)}$.
Arguing similarly to the previous step and employing Taylor's theorem around $x$ we obtain
\begin{align*}
    q^{-1}\,l\big(\exp_x^h(w),y \big)^q &\leq u\big(\exp_x^h(w) \big) + u^{(l^q)}(y)\\
    &= u(x) + \d u(w) + \d^2 u(w \otimes w) + u^{(l^q)}(y) + o(\vert w\vert_h^2)\\
    &= q^{-1}\,l(x,y)^q + \d u(w) + \d^2 u(w \otimes w) + o(\vert w\vert_h^2)
\end{align*}
for every sufficiently small $w \in T_xM$.
By replacing $w$ with $-w$ and adding up the resulting inequalities, the terms involving $\d u$ cancel out.
Thus, we get an upper bound for the numerator in Corollary \ref{Cor:Twist}(iv), and we find $(x,y) \notin \sing(l)$.

The analogous statements replacing the role of $x$ and $y$ can be seen by the same argument for $\overline{L}$ (by noticing $\overline{F}{}^*(\zeta)=F^*(-\zeta)$ and $\overline{\mathscr{L}}{}^*(\zeta)=-\mathscr{L}^*(-\zeta)$ for $-\zeta \in \Omega^*_y$, where $\overline{F}{}^*$ and $\overline{\mathscr{L}}{}^*$ are associated with $\overline{L}$).


(ii)
On the one hand, since $u$ is semi-convex, it is twice differentiable $\meas$-a.e.
In particular, for $\meas$-a.e.\ $x \in X$, $\d u(x)$ exists and, when $\d u(x) \neq 0$, the Hessian $\nabla^2 u$ as in \eqref{Eq:HESSIAN} is well-defined.
On the other hand, by definition, $\gamma_{z}(t)=\FF_t(z)$ is a geodesic for every $z \in \Dom(\d u)$.
Hence, $\d \FF_t(x)$ exists and $t \longmapsto \d \FF_t(x)(w)$ is a Jacobi field (and hence smooth).

(iii)
Observe that
\[
\dot{\gamma}_x(0) =F^*(\d u)^{p-2} \mathscr{L}^*(\d u) =F^*(\d u)^{p-2} \, \nabla u(x).
\]
This implies, for every $w \in T_xM$,
\begin{align*}
D^{\dot{\gamma}_x}_{\dot{\gamma}_x} \big[ \d\FF_t(x)(w) \big]_{0}
&= D^{\dot{\gamma}_x}_w \big[ F^*(\d u)^{p-2} \, \nabla u \big] \\
&= F^*(\d u)^{p-2} \, \nabla^2 u(w) +\d[F^*(\d u)^{p-2}](w) \, \nabla u(x).
\end{align*}
This terminates the proof.
\end{proof}

We remark that, when equality holds in \eqref{eq:u+v} as in (i), then $\smash{\d u(x) \in \Omega^*_x}$.
Therefore, $F^*(\d u(x))$ is well-defined and $\nabla u(x) =\smash{\mathscr{L}^*(\d u(x)) \in \Omega_x}$.
Analogously, we find $\smash{-\d u^{(l^q)}(y) \in \Omega^*_y}$ and $\smash{\mathscr{L}^*(-\d u^{(l^q)}(y)) \in \Omega_y}$.
Thus, the vector $\smash{-\mathscr{L}^*(-\d u^{(l^q)}(y))}$ is future-directed with respect to the reverse structure $\smash{\overline{L}}$, and the map $t\longmapsto \mathcal{G}_t(y)$ constitutes  a future-directed timelike geodesic for $\smash{\overline{L}}$.

\begin{theorem}[Characterizing optimal maps]\label{Th:Char optimal maps}
Let $q\in (0,1)$ and assume $(\mu,\nu)\in \mathcal{P}_\comp(M)\times\Prob_\comp(M)$ to be $q$-separated by $(\pi,u,v)$. Then the following hold.
\begin{enumerate}[label=\textnormal{(\roman*)}]
    \item Suppose $\smash{\mu \in \mathcal{P}^\ac(M,\meas)}$ and
    define $\FF_1 \colon M\longrightarrow M$ as in \eqref{Eq:Fs definition}. 
    Then we have $\pi = (\Id,\FF_1)_\sharp\mu$, which is a unique maximizer of $\ell_q(\mu,\nu)$, and $(x,\FF_1(x))\notin \sing(l)$ holds for $\mu$-a.e.~$x\in M$.
    \item The map $\FF_1$ in \textnormal{(i)} is unique in the following sense.
    Assume that $\tilde{u}\colon U\longrightarrow \R$ is a Lipschitz function defined on a neighborhood $U$ of $\supp\mu$ with
    \begin{equation}\label{Eq:utilde max}
        \tilde{u}(x) = \sup_{y\in \supp\nu}\big(q^{-1}\,l(x,y)^q - \tilde{u}^{(l^q)}(y)\big),
    \end{equation}
    and suppose that the map $\smash{\tilde{\FF}_1\colon M \longrightarrow M}$ given by
    \begin{equation*}
        \tilde{\FF}_1(x) = \exp_x\!\big(F^*(\d\tilde{u})^{p-2}\,\mathscr{L}^*(\d\tilde{u})\big)
    \end{equation*}
    provided $x\in \Dom(\d\tilde{u})$, obeys $\nu = (\Tilde{\FF}_1)_\sharp\mu$.
    Then $\smash{\d\Tilde{u} = \d u}$ $\mu$-a.e.
    \item In addition to the hypothesis of \textnormal{(i)}, assume $\nu\in \Prob^\ac(M,\meas)$ and define $\GG_1\colon M \longrightarrow M$ as in \eqref{Eq:Fs definition}.
    Then we have $\FF_1\circ \GG_1(y) = y$ for $\nu$-a.e.~$y\in M$ as well as $\GG_1\circ \FF_1(x) = x$ for $\mu$-a.e.~$x\in M$.
\end{enumerate}
\end{theorem}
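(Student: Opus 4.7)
The plan is to reduce every part of the statement to Lemma \ref{Le:Maps Jacobian}(i), applied to the relevant Kantorovich potentials. Using Theorem \ref{Th:Inf attained}(iv), I would first extend $u$ and $v$ to semi-convex Lipschitz functions on neighborhoods of $\supp\mu$ and $\supp\nu$ respectively, so that Alexandrov's theorem makes them twice differentiable outside a $\meas$-negligible, and hence (by absolute continuity) $\mu$- or $\nu$-negligible, set. For (i), for $\pi$-a.e.\ $(x,y)$ one has $(x,y) \in E \subset \{l > 0\}$ by Definition \ref{Def:q-separated}, so equality holds in \eqref{eq:u+v}; Fubini's theorem then gives, for $\mu$-a.e.\ $x$ at which $u$ is twice differentiable, that the disintegration $\pi_x$ is supported on $y$'s with $(x,y) \in E$. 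Lemma \ref{Le:Maps Jacobian}(i) would then force each such $y$ to equal $\mathcal{F}_1(x)$ and to satisfy $(x,y) \notin \sing(l)$, so $\pi_x = \delta_{\mathcal{F}_1(x)}$, i.e.\ $\pi = (\Id,\mathcal{F}_1)_\sharp\mu$. Uniqueness of the maximizer is then free: any other $\ell_q$-optimal coupling $\pi'$ of $(\mu,\nu)$ must again be concentrated on $E$ by the duality of Proposition \ref{Pr:Duality} and Theorem \ref{Th:Inf attained}(iii), so the same argument gives $\pi' = \pi$.

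For (ii), the compactness of $\supp\nu$ and the upper semi-continuity of $l$ (Lemma \ref{Le:l continuous}) guarantee that for every $x \in U$ the supremum in \eqref{Eq:utilde max} is attained at some $\tilde y \in \supp\nu$, and since $\tilde u(x)$ is finite, necessarily $l(x,\tilde y) > 0$. Since $\tilde u$ is Lipschitz and arises as an $(l^q)$-transform, Theorem \ref{Th:Inf attained}(iv) makes it semi-convex as well, hence twice differentiable $\mu$-a.e.; at such points, Lemma \ref{Le:Maps Jacobian}(i) applied to $(\tilde u, \tilde u^{(l^q)})$ forces $\tilde y = \tilde{\mathcal{F}}_1(x)$ and $(x,\tilde y) \notin \sing(l)$. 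I would then set $\tilde\pi := (\Id,\tilde{\mathcal{F}}_1)_\sharp \mu = \nu$ and observe that $(\mu,\nu)$ is $q$-separated by $(\tilde\pi, \tilde u, \tilde u^{(l^q)})$, whose equality set lies in $\{l>0\}$. Applying (i) to this separation yields $\tilde\pi = \pi$, hence $\tilde{\mathcal{F}}_1 = \mathcal{F}_1$ $\mu$-a.e. Since $(x,\mathcal{F}_1(x)) \notin \sing(l)$ for $\mu$-a.e.\ $x$, the initial velocity of the unique maximizing geodesic from $x$ to $\mathcal{F}_1(x)$ is determined, and it agrees with both $F^*(\d u)^{p-2}\mathscr{L}^*(\d u)$ and $F^*(\d\tilde u)^{p-2}\mathscr{L}^*(\d\tilde u)$; by the injectivity of $\zeta \longmapsto F^*(\zeta)^{p-2}\mathscr{L}^*(\zeta)$ on $\Omega^*_x$ from Lemma \ref{Le:Convex Lagrangian}(i), this finally gives $\d\tilde u = \d u$ $\mu$-a.e.

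For (iii), I would apply (i) to the reverse Lorentz--Finsler structure $\overline L(v) := L(-v)$. Using the identifications $\overline F^*(\zeta) = F^*(-\zeta)$ and $\overline{\mathscr{L}}{}^*(\zeta) = -\mathscr{L}^*(-\zeta)$ recalled in the proof of Lemma \ref{Le:Maps Jacobian}(i), the pair $(\nu,\mu)$ is $q$-separated for $\overline L$ by $(\tau_\sharp\pi, v, u)$ with $\tau(x,y) := (y,x)$, and the corresponding transport map from $\nu$ works out exactly to $\mathcal{G}_1$. Hence $\tau_\sharp \pi = (\Id, \mathcal{G}_1)_\sharp \nu$, i.e.\ $\pi = (\mathcal{G}_1, \Id)_\sharp \nu$. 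Combined with the identity $\pi = (\Id, \mathcal{F}_1)_\sharp \mu$ from (i), $\pi$-a.e.\ $(x,y)$ simultaneously satisfies $y = \mathcal{F}_1(x)$ and $x = \mathcal{G}_1(y)$, whence $\mathcal{G}_1 \circ \mathcal{F}_1 = \Id$ $\mu$-a.e.\ and $\mathcal{F}_1 \circ \mathcal{G}_1 = \Id$ $\nu$-a.e. The main technical obstacle I anticipate lies in part (ii): verifying that the pair $(\tilde u, \tilde u^{(l^q)})$ really realises a bona fide $q$-separation of $(\mu,\nu)$ requires controlling the regularity of $\tilde u$ from \eqref{Eq:utilde max} alone and showing that the associated equality set avoids $\{l \le 0\}$, both of which rely delicately on combining the structural hypothesis \eqref{Eq:utilde max} with the assumption $\nu = (\tilde{\mathcal{F}}_1)_\sharp \mu$.
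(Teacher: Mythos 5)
Your treatment of (i) and (iii) matches the paper's proof: Theorem~\ref{Th:Inf attained} to extend the potentials, Alexandrov's theorem, Lemma~\ref{Le:Maps Jacobian}(i) to identify $y = \FF_1(x)$ (resp.\ $x = \GG_1(y)$) and rule out $\sing(l)$, and Proposition~\ref{Pr:Duality} for uniqueness. The disintegration phrasing in (i) and the explicit appeal to the reverse structure $\overline L$ in (iii) are cosmetic variants of the same argument.

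Part (ii) is where you diverge from the paper and where the gap you yourself flag is genuine. You try to upgrade $(\tilde u, \tilde u^{(l^q)})$ to a full $q$-separation and then feed it into (i). The paper is more economical: it only shows $\tilde\pi := (\Id, \tilde\FF_1)_\sharp\mu$ maximizes $\ell_q(\mu,\nu)$, by observing that for $\mu$-a.e.\ $x$ the supremum in \eqref{Eq:utilde max} is attained at $y = \tilde\FF_1(x)$, integrating the resulting pointwise equality against $\mu$, and pairing $\nu = (\tilde\FF_1)_\sharp\mu$ with the Kantorovich duality of Proposition~\ref{Pr:Duality}. Maximality together with the uniqueness from (i) then yields $\tilde\FF_1 = \FF_1$ $\mu$-a.e.; the last step $\d\tilde u = \d u$ $\mu$-a.e.\ is essentially your argument via $(x,\FF_1(x))\notin\sing(l)$ and the injectivity from Lemma~\ref{Le:Convex Lagrangian}(i). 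Two links in your chain are broken. First, finiteness of $\tilde u(x)$ at the argmax $\tilde y$ only forces $l(x,\tilde y)^q$ to be finite and hence $l(x,\tilde y)\ge 0$ under the conventions~\eqref{Eq:Conventions}; the value $l(x,\tilde y)=0$ is not excluded, so the equality set of $(\tilde u, \tilde u^{(l^q)})$ need not lie in $\{l>0\}$. Second, Theorem~\ref{Th:Inf attained}(iv), which you invoke to deduce semi-convexity of $\tilde u$, is a conclusion about potentials witnessing an already-established $q$-separation, so invoking it here is circular; its own proof relies on exactly the $\{l>0\}$ inclusion you are trying to prove. The direct duality argument never needs to certify $(\tilde u, \tilde u^{(l^q)})$ as a $q$-separation, which is precisely the step your proposal cannot complete.
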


\begin{proof}
(i)
Theorem \ref{Th:Inf attained} yields that $\smash{u= {}^{(l^q)}v}$ on $\supp\mu$, $\smash{v = u^{(l^q)}}$ on $\supp\nu$, and that $u$ and $v$ extend to semi-convex Lipschitz functions on neighborhoods of $\supp\mu$ and $\supp\nu$, respectively.
Rademacher's theorem ensures that $u$ is differentiable $\meas$-a.e.~and thus $\mu$-a.e.
In particular, the measure $(\Id, \FF_1)_\sharp\mu$ makes sense.

Again by Theorem \ref{Th:Inf attained}, $\pi$ and $(u,v)$ attain the respective supremum and infimum in \eqref{Eq:Kantorovich}.
Let $E$ be the equality set from \eqref{Eq:Equality set S}.
By Lemma \ref{Le:Maps Jacobian}(i), for $\pi$-a.e.~$(x,y) \in E$ with $\smash{x\in \Dom(\d^2 u)}$ we have $y=\FF_1(x)$ and $(x,\FF_1(x)) \notin \sing(l)$.
Since $M \setminus \smash{\Dom(\d^2 u)}$ is $\meas$-negligible by Alexandrov's theorem and hence $\mu$-negligible, we obtain $\smash{\pi = (\Id, \FF_1)_\sharp\mu}$, therefore, $\smash{(\Id, \FF_1)_\sharp \mu}$ is a maximizer of $\smash{\ell_q(\mu,\nu)}$.
Given any such maximizer $\pi'$, Proposition \ref{Pr:Duality} gives  $\pi'[E]=1$, and arguing as above shows $\pi' = \pi$ and hence the uniqueness of $\pi$.

(ii)
First, we claim that $\smash{\Tilde{\pi} := (\Id, \Tilde{\FF}_1)_\sharp\mu}$ maximizes $\smash{\ell_q(\mu,\nu)}$.
By Lemma \ref{Le:Maps Jacobian}(i), for every $x\in\Dom(\d\Tilde{u})$ the supremum of \eqref{Eq:utilde max} is necessarily attained at $y = \tilde{\FF}_1(x)$.
As in (i), we see that $M\setminus \Dom(\d\Tilde{u})$ is $\mu$-negligible. Therefore,
\begin{equation*}
    q^{-1}\,l\big( x,\Tilde{\FF}_1(x) \big) = \Tilde{u}(x) + \Tilde{u}^{(l^q)}\big( \Tilde{\FF}_1(x) \big)
\end{equation*}
for $\mu$-a.e.~$x\in M$. By integrating this inequality against $\mu$ and using $\nu = (\Tilde{\FF}_1)_\sharp\mu$ as well as Proposition \ref{Pr:Duality}, we deduce the claim.

The uniqueness part of (i) thus gives $(\Id, \FF_1)_\sharp\mu = (\Id, \Tilde{\FF}_1)_\sharp\mu$, whence $\FF_1 = \Tilde{\FF}_1$ $\mu$-a.e.
We claim that in fact $\d u = \d\Tilde{u}$ on $\Dom(\d u) \cap \Dom(\d\Tilde{u})$ (a set which has $\mu$-full measure, as seen above).
If that fails  at all points $x\in \Dom(\d u)\cap\Dom(\d\Tilde{u})$ with $\smash{\FF_1(x) = \Tilde{\FF}_1(x)}$ in a set of positive $\mu$-measure, there are two different maximizing geodesics connecting $x$ and $\FF_1(x)$.
Hence $(x,\FF_1(x))\in\sing(l)$, contradicting (i).

(iii) An analogous argument as in (i), taking into account that $v = \smash{u^{(l^q)}}$ on $\supp\nu$, gives $\pi = (\GG_1,\Id)_\sharp\nu$.
Therefore, $\pi$ is concentrated on $\Dom(\d u) \times \Dom(\d v)$.
Hence $y=\FF_1(x)$ and $x=\GG_1(y)$ for $\pi$-a.e.~$(x,y)\in M\times M$ by Lemma \ref{Le:Maps Jacobian}(i), which is the claim.
\end{proof}

\begin{corollary}[Lagrangian characterization of $q$-geodesics]\label{Cor:Char geos}
Given any $q\in (0,1)$, assume $(\mu_0,\mu_1)\in\Prob_\comp^\ac(M,\meas)\times\Prob_\comp(M)$ to be $q$-separated by $(\pi,u,v)$.
Define $\FF_t \colon M \longrightarrow M$ as in \eqref{Eq:Fs definition}, where $t\in[0,1]$.
Then $((\FF_t)_\sharp\mu_0)_{t\in[0,1]}$ is a unique $q$-geodesic from $\mu_0$ to $\mu_1$.
\end{corollary}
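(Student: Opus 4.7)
The plan is to establish existence and uniqueness separately, leveraging Theorem \ref{Th:Char optimal maps}(i) and Theorem \ref{Cor:Continuous inverse maps} as the central tools. Setting $\mu_t := (\FF_t)_\sharp \mu_0$ for each $t \in [0,1]$, the first task is to verify that this defines a $q$-geodesic. Theorem \ref{Th:Char optimal maps}(i) identifies $\pi = (\Id,\FF_1)_\sharp \mu_0$ as the unique $\ell_q$-maximizer between $\mu_0$ and $\mu_1$ and asserts that $(x, \FF_1(x)) \notin \sing(l)$ holds for $\mu_0$-a.e.~$x \in M$; in particular $\mu_1 = (\FF_1)_\sharp \mu_0$, and for $\mu_0$-a.e.~$x$ the curve $t \longmapsto \FF_t(x)$ is the unique maximizing timelike geodesic from $x$ to $\FF_1(x)$. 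Consequently, for any $0 \le s \le t \le 1$, $l(\FF_s(x), \FF_t(x)) = (t-s)\,l(x, \FF_1(x))$ on a $\mu_0$-full set, and since $(\FF_s, \FF_t)_\sharp \mu_0 \in \Pi(\mu_s, \mu_t)$ is chronological,
\begin{equation*}
    \ell_q(\mu_s, \mu_t)^q \,\geq\, \int_M l\big(\FF_s(x), \FF_t(x)\big)^q \,\d\mu_0(x) \,=\, (t-s)^q\, \ell_q(\mu_0, \mu_1)^q.
\end{equation*}
Combined with the reverse triangle inequality from Proposition \ref{Pr:Reverse triangle}, this estimate forces equality for all admissible $s$ and $t$, and the $q$-geodesic property follows.

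For uniqueness, let $(\tilde\mu_t)_{t \in [0,1]}$ be any $q$-geodesic from $\mu_0$ to $\mu_1$ and fix $t \in (0,1)$. By Corollary \ref{Cor:Compact support}, $\tilde\mu_t$ is compactly supported in $Z_t(E)$, where $E$ is the equality set from \eqref{Eq:Equality set S} associated to $(\mu_0,\mu_1)$. Theorem \ref{Cor:Continuous inverse maps} then yields a continuous map $W \colon Z_t(E) \longrightarrow E$ such that $W_\sharp \tilde\mu_t \in \Pi(\mu_0, \mu_1)$ attains $\ell_q(\mu_0, \mu_1)$; by the uniqueness statement of Theorem \ref{Th:Char optimal maps}(i), we have $W_\sharp \tilde\mu_t = \pi$. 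Since $\pi$ is concentrated on $E \setminus \sing(l)$, $\tilde\mu_t$ is concentrated on $W^{-1}(E \setminus \sing(l))$; on this set, any $z$ with $W(z) = (x,y)$ lies in the singleton $Z_t(x,y) = \{z_t(x,y)\}$ (by Lemma \ref{Le:zs}), so $z_t \circ W = \Id$ $\tilde\mu_t$-a.e. On the other hand, $\FF_t(x) = z_t(x, \FF_1(x))$ on the $\mu_0$-full set where $(x, \FF_1(x)) \notin \sing(l)$, whence $(z_t)_\sharp \pi = (\FF_t)_\sharp \mu_0 = \mu_t$. Chaining these facts,
\begin{equation*}
    \tilde\mu_t \,=\, (z_t \circ W)_\sharp \tilde\mu_t \,=\, (z_t)_\sharp (W_\sharp \tilde\mu_t) \,=\, (z_t)_\sharp \pi \,=\, \mu_t.
\end{equation*}

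The delicate step will be justifying the identity $z_t \circ W = \Id$ $\tilde\mu_t$-a.e.; this rests on transferring the concentration of $\pi$ on $E \setminus \sing(l)$ back to $\tilde\mu_t$ via the pushforward $W_\sharp \tilde\mu_t = \pi$, whereupon the singleton structure of $Z_t(x,y)$ off the timelike cut locus closes the argument. All other ingredients---the maximizing geodesic property of $t\longmapsto \FF_t(x)$, the chronological nature of $(\FF_s,\FF_t)_\sharp\mu_0$, and the extraction of an optimal coupling from $\tilde\mu_t$---are immediate consequences of the preparatory results already established.
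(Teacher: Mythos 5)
Your proof is correct and follows essentially the same two-stage strategy as the paper's: the existence part is identical (push forward along $\FF_t$, compare lengths pointwise, and squeeze against the reverse triangle inequality), and the uniqueness part rests on the same pillars --- uniqueness of the $\ell_q$-optimal coupling $\pi$ and the singleton structure of $Z_t(x,y)$ off $\sing(l)$. The only difference is routing: you extract the inverse map $W$ from Theorem~\ref{Cor:Continuous inverse maps} and verify $z_t\circ W=\Id$ on a $\tilde\mu_t$-full set, while the paper invokes the gluing measure $\omega$ from Proposition~\ref{Pr:Reverse triangle} directly and applies its final clause $\mu_t=(z_t)_\sharp\pi_{13}$; these are interchangeable since Theorem~\ref{Cor:Continuous inverse maps} is itself built on Proposition~\ref{Pr:Reverse triangle}. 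One small citation slip: Corollary~\ref{Cor:Compact support} only yields $\supp\tilde\mu_t\subset Z_t(\supp\mu_0\times\supp\mu_1)$; the sharper containment $\supp\tilde\mu_t\subset Z_t(E)$ (needed for $W_\sharp\tilde\mu_t$ to make sense) is established inside the proof of Theorem~\ref{Cor:Continuous inverse maps} via Proposition~\ref{Pr:Reverse triangle} and duality, and is in any case guaranteed by the statement of Theorem~\ref{Cor:Continuous inverse maps}, so the argument remains sound.
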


\begin{proof}
By Theorem \ref{Th:Char optimal maps}, $\pi$ is the unique $\smash{\ell_q}$-optimal coupling of $\mu_0$ and $\mu_1$, and has the form  $\smash{\pi=(\Id,\FF_1)_\sharp\mu_0}$. 
In particular, since 
\begin{equation*}
    l\big( \FF_s(x),\FF_t(x) \big)^q = (t-s)^q\,l\big( x,\FF_1(x) \big)^q
\end{equation*}
for $\mu_0$-a.e.~$x\in M$ and every $s,t\in[0,1]$ with $s<t$, $((\FF_t)_\sharp\mu_0)_{t\in[0,1]}$ is a $q$-geodesic from $\mu_0$ to $\mu_1$; Corollary \ref{Cor:Compact support} thus implies the compactness of $\supp[ (\FF_t)_\sharp\mu_0]$.

To prove the uniqueness, let $(\mu_t)_{t\in[0,1]}$ be a $q$-geodesic from $\mu_0$ to $\mu_1$, and fix $t\in (0,1)$.
Again $\mu_t$ is compactly supported  by Corollary \ref{Cor:Compact support}.
In particular, by Lemma \ref{Le:Existence optimal couplings} $\mu_0$ and $\mu_t$ as well as $\mu_t$ and $\mu_1$ admit $\smash{\ell_q}$-optimal couplings.
Then we fix $\omega\in \Prob(M\times M\times M)$ as given by Proposition \ref{Pr:Reverse triangle}.
The $\smash{\ell_q}$-optimality of its marginals and uniqueness of $\pi$ ensures that $\pi_{13} = \pi$, where $\pi_{13}:=(\proj_{13})_\sharp\omega$.
Hence $\pi_{13}[\sing(l)]=0$ by Theorem \ref{Th:Char optimal maps}(i).
Combining Lemma \ref{Le:zs} with the last part of Proposition \ref{Pr:Reverse triangle} and  $z_t(x,\FF_1(x)) = \FF_t(x)$ for every $x\in \Dom(\d u)$, cf.~Remark \ref{Re:Zs s-intermediate points}, finally implies
\begin{equation*}
\mu_t = (z_t)_\sharp\pi_{13} = (z_t)_\sharp\big((\Id, \FF_1)_\sharp\mu_0\big) = (\FF_t)_\sharp\mu_0
\end{equation*}
and hence the desired conclusion.
\end{proof}

In fact, under the hypotheses of Corollary \ref{Cor:Char geos} the interpolants $(\FF_t)_\sharp\mu_0$ are absolutely continuous with respect to $\meas$ provided $t<1$.
We defer the proof of the following corresponding proposition to the next section since we need the measure contraction property stated in Lemma \ref{Le:mcp}. 

\begin{proposition}[Absolute continuity of interpolants]\label{Pr:Absolute continuity}
Given any $q\in (0,1)$, let $(\mu_t)_{t\in[0,1]}$ denote a unique $q$-geodesic joining a pair of $q$-separated endpoints $(\mu_0,\mu_1)\in \Prob_\comp^\ac(M,\meas) \times \Prob_\comp(M)$. Then $\mu_t\in \Prob^\ac_\comp(M,\meas)$ for all $t\in (0,1)$.
\end{proposition}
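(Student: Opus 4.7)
The plan is to follow the strategy of Figalli--Juillet \cite{figalli2008}, who reduce absolute continuity of displacement interpolants to the measure contraction property already recorded in Lemma \ref{Le:mcp}. Truncating $\rho_0 := \d\mu_0/\d\meas$ to its $k$-level set and arguing by monotone convergence, it suffices to treat the case where $\rho_0 \in L^\infty(M,\meas)$ is compactly supported. By Corollary \ref{Cor:Char geos}, the unique $\ell_q$-optimal coupling is $\pi = (\Id,\FF_1)_\sharp\mu_0$ with $\FF_1$ as in \eqref{Eq:Fs definition}, and $\mu_t = (\FF_t)_\sharp\mu_0$ for every $t \in [0,1]$.

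Fix $t \in (0,1)$ and a Borel set $A \subset M$ with $\meas[A] = 0$; the goal is to show $\mu_t[A]=0$. By Remark \ref{Re:Zs s-intermediate points}, for $\mu_0$-a.e.~$x$ one has $\FF_t(x) = z_t(x,\FF_1(x))$, hence
\[
\mu_t[A] = \int_{M\times M} \mathbf{1}_A\big(z_t(x,y)\big)\,\d\pi(x,y).
\]
Disintegrating $\pi = \int_M \pi^y \otimes \delta_y\,\d\mu_1(y)$ with respect to its second marginal, the right-hand side becomes
\[
\int_M \pi^y\big[\{x \in M : z_t(x,y) \in A\}\big]\,\d\mu_1(y).
\]
For each $y \in \supp\mu_1$, the map $x \longmapsto z_t(x,y)$ is a contraction towards $y$; Lemma \ref{Le:mcp} provides a quantitative Jacobian lower bound on such contractions, which forces the inner integrand to vanish whenever $\meas[A] = 0$. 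Integrating against $\mu_1$ by Fubini then yields $\mu_t[A]=0$, establishing $\mu_t \ll \meas$. A density estimate on $\d\mu_t/\d\meas$ in terms of $\|\rho_0\|_\infty$, $t$, and the MCP constant follows from the same computation applied to arbitrary Borel $A$.

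The main obstacle is to verify that Lemma \ref{Le:mcp} applies to the compression $z_t(\cdot,y)$ with the possibly singular slice $\pi^y$ as source; this requires either a formulation of MCP strong enough to absorb arbitrary Borel source measures, or an approximation of $\pi^y$ by absolutely continuous measures followed by a stability argument for the compression Jacobian. Should either avenue prove delicate, a backup route is to invoke Theorem \ref{Th:Char optimal maps}(i) to ensure that $\mu_0$-a.e.~$x$ lies outside $\sing(l)$, so the unique maximizing geodesic $s\longmapsto \FF_s(x)$ carries no interior conjugate points of $x$; the Jacobi field $s\longmapsto \d\FF_s(x)(\cdot)$ from Lemma \ref{Le:Maps Jacobian}(ii) is then non-degenerate on $(0,1)$, yielding $\mu_t \ll \meas$ directly via the area formula.
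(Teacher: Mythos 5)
Your primary route has a gap you yourself flag, and it is fatal. Disintegrating $\pi = \int_M \pi^y \otimes \delta_y\,\d\mu_1(y)$ places the conditional $\pi^y$ on the fibre $\FF_1^{-1}(y)$, which is generically a lower-dimensional set (a singleton $\{\GG_1(y)\}$ when $\mu_1 \ll \meas$), hence mutually singular with $\meas$. Lemma \ref{Le:mcp} compares $\meas[Z_t(A,y)]$ with $\meas[A]$ for compact $A$, so from $\meas[A]=0$ you can at best conclude $\meas[\{x : z_t(x,y)\in A\}]=0$, which says nothing about $\pi^y[\{x : z_t(x,y)\in A\}]$. No local reformulation of MCP will absorb a singular source; after slicing by $y$ there is simply no $\meas$-mass left in the source to contract.

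Your backup route also does not close the argument. The Jacobi fields $s\mapsto\d\FF_s(x)(w)$ from Lemma \ref{Le:Maps Jacobian}(ii) start with $J(0)=w\neq 0$ and a nonzero derivative prescribed by $\nabla^2 u$; conjugate points concern Jacobi fields vanishing at the endpoint $s=0$, so $(x,\FF_1(x))\notin\sing(l)$ does not preclude $\det[\d\FF_s(x)]$ vanishing at some intermediate $s\in(0,1)$. Ruling that out requires coupling the Jacobi equation with the second-order information encoded in the $(l^q/q)$-convexity of $u$ — i.e.\ a Monge--Mather shortening estimate — which is nontrivial and is exactly what this Proposition is meant to replace. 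Note also that Corollary \ref{Cor:Monge Ampère} presupposes the present Proposition, so invoking the Jacobian equation here would be circular.

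The paper's actual Figalli--Juillet implementation reverses the slicing direction: it approximates $\mu_1$ by finitely supported measures $\mu_1^k = \sum_i\lambda_i^k\,\delta_{x_i^k}$. The optimal map $\FF_1^k$ then partitions $\{\rho_0>0\}$ into slabs $A_i^k = (\FF_1^k)^{-1}(x_i^k)$ of positive $\meas$-measure, and Lemma \ref{Le:mcp} is applied with $\meas\vert_{A_i^k}$ (restricted further to suitable Borel subsets) as the source — keeping the source measure compatible with $\meas$ throughout and yielding a density bound uniform in $k$. Stability of optimal maps (\cite[Corollary 5.23]{villani2009}) transports this bound to the limit, giving $\mu_t\ll\meas$; the case without $\supp\mu_0\times\supp\mu_1\subset\{l>0\}$ then follows by sub-partition. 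The moral is that one should approximate the target, not disintegrate over it: this keeps the Lebesgue measure, rather than the singular conditionals $\pi^y$, in the role of source for the contraction estimate.
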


A direct consequence is the following \emph{Monge--Amp\`ere type equation} (also called the \emph{Jacobian equation}; cf.\ \cite[Theorem~11.1]{villani2009}).
It is shown in exactly the same way as \cite[Corollary 5.11]{Mc} (whose main ingredient \cite[Theorem 5.10]{Mc} is a local statement that can be established chart-wise in our setting) and thus the proof is omitted. 

\begin{corollary}[Monge--Ampère type equation]\label{Cor:Monge Ampère}
Retain the assumptions and the notation of Proposition \textnormal{\ref{Pr:Absolute continuity}}, and set $\smash{\mu_t :=(\FF_t)_{\sharp}\mu_0=\rho_t\, \meas}$ for $t \in [0,1)$.
Then, for every $t \in (0,1)$ and $\mu_0$-a.e.\ $x \in M$, $\d\FF_t(x)$ exists and we have
\[
    \rho_0(x) = \rho_t \big( \FF_t(x) \big)\, \det_{\meas}[\d\FF_t(x)].
\]
This conclusion also holds for $t=1$ provided $\smash{\mu_1\in \Prob_\comp^\ac(M,\meas)}$.
\end{corollary}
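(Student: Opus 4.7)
The plan is to establish three properties of the transport map $\FF_t$ on a Borel set $A\subset M$ of full $\mu_0$-measure, and then apply the classical area formula chart by chart. Fix $t\in(0,1)$. Recall from Theorem~\ref{Th:Inf attained}(iv) that the $q$-separating potential $u$ extends to a semi-convex Lipschitz function on a neighborhood of $\supp\mu_0$. Alexandrov's theorem therefore supplies a set $\Dom(\d^2 u)$ of full $\meas$-measure, and hence of full $\mu_0$-measure, on which $u$ is twice differentiable; by Lemma~\ref{Le:Maps Jacobian}(ii) the differential $\d\FF_t(x)$ exists on this set and $s\longmapsto \d\FF_s(x)(w)$ is a Jacobi field along $\gamma_x(s):=\FF_s(x)$ for every $w\in T_xM$.

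For the non-degeneracy of the Jacobian, I would invoke Theorem~\ref{Th:Char optimal maps}(i), which gives $(x,\FF_1(x))\notin\sing(l)$ for $\mu_0$-a.e.~$x$. The unique maximizing timelike geodesic $\gamma_x\vert_{[0,1]}$ then has no conjugate point to $\gamma_x(0)$ in its open interior, so no Jacobi field vanishing at $s=0$ can vanish at $s=t\in(0,1)$; this yields $\det_{\meas}[\d\FF_t(x)]\neq 0$, with positivity coming from continuity in $s$ and $\d\FF_0(x)=\Id$. Injectivity of $\FF_t$ is the main obstacle and relies on the no-crossing principle: Theorem~\ref{Th:Char optimal maps}(i) shows that $\pi=(\Id,\FF_1)_\sharp\mu_0$ is concentrated on the $l^q$-cyclically monotone equality set $E$ from Theorem~\ref{Th:Inf attained}(ii). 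If $\FF_t(x_1)=\FF_t(x_2)=z$ with $x_1,x_2$ in the set $A$ constructed so far, then $z\in Z_t(x_1,\FF_1(x_1))\cap Z_t(x_2,\FF_1(x_2))$ by Remark~\ref{Re:Zs s-intermediate points}, while $l^q$-cyclical monotonicity at the two pairs in $E$ yields
\[
l\big(x_1,\FF_1(x_2)\big)^q + l\big(x_2,\FF_1(x_1)\big)^q \le l\big(x_1,\FF_1(x_1)\big)^q + l\big(x_2,\FF_1(x_2)\big)^q.
\]
Proposition~\ref{Pr:Lagrangian trajectories} then forces $x_1=x_2$, so $\FF_t\vert_A$ is injective.

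With differentiability, injectivity, and a positive Jacobian at hand, I would cover $A$ by countably many precompact coordinate charts and invoke the local area formula chart by chart, along the lines of \cite[Theorem~5.10]{Mc}: for every Borel set $B\subset M$,
\[
\int_B \rho_t\,\d\meas = \mu_0\big[\FF_t^{-1}(B)\cap A\big] = \int_{\FF_t^{-1}(B)\cap A} \rho_t\big(\FF_t(x)\big)\,\det_{\meas}[\d\FF_t(x)]\,\d\meas(x),
\]
the last step being the change of variables $y=\FF_t(x)$. Comparing integrands yields the Monge--Amp\`ere identity for $\mu_0$-a.e.~$x\in M$. The case $t=1$ proceeds identically once $\d\FF_1(x)$ is known to be invertible $\mu_0$-a.e.; this is precisely where the extra hypothesis $\mu_1\in \Prob_\comp^\ac(M,\meas)$ enters, since Theorem~\ref{Th:Char optimal maps}(iii) then produces the inverse map $\GG_1$ with $\GG_1\circ\FF_1=\Id$ $\mu_0$-a.e., forcing non-degeneracy of $\d\FF_1$. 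The subtle technical point is that the area formula must be applied to $\FF_t$ despite only a.e.~pointwise differentiability; I would handle this via a Lusin-type decomposition of $\FF_t\vert_A$ into countably many Lipschitz pieces adapted to the chart covering, as in \cite[Chapter~11]{villani2009}.
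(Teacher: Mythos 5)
Your overall framework matches what the paper intends (Alexandrov differentiability, no-crossing injectivity via the $l^q$-cyclical monotonicity of the equality set, area formula in charts after a Lusin-type decomposition), but the non-degeneracy step contains a genuine gap. You argue: $(x,\FF_1(x))\notin\sing(l)$ implies no conjugate points to $\gamma_x(0)$ in $(0,1)$, hence no Jacobi field vanishing at both $s=0$ and $s=t$, hence $\det_\meas[\d\FF_t(x)]\neq 0$. The last implication fails: the relevant Jacobi fields $J_w(s):=\d\FF_s(x)(w)$ satisfy $J_w(0)=w\neq 0$, so they do \emph{not} vanish at $s=0$, and the absence of conjugate points is silent about whether $J_w(t)$ can vanish for some $t\in(0,1)$. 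A flat Minkowski example makes this concrete: there are never conjugate points, yet $\FF_t(x)=x+t\nabla u(x)$ has $\d\FF_t(x)=I+t\nabla^2 u(x)$, which degenerates as soon as an eigenvalue of $\nabla^2 u(x)$ dips below $-1$.

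The correct input is not the absence of conjugate points alone but the $(l^q/q)$-convexity of $u$, i.e.~the identity $u={}^{(l^q)}(u^{(l^q)})$ from Theorem~\ref{Th:Inf attained}(i), which forces the one-sided Hessian bound $\nabla^2 u(x)\geq \nabla^2_x\big[l^q(\cdot,\FF_1(x))/q\big](x)$ at the contact point; combined with a Jacobi/Riccati comparison against the family of Jacobi fields vanishing at $t=1$ (where the no-conjugate-point property of $(x,\FF_1(x))\notin\sing(l)$ finally enters), this yields $\det_\meas[\d\FF_t(x)]>0$ for $t\in(0,1)$. Alternatively, you can sidestep a direct non-degeneracy argument altogether: run your area formula step first to obtain $\rho_0(x)=\rho_t(\FF_t(x))\,\bigl|\det_\meas[\d\FF_t(x)]\bigr|$ $\mu_0$-a.e., deduce $\bigl|\det_\meas[\d\FF_t(x)]\bigr|>0$ from $\rho_0,\rho_t>0$, and recover the sign by continuity of $s\mapsto\det_\meas[\d\FF_s(x)]$ together with $\det_\meas[\d\FF_0(x)]=1$. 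Both routes fix the gap; as written, your deduction does not.
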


Here, $\det_{\meas}[\d\FF_t(x)]$ is the Jacobian (determinant) of  $\d\FF_t(x)$ with respect to $\meas$, namely
\[
\det_{\meas}[\d\FF_t(x)] :=\frac{\meas_{\FF_t(x)}[\d\FF_t(x)(A)]}{\meas_x[A]},
\]
where $\meas_x$ is the Lebesgue measure on $T_xM$ induced from $\meas$ via coordinates and $A \subset T_xM$ is an arbitrary nonempty, bounded open set.
We remark that $\det_{\meas}[\d\FF_0(x)]=1$ and $\det_{\meas}[\d\FF_t(x)]>0$ for $\mu_0$-a.e.\ $x\in M$.

Finally, we remark that the statements from Theorem \ref{Th:Char optimal maps}, Corollary \ref{Cor:Char geos}, and Proposition \ref{Pr:Absolute continuity} can be extended beyond $q$-separated measures.
Compare this with Remark \ref{Re:About q-sep} below. 

\begin{corollary}[Optimal maps and $q$-geodesics without duality]\label{Cor:Optimal maps without duality} Let $q\in (0,1)$, and let $\smash{\mu_0 \in\Prob_\comp^\ac(M,\meas)}$ as well as $\mu_1\in\Prob_\comp(M)$. 
\begin{enumerate}[label=\textnormal{(\roman*)}]
    \item There exists at most one \emph{chronological} $\smash{\ell_q}$-optimal coupling $\smash{\pi\in\Pi(\mu_0,\mu_1)}$.
    \end{enumerate}
Moreover, if $\pi$ is such a coupling, then we have the following.
\begin{enumerate}[label=\textnormal{(\roman*)}]\setcounter{enumi}{1}
    \item It can be written as $\pi = (\Id, \FF_1)_\sharp\mu_0$, where $\FF_1\colon M\longrightarrow M$ is a certain Borel map, and satisfies $\pi[\sing(l)]=0$.
    \item It decomposes as $\pi = \sum_{i\in\N}\pi_i$, where $(\pi_i)_{i\in\N}$ is a sequence of mutually singular sub-probability measures on $M\times M$ with the following properties for every $i\in\N$\textnormal{:} The set $\smash{\supp\mu^i \times\supp\nu^i}$, where $\smash{\mu^i}$ and $\smash{\nu^i}$ are the marginals of $\smash{\hat\pi_i := \pi_i[M\times M]^{-1}\,\pi_i}$, is compact and disjoint from $\{l\geq 0\}$. In particular, the pair $(\mu^i,\nu^i)$ is $q$-separated. $\FF_1$ agrees $\smash{\mu^i}$-a.e.~with the transport map $\smash{\FF_1^i}$ pushing forward $\mu^i$ to $\nu^i$ from Theorem \textnormal{\ref{Th:Char optimal maps}}. Lastly, the above sum is finite if and only if $\supp\pi$ is compact and disjoint from $\{l\leq 0\}$.
    \item The assignment $\mu_t := (z_t)_\sharp\pi$, where $z_t$ designates the map from Lemma \textnormal{\ref{Le:Measurable extension}},  defines a $q$-geodesic $(\mu_t)_{t\in[0,1]}$ from $\mu_0$ to $\mu_1$ such that $\mu_t\in \smash{\Prob_\comp^\ac(M,\meas)}$ provided $t<1$. Subject to the decomposition of $\pi$ from \textnormal{(ii)},  $((z_t)_\sharp\pi_i)_{i\in[0,1]}$ are mutually singular sub-probability measures for $t<1$.
\end{enumerate}
\end{corollary}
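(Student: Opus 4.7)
The plan is to reduce to the $q$-separated case covered by Theorem \ref{Th:Char optimal maps}, Corollary \ref{Cor:Char geos}, and Proposition \ref{Pr:Absolute continuity} via a countable decomposition of any given chronological $\ell_q$-optimal coupling $\pi$. Since $\{l>0\} \subset M \times M$ is open and $M$ is second countable and locally compact, I would cover $\{l>0\}$ by countably many open product sets $(U_i \times V_i)_{i \in \N}$ whose closures satisfy $\overline{U_i} \times \overline{V_i} \subset \{l>0\}$. Disjointifying these to $B_i := (U_i \times V_i)\setminus\bigcup_{j<i}(U_j \times V_j)$ yields $\pi = \sum_i \pi_i$ with $\pi_i := \pi\vert_{B_i}$ mutually singular sub-probability measures, and the finiteness criterion in (iii) becomes a Heine--Borel statement about compactness of $\supp\pi$ inside the open set $\{l>0\}$. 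Normalizing each nonzero $\pi_i$ to a probability $\hat\pi_i$, the marginals $(\hat\mu^i,\hat\nu^i)$ are compactly supported with $\supp\hat\mu^i \times\supp\hat\nu^i \subset\{l>0\}$, so Lemma \ref{Le:Existence separation} makes the pair $q$-separated; the $l^q$-cyclical monotonicity of $\supp\pi$ restricts to $\supp\hat\pi_i$ and, since $l^q$ is continuous and bounded on the compact set $\supp\hat\pi_i \subset \{l>0\}$, this promotes each $\hat\pi_i$ to an $\ell_q$-optimal coupling of its marginals via standard arguments. Theorem \ref{Th:Char optimal maps}(i) then yields Borel maps $\FF_1^i$ with $\hat\pi_i = (\Id, \FF_1^i)_\sharp \hat\mu^i$ and $\hat\pi_i[\sing(l)] = 0$.

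The main obstacle is upgrading these local graph structures to the global statement (ii) that $\pi$ itself is carried by the graph of a single Borel map $\FF_1$. I would argue by contradiction: suppose the disintegration $\pi = \int \pi_x\,\d\mu_0(x)$ has non-Dirac fibers $\pi_x$ on a set of positive $\mu_0$-measure. A separability/density argument, leveraging $\mu_0 \ll \meas$ and a countable base of coordinate balls with rational parameters, extracts open sets $U, V', V''\subset M$ with $V' \cap V'' = \emptyset$, $\overline{U} \times \overline{V'}, \overline{U} \times \overline{V''} \subset\{l>0\}$, and a positive-$\mu_0$-measure set of $x \in U$ from which $\pi_x$ charges both $V'$ and $V''$. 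The restriction $T := \pi\vert_{U \times (V' \cup V'')}$, suitably normalized, then satisfies the hypotheses of the previous paragraph, so Theorem \ref{Th:Char optimal maps}(i) forces it to be concentrated on the graph of a single map --- directly contradicting the existence of those $x$'s. Consequently $\pi = (\Id, \FF_1)_\sharp \mu_0$, and matching with the pieces forces $\FF_1 = \FF_1^i$ $\mu^i$-a.e. Uniqueness (i) follows by applying this graph statement to the convex midpoint $(\pi+\pi')/2$ of two chronological $\ell_q$-optimal competitors: both $\pi$ and $\pi'$ are absolutely continuous with respect to $(\pi+\pi')/2$ and are therefore carried by the same graph, which their common first marginal $\mu_0$ pins down to equality.

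For (iv), I would set $\mu_t := (z_t)_\sharp \pi = \sum_i (z_t)_\sharp \pi_i$; applied piecewise, Corollary \ref{Cor:Char geos} and Proposition \ref{Pr:Absolute continuity} identify each $(z_t)_\sharp \hat\pi_i$ as the $t$-intermediate point of the unique $q$-geodesic from $\hat\mu^i$ to $\hat\nu^i$ and guarantee its absolute continuity for $t < 1$. The $q$-geodesic property of $(\mu_t)_{t\in[0,1]}$ itself then follows from the constant-speed parametrization of $z_t$ combined with the reverse triangle inequality of Proposition \ref{Pr:Reverse triangle}, exactly as in the construction of $q$-geodesics after Lemma \ref{Le:Measurable extension}. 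Mutual singularity of $((z_t)_\sharp \pi_i)_{i\in\N}$ for $t<1$ reduces to Proposition \ref{Pr:Lagrangian trajectories}: were $z_t(x,\FF_1(x)) = z_t(x',\FF_1(x'))$ to hold with $(x,\FF_1(x))\in B_i$ and $(x',\FF_1(x'))\in B_j$, the $l^q$-cyclical monotonicity of $\supp\pi$ would rule out the strict inequality alternative therein, forcing $(x,\FF_1(x)) = (x',\FF_1(x'))$ and hence $B_i \cap B_j \ne \emptyset$, a contradiction.
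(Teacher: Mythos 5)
Your proof is correct and follows essentially the same strategy the paper uses (and attributes to McCann's Theorem 7.1): disjointify a countable cover of $\{l>0\}$ by open rectangles with compact closures inside $\{l>0\}$, restrict $\pi$ to the pieces, invoke Lemma~\ref{Le:Existence separation}, Theorem~\ref{Th:Char optimal maps}, Corollary~\ref{Cor:Char geos}, and Proposition~\ref{Pr:Absolute continuity} piecewise, and glue. Your explicit arguments for the global graph property (the $U,V',V''$ contradiction via a countable base and a further restriction) and for uniqueness (applying the graph statement to $\tfrac12(\pi+\pi')$) fill in details the paper defers to \cite[Theorem~7.1]{Mc}, and the non-crossing Proposition~\ref{Pr:Lagrangian trajectories} correctly delivers mutual singularity for $t\in(0,1)$, the case $t=0$ being handled by the graph structure itself.
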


\begin{proof} This is proven in the same way as \cite[Theorem 7.1]{Mc}. A decomposition of $\pi$ as in (ii) is obtained by dividing $\{l>0\} \cap \supp\pi$ into countably many open and relatively compact rectangles whose closures are contained in the open set $\{l>0\}$, and restriction of $\pi$ to these. We skip the details.
\end{proof}

In view of the last claim of (iii), we stress that the condition $\pi[\{l>0\}]=1$ does not imply the disjointness of $\supp\pi$ from $\{l\leq 0\}$. The latter is thus a strictly stronger property.

\section[Curvature-dimension condition from Ricci  curvature bounds]{Curvature-dimension condition from Ricci\\ curvature bounds}\label{sc:Ric-TCD}

In the previous section we saw that optimal transports are done along geodesics, and hence Jacobi fields play a key role to control their behavior.
Then, it is natural to expect that the Ricci curvature comes into play, and we show in this section that a lower bound of the weighted Ricci curvature provides a convexity estimate for a certain entropy functional, called the curvature-dimension condition.
We refer to \cite{cordero2001,lott2009,renesse2005,sturmI,sturmII} for the Riemannian case and \cite{ohta2009} for the Finsler case.

\subsection{Weighted Ricci curvature}\label{Sec:weight}

Here and henceforth, we fix a smooth positive measure $\meas$ on a Finsler spacetime $(M,L)$.
Then the associated \emph{weight function} $\smash{\psi_{\meas} \colon \overline{\Omega} \setminus \{0\} \longrightarrow \R}$ is defined by
\begin{equation*}
\d\meas =\e^{-\psi_{\meas}(\dot{\gamma}(t))} \sqrt{-\det\big[ g_{\alpha\beta}\big(\dot{\gamma}(t) \big) \big]}
 \,\d x^1\, \d x^2 \cdots \d x^n
\end{equation*}
along timelike geodesics $\gamma$.
We remark that $\psi_{\meas}$ is positively $0$-homogeneous in the sense that $\psi_{\meas}(cv)=\psi_{\meas}(v)$ for every $\smash{v \in \overline{\Omega}} \setminus \{0\}$ and $c>0$.

\begin{remark}\label{rm:psi}
In \cite{LMO1,LMO2,LMO3}, we deal with a more general framework by directly considering a positively $0$-homogeneous weight function $\smash{\psi \colon \overline{\Omega} \setminus \{0\} \longrightarrow \R}$,
which is not necessarily associated with a measure as above.
In this paper, however, we begin with a measure $\meas$ since we need it as a reference measure to define entropy functionals.
In addition, we will not use the \emph{$\epsilon$-range} introduced in \cite{LMO1} (in other words, we put $\epsilon=1$).
\end{remark}

For later convenience, along a timelike geodesic $\gamma \colon [0,l) \longrightarrow M$, we denote
\begin{equation}\label{eq:psi_g}
\psi_{\gamma}(t) :=\psi_{\meas}\big( \dot{\gamma}(t) \big).
\end{equation}

\begin{definition}[Weighted Ricci curvature]\label{df:LF-wRic}
Given a causal vector $v \in \overline{\Omega} \setminus \{0\}$,
let $\gamma \colon (-\varepsilon,\varepsilon) \longrightarrow M$ be the causal geodesic with $\dot{\gamma}(0)=v$.
Then, for $N \in \R \setminus \{n\}$, we define the \emph{weighted Ricci curvature} at $v$ by
\begin{equation}\label{eq:LF-wRic}
\Ric_N(v) :=\Ric(v) +\psi''_{\gamma}(0) -\frac{\psi'_{\gamma}(0)^2}{N-n},
\end{equation}
where $\Ric(v)$ is the usual Ricci curvature defined in Definition~\ref{df:curv}.
We also set
\[
\Ric_{\infty}(v) :=\Ric(v) +\psi''_{\gamma}(0),
 \qquad \Ric_n(v) :=\lim_{N \downarrow n} \Ric_N(v),
\]
and $\Ric_N(0):=0$.
\end{definition}

\begin{remark}\label{rm:Ric_N}
\begin{enumerate}[label=\textnormal{(\alph*)}]
\item
By definition, the quantity $\Ric_N(v)$ is non-decreasing in $N$ in the ranges $[n,+\infty]$ and $(-\infty,n)$.
Moreover, for $N \in (n,+\infty)$ and $N' \in (-\infty,0)$ we have
\[
\Ric_n(v) \le \Ric_N(v) \le \Ric_{\infty}(v) \le \Ric_{N'}(v) \le \Ric_0(v).
\]

\item
In our previous papers \cite{LMO1,LMO2,LMO3}, we chose the notation $\dim M=n+1$ but adopted the same definition \eqref{eq:LF-wRic}.
As a result, $\Ric_{N+1}$ in this paper (and \cite{Mc}) corresponds to $\Ric_N$ in \cite{LMO1,LMO2,LMO3}.
\end{enumerate}
\end{remark}

One can obtain several singularity theorems \cite{LMO1} and comparison theorems \cite{LMO2} under the curvature bound $\Ric_N \ge K$ for $K \in \R$, by which we mean $\smash{\Ric_N(v) \ge KF^2(v) =-2KL(v)}$ for all $v \in \Omega$ (and hence for all $\smash{v \in \overline{\Omega}}$ by continuity). 
For giving a deferred proof of Proposition~\ref{Pr:Absolute continuity}, here we explain the \emph{timelike measure contraction property} as a corollary to (the proof of) the Bishop--Gromov volume comparison theorem in \cite[Theorem~5.10]{LMO2} (see \cite{ohta-mcp,sturmII} for the Riemannian measure contraction property).
To this end, we define
\[
\mathfrak{s}_{\kappa}(r)
:=\begin{cases} \dfrac{1}{\sqrt{\kappa}}\sin(\sqrt{\kappa}r) & \kappa>0, \\
r & \kappa=0, \\
\dfrac{1}{\sqrt{-\kappa}}\sinh(\sqrt{-\kappa}r) & \kappa<0 \end{cases}
\]
for $r \in \R$ when $\kappa \leq 0$, and for $r \in [0,\pi/\sqrt{\kappa}]$ when $\kappa > 0$, as well as
\begin{equation}\label{eq:tau}
\tau_{K,N}^{(t)}(r) :=t^{1/N} \bigg( \frac{\mathfrak{s}_{K/(N-1)}(tr)}{\mathfrak{s}_{K/(N-1)}(r)} \bigg)^{(N-1)/N}
\end{equation}
for $K \in \R$, $N \in (-\infty,0] \cup [n,+\infty)$, and $t \in (0,1)$, which is defined for $r>0$ when $K/(N-1) \le 0$, and for $\smash{r\in (0,\pi\sqrt{(N-1)/K})}$ when $K/(N-1)>0$. We also define $\smash{\tau_{K,N}^{(t)}(0):=t}$.

\begin{lemma}[Timelike measure contraction property]\label{Le:mcp}
Let $(M,L,\meas)$ satisfy $\Ric_N \ge K$ for some $K \in \R$ and $N \in [n,+\infty)$.
Then we have
\[ \meas[Z_t(x,B)] \ge \inf_{y \in B} \tau_{K,N}^{(t)}\big( l(x,y) \big)^N \, \meas[B] \]
for every $x \in M$, every compact set $B \subset I^+(x)$, and every $t \in (0,1)$. Moreover,
\[ \meas[Z_t(A,y)] \ge \inf_{x \in A} \tau_{K,N}^{(1-t)}\big( l(x,y) \big)^N \, \meas[A] \]
for every $y \in M$, every compact set $A \subset I^-(y)$, and every $t \in (0,1)$.
\end{lemma}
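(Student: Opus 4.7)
The plan is to derive this timelike measure contraction property directly from the Jacobi field machinery that underlies the Bishop--Gromov volume comparison theorem of \cite[Theorem~5.10]{LMO2}, applied along the family of maximizing geodesics radiating from $x$. I would first establish the initial inequality and then deduce the second by appealing to the reverse Lorentz--Finsler structure $\overline{L}(v):=L(-v)$.

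Fix $x\in M$ and a compact set $B\subset I^+(x)$. Since $(M,L)$ is globally hyperbolic, every $y\in B$ admits a maximizing timelike geodesic from $x$ to $y$. I would first restrict to a subset $B'\subset B$ with $\meas[B']=\meas[B]$ consisting of those $y$ with $(x,y)\notin\sing(l)$; this is possible because the set $\{y : (x,y)\in\sing(l)\}$ has vanishing $\meas$-measure --- a standard consequence of the semi-concavity failure asserted in Theorem~\ref{th:semiconc} (or of the smoothness of $l(\cdot,y)$ off $\sing(l)$ from Theorem~\ref{Th:Lorentz distance}(i) combined with Alexandrov's theorem). On $B'$, the intermediate point map $\Phi_t(y):=z_t(x,y)$ from Lemma~\ref{Le:zs} is smooth and injective, since two distinct $y_1,y_2\in B'$ yield two distinct maximizing geodesics from $x$ and hence two distinct $t$-intermediate points. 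Moreover $\Phi_t(B')\subset Z_t(x,B)$, so that it suffices to lower-bound $\meas[\Phi_t(B')]$ via the change-of-variables formula $\meas[\Phi_t(B')]=\int_{B'}\det_{\meas}[\d\Phi_t(y)]\,\d\meas(y)$.

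The key step is to control $\det_{\meas}[\d\Phi_t(y)]$ from below by $\smash{\tau_{K,N}^{(t)}(l(x,y))^N}$ pointwise on $B'$. For this, let $\gamma_y\colon [0,1]\longrightarrow M$ denote the maximizing geodesic from $x$ to $y$. The differential $\d\Phi_t(y)\colon T_yM\longrightarrow T_{\gamma_y(t)}M$ sends a tangent vector $w$ to $J_w(t)$, where $J_w$ is the unique Jacobi field along $\gamma_y$ with $J_w(0)=0$ and $J_w(1)=w$. A standard computation expresses $\det_{\meas}[\d\Phi_t(y)]$ as a ratio of the determinants of such Jacobi field matrices, weighted by $\smash{\e^{\psi_{\gamma_y}(1)-\psi_{\gamma_y}(t)}}$ with $\psi_{\gamma_y}$ as in \eqref{eq:psi_g}. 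Under the hypothesis $\Ric_N\ge K$, the Riccati/Raychaudhuri comparison argument developed in \cite[Section~5]{LMO2} shows that the $N$-th root of this quantity dominates the classical model expression $\smash{t^{1/N}\,\mathfrak{s}_{K/(N-1)}(t\,l(x,y))^{(N-1)/N}/\mathfrak{s}_{K/(N-1)}(l(x,y))^{(N-1)/N}}$, which is precisely $\smash{\tau_{K,N}^{(t)}(l(x,y))}$ by \eqref{eq:tau}. Integrating the resulting pointwise bound over $B'$ yields the first assertion.

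The main obstacle is extracting this pointwise Jacobian bound from \cite[Theorem~5.10]{LMO2}, where only the integrated Bishop--Gromov inequality is recorded; I expect that its proof already contains the pointwise estimate needed here, and that only a reorganization is required. Once the first inequality is established, the second follows by symmetry: applying the first to the reverse structure $\overline{L}$ with its induced opposite time orientation, and noting that $Z_t$ for $L$ equals $Z_{1-t}$ for $\overline{L}$ after interchanging arguments, together with the identity $\overline{\Ric}_N(v)=\Ric_N(-v)$ for the weighted Ricci curvature of $\overline{L}$, converts the bound $\tau_{K,N}^{(t)}$ into $\tau_{K,N}^{(1-t)}$ and the role of $(x,B)$ into $(y,A)$, yielding the second claim.
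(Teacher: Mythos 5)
Your proposal follows the same approach as the paper: express $\meas[Z_t(x,B)]$ as the integral of the Jacobian of the intermediate-point map $y\longmapsto z_t(x,y)$, bound that Jacobian pointwise by $\smash{\tau_{K,N}^{(t)}(l(x,y))^N}$ via the Raychaudhuri/Riccati comparison underlying the Bishop--Gromov theorem of \cite[Theorem~5.10]{LMO2}, integrate, and pass to the reverse structure $\overline{L}$ for the second assertion. The only detail you omit (and which the paper addresses first) is the case $K>0$: there one must invoke the Lorentz--Finsler Bonnet--Myers theorem to ensure $l(x,y)<\pi\sqrt{(N-1)/K}$ for $\meas$-a.e.\ $y$, so that $\smash{\tau_{K,N}^{(t)}(l(x,y))}$ is actually defined; this is a minor but necessary preliminary.
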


\begin{proof}
First of all, if $K>0$, then the Lorentzian counterpart to the Bonnet--Myers theorem \cite[Theorem~5.17]{LMO1} implies $\smash{l(x,y) \le \pi\sqrt{(N-1)/K}}$, and hence $l(x,y)<\smash{\pi\sqrt{(N-1)/K}}$ for $\meas$-a.e.\ $y \in I^+(x)$.

We first consider the set $Z_t(x,B)$.
Note that $x$ and $\meas$-a.e.\ $\smash{y \in I^+(x)}$ are connected by a unique maximizing timelike geodesic $\gamma_y \colon [0,1] \longrightarrow M$, and
\[
\meas[Z_t(x,B)] =\int_B \det_{\meas}[\d\Psi_t] \,\d\meas,
\]
where $\det_{\meas}[\d\Psi_t]$ is the Jacobian of the map $\Psi_t(y):=\gamma_y(t)$ with respect to the measure $\meas$.
Precisely, setting $\smash{w_y:=\dot{\gamma}_y(0)} \in T_xM$ and using an orthonormal basis $(e_\alpha)_{\alpha=1}^n$ of $(T_xM,g_v)$ with $e_1=w_y/l(x,y)$ as well as the vector fields $\smash{J_i(t):=\d[\exp_{tw_y}](te_i)}$ along $\gamma_y$, where $i=2,\ldots,n$, we have
\[
\det_{\meas}[\d\Psi_t(y)] =\frac{t\,\phi_y(t)}{\phi_y(1)},\qquad
\phi_y(t):=\e^{-\psi_{\gamma_y}(t)} \sqrt{ \det \big[ g_{\dot{\gamma}_y(t)}\big( J_i(t),J_j(t) \big) \big]}.
\]
The function $s \longmapsto \phi_y(s/l(x,y))^{1/(N-1)}$ coincides with $h=h_1$ in the proof of \cite[Theorem 5.10]{LMO2}  with $\epsilon=1$ and $c=1/(N-1)$, and it follows from the hypothesis $\Ric_N \ge K$ that $h/\mathfrak{s}_{K/(N-1)}$ is non-increasing.
Hence,
\begin{align*}
\meas[Z_t(x,B)]
&=\int_B \frac{t\,\phi_y(t)}{\phi_y(1)} \,\d\meas(y) \\
&\ge \int_B t\, \frac{\mathfrak{s}_{K/(N-1)}(t\,l(x,y))^{N-1}}{\mathfrak{s}_{K/(N-1)}(l(x,y))^{N-1}} \,\d\meas(y) \\
&\ge \inf_{y \in B} \tau^{(t)}_{K,N}\big( l(x,y) \big)^N \, \meas[B].
\end{align*}

The assertion on $Z_t(A,y)$ can be reduced to the reverse structure $\overline{L}$, for which $\Ric_N \ge K$ (with respect to $\meas$) also holds since
\[ \Ric_N^{\overline{L}}(v) =\Ric_N^L(-v) \ge -2KL(-v) =-2K\overline{L}(v) \]
for every tangent vector $v$ which is causal with  respect  to $\smash{\overline{L}}$.
\end{proof}

\subsection{Absolute continuity of intermediate measures}

We are ready to provide a proof of Proposition \ref{Pr:Absolute continuity}, which we restate below for convenience. The Lorentzian version of it has been shown in \cite[Theorem~A.1]{Mc} by establishing a modified Monge--Mather shortening estimate, see also \cite{buffoni, villani2009}. 
We propose an alternative proof based on ideas of Figalli and Juillet \cite{figalli2008}; a byproduct of the proof is a qualitative density bound we do not state explicitly, but which should be compared to \cite{braun2023}.

\begin{proposition} Given any $q\in (0,1)$, let $(\mu_t)_{t\in[0,1]}$ denote a unique $q$-geodesic joining a pair of $q$-separated endpoints $(\mu_0,\mu_1)\in \Prob_\comp^\ac(M,\meas) \times \Prob_\comp(M)$ according to Corollary \textnormal{\ref{Cor:Char geos}}. Then $\mu_t\in \Prob^\ac_\comp(M,\meas)$ for every $t\in (0,1)$.
\end{proposition}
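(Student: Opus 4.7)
The plan is to adapt Figalli--Juillet's approach \cite{figalli2008} based on the timelike measure contraction property, suitably localized to the compact region carrying all transport. Fix $t \in (0,1)$. By decomposing $\mu_0$ along level sets of its density $\rho_0 := \d\mu_0/\d\meas$, invoking Corollary \ref{Cor:Optimal maps without duality}(iii), and using linearity together with mutual singularity of the resulting sub-interpolants, we may assume $\rho_0 \in L^\infty(M, \meas)$.

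The $q$-separation of $(\mu_0, \mu_1)$ together with Theorem \ref{Th:Inf attained} provides a compact equality set $E \subset \{l > 0\}$, and by Lemma \ref{Le:Zs} all relevant intermediate points lie in the compact set $\mathcal{K} := Z([0,1] \times E)$. The tangent vectors of unit-speed maximizing geodesics joining points of $E$ form a compact subset of $\Omega$ bounded uniformly away from the null cone. Since $\Ric_N(v)/F(v)^2$ is positively $0$-homogeneous and continuous on $\Omega \setminus \{0\}$, smoothness of $\Ric$ and $\psi_\meas$ then yields $N_0 \in [n,+\infty)$ and $K_0 \in \R$ with $\Ric_{N_0}(v) \ge K_0\, F(v)^2$ for every $v$ tangent to such a geodesic. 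Because the proof of Lemma \ref{Le:mcp} is a geodesic-wise Jacobi-field estimate that only uses the Ricci bound along the geodesic at hand, it localizes and produces a constant $\tau_0 = \tau_0(t) > 0$ such that
\begin{equation*}
\meas[B] \le \tau_0^{-1}\, \meas[Z_t(B, y)]
\end{equation*}
for every $y \in \supp\mu_1$ and every Borel set $B \subset I^-(y)$ with $Z_t(B, y) \subset \mathcal{K}$.

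Approximate $\mu_1$ by discrete measures $\mu_1^{(n)} = \sum_i c_i^{(n)} \delta_{y_i^{(n)}}$ with distinct $y_i^{(n)} \in \supp\mu_1$ and $\mu_1^{(n)} \rightharpoonup \mu_1$, chosen so that $(\mu_0, \mu_1^{(n)})$ remains $q$-separated with equality set contained in $\mathcal{K}$, so that the same $\tau_0$ applies. Corollary \ref{Cor:Optimal maps without duality} then yields a unique chronological $\ell_q$-optimal coupling $\pi^{(n)} = \sum_i \nu_i^{(n)} \otimes \delta_{y_i^{(n)}}$ with $\nu_i^{(n)} \le \mu_0$, and a unique $q$-geodesic $\mu_t^{(n)} = \sum_i (z_t(\cdot, y_i^{(n)}))_\sharp \nu_i^{(n)}$. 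Given any Borel set $A \subset \mathcal{K}$, put $B_i := \{x \in \supp\nu_i^{(n)} : z_t(x, y_i^{(n)}) \in A\}$, so that $Z_t(B_i, y_i^{(n)}) \subset A$. The $l^q$-cyclical monotonicity of $\supp\pi^{(n)}$ combined with Proposition \ref{Pr:Lagrangian trajectories} forces the sets $Z_t(B_i, y_i^{(n)})$ to be pairwise disjoint, whence the localized MCP applied summand-by-summand gives
\begin{equation*}
\mu_t^{(n)}[A] \le \|\rho_0\|_\infty \sum_i \meas[B_i] \le \|\rho_0\|_\infty\, \tau_0^{-1} \sum_i \meas[Z_t(B_i, y_i^{(n)})] \le \|\rho_0\|_\infty\, \tau_0^{-1}\, \meas[A].
\end{equation*}

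Finally, uniqueness of $q$-geodesics (Corollary \ref{Cor:Char geos}) together with a standard stability argument for chronological $\ell_q$-optimal couplings under weak convergence of endpoint marginals yields $\mu_t^{(n)} \rightharpoonup \mu_t$; the uniform density bound then passes to the limit, giving $\mu_t \le \|\rho_0\|_\infty\, \tau_0^{-1}\, \meas$ and hence $\mu_t \in \Prob^\ac(M, \meas)$. The main obstacle will be the uniformity of $\tau_0$ across the discrete approximation: this hinges on preserving $q$-separation when replacing $\mu_1$ by $\mu_1^{(n)}$ with equality sets in a fixed compact subset of $\{l > 0\}$, and on the fact that the Jacobi-field analysis behind Lemma \ref{Le:mcp} requires only the weighted Ricci bound along individual geodesics, both of which are uniform over $\mathcal{K}$.
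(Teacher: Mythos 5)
Your argument is essentially the same as the paper's: localize to a compact region to fix $K\in\R$ and $N\in(n,+\infty)$ with $\Ric_N\geq K$ in the relevant timelike directions, approximate $\mu_1$ by discrete measures, apply the timelike measure contraction property (Lemma~\ref{Le:mcp}) fiber by fiber, and pass to the limit via stability. One welcome addition in your write-up is the explicit appeal to Proposition~\ref{Pr:Lagrangian trajectories} (plus $l^q$-cyclical monotonicity) to justify the pairwise disjointness of the intermediate sets $Z_t(B_i,y_i^{(n)})$; the paper is terse about this point, saying only that one concludes ``by appropriately decomposing an arbitrary Borel set.''

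However, the opening reduction is muddled. You cite Corollary~\ref{Cor:Optimal maps without duality}(iii) and a level-set decomposition of $\rho_0$ in the same breath and conclude merely that one may assume $\rho_0\in L^\infty(M,\meas)$. These are two distinct reductions with different purposes. The rectangle decomposition behind Corollary~\ref{Cor:Optimal maps without duality}(iii) is what reduces to the case $\supp\mu_0\times\supp\mu_1\subset\{l>0\}$, and this is the indispensable one: without it, $(\mu_0,\mu_1^{(n)})$ need not be $q$-separated (indeed $\mu_0$ and $\mu_1^{(n)}$ may fail to admit any causal coupling at all when $\supp\mu_0\times\supp\mu_1\not\subset\{l>0\}$), which undermines the ``chosen so that $(\mu_0,\mu_1^{(n)})$ remains $q$-separated'' step that you already flag as delicate. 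The level-set truncation of $\rho_0$ is a separate issue, and in fact not needed in the paper's formulation: there one proves the transfer inequality $(\FF_t)_\sharp(\meas\vert_{\{\rho_0>0\}})\le C\,\meas$ and concludes $\mu_t\le C\,\rho_0\circ\FF_t^{-1}\,\meas$, which yields absolute continuity without requiring $\rho_0$ to be bounded. Your limit argument via $\mu_t^{(n)}\rightharpoonup\mu_t$ (passing the density bound through Portmanteau) differs cosmetically from the paper's, which uses stability of optimal \emph{maps} $\FF_t^k\to\FF_t$ in $\mu_0$-measure from \cite[Corollary~5.23]{villani2009}; both are sound.
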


\begin{proof} 
We first check the case $\supp\mu_0\times\supp\mu_1\subset \{l>0\}$, i.e.~$l$ is bounded and bounded away from zero on the product $\supp\mu_0\times\supp\mu_1$. Let $K\in\R$ and $N\in (n,+\infty)$ be fixed numbers such that $\Ric_N\geq K$ in all timelike directions on the compact causal diamond $\smash{J^+(\mu_0)\cup J^-(\mu_1)}$.
Then we claim that 
\begin{equation}\label{Eq:Claim}
    \int_{\{\rho_0 >0\}} f\circ \FF_t\,\d\meas \leq 
    \sup_{x\in\supp\mu_0, y\in\supp\mu_1} \tau_{K,N}^{(1-t)}\big(l(x,y)\big)^{-N}
    \int_M f\,\d\meas
\end{equation}
for every nonnegative, continuous function $f$ on $M$ with compact support. 

To this aim, we fix a sequence $(\mu_1^k)_{k\in\N}$ of compactly supported  probability measures of the form $\smash{\mu_1^k = \lambda_1^k\,\delta_{x_1^k} + \dots + \lambda_k^k\,\delta_{x_k^k}}$  which weakly converges to $\mu_1$; here, given any   $k\in\N$,  $\smash{x_1^k,\dots,x_k^k}$ are points in $\supp\mu_1$, and $\smash{\lambda_1^k,\dots,\lambda_k^k}$ are nonnegative real numbers which sum up to $1$.  Since $\supp\mu_0\times\supp\mu_1^k \subset\{l>0\}$, $\smash{(\mu_0,\mu_1^k)}$ is $q$-separated by Lemma \ref{Le:Existence separation}. 
Theorem \ref{Th:Char optimal maps} and Corollary \ref{Cor:Char geos} imply the existence of appropriate Borel maps $\smash{\FF_t^k\colon M \longrightarrow M}$, where $t\in[0,1]$, with the properties
\begin{itemize}
    \item $\smash{\FF_1^k}$ induces the unique $\ell_q$-optimal coupling  of $\smash{(\mu_0,\mu_1^k)}$, and
    \item $((\FF_t^k)_\sharp\mu_0)_{t\in[0,1]}$ constitutes the unique $q$-geodesic from $\mu_0$ to $\smash{\mu_1^k}$.
\end{itemize}
Let $\smash{A_i^k\subset \{\rho_0 > 0\}}$ be the set of all points $x\in \{\rho_0>0\}$ with $\smash{\FF_1^k(x) = x_i^k}$, where $i=1,\dots,k$. Then $\smash{A_1^k,\dots,A_k^k}$ are mutually disjoint, and their union has full $\mu_0$-measure.
Given any $i=1,\dots,k$, Theorem \ref{Th:Char optimal maps} and Remark \ref{Re:Zs s-intermediate points} imply the curve $\smash{(\FF_t^k(x))_{t\in[0,1]}}$ to form the unique maximizing timelike geodesic from $x$ to $\smash{x_i^k}$ for $\mu_0$-a.e.~$\smash{x\in A_i^k}$.
Since $\mu_0$ and $\meas$ are equivalent on $\{\rho_0>0\}$, the same statement holds for every $\smash{x\in B_i^k}$, where $\smash{B_i^k\subset A_i^k}$ is an appropriate Borel set with $\smash{\meas[A_i^k \setminus B_i^k]=0}$. In particular $\smash{Z_t(E', x_i^k) = \FF_t^k(E')}$ for every Borel set $E'\subset B_i^k$, for which Lemma \ref{Le:mcp} (applied to the Finsler spacetime built by the causal diamond $\smash{J^+(\mu_0)\cup J^-(\mu_1)}$) implies
\begin{equation*}
    \meas\big[\FF_t^k(E')\big] \geq \inf_{x\in\supp\mu_0,y\in\supp\mu_1}\tau_{K,N}^{(1-t)}\big(l(x,y)\big)^N\,\meas[E'].
\end{equation*}
In particular, by appropriately decomposing an arbitrary Borel set $E\subset \{\rho_0>0\}$ we get
\begin{equation*}
    \meas\big[\FF_t^k(E)\big] \geq \inf_{x\in\supp\mu_0,y\in\supp\mu_1}\tau_{K,N}^{(1-t)}\big(l(x,y)\big)^N\, \meas[E].
\end{equation*}
This inequality can be rewritten as
\begin{equation*}
    \meas[F] \geq \inf_{x\in\supp\mu_0,y\in\supp\mu_1}\tau_{K,N}^{(1-t)}\big(l(x,y)\big)^N\,\meas\big[(\FF_t^k)^{-1}(F)\cap \{\rho_0>0\}\big]
\end{equation*}
for every Borel set $\smash{F\subset \FF_t^k(\{\rho_0>0\})}$, or equivalently 
\begin{equation}\label{Eq:Pre-inequ}
    \int_{\{\rho_0>0\}} f\circ \FF_t^k\,\d\meas \leq \sup_{x\in\supp\mu_0,y\in\supp\mu_1}\tau_{K,N}^{(1-t)}\big(l(x,y)\big)^{-N}\int_M f\,\d\meas
\end{equation}
for every function $f$ as above.
Now by the $q$-separation of $(\mu_0,\mu_1)$ and the stability of optimal maps \cite[Corollary~5.23]{villani2009}, $\smash{(\FF_t^k)_{k\in\N}}$ converges to $\FF_t$ in $\mu_0$-measure, and thus $\FF_t^k \to \FF_t$ $\meas$-a.e.~on $\{\rho_0>0\}$ as $k \to +\infty$ up to a non-relabeled subsequence.
Passing to the limit in \eqref{Eq:Pre-inequ} then shows  \eqref{Eq:Claim}.

An approximation argument now allows us to extend \eqref{Eq:Claim} to nonnegative Borel functions $f$ on $M$.
For $f := 1_{\FF_t(\{\rho_0>0\})}\,h\,\rho_0\circ \FF_t^{-1}$, where $h$ is a given nonnegative and bounded Borel function on $M$, this translates into
\begin{align*}
     \int_{\FF_t(\{\rho_0>0\})} h\,\d\mu_t &= \int_{\{\rho_0>0\}} h\circ \FF_t\,\d\mu_0\\
     &= \int_{\{\rho_0>0\}} (h\circ \FF_t)\,\rho_0\,\d\meas\\
     &\leq \sup_{x\in\supp\mu_0,y\in\supp\mu_1}\tau_{K,N}^{(1-t)}\big(l(x,y)\big)^{-N}\int_M h\,\rho_0\circ \FF_t^{-1}\,\d\meas.
\end{align*}
This implies the claim under our initial assumptions on $\mu_0$ and $\mu_1$.

The general case now follows by sub-partition as in \cite[Proposition 3.38]{braun2022}.
The proof is similar to the one of Corollary \ref{Cor:Optimal maps without duality}.
We skip the details.
\end{proof}

\if0
\subsection{Weighted Raychaudhuri equation}

Now, let us review some necessary notions in Lorentzian geometry related to the Ricci curvature (we refer to \cite{LMO1,LMO2} for more details, where $\dim M=n+1$).
Given a timelike geodesic $\gamma \colon [0,l) \longrightarrow M$,
we will denote by $\smash{N_\gamma(t) \subset T_{\gamma(t)}M}$ the $(n-1)$-dimensional subspace of vectors orthogonal to $\dot{\gamma}(t)$ with respect to $\smash{g_{\dot{\gamma}(t)}}$.
For simplicity, the covariant derivative $\smash{D^{\dot{\gamma}}_{\dot{\gamma}}X}$
of a vector field $X$ along $\gamma$ will be denoted by $X'$.

\begin{definition}[Jacobi and Lagrange tensor fields]\label{df:Jtensor}
Let $\gamma \colon [0,l) \longrightarrow M$ be a timelike geodesic of unit speed.
\begin{enumerate}[label=\textnormal{(\arabic*)}]
\item
A smooth tensor field $\JJ$, giving an endomorphism
$\JJ(t) \colon N_{\gamma}(t) \longrightarrow N_{\gamma}(t)$ for each $t \in [0,l)$,
is called a \emph{Jacobi tensor field} along $\gamma$ if we have
\begin{equation}\label{eq:Jtensor}
\JJ''+\RR\, \JJ=0
\end{equation}
as well as $\ker(\JJ(t)) \cap \ker(\JJ'(t)) =\{0\}$ for all $t\in[0,l)$,
where $\smash{\RR(t):=R_{\dot\gamma(t)}}:\smash{N_{\gamma}(t) \longrightarrow N_{\gamma}(t)}$ is the curvature endomorphism.

\item
A Jacobi tensor field $\JJ$ is called a \emph{Lagrange tensor field} if
\begin{equation}\label{eq:Ltensor}
(\JJ')^{\TT}\, \JJ -\JJ^{\TT}\, \JJ'=0
\end{equation}
holds on $[0,l)$, where the transpose $\TT$ is taken with respect to $g_{\dot{\gamma}}$.
\end{enumerate}
\end{definition}

\begin{remark}\label{rm:Jtensor}
\begin{enumerate}[label=\textnormal{(\alph*)}]
\item
The equation \eqref{eq:Jtensor} means that, for any
$g_{\dot{\gamma}}$-parallel vector field $P$ along $\gamma$ (namely $P'= 0$),
$Y(t):=\JJ(t)(P(t))$ is a Jacobi field along $\gamma$.
Then the condition $\ker(\JJ(t)) \cap \ker(\JJ'(t)) =\{0\}$
implies that $Y=\JJ(P)$ is not identically zero for every nonzero $P$.
Note also that $\smash{R_{\dot{\gamma}(t)}(w) \in N_{\gamma}(t)}$ indeed holds for all $w \in T_{\gamma(t)}M$.

\item
The equation \eqref{eq:Ltensor} means that
$\JJ^\TT\,\JJ'$ is $g_{\dot{\gamma}}$-symmetric, precisely,
given two $g_{\dot{\gamma}}$-parallel vector fields $P_1,P_2$ along $\eta$,
the Jacobi fields $Y_1:=\JJ(P_1)$ and $Y_2:=\JJ(P_2)$ satisfy
\begin{equation}\label{eq:Ltensor'}
g_{\dot{\gamma}}(Y'_1,Y_2) -g_{\dot{\gamma}}(Y_1,Y'_2)= 0.
\end{equation}
Since \eqref{eq:Jtensor} yields that
$[g_{\dot{\gamma}}(Y'_1,Y_2) -g_{\dot{\gamma}}(Y_1,Y'_2)]' = 0$,
we have \eqref{eq:Ltensor'} for all $t\in[0,l)$ if it holds at some $t\in[0,l)$.
\end{enumerate}
\end{remark}

We will consider Lagrange tensor fields induced from optimal transports as follows.

\begin{lemma}[Construction of Lagrange tensor fields]\label{lm:Lagrange}
For any $q \in (0,1)$ and $(\mu_0,\mu_1) \in \mathcal{P}^{\ac}_{\comp}(M,\meas) \times \mathcal{P}_{\comp}(M)$ $q$-separated by $(\pi,u,v)$, let $\FF_t$ denote the map defined by \eqref{Eq:Fs definition}.
Then, for $\mu_0$-a.e.\ $x \in \{ \d u \neq 0\}$, the tensor field $\JJ$ along $\gamma(t):=\FF_t(x)$ defined by
\[
\JJ(t) \big( P(t) \big) :=\d\FF_t\big( P(0) \big),
\]
where $P$ is any $g_{\dot{\gamma}}$-parallel vector field, is a Lagrange tensor field.
\end{lemma}

\begin{proof}
It follows from Lemma \ref{Le:Maps Jacobian}(ii) that $\JJ$ is a Jacobi tensor field.
To see that $\JJ$ is a Lagrange tensor field, take two Jacobi fields $Y_i(t)=\d\FF_t(w_i)$ with $\smash{w_i \in N_{\gamma}(0)}$, where $i=1,2$.
Then we find from Lemma \ref{Le:Maps Jacobian}(iii) that
\begin{align*}
g_{\dot{\gamma}} \big( Y'_1(0),Y_2(0) \big) &= F^* \big( \d u(x) \big)^{p-2}\, g_{\dot{\gamma}} \big( \nabla^2 u(w_1),w_2 \big)\\
&\qquad\qquad +\d\big[F^*(\d u)^{p-2}\big](w_1)\, g_{\dot{\gamma}}\big( \nabla u(x),w_2 \big).
\end{align*}
Since $\smash{\dot{\gamma}(0)=F^*(\d u)^{p-2}\, \nabla u(x)}$ and $\smash{w_2 \in N_{\gamma}(0)}$, we have $g_{\dot{\gamma}}(\nabla u(x),w_2)=0$.
Moreover, the symmetry of the Hessian \cite[Lemma 4.13]{LMO2} implies
\[
g_{\dot{\gamma}} \big( \nabla^2 u(w_1),w_2 \big) =g_{\nabla u} \big( \nabla^2 u(w_1),w_2 \big) =g_{\dot{\gamma}} \big( \nabla^2 u(w_2),w_1 \big).
\]
Hence, $g_{\dot{\gamma}}(Y'_1(0),Y_2(0)) =g_{\dot{\gamma}}(Y_1(0),Y'_2(0))$ holds and $\JJ$ is a Lagrange tensor field thanks to Remark~\ref{rm:Jtensor}(b).
\end{proof}

Given a Lagrange tensor field $\JJ$ along $\gamma$, we define $\BB:=\JJ'\, \JJ^{-1}$,
which is symmetric by \eqref{eq:Ltensor}.
Multiplying \eqref{eq:Jtensor} from the right by $\JJ^{-1}$,
we arrive at the cor\-responding \emph{Riccati equation}
\begin{equation}\label{Eq:Riccati equ}
\BB' +\BB^2 +\RR=0
\end{equation}
see \cite[(5.3)]{LMO1} and \cite[(50)]{Mc}.
We further define the \emph{expansion scalar}
\[ \theta(t) :=\trace\!\big[ \BB(t) \big], \]
and the \emph{shear tensor} (the traceless part of $\BB$)
\[
\sigma(t) :=\BB(t) -\frac{\theta(t)}{n-1} \,\II_{n-1}(t),
\]
where $\II_{n-1}(t)$ denotes the identity of $N_{\gamma}(t)$.

Recalling the notation $\psi_{\gamma}$ from \eqref{eq:psi_g}, we have the weighted counterparts of the objects introduced above as follows:
\begin{align}\label{Eq:weighted counterparts}
\begin{split}
\JJ_{\meas}(t) &:= \e^{-\psi_{\gamma}(t)/(n-1)}\, \JJ(t), \\
\BB_{\meas}(t) &:= \JJ'_{\meas}(t) \, \JJ_{\meas}(t)^{-1} =\BB(t) -\frac{\psi'_{\gamma}(t)}{n-1}\, \II_{n-1}(t), \\
\theta_{\meas}(t) &:= \trace\!\big[ \BB_{\meas}(t) \big] =\theta(t) -\psi'_{\gamma}(t) , \\
\sigma_{\meas}(t) &:= \BB_{\meas}(t) -\frac{\theta_{\meas}(t)}{n-1}\, \II_{n-1}(t) =\sigma(t).
\end{split}
\end{align}
Moreover, the \emph{weighted Riccati equation} is given as
\[
\BB'_{\meas}(t) +\frac{2\,\psi'_{\gamma}(t)}{n-1}\, \BB_{\meas}(t) +\BB^2_{\meas}(t) +\RR_0(t) =0,
\]
where
\[
\RR_N(t) :=\RR(t) +\frac{1}{n-1}\,\bigg( \psi''_{\gamma}(t) -\frac{\psi'_{\gamma}(t)^2}{N-n} \bigg)\, \II_{n-1}(t),
\]
see \cite[Lemma~5.5]{LMO1}. Note that $\trace(\RR_N(t))=\Ric_N(\dot{\gamma}(t))$.

Taking the trace of the weighted Riccati equation, we obtain the following \emph{weighted Raychaudhuri equation} (see \cite[Theorem~5.6]{LMO1}).

\begin{theorem}[Timelike weighted Raychaudhuri equation]\label{th:wRay} In the notation of \eqref{Eq:weighted counterparts}, we have
\begin{equation*}
\theta'_{\meas} +\frac{2\psi'_{\gamma}}{n-1} \,\theta_{\meas} +\frac{\theta_{\meas}^2}{n-1} +\trace[\sigma_{\meas}^2] +\Ric_0(\dot{\gamma}) =0.
\end{equation*}
Moreover, we have
\begin{align*}
&\theta'_{\meas} +\frac{(N-1)\,(N-n)}{n-1} \bigg( \frac{\theta_{\meas}}{N-1} +\frac{\psi'_{\gamma}}{N-n} \bigg)^2\\
&\qquad\qquad +\frac{\theta_{\meas}^2}{N-1} +\trace[\sigma_{\meas}^2] +\Ric_N(\dot{\gamma}) =0
\end{align*}
for every $N \in \R \setminus \{1,n\}$, and
\begin{equation*}
\theta'_{\meas} +\frac{(\theta_{\meas}+\psi'_{\gamma})^2}{n-1} +\trace[\sigma_{\meas}^2] +\Ric_{\infty}(\dot{\gamma}) =0.
\end{equation*}
\end{theorem}

As an immediate corollary, we obtain
\begin{equation}\label{eq:wRayineq}
\theta'_{\meas} \le -\Ric_N(\dot{\gamma}) -\trace[\sigma_{\meas}^2] -\frac{\theta_{\meas}^2}{N-1}
\end{equation}
for $N \in (-\infty,1) \cup [n,+\infty]$,
which is called the \emph{weighted Raychaudhuri inequality} in \cite[Proposition~5.7]{LMO1}.
We will see that this inequality is sufficient to show the timelike curvature-dimension condition.
\fi

\subsection{Timelike curvature-dimension condition}

To introduce the timelike curvature-dimension conditions we will study in a moment, we need to define several entropy functionals on $\mathcal{P}(M)$. Though we give a definition for general probability measures, in fact we only deal with $\meas$-absolutely continuous measures. 

\begin{definition}[Entropies]\label{df:entropy}
Let $\mu=\rho\,\meas +\mu_{\perp}$ denote the Lebesgue decomposition of a given probability measure $\mu\in\Prob(M)$ into its $\meas$-absolutely continuous and $\meas$-singular parts.
\begin{enumerate}[label=\textnormal{(\arabic*)}]
    \item The \emph{Boltzmann--Shannon entropy} of $\mu$ is defined by
    \[ \Ent_{\meas}(\mu) :=\int_M \rho \log \rho \,\d\meas \]
if $\mu \in \mathcal{P}^{\ac}(M,\meas)$ and $\int_{\{\rho<1\}} \rho\log\rho \,\d\meas>-\infty$, otherwise $\Ent_{\meas}(\mu):=+\infty$.
\item For $n \leq N < +\infty$ we define the \emph{$N$-Rényi entropy} of $\mu$ by
\begin{align*}
    S_\meas^N(\mu) := -\int_M\rho^{(N-1)/N}\,\d\meas.
\end{align*}

\item For $N<0$, the corresponding \emph{$N$-Rényi entropy} of $\mu$ is
\begin{align*}
    S_\meas^N(\mu) := \int_M \rho^{(N-1)/N}\,\d\meas
\end{align*}
if $\mu\in\Prob^\ac(M,\meas)$, otherwise $S_\meas^N(\mu) := +\infty$.

\item The \emph{$0$-Rényi entropy} of $\mu$ is given by
\begin{align*}
    S_\meas^0(\mu) := \esssup_{x\in M} \rho(x) = \Vert \rho\Vert_{L^\infty(M,\meas)}
\end{align*}
if $\mu\in \Prob^\ac(M,\meas)$, otherwise $S_\meas^0(\mu) := +\infty$.
\end{enumerate}
\end{definition}

We remark that the generation functions $r\longmapsto r\log r$, $\smash{r\longmapsto -r^{(N-1)/N}}$ (for $n \leq N < +\infty$), and $\smash{r\longmapsto r^{(N-1)/N}}$ (for $N<0$) are all convex on $[0,\infty)$.
(This explains the change of sign for $S_\meas^N$ in passing from positive to negative $N$.)
In addition, we have $\smash{\Ent_{\meas}(\mu) \ge -\log\meas[\supp\mu]}$ for $\mu \in \mathcal{P}_{\comp}(M)$ by Jensen's inequality, and similarly we obtain a lower bound for $\smash{S_\meas^N(\mu)}$ for $N\in (-\infty,0] \cup [n,+\infty)$, see the proof of Theorem \ref{Th:Brunn Minkowski}.

Now we introduce the \emph{timelike curvature-dimension condition} $\smash{\TCD_q(K,N)}$ partly using the function $\smash{\tau_{K,N}^{(t)}}$ in \eqref{eq:tau}.

\begin{definition}[Timelike curvature-dimension condition]\label{df:TCD}
Let $q \in (0,1)$, $K \in \R$, and $N \in (-\infty,0] \cup [n,+\infty]$.
We say that the weighted Finsler spacetime $(M,L,\meas)$ satisfies $\TCD_q(K,N)$ if, for every $q$-separated $(\mu_0,\mu_1) \in \smash{\mathcal{P}^{\ac}_{\comp}(M,\meas)}\times\smash{\mathcal{P}^{\ac}_{\comp}(M,\meas)}$  there exist a $q$-geodesic $(\mu_t)_{t\in[0,1]}$ from $\mu_0$ and $\mu_1$ and an $\smash{\ell_q}$-optimal coupling $\pi\in\Pi(\mu_0,\mu_1)$ such that the following hold.
\begin{enumerate}[label=\textnormal{(\arabic*)}]
\item
When $N=+\infty$, for every $t\in[0,1]$ we have
\begin{align}\label{eq:TCDinf}
\begin{split}
\Ent_{\meas}(\mu_t) &\le (1-t)\,\Ent_{\meas}(\mu_0) +t\,\Ent_{\meas}(\mu_1)\\
&\qquad\qquad -\frac{K}{2}\,t\,(1-t) \int_{M \times M} l(x,y)^2 \,\d\pi(x, y).
\end{split}
\end{align}

\item
When $n \leq N <+\infty$, for every $N' \geq N$ and every $t\in[0,1]$ we have
\begin{align}\label{eq:TCDpos}
\begin{split}
S_{\meas}^{N'}(\mu_t) &\le
-\int_{M \times M} \tau_{K,N'}^{(1-t)}\big( l(x,y) \big)\, \rho_0(x)^{-1/N'}\,\d\pi(x,y)\\
&\qquad\qquad - \int_{M\times M} \tau_{K,N'}^{(t)}\big( l(x,y) \big) \,\rho_1(y)^{-1/N'} \,\d\pi(x, y).
\end{split}
\end{align}

\item
When $N<0$, for every $N'\in [N,0)$ and every $t\in[0,1]$ we have
\begin{align}\label{eq:TCDneg}
\begin{split}
S_{\meas}^{N'}(\mu_t) &\le
\int_{M \times M}  \tau_{K,N'}^{(1-t)}\big( l(x,y) \big)\, \rho_0(x)^{-1/N'} \,\d\pi(x,y)\\
&\qquad\qquad +\int_{M\times M}\tau_{K,N'}^{(t)}\big( l(x,y) \big) \,\rho_1(y)^{-1/N'} \d\pi(x, y),
\end{split}
\end{align}
where we set $\smash{\tau_{K,N'}^{(t)}(r):=+\infty}$ if $K<0$ and $\smash{r \ge \pi\sqrt{(N'-1)/K}}$.

\item
When $N=0$, for every $t\in[0,1]$ we have
\begin{align}\label{eq:TCD0}
\begin{split}
S_{\meas}^0(\mu_t) &\le \esssup_{(x,y)\in \supp\pi} \max\bigg\{ \frac{\mathfrak{s}_{-K}((1-t)\,l(x,y))}{(1-t)\,\mathfrak{s}_{-K}(l(x,y))}\, \rho_0(x),\\
&\qquad\qquad \frac{\mathfrak{s}_{-K}(t\,l(x,y))}{t\,\mathfrak{s}_{-K}(l(x,y))} \,\rho_1(y) \bigg\},
\end{split}
\end{align}
where we set $\smash{\mathfrak{s}_{-K}(tl(x,y))/(t\mathfrak{s}_{-K}(l(x,y))):=1}$ if $l(x,y)=0$.
\end{enumerate}
\end{definition}

We refer to \cite{ohta-negative} and \cite{ohta-needle} for the curvature-dimension condition for Riemannian or Finsler manifolds with $N<0$ and $N=0$, respectively.
Note that, in the case of $N<0$ and $K<0$, $\TCD_q(K,N)$ provides only a local control since the defining inequality \eqref{eq:TCDneg} is trivial if
\[
\pi\big[\big\{ (x,y) \in M \times M \,\big\vert\, l(x,y) \ge \pi\sqrt{(N-1)/K} \big\} \big] >0.
\]

We also remark that $\pi$ is an $\ell_q$-optimal coupling of $\mu_0$ and $\mu_1$, thereby the last term  $\int_{M \times M}l(x,y)^2 \,\d\pi(x,y)$ in \eqref{eq:TCDinf} is not directly written as a transport cost. In fact, $\pi$ is a unique $\smash{\ell_q}$-optimal coupling of $\mu_0$ and $\mu_1$ by Lemma \ref{Le:Existence separation} and Theorem \ref{Th:Char optimal maps}, and $(\mu_t)_{t\in[0,1]}$ is a unique $q$-geodesic connecting its endpoints by Corollary \ref{Cor:Char geos}. Here we have implicitly used that all couplings of $\mu_0$ and $\mu_1$ are chronological, which leads us to a further remark on the chronology assumption on $\mu_0$ and $\mu_1$ in Definition \ref{df:TCD}.

\begin{remark}[Timelike $q$-dualizability]\label{Re:About q-sep} 
The hypothesis of $q$-separation in Definition \ref{df:TCD}  lies in between strong timelike $p$-dualizability and timelike $p$-dualizability of $(\mu_0,\mu_1)$ \cite[Definitions 2.18, 2.27]{cavalletti2020}. These are the properties in whose terms strong and weak  timelike curvature-dimension conditions have been defined in \cite{braun2022, cavalletti2020}. In the young literature about TCD spaces this distinction is still necessary for stability questions, since chronology is not a closed condition.

The results in Theorem \ref{th:Ric-TCD} and Theorem \ref{Th:TCD-Ric} still hold if Definition \ref{Def:q-separated} was instead given for timelike $p$-dualizable or strongly timelike $p$-dualizable pairs $(\mu_0,\mu_1)$.
For $N\in [n,+\infty)$ the equivalence of these various TCD conditions is due to \cite{braun2022}.  The proof employs a construction similar to Corollary \ref{Cor:Optimal maps without duality}, convexity, and a timelike non-branching property (which is clear in our case). With some work, this can be extended to the range $N\in (-\infty,0]\cup \{+\infty\}$. One could even merely assume $(\mu_0,\mu_1)$ to obey  $\smash{\supp\mu_0\times\supp\mu_1 \subset\{l>0\}}$, cf.~Lemma \ref{Le:Existence separation}. This property should be easier to verify in practice, but as observed in Section \ref{Sec:Kantorovich duality}, unlike the other conditions (see Proposition \ref{Pr:Star-shaped} and \cite[Lemma 3.1]{braun2023}) it does not propagate along $q$-geodesics.
\end{remark}

Integrating the behavior of optimal transports in the previous section and the control of $\meas$ by the weighted Ricci curvature, we arrive at the following main result.

\begin{theorem}[$\smash{\Ric_N \ge K}$ implies $\smash{\TCD_q(K,N)}$]\label{th:Ric-TCD}
Let $(M,L,\meas)$ be a globally hyperbolic weighted Finsler spacetime satisfying $\Ric_N \ge K$ in timelike directions for some $K \in \R$ and $N \in (-\infty,0] \cup [n,+\infty]$.
Then it satisfies $\TCD_q(K,N)$ for any $q \in (0,1)$.
\end{theorem}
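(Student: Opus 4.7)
Fix $q\in (0,1)$ and a $q$-separated pair $(\mu_0,\mu_1)\in \Prob_\comp^\ac(M,\meas)\times\Prob_\comp^\ac(M,\meas)$, separated by $(\pi,u,v)$. By Theorem~\ref{Th:Char optimal maps}(i), Corollary~\ref{Cor:Char geos}, and Proposition~\ref{Pr:Absolute continuity}, the coupling $\pi=(\Id,\FF_1)_\sharp \mu_0$ is the unique $\ell_q$-optimal one, and $\mu_t:=(\FF_t)_\sharp \mu_0 =\rho_t\,\meas$ furnishes the unique $q$-geodesic from $\mu_0$ to $\mu_1$. The Monge--Amp\`ere identity in Corollary~\ref{Cor:Monge Ampère} yields
\begin{equation*}
\rho_t\big(\FF_t(x)\big) = \rho_0(x)\,\det_\meas[\d\FF_t(x)]^{-1} \qquad \text{for $\mu_0$-a.e.\ }x,
\end{equation*}
so the required entropy bounds reduce to a pointwise distortion inequality for $\det_\meas[\d\FF_t(x)]$ along the maximizing geodesic $\gamma_x(t):=\FF_t(x)$ from $x$ to $\FF_1(x)$.

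For $\mu_0$-a.e.\ such $x$, Lemma~\ref{Le:Maps Jacobian} shows that $\d\FF_t(x)$ restricted to the $g_{\dot\gamma_x}$-orthogonal complement of $\dot\gamma_x$ arises from a Lagrange tensor field $\JJ$ along $\gamma_x$, while the radial direction contributes a linear factor $t$. Writing $J_\meas(t)$ for the weighted Jacobian of $\d\FF_t(x)$ (built from $\det[\JJ(t)]$ and the weight $\e^{-\psi_\meas(\dot\gamma_x)}$ through the definition of $\psi_\meas$) and taking the trace of the matrix Riccati equation obeyed by $\JJ'\JJ^{-1}$ yields the weighted Raychaudhuri inequality as in \cite[Section~5]{LMO1}. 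Combined with $\Ric_N(\dot\gamma_x)\ge K\,l(x,\FF_1(x))^2$, a standard ODE comparison (cf.~\cite{sturmII}) delivers the pointwise distortion estimate
\begin{equation*}
J_\meas(t)^{1/N}\,\ge\, \tau^{(1-t)}_{K,N}\!\big(l(x,\FF_1(x))\big)\,J_\meas(0)^{1/N} + \tau^{(t)}_{K,N}\!\big(l(x,\FF_1(x))\big)\,J_\meas(1)^{1/N}
\end{equation*}
for $N\in[n,+\infty)$, with the sense of the inequality reversed for $N\in(-\infty,0)$.

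Substituting the Monge--Amp\`ere formula, multiplying through by $\rho_0(x)^{-1/N}$, and integrating against $\mu_0$ while using $\pi=(\Id,\FF_1)_\sharp\mu_0$ and $\mu_1=(\FF_1)_\sharp\mu_0$, yields \eqref{eq:TCDpos} for $N'=N\in[n,+\infty)$ and \eqref{eq:TCDneg} for $N'=N\in(-\infty,0)$. The validity for every $N'\ge N$ (respectively $N'\in[N,0)$) then follows from the monotonicity of $\Ric_{N'}$ in $N'$ recorded in Remark~\ref{rm:Ric_N}(a), since the whole argument may be rerun with $N'$ in place of $N$. The case $N=+\infty$ is entirely parallel: the inequality $(\log J_\meas)''(t)\le -\Ric_\infty(\dot\gamma_x)\le -K\,l(x,\FF_1(x))^2$ replaces the Raychaudhuri estimate and, integrated against $\mu_0$ together with $\Ent_\meas(\mu_t)=\int \log(\rho_0/\det_\meas[\d\FF_t])\,\d\mu_0$, gives \eqref{eq:TCDinf}.

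The main obstacle is the case $N=0$, since the functional $S_\meas^0(\mu)=\Vert\rho\Vert_{L^\infty(M,\meas)}$ is not a standard R\'enyi entropy. My plan is to recover \eqref{eq:TCD0} as a limit of \eqref{eq:TCDneg} as $N\uparrow 0$: take an appropriate root of the distortion inequality, pass to essential suprema over $\supp\pi$, and send $N\uparrow 0$, exploiting the convergence of $\tau_{K,N}^{(t)}(r)^{1/(-N)}$ to $\mathfrak{s}_{-K}(tr)/(t\,\mathfrak{s}_{-K}(r))$ and of $\rho^{(N-1)/N}$ to the $L^\infty$-norm along maximizing sequences. The delicate step is justifying the interchange of limit and essential supremum; I would first establish \eqref{eq:TCD0} on small transport rectangles on which $\rho_0$ and $\rho_1$ are essentially constant, along a decomposition in the spirit of Corollary~\ref{Cor:Optimal maps without duality}, and then conclude by exhaustion. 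As indicated by the authors' metaremark, this limiting procedure is the genuinely new difficulty introduced by allowing $N\le 0$.
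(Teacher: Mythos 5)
Your proposal takes a genuinely different route from the published proof for the main estimate: where the paper works with the full $n\times n$ matrix $J(t)=(J_{\alpha\beta}(t))$ of Jacobi-field coefficients, derives the matrix Riccati equation for $D(t)=J(t)^{-1}B(t)J(t)$, and splits $\log\det[\d\FF_t(x)] = \phi_1(t)+\phi_2(t)$ via $\phi_1(t):=\int_0^t D_{11}(s)\,\d s$, you propose to factor $\det[\d\FF_t(x)]$ into an affine-in-$t$ ``radial'' factor times the determinant of a Lagrange tensor field on $N_{\gamma_x}(t)$ and then apply the weighted Raychaudhuri inequality of \cite{LMO1}. Both decompositions produce a concave radial factor and an orthogonal factor obeying the $(K,N-1)$-comparison, after which H\"older gives the sharp distortion coefficients $\tau^{(t)}_{K,N}$. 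Two caveats are worth recording. First, the radial factor is affine $1+bt$, not the linear function $t$ (the latter occurs only in the measure-contraction setting where $\mu_0$ is a point); and extracting this factor requires a column operation on the Jacobi fields, because $J_1$ has a nonzero orthogonal projection and the $J_i$ for $i\ge 2$ have nonzero radial components, so $\d\FF_t(x)$ does not literally restrict to $N_{\gamma_x}$. Second, Lemma \ref{Le:Maps Jacobian} only gives the Jacobi-field structure; the Lagrange (symmetry) condition on $\JJ'\JJ^{-1}$ has to be verified separately, from the symmetry of the Hessian $\nabla^2 u$ as in \cite[Lemma~4.13]{LMO2}. With these repairs your route is sound and in fact parallels an earlier, now-superseded draft of the argument; the remark after the proof of Theorem \ref{th:Ric-TCD} hints at why the published version prefers the matrix computation (the Lorentzian frame matrix $g_{\dot\gamma}(E_\alpha,E_\beta)$ is $\mathrm{diag}(-1,1,\dots,1)$ rather than the identity, which makes the Riccati bookkeeping more delicate than in the positive-definite Finsler case).

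The $N=0$ case is where your proposal has a genuine gap. You plan to deduce \eqref{eq:TCD0} as a limit of \eqref{eq:TCDneg} as $N\uparrow 0$, but the curvature inputs go the wrong way: by Remark~\ref{rm:Ric_N}(a) we have $\Ric_N\le\Ric_0$ for $N<0$, so the hypothesis $\Ric_0\ge K$ does \emph{not} grant $\Ric_N\ge K$ and you cannot simply invoke the $N<0$ case. At best one has $\Ric_N\ge (K-\varepsilon_N)F^2$ uniformly on a fixed compact causal diamond with $\varepsilon_N\downarrow 0$; your sketch never introduces this perturbation. Moreover, even granting it, passing the integrated inequality \eqref{eq:TCDneg} to the limit requires controlling the simultaneous degenerations $\tau_{K,N}^{(t)}(r)\to\infty$, $\rho^{-1/N}\to\infty$, and the indeterminate behaviour of $\det_\meas[\d\FF_1(x)]^{1/N}$; this can be made to work pointwise (and your localization to small rectangles is morally the right move), but it is heavy machinery for what is actually an elementary step. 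The paper avoids all of this by noting that for $N=0$ the very same Riccati/Raychaudhuri estimates that yield the $(K/(N-1))$-comparison for $c_3$ continue to apply directly (giving $c_3''\ge K\,d(x)^2\,c_3$), and one then combines the concavity of the radial factor with the $c_3$-bound via the trivial inequality $(a_1+a_2)/(b_1+b_2)\ge\min\{a_1/b_1,\,a_2/b_2\}$ to obtain the pointwise version of \eqref{eq:TCD0} immediately, with no limit and no perturbation of $K$. I would recommend adopting this direct pointwise argument in place of the proposed limit.
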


\begin{proof}
We divide the proof into five steps.

\begin{step}
We start with some preparations.  Throughout this proof, we fix a pair $\smash{(\mu_0,\mu_1) \in \mathcal{P}^{\ac}_{\comp}(M,\meas)\times \Prob_\comp^\ac(M,\meas)}$ which is $q$-separated, say by $(\pi,u,v)$. Let $(\mu_t)_{t \in [0,1]}$ be the unique $q$-geodesic which connects  $\mu_0$ to $\mu_1$.
Recall from Theorem~\ref{Th:Char optimal maps} and Corollary~\ref{Cor:Char geos} that $\mu_t=(\FF_t)_{\sharp}\mu_0$ for every $t\in(0,1)$, where
\[
\FF_t(x) :=\exp_x\! \big( t\,F^*(\d u)^{p-2} \,\mathscr{L}^*(\d u) \big).
\]
Moreover, $\mu_t = \rho_t\,\meas$ is $\meas$-absolutely continuous, and Corollary~\ref{Cor:Monge Ampère} yields
\[ \rho_0(x) =\rho_t \big( \FF_t(x) \big) \,\det_{\meas} [\d\FF_t(x)] \]
for $\mu_0$-a.e.~$x\in M$. 
To be more precise, if $\d u(x) \neq 0$ (equivalently, $x \neq \FF_1(x)$), one can write down
\[ \det_{\meas} [\d\FF_t(x)] =\e^{\psi_{\gamma}(0)-\psi_{\gamma}(t)} \,\det[\d\FF_t(x)] \]
for the maximizing geodesic $\gamma \colon [0,1] \longrightarrow M$ from $x$ to $\FF_1(x)$ and $\psi_{\gamma}$ given by \eqref{eq:psi_g}, where $\det[\d\FF_t(x)]$ denotes the Jacobian associated with the measure $\smash{\vol_{g_{\dot{\gamma}}}}$ along $\gamma$ induced from the Lorentzian metric $g_{\dot{\gamma}}$ as in \eqref{eq:g_v}.
\end{step}

\begin{step}
Now, we can follow essentially the same lines as the Lorentzian case in \cite{Mc},
however, we need more detailed calculations to improve $\TCD^*(K,N)$ to $\TCD(K,N)$.
Here we give the detailed proof for completeness.
Fix $x \in \supp\mu_0$ with $\d u(x) \neq 0$ as above and take an $g_{\dot{\gamma}}$-orthonormal basis
$(e_{\alpha})_{\alpha=1}^n$ of $T_xM$ with $e_1=\dot{\gamma}(0)/F(\dot{\gamma}(0))$.
That is, we have $g_{\dot{\gamma}}(e_1,e_1)=-1$, $g_{\dot{\gamma}}(e_i,e_i)=1$ for $i \ge 2$, 
and $g_{\dot{\gamma}}(e_{\alpha},e_{\beta})=0$ for $1 \le \alpha<\beta \le n$.
We consider the $g_{\dot{\gamma}}$-parallel vector fields $E_{\alpha}$ along $\gamma$
with $E_{\alpha}(0)=e_{\alpha}$ (i.e.\ $\smash{D^{\dot{\gamma}}_{\dot{\gamma}} E_{\alpha} = 0}$) 
as well as the Jacobi fields $J_{\alpha}(t):=\d \FF_t(x)(e_{\alpha})$ along $\gamma$. 
We remark that $J_1(t)$ is not necessarily tangent to $\dot{\gamma}(t)$.
For simplicity, we denote the covariant derivative along $\gamma$ by $'$,
namely $\smash{J'_{\alpha} :=D^{\dot{\gamma}}_{\dot{\gamma}} J_{\alpha}}$.
Then we define the $n \times n$ matrix $\smash{J(t)=(J_{\alpha \beta}(t))_{\alpha,\beta=1}^n}$ by
\[
J_{\alpha}(t) :=\sum_{\beta=1}^n J_{\alpha \beta}(t)\, E_{\beta}(t).
\]
We have $J''_{\alpha} =\sum_{\beta =1}^n J''_{\alpha \beta}\, E_\beta$ and observe from the Jacobi equation that
\begin{align*}
J''_{\alpha} &=-R_{\dot{\gamma}}(J_{\alpha})
 =-\sum_{\delta=1}^n J_{\alpha\delta}\, R_{\dot{\gamma}}(E_{\delta}) \\
&= \sum_{\delta=1}^n J_{\alpha\delta}\, g_{\dot{\gamma}}\big( R_{\dot{\gamma}}(E_{\delta}),E_1 \big) E_1
 -\sum_{\delta=1}^n \sum_{j=2}^n J_{\alpha\delta} \, g_{\dot{\gamma}}\big( R_{\dot{\gamma}}(E_{\delta}),E_j \big) E_j \\
&= -\sum_{\beta,\delta=1}^n J_{\alpha\delta}\, g_{\dot{\gamma}}\big( R_{\dot{\gamma}}(E_{\delta}),E_{\beta} \big) E_{\beta};
\end{align*}
in the last line, we have used the identity 
 $g_{\dot{\gamma}}(R_{\dot{\gamma}}(E_{\delta}),E_1) =g_{\dot{\gamma}}(R_{\dot{\gamma}}(E_1),E_{\delta}) =0$
provided by $E_1=\dot{\gamma}/F(\dot{\gamma})$ and \eqref{eq:R_v}.
Hence, by letting
\[
B:=J'\, J^{-1}, \qquad
R(t) :=\Big( g_{\dot\gamma}\big( R_{\dot{\gamma}}(E_{\alpha}),E_{\beta}\big)(t) \Big)_{\alpha,\beta=1}^n,
\]
we have the \emph{Riccati equation}
\begin{equation}\label{eq:matrix-Riccati}
B' =J''\, J^{-1} -(J'\, J^{-1})^2 =-J\, R\, J^{-1} -B^2.
\end{equation}
This can be rewritten as 
\[
D' =-J^{-1}\, J'\, J^{-1}\, B\,J -R -J^{-1}\, B^2\, J +J^{-1}\, B\,J' =-R -D^2,
\]
where $D:=J^{-1}\,B\,J$. Therefore, taking the trace yields
\begin{equation}\label{eq:trace-Riccati}
\trace[B'] +\trace[B^2] =\trace[D'] +\trace[D^2] =-\Ric(\dot{\gamma}).
\end{equation}
\end{step}

\begin{step}
Next, we have a closer look on $\trace[B^2]$ at $t=0$.
This is the step where we need a more delicate analysis to achieve $\TCD(K,N)$.
First of all, since $J_{\alpha}(0)=E_{\alpha}(0)$, $J(0)$ is the identity matrix and we have $D(0)=B(0)$.
By the choice of $(e_{\alpha})_{\alpha=1}^n$,
\[
B_{\alpha 1}(0) =-g_{\dot{\gamma}} \big( J'_{\alpha}(0),e_1 \big), \qquad
B_{\alpha j}(0) =g_{\dot{\gamma}} \big( J'_{\alpha}(0),e_j \big)
\]
for every $j \ge 2$.
We recall from Lemma~\ref{Le:Maps Jacobian}(iii) that
\[
J'_{\alpha}(0)
 = F^*(\d u)^{p-2} \, \nabla^2 u(e_{\alpha}) +\d[ F^*(\d u)^{p-2}] (e_{\alpha}) \, \nabla u(x),
\]
and observe from the identities $\smash{F^*(\d u)^2 =-g_{\nabla u}(\nabla u,\nabla u)}$, compare with \cite[(3.2)]{LMO1}, and $\smash{e_1=\nabla u(x)/F^*(\d u)}$ that
\begin{align*}
\d[ F^*(\d u)^{p-2}] (e_{\alpha}) \, \nabla u(x)
&= -(p-2)\, F^*(\d u)^{p-4}\, g_{\nabla u} \big( \nabla^2 u(e_{\alpha}),\nabla u \big) \, \nabla u(x) \\
&= -(p-2)\, F^*(\d u)^{p-2}\, g_{\nabla u} \big( \nabla^2 u(e_{\alpha}),e_1 \big) \, e_1.
\end{align*}
Therefore,
\[
B_{\alpha 1}(0) =(1-p)\, F^*(\d u)^{p-2}\, g_{\nabla u} \big( \nabla^2 u(e_{\alpha}),e_1 \big),
\]
while for $j \ge 2$ we obtain
\[
B_{\alpha j}(0) =F^*(\d u)^{p-2}\, g_{\nabla u} \big( \nabla^2 u(e_{\alpha}),e_j \big).
\]
Thus we can decompose $B(0) =F^*(\d u)^{p-2}\, H^u\, Q^2$, where
\[
H^u :=\Big( g_{\nabla u}\big( \nabla^2 u(e_{\alpha}),e_{\beta} \big) \Big)_{\alpha,\beta=1}^n, \qquad
Q:=\textnormal{diag}\big(\sqrt{1-p}, 1,\dots,1\big).
\]
Note that $H^u$ is symmetric  \cite[Lemma~4.13]{LMO2}.

Since $D(0)=B(0)$, we deduce from \eqref{eq:matrix-Riccati}, \eqref{eq:R_v}, and the symmetry of $H^u$ that
\begin{align} \label{eq:D11}
\begin{split}
D'_{11}(0) &=-\sum_{\alpha=1}^n D_{1 \alpha}(0) \, D_{\alpha 1}(0)\\
&= -F^*(\d u)^{2p-4}\, \big( H^u\, Q^2\, H^u\, Q^2 \big)_{11}\\
&= -F^*(\d u)^{2p-4}\, \Bigg( (1-p)^2\, (H^u_{11})^2 +(1-p)\,\sum_{j=2}^n (H^u_{1j})^2 \Bigg)\\
&\le -F^*(\d u)^{2p-4}\, (1-p)^2\, (H^u_{11})^2\\
&= -D_{11}(0)^2.
\end{split}
\end{align}
We set $\phi(t) :=\log \det[\d\FF_t(x)] =\log \det[J(t)]$ and decompose it as
\[
\phi_1(t) := \int_0^t D_{11}(s) \,\d s, \qquad
\phi_2(t) := \phi(t) -\phi_1(t).
\]
Note that $\phi(0)=\phi_1(0)=\phi_2(0)=0$ and
\begin{align*}
\phi'(t) &= \trace\!\big[J'(t)\, J(t)^{-1} \big] =\trace[B(t)] =\trace[D(t)], \\
\phi'_2(0) &=\sum_{i=2}^n D_{ii}(0) =F^*(\d u)^{p-2}\, \sum_{i=2}^n H^u_{ii},
\end{align*}
and \eqref{eq:D11} yields
\begin{equation}\label{eq:phi_1}
\big[\e^{\phi_1(t)}\big]''_{0} =\big( D'_{11}(0) +D_{11}(0)^2 \big)\,\e^{\phi_1(0)} \le 0.
\end{equation}
Moreover, \eqref{eq:trace-Riccati} implies
\[
\phi''_2(t) =\phi''(t) -\phi''_1(t)
 =-\Ric \big( \dot{\gamma}(t) \big) -\trace[D(t)^2] -D'_{11}(t).
\]
If follows from the symmetry of $H^u$ and the Cauchy--Schwarz inequality that
\begin{align*}
\trace[D(0)^2] +D'_{11}(0)
&= \sum_{\alpha,\beta=1}^n D_{\alpha\beta}(0) \, D_{\beta\alpha}(0)
 -\sum_{\beta=1}^n D_{1 \beta}(0)\, D_{\beta 1}(0) \\
&= F^*(\d u)^{2p-4}\, \Bigg( (1-p)\, \sum_{j=2}^n (H^u_{1j})^2 +\sum_{i,j=2}^n (H^u_{ij})^2 \Bigg) \\
&\ge F^*(\d u)^{2p-4}\, \sum_{i=2}^n (H^u_{ii})^2 \\
&\ge F^*(\d u)^{2p-4}\, \frac{1}{n-1} \Bigg( \sum_{i=2}^n H^u_{ii} \Bigg)^2 \\
&= \frac{1}{n-1}\, \phi'_2(0)^2.
\end{align*}
Combining these we obtain
\begin{equation}\label{eq:phi_2}
\frac{\big[\e^{\phi_2(t)/(n-1)}\big]''_0}{\e^{\phi_2(0)/(n-1)}}
 =\frac{\phi''_2(0)}{n-1} +\frac{\phi'_2(0)^2}{(n-1)^2}
 \le -\frac{\Ric(\dot{\gamma}(0))}{n-1}.
\end{equation}

We further consider the functions
\begin{alignat*}{3}
c(t) &:=\det_{\meas}[\d \FF_t(x)]^{1/N}, && \qquad
& c_1(t) & :=\exp\!\bigg( \frac{\psi_{\gamma}(0)-\psi_{\gamma}(t)}{N-n} \bigg), \\
c_2(t) &:=\e^{\phi(t)/n}, &&&
c_3(t) &:=c_1(t)^{(N-n)/(N-1)}\, \e^{\phi_2(t)/(N-1)},
\end{alignat*}
where we assume $N \neq 0$ for $c(t)$ and $c_2(t)$ will be used in the proof of Theorem~\ref{Th:TCD-Ric}.
Note that, since $N \in (-\infty,0] \cup (n,\infty)$,
\begin{align*}
(N-1)\,\frac{c''_3}{c_3} &= (N-n)\,\frac{c_1''}{c_1} + (n-1)\,\frac{[\e^{\phi_2/(n-1)}]''}{\e^{\phi_2/(n-1)}}\\
&\qquad\qquad -\frac{(n-1)(N-n)}{N-1}\,\bigg(\frac{c'_1}{c_1} -\frac{[\e^{\phi_2/(n-1)}]'}{\e^{\phi_2/(n-1)}} \bigg)^2 \\
&\le -\psi''_{\gamma} +\frac{(\psi'_{\gamma})^2}{N-n} +(n-1)\,\frac{[\e^{\phi_2/(n-1)}]''}{\e^{\phi_2/(n-1)}}.
\end{align*}
Therefore, at $t=0$, we obtain from \eqref{eq:phi_2} and the hypothesis $\Ric_N \ge K$ that
\begin{equation}\label{eq:c_3}
(N-1)\,\frac{c''_3(0)}{c_3(0)} \le -\Ric_N \big( \dot{\gamma}(0) \big) \le -K\,F^2 \big( \dot{\gamma}(0) \big).
\end{equation}
\end{step}

\begin{step}
We shall generalize \eqref{eq:phi_1} and \eqref{eq:c_3} to $t>0$.
When we perform the same calculation at $t_0 \in (0,1)$
with the orthonormal basis $(E_{\alpha}(t_0))_{\alpha=1}^n$,
then $J(t)$, $B(t)$, and $D(t)$ are respectively replaced by
\[
J^{t_0}(t) =J(t_0)^{-1}\, J(t), \qquad B^{t_0}(t) =J(t_0)^{-1}\, B(t)\, J(t_0), \qquad
D^{t_0}(t) =D(t)
\]
for $t \ge t_0$.
This immediately implies \eqref{eq:phi_1} as well as \eqref{eq:phi_2} at $t_0$,
and then \eqref{eq:c_3} also follows.

On the one hand, we deduce from the concavity inequality \eqref{eq:phi_1} that
\begin{equation}\label{eq:phi_1+}
\e^{\phi_1(t)} \ge (1-t)\,\e^{\phi_1(0)} +t\,\e^{\phi_1(1)} =(1-t) +t\,\e^{\phi_1(1)}.
\end{equation}
On the other hand, \eqref{eq:c_3} implies that $-(N-1)\log c_3$ is $(KF^2(\dot{\gamma}),N-1)$-convex 
in the sense of \cite[Lemma~2.2]{EKS} for $N \in (n,+\infty)$ or \cite[Lemma~2.1]{ohta-negative} for $N \le 0$, respectively. 
Thus, we obtain
\begin{equation}\label{eq:c_3+}
c_3(t) \ge \frac{\mathfrak{s}_{K/(N-1)}((1-t)\,d(x))}{\mathfrak{s}_{K/(N-1)}(d(x))}
 +\frac{\mathfrak{s}_{K/(N-1)}(t\,d(x))}{\mathfrak{s}_{K/(N-1)}(d(x))}\, c_3(1)
\end{equation}
for $N \in (n,+\infty)$, and
\begin{equation}\label{eq:c_3-}
c_3(t) \le \frac{\mathfrak{s}_{K/(N-1)}((1-t)\,d(x))}{\mathfrak{s}_{K/(N-1)}(d(x))}
 +\frac{\mathfrak{s}_{K/(N-1)}(t\,d(x))}{\mathfrak{s}_{K/(N-1)}(d(x))}\, c_3(1)
\end{equation}
for $N \le 0$, where we set $d(x):=l(x,\FF_1(x))=F(\dot{\gamma})$ for simplicity.
\end{step}

\begin{step}
In this final step, we turn to the proofs of the respective entropic displacement semi-convexity.
We first consider the case of $N \in (n,+\infty)$.
We remark that, thanks to the monotonicity as in Remark~\ref{rm:Ric_N},
it suffices to show \eqref{eq:TCDpos} for $N'=N$.
Combining \eqref{eq:phi_1+}, \eqref{eq:c_3+}, and  Hölder's inequality, we obtain
\begin{align*}
c(t) &= c_3(t)^{(N-1)/N}\, \e^{\phi_1(t)/N} \\
&\ge \bigg(\frac{\mathfrak{s}_{K/(N-1)}((1-t)\,d(x))}{\mathfrak{s}_{K/(N-1)}(d(x))}
 +\frac{\mathfrak{s}_{K/(N-1)}(t\,d(x))}{\mathfrak{s}_{K/(N-1)}(d(x))}\, c_3(1) \bigg)^{(N-1)/N} \\
&\qquad\qquad \times \big( (1-t) +t\,\e^{\phi_1(1)} \big)^{1/N} \\
&\ge \bigg(\frac{\mathfrak{s}_{K/(N-1)}((1-t)\,d(x))}{\mathfrak{s}_{K/(N-1)}(d(x))} \bigg)^{(N-1)/N}
 (1-t)^{1/N} \\
&\qquad\qquad +\bigg(\frac{\mathfrak{s}_{K/(N-1)}(t\,d(x))}{\mathfrak{s}_{K/(N-1)}(d(x))}\, c_3(1) \bigg)^{(N-1)/N}
 \big(t\,\e^{\phi_1(1)} \big)^{1/N} \\
&= \tau^{(1-t)}_{K,N}\big( d(x) \big) +\tau^{(t)}_{K,N}\big( d(x) \big) \, c(1).
\end{align*}
We remark that this inequality holds as equality for $\mu_0$-a.e.\ $x \in \{\d u=0\}$ since $d(x)=0$.
Therefore, with the help of Corollary \ref{Cor:Monge Ampère}, we obtain the desired convexity \eqref{eq:TCDpos} of $S_{\meas}^N$ as
\begin{align*}
S^N_{\meas}(\mu_t)
 &= -\int_M \rho_t\big( \FF_t(x) \big)^{(N-1)/N}\, \det_{\meas}[\d\FF_t(x)] \,\d\meas(x) \nonumber\\
&= -\int_M \rho_0(x)^{(N-1)/N}\, \det_{\meas}[\d\FF_t(x)]^{1/N} \,\d\meas(x) \\
&\le -\int_M \rho_0(x)^{(N-1)/N} \tau_{K,N}^{(1-t)}\big( d(x) \big)\,\d\meas(x) \nonumber\\
&\qquad\qquad -\int_M \rho_0(x)^{(N-1)/N}\,\tau_{K,N}^{(t)}\big( d(x) \big) \,\det_{\meas}[\d\FF_1(x)]^{1/N}\, \d\meas(x) \nonumber\\
&= -\int_M  \tau_{K,N}^{(1-t)}\big( d(x) \big)\, \rho_0(x)^{(N-1)/N}\,\d\meas(x) \nonumber\\
&\qquad\qquad -\int_M \tau_{K,N}^{(t)}\big( d(x) \big)\, \rho_1\big( \FF_1(x) \big)^{(N-1)/N}\, \det_{\meas} [\d\FF_1(x)] \,\d\meas(x) \nonumber\\
&= -\int_{M \times M} \tau_{K,N}^{(1-t)}\big( l(x,y) \big)\, \rho_0(x)^{-1/N}\,\d\pi(x,y)\nonumber\\
&\qquad\qquad -\int_{M\times M}\tau_{K,N}^{(t)}\big( l(x,y) \big) \,\rho_1(y)^{-1/N} \,\d\pi(x, y).\nonumber
\end{align*}
This completes the proof in the case of $N \in (n,+\infty)$.
Then the case of $N=n$ is obtained by a limit argument, since $\Ric_n \ge K$ implies $\Ric_N \ge K$ for $N>n$.

For $N<0$, we make use of \eqref{eq:c_3-} instead of \eqref{eq:c_3+} to see
\begin{align*}
c(t) &= c_3(t)^{(N-1)/N}\, \e^{\phi_1(t)/N} \\
&\le \bigg(\frac{\mathfrak{s}_{K/(N-1)}((1-t)\,d(x))}{\mathfrak{s}_{K/(N-1)}(d(x))}
 +\frac{\mathfrak{s}_{K/(N-1)}(t\,d(x))}{\mathfrak{s}_{K/(N-1)}(d(x))}\, c_3(1) \bigg)^{(N-1)/N} \\
&\qquad\qquad \times \big((1-t) +t\,\e^{\phi_1(1)} \big)^{1/N} \\
&\le \bigg(\frac{\mathfrak{s}_{K/(N-1)}((1-t)\,d(x))}{\mathfrak{s}_{K/(N-1)}(d(x))} \bigg)^{(N-1)/N}\, (1-t)^{1/N} \\
&\qquad\qquad +\bigg(\frac{\mathfrak{s}_{K/(N-1)}(t\,d(x))}{\mathfrak{s}_{K/(N-1)}(d(x))}\, c_3(1) \bigg)^{(N-1)/N}\,
 \big( t\,\e^{\phi_1(1)} \big)^{1/N} \\
&= \tau^{(1-t)}_{K,N}\big( d(x) \big) +\tau^{(t)}_{K,N}\big( d(x) \big) \, c(1),
\end{align*}
where the latter inequality follows from the H\"older inequality of the form
\begin{align*}
(a_1 +b_1)^{(N-1)/N}
&=\bigg( \frac{a_1 \cdot a_2^{1/(N-1)}}{a_2^{1/(N-1)}} +\frac{b_1 \cdot b_2^{1/(N-1)}}{b_2^{1/(N-1)}} \bigg)^{(N-1)/N} \\
&\le (a_1^{(N-1)/N} a_2^{1/N} +b_1^{(N-1)/N} b_2^{1/N})\, (a_2 +b_2)^{-1/N}
\end{align*}
for $a_1,a_2,b_1,b_2 >0$. Then \eqref{eq:TCDneg} is deduced along the same lines as above.

Next, in the case of $N=+\infty$, we more directly see that
$\log[\det_{\meas}[\d\FF_t(x)]] =\psi_{\gamma}(0) -\psi_{\gamma}(t) +\phi(t)$ satisfies
\begin{align*}
[\psi_{\gamma}(0) -\psi_{\gamma}(t) +\phi(t)]''
&= -\psi''_{\gamma}(t) -\trace[D(t)^2] -\Ric\big( \dot{\gamma}(t) \big) \\
&\le -\Ric_{\infty}\big( \dot{\gamma}(t) \big)
\le -K\,d(x)^2.
\end{align*}
This implies
\[
\log \det_{\meas}[\d \FF_t(x)]  \ge t\log \det_{\meas}[\d \FF_1(x)]  +\frac{K}{2}\,t\,(1-t)\,d(x)^2
\]
(which again holds true also for $\mu_0$-a.e.\ $x \in \{\d u=0\}$ since $d(x)=0$),
and we deduce the inequalities
\begin{align*}
\Ent_{\meas}(\mu_t)
 &= \int_M \rho_t \big( \FF_t(x) \big) \log \rho_t \big( \FF_t(x) \big) \, \det_{\meas}[\d\FF_t(x)] \,\d\meas(x) \\
&= \int_M \rho_0(x) \log\frac{\rho_0(x)}{\det_{\meas}[\d\FF_t(x)]} \,\d\meas(x) \\
&\le (1-t)\,\Ent_{\meas}(\mu_0) +t\int_M \rho_0(x) \log\frac{\rho_0(x)}{\det_{\meas}[\d\FF_1(x)]} \,\d\meas(x) \\
&\qquad\qquad -\frac{K}{2}\,t\,(1-t)\, \int_M d(x)^2 \,\d\mu_0(x) \\
&= (1-t)\,\Ent_{\meas}(\mu_0) +t\,\Ent_{\meas}(\mu_1) -\frac{K}{2}\,t\,(1-t) \int_{M \times M} l(x,y)^2 \,\d\pi(x, y).
\end{align*}

Finally, when $N=0$, \eqref{eq:c_3-} is still available, namely
\[ c_3(t) \le \frac{\mathfrak{s}_{-K}((1-t)\,d(x))}{\mathfrak{s}_{-K}(d(x))}
 +\frac{\mathfrak{s}_{-K}(t\,d(x))}{\mathfrak{s}_{-K}(d(x))}\, c_3(1). \]
In this case, we have
\begin{align*}
c_3(t) &=c_1(t)^n\, \e^{-\phi_2(t)}\\
&=\e^{\psi_{\gamma}(t)-\psi_{\gamma}(0)} \,\e^{\phi_1(t)}\, \det[\d \FF_t(x)]^{-1}\\
&=\e^{\phi_1(t)}\, \det_{\meas}[\d \FF_t(x)]^{-1}.
\end{align*}
Combining this with \eqref{eq:phi_1+} yields
\begin{align*}
\det_{\meas}[\d \FF_t(x)]
 &= \e^{\phi_1(t)}\, c_3(t)^{-1} \\
&\ge \big( (1-t)\,+t\, \e^{\phi_1(1)} \big)\, \bigg( \frac{\mathfrak{s}_{-K}((1-t)\,d(x))}{\mathfrak{s}_{-K}(d(x))}
 + \frac{\mathfrak{s}_{-K}(t\,d(x))}{\mathfrak{s}_{-K}(d(x))}\, c_3(1) \bigg)^{-1} \\
&\ge \min\!\bigg\{ \frac{(1-t)\,\mathfrak{s}_{-K}(d(x))}{\mathfrak{s}_{-K}((1-t)\,d(x))},
 \frac{t\,\mathfrak{s}_{-K}(d(x))}{\mathfrak{s}_{-K}(t\,d(x))}\,  \det_{\meas}[\d \FF_1(x)] \bigg\},
\end{align*}
where the latter inequality follows from
\[ \frac{a_1 +a_2}{b_1 +b_2} =\frac{b_1}{b_1 +b_2} \frac{a_1}{b_1} +\frac{b_2}{b_1 +b_2} \frac{a_2}{b_2} \ge \min\bigg\{ \frac{a_1}{b_1},\frac{a_2}{b_2} \bigg\} \]
for $a_1,a_2,b_1,b_2 >0$.
This implies
\begin{align*}
\rho_t\big( \FF_t(x) \big)
&=\frac{\rho_0(x)}{\det_{\meas}[\d \FF_t(x)]}\\
&\le  \max\! \bigg\{ \frac{\mathfrak{s}_{-K}((1-t)\,d)}{(1-t)\,\mathfrak{s}_{-K}(d)} \,\rho_0(x),
 \frac{\mathfrak{s}_{-K}(t\,d)}{t\,\mathfrak{s}_{-K}(d)}\, \frac{\rho_0(x)}{\det_{\meas}[\d \FF_1(x)]} \bigg\} \\
&= \max\! \bigg\{ \frac{\mathfrak{s}_{-K}((1-t)\,d)}{(1-t)\,\mathfrak{s}_{-K}(d)} \,\rho_0(x),
 \frac{\mathfrak{s}_{-K}(t\,d)}{t\,\mathfrak{s}_{-K}(d)}\, \rho_1\big( \FF_1(x) \big) \bigg\},
\end{align*}
while $\rho_t(\FF_t(x))=\rho_0(x)=\rho_1(\FF_1(x))$ for $\mu_0$-a.e.\ $x \in \{\d u=0\}$.
Therefore, we obtain \eqref{eq:TCD0}, and the proof is finished.\qedhere
\end{step}

\if0
Now, we can follow essentially the same lines as the Lorentzian case in \cite{Mc}.
Here we give the detailed proof for completeness.
Fix $x \in \supp\mu_0$ with $\d u(x) \neq 0$ as above and take an $g_{\dot{\gamma}}$-orthonormal basis $(e_{\alpha})_{\alpha=1}^n$ of $T_xM$ with $e_1=\dot{\gamma}(0)/F(\dot{\gamma}(0))$ (i.e.\ $g_{\dot{\gamma}}(e_1,e_1)=-1$, $g_{\dot{\gamma}}(e_i,e_i)=1$ for $i \ge 2$ and $g_{\dot{\gamma}}(e_{\alpha},e_{\beta})=0$ for $1 \le \alpha<\beta \le n$).
We consider the Jacobi fields $J_{\alpha}(t):=\d \FF_t(x)(e_{\alpha})$ along $\gamma$ (we remark that $J_1(t)$ is not necessarily tangent to $\dot{\gamma}(t)$),
and define the $n \times n$ matrices $A(t)$ and $B(t)$ by
\[ a_{\alpha\beta}(t):=g_{\dot{\gamma}}\big( J_{\alpha}(t),J_{\beta}(t) \big), \qquad
D_{\dot{\gamma}}^{\dot{\gamma}}J_{\alpha}(t) =\sum_{\beta=1}^n b_{\alpha\beta}(t) J_{\beta}(t).
\]
Note that
\begin{equation}\label{eq:A''}
A'=BA+AB^{\tran}, \qquad A''=-2R +2BAB^{\tran},
\end{equation}
where $r_{\alpha \beta}:=g_{\dot{\gamma}}(R_{\dot{\gamma}}(J_{\alpha}),J_{\beta})$
and $B^{\tran}$ denotes the transpose of $B$.
We also introduce the $n \times n$ matrix $C(t)$ such that $P_{\alpha}:=\sum_{\beta=1}^n c_{\alpha \beta}J_{\beta}$ is a $g_{\dot{\gamma}}$-parallel vector field with $P_{\alpha}(0)=e_{\alpha}$, and put $D:=CBC^{-1}$.
Then we find $C'+CB=0$,
$CAC^{\tran} =(g_{\dot{\gamma}}(P_{\alpha},P_{\beta}))$, and $\det\, C=(-\det\, A)^{-1/2}$.
Since
\[
J_{\alpha}=-g_{\dot{\gamma}}(J_{\alpha},P_1)P_1 +\sum_{i=2}^n g_{\dot{\gamma}}(J_{\alpha},P_i)P_i,
\]
we observe
\[
\det[\d \FF_t(x)] =-\det\big[ A(t)C^{\tran}(t) \big] = \sqrt{-\det[A(t)]}.
\]
Then we set $\phi:=\log \det[\d \FF_t(x)]$ and deduce from \eqref{eq:A''} that
\begin{align*}
\phi' &= \frac{1}{2} \trace[A' \, A^{-1}] =\trace B, \\
\phi'' &= \frac{1}{2} \trace[A'' \, A^{-1}] -\frac{1}{2} \trace[(A' \, A^{-1})^2] \\
&= -\trace[RA^{-1}] +\trace[BAB^{\tran}A^{-1}] -\frac{1}{2}\trace[(B+AB^{\tran}A^{-1})^2] \\
&= -\Ric(\dot{\gamma}) -\trace[B^2],
\end{align*}
where we used 
\[
\trace[CR(CA)^{-1}]
=-g_{\dot{\gamma}}\big( R_{\dot{\gamma}}(P_1),P_1 \big) +\sum_{i=2}^n g_{\dot{\gamma}}\big( R_{\dot{\gamma}}(P_i),P_i \big)
=\Ric(\dot{\gamma}).
\]
Comparing $\phi'$ and $\phi''$, we obtain the \emph{Riccati equation}
\begin{equation}\label{eq:Riccati}
\trace[B'] +\trace[B^2] +\Ric(\dot{\gamma}) =0,
\end{equation}
\SO{(I couldn't see the Riccati equation as matrices (without trace))}
which also implies
\[
\trace[D'] +\trace[D^2] +\Ric(\dot{\gamma}) =0.
\]

Next, we have a closer look on $\trace[B^2]$ at $t=0$.
By the choice of $e_{\alpha}$,
\[
b_{\alpha 1}(0) =-g_{\dot{\gamma}} \big( D_{\dot{\gamma}}^{\dot{\gamma}}J_{\alpha}(0),e_1 \big), \qquad
b_{\alpha j}(0) =g_{\dot{\gamma}} \big( D_{\dot{\gamma}}^{\dot{\gamma}}J_{\alpha}(0),e_j \big)
\]
for $j \ge 2$.
We recall from Lemma~\ref{Le:Maps Jacobian}(iii) that
\[
D_{\dot{\gamma}}^{\dot{\gamma}}J_{\alpha}(0)
 = F^*(\d u)^{p-2} \cdot \nabla^2 u(e_{\alpha}) +\d[ F^*(\d u))^{p-2}] (e_{\alpha}) \cdot \nabla u(x),
\]
and observe from $F^*(\d u)^2 =-g_{\nabla u}(\nabla u,\nabla u)$, \cite[(3.2)]{LMO1} and $e_1=\nabla u(x)/F^*(\d u)$ that
\begin{align*}
\d[ F^*(\d u))^{p-2}] (e_{\alpha}) \cdot \nabla u(x)
&= -(p-2) F^*(\d u)^{p-4} g_{\nabla u} \big( \nabla^2 u(e_{\alpha}),\nabla u \big) \cdot \nabla u(x) \\
&= -(p-2) F^*(\d u)^{p-2} g_{\nabla u} \big( \nabla^2 u(e_{\alpha}),e_1 \big) \cdot e_1.
\end{align*}
Therefore,
\[
b_{\alpha 1}(0) =(1-p) F^*(\d u)^{p-2} g_{\nabla u} \big( \nabla^2 u(e_{\alpha}),e_1 \big)
\]
and for $j \ge 2$
\[
b_{\alpha j}(0) =F^*(\d u)^{p-2} g_{\nabla u} \big( \nabla^2 u(e_{\alpha}),e_j \big).
\]
Thus we can decompose $B(0)$ as $B(0) =F^*(\d u)^{p-2} H^u Q^2$, where
\[
H^u :=\big( g_{\nabla u}\nabla^2 u(e_{\alpha}),e_{\beta} \big), \qquad
Q:=\begin{pmatrix} \sqrt{1-p} & 0 & \cdots & 0 \\ 0 & 1 & & \vdots \\ \vdots & & \ddots & 0 \\ 0 & \cdots & 0 & 1 \end{pmatrix}.
\]
Note that $H^u$ is symmetric (cf.\ \cite[Lemma~4.13]{LMO2}).
Then one can apply the Cauchy--Schwarz inequality to see
\begin{align*}
\trace[B(0)^2]
&= F^*(\d u)^{2p-4} \trace[(QH^uQ)^2] \ge F^*(\d u)^{2p-4} \frac{(\trace[QH^uQ])^2}{n} \\
&= \frac{(\trace[B(0)])^2}{n}.
\end{align*}
We shall extend this inequality to $t>0$, namely
\begin{equation}\label{eq:traceB}
\trace[B(t)^2] \ge \frac{(\trace[B(t)])^2}{n}.
\end{equation}
\SO{Extension to $t>0$}

We deduce from \eqref{eq:traceB} that
\[
\phi'' =-\trace[B^2] -\Ric(\dot{\gamma}) \le -\frac{(\phi')^2}{n} -\Ric(\dot{\gamma})
\]
Then we consider the functions
\[
c(t) :=\det_{\meas}[\d \FF_t(x)]^{1/N}, \qquad
c_1(t) :=\exp\bigg( \frac{\psi_{\gamma}(0)-\psi_{\gamma}(t)}{N-n} \bigg), \qquad
c_2(t) :=\e^{\phi/n}.
\]
Note that $c=c_1^{(N-n)/N} c_2^{n/N}$ and
\begin{align*}
N\,\frac{c''}{c}
&= (N-n)\,\frac{c_1''}{c_1} + n\,\frac{c_2''}{c_2}
 - \frac{n(N-n)}{N}\,\bigg(\frac{c_1''}{c_1} - \frac{c_2''}{c_2} \bigg)^2 \\
&\le -\psi''_{\gamma} +\frac{(\psi'_{\gamma})^2}{N-n} +\phi'' +\frac{(\phi')}{n} \\
&\le -\Ric_N(\dot{\gamma}) \le -KF^2(\dot{\gamma}),
\end{align*}
where in the first inequality $N \in (-\infty,0) \cup (n,\infty)$ is essential.
The above inequality shows that $-N\log c$ is \emph{$(K,N)$-convex}
in the sense of \cite{EKS} (for $n \leq N <+\infty$) or \cite{ohta-negative} (for $N<0$).

Now, we turn to the proofs of the respective entropic displacement semi-convexity.
We first consider the case of $n \leq N <+\infty$.
We remark that, thanks to the monotonicity as in Remark~\ref{rm:Ric_N},
it suffices to show \eqref{eq:TCDpos} for $N'=N$.
By the $(K,N)$-convexity mentioned above, we obtain the concavity estimate
\begin{equation}\label{eq:Jconcave}
\det_{\meas}[\d\FF_t(x)]^{1/N}
\ge \frac{\mathfrak{s}_{K/N}((1-t)\,d(x))}{\mathfrak{s}_{K/N}(d)}\, +\frac{\mathfrak{s}_{K/N}(t\,d(x))}{\mathfrak{s}_{K/N}(d)}\, \,\det_{\meas}[\d\FF_1(x)]^{1/N}
\end{equation}
for the Jacobian $\smash{\det_{\meas}[\d\FF_t(x)]}$,
where $d=d(x):=l(x,\FF_1(x))=F(\dot{\gamma})$; see \cite[Lemma~2.2]{EKS}.
Therefore, with the help of Corollary \ref{Cor:Monge Ampère}, we obtain the desired convexity \eqref{eq:TCDpos} of $S_{\meas}^N$ as
\begin{align}\label{Eq:SN equality det}
S^N_{\meas}(\mu_t)
 &= -\int_M \rho_t\big( \FF_t(x) \big)^{(N-1)/N}\, \det_{\meas}[\d\FF_t(x)] \,\d\meas(x) \nonumber\\
&= -\int_M \rho_0(x)^{(N-1)/N}\, \det_{\meas}[\d\FF_t(x)]^{1/N} \,\d\meas(x) \\
&\le -\int_M \rho_0(x)^{(N-1)/N}  \,\sigma_{K,N}^{(1-t)}\big( d(x) \big) \,\d\meas(x)\nonumber\\
&\qquad\qquad -\int_M \rho_0(x)^{(N-1)/N}\,\sigma_{K,N}^{(t)}\big( d(x) \big) \,\det_{\meas}[\d\FF_1(x)]^{1/N}\,\d\meas(x) \nonumber\\
&= -\int_M  \sigma_{K,N}^{(1-t)}\big( d(x) \big)\, \rho_0(x)^{(N-1)/N} \,\d\meas(x)\nonumber\\
&\qquad\qquad -\int_M \sigma_{K,N}^{(t)}\big( d(x) \big)\, \rho_1\big( \FF_1(x) \big)^{(N-1)/N}\, \det_{\meas} [\d\FF_1(x)] \,\d\meas(x) \nonumber\\
&= -\int_{M \times M} \sigma_{K,N}^{(1-t)}\big( l(x,y) \big)\, \rho_0(x)^{-1/N}\,\d\pi(x,y)\nonumber\\
&\qquad\qquad -\int_{M\times M}\sigma_{K,N}^{(t)}\big( l(x,y) \big) \,\rho_1(y)^{-1/N} \,\d\pi(x, y).\nonumber
\end{align}
This completes the proof in the case of $n \leq N<+\infty$.

For $N<0$, we have the reverse inequality
\begin{equation}\label{eq:Jconcave-}
\det_{\meas}[\d\FF_t(x)]^{1/N}
\le \sigma_{K,N}^{(1-t)}\big( d(x) \big) + \sigma_{K,N}^{(t)}\big( d(x) \big)\, \,\det_{\meas}[\d\FF_1(x)]^{1/N}
\end{equation}
instead of \eqref{eq:Jconcave} (see \cite[Lemma~2.1]{ohta-negative}),
which yields \eqref{eq:TCDneg} in the same way.

Next, in the case of $N=+\infty$, we more directly see that
$\log[\det_{\meas}[\d\FF_t(x)]] =\psi_{\gamma}(0) -\psi_{\gamma}(t) +\phi(t)$ satisfies
\[
[\psi_{\gamma}(0) -\psi_{\gamma}(t) +\phi(t)]''
\le -\frac{(\phi')^2}{n} -\Ric_{\infty}(\dot{\gamma}) \le -Kd(x)^2.
\]
This implies
\[
\log \det_{\meas}[\d \FF_t(x)]  \ge t\log \det_{\meas}[\d \FF_1(x)]  +\frac{K}{2}\,t\,(1-t)\,d(x)^2
\]
(which again holds true also for $\mu_0$-a.e.\ $x \in \{\d u=0\}$ since $d(x)=0$),
and we deduce that
\begin{align*}
&\Ent_{\meas}(\mu_t)
 = \int_M \rho_t \big( \FF_t(x) \big) \log \rho_t \big( \FF_t(x) \big) \, \det_{\meas}[\d\FF_t(x)] \,\d\meas(x) \\
&= \int_M \rho_0(x) \log\frac{\rho_0(x)}{\det_{\meas}[\d\FF_t(x)]} \,\d\meas(x) \\
&\le (1-t)\,\Ent_{\meas}(\mu_0) +t\int_M \rho_0(x) \log\frac{\rho_0(x)}{\det_{\meas}[\d\FF_1(x)]} \,\d\meas(x) \\
&\qquad\qquad -\frac{K}{2}\,t\,(1-t)\, \int_M d(x)^2 \,\d\mu_0(x) \\
&= (1-t)\,\Ent_{\meas}(\mu_0) +t\,\Ent_{\meas}(\mu_1) -\frac{K}{2}\,t\,(1-t) \int_{M \times M} l(x,y)^2 \,\d\pi(x, y).
\end{align*}

\SO{$N=0$ case seems not available in this form.}
\fi
%
%
\if0
Decomposing $\d\FF_t$ into the direction along $\gamma$ and its perpendicular part with respect to $\smash{g_{\dot{\gamma}}}$, we introduce the Lagrange tensor field $\JJ$ along $\gamma$ as in Lemma \ref{lm:Lagrange}.
Then we have
\[ \e^{-\psi_{\gamma}(t)}\, \det[\d\FF_t(x)]
 = \e^{-\psi_{\gamma}(t)}\, f(t)\, \det[\JJ(t)]
 = f(t)\, \det[\JJ_{\meas}(t)] \]
for some positive affine function $f$.
Note also that
\begin{equation*}
\frac{\d}{\d t} \log \det[\JJ_{\meas}(t)] 
 =\trace\!\big[ \JJ'_{\meas}(t) \, \JJ_{\meas}(t)^{-1} \big] =\theta_{\meas}(t).
 \end{equation*}
Therefore, the weighted Raychaudhuri inequality \eqref{eq:wRayineq} shows that the function
$t\longmapsto -\log \det[\JJ_{\meas}(t)]$ is $(K,N-1)$-convex on $[0,1]$ in the sense of \cite{EKS} (for $n \leq N <+\infty$) or \cite{ohta-negative} (for $N<1$).

After these preparations, we turn to the proofs of the respective entropic displacement semi-convexity. We first consider the case of $n \leq N <+\infty$.
We remark that, thanks to the monotonicity as in Remark~\ref{rm:Ric_N}, it suffices to show \eqref{eq:TCDpos} for $N'=N$.
By the $(K,N-1)$-convexity mentioned above, we find that $\Xi(t) :=\det[\JJ_{\meas}(t)]^{1/(N-1)}$ satisfies, with $d=d(x):=l(x,\FF_1(x))$,
\begin{equation}\label{eq:KN-1}
\Xi(t) \ge \frac{\mathfrak{s}_{K/(N-1)}((1-t)\,d)}{\mathfrak{s}_{K/(N-1)}(d)}\, \Xi(0) + \frac{\mathfrak{s}_{K/(N-1)}(t\,d)}{\mathfrak{s}_{K/(N-1)}(d)}\, \Xi(1);
\end{equation}
see \cite[Lemma~2.2]{EKS}.
Combining this with the H\"older inequality implies
\begin{align*}
&\big(f(t)\,\det[\JJ_{\meas}(t)] \big)^{1/N}
 = \big(f(t)\, \Xi(t)^{N-1} \big)^{1/N} \\
&\qquad\qquad \ge \big( (1-t)\,f(0)+t\,f(1) \big)^{1/N}
 \bigg( \frac{\mathfrak{s}_{K/(N-1)}((1-t)\,d)}{\mathfrak{s}_{K/(N-1)}(d)}\, \Xi(0)\\
 &\qquad\qquad\qquad\qquad + \frac{\mathfrak{s}_{K/(N-1)}(t\,d)}{\mathfrak{s}_{K/(N-1)}(d)}\, \Xi(1) \bigg)^{(N-1)/N} \\
&\qquad\qquad \ge \tau_{K,N}^{(1-t)}(d)\, \big( f(0)\, \Xi(0)^{N-1} \big)^{1/N} +\tau_{K,N}^{(t)}(d)\, \big( f(1)\, \Xi(1)^{N-1} \big)^{1/N} \\
&\qquad\qquad = \tau_{K,N}^{(1-t)}(d)\, \big( f(0)\,\det[\JJ_{\meas}(0)] \big)^{1/N} +\tau_{K,N}^{(t)}(d)\, \big( f(1)\,\det[\JJ_{\meas}(1)] \big)^{1/N}.
\end{align*}

Thus, we obtain the concavity estimate
\begin{equation}\label{eq:Jconcave}
\det_{\meas}[\d\FF_t(x)]^{1/N}
\ge \tau_{K,N}^{(1-t)}\big( d(x) \big) +\tau_{K,N}^{(t)}\big( d(x) \big) \,\det_{\meas}[\d\FF_1(x)]^{1/N}
\end{equation}
for the Jacobian $\smash{\det_{\meas}[\d\FF_t(x)]}$.
Moreover, we remark that, for $\mu_0$-a.e.\ $x \in \{\d u=0\}$, equality holds in \eqref{eq:Jconcave} since $\det_{\meas}[\d\FF_t(x)]=\det_{\meas}[\d\FF_1(x)]=1$, and $\smash{\tau_{K,N}^{(t)}(0)=t}$.
Therefore, with the help of Corollary \ref{Cor:Monge Ampère}, we obtain the desired convexity \eqref{eq:TCDpos} of $S_{\meas}^N$ as
\begin{align}\label{Eq:SN equality det}
S^N_{\meas}(\mu_t)
 &= -\int_M \rho_t\big( \FF_t(x) \big)^{(N-1)/N}\, \det_{\meas}[\d\FF_t(x)] \,\d\meas(x) \nonumber\\
&= -\int_M \rho_0(x)^{(N-1)/N}\, \det_{\meas}[\d\FF_t(x)]^{1/N} \,\d\meas(x) \\
&\le -\int_M \rho_0(x)^{(N-1)/N}  \,\tau_{K,N}^{(1-t)}\big( d(x) \big) \,\d\meas(x)\nonumber\\
&\qquad\qquad -\int_M \rho_0(x)^{(N-1)/N}\,\tau_{K,N}^{(t)}\big( d(x) \big) \,\det_{\meas}[\d\FF_1(x)]^{1/N}\,\d\meas(x) \nonumber\\
&= -\int_M  \tau_{K,N}^{(1-t)}\big( d(x) \big)\, \rho_0(x)^{(N-1)/N} \,\d\meas(x)\nonumber\\
&\qquad\qquad -\int_M \tau_{K,N}^{(t)}\big( d(x) \big)\, \rho_1\big( \FF_1(x) \big)^{(N-1)/N}\, \det_{\meas} [\d\FF_1(x)] \,\d\meas(x) \nonumber\\
&= -\int_{M \times M} \tau_{K,N}^{(1-t)}\big( l(x,y) \big)\, \rho_0(x)^{-1/N}\,\d\pi(x,y)\nonumber\\
&\qquad\qquad -\int_{M\times M}\tau_{K,N}^{(t)}\big( l(x,y) \big) \,\rho_1(y)^{-1/N} \,\d\pi(x, y).\nonumber
\end{align}
This completes the proof in the case of $n \leq N<+\infty$.

For $N<0$, we have the reverse inequality
\begin{equation}\label{eq:KN-1'}
\Xi(t) \le \frac{\mathfrak{s}_{K/(N-1)}((1-t)\,d)}{\mathfrak{s}_{K/(N-1)}(d)}\, \Xi(0) + \frac{\mathfrak{s}_{K/(N-1)}(t\,d)}{\mathfrak{s}_{K/(N-1)}(d)}\, \Xi(1)
\end{equation}
instead of \eqref{eq:KN-1} (see \cite[Lemma~2.1]{ohta-negative}), and
\[
\det_{\meas}[\d\FF_t(x)]^{1/N}
\le \tau_{K,N}^{(1-t)}\big( d(x) \big) +\tau_{K,N}^{(t)}\big( d(x) \big) \,\det_{\meas}[\d\FF_1(x)]^{1/N}
\]
in place of \eqref{eq:Jconcave} by the H\"older inequality of the form
\begin{align*}
b_1+b_2
&= a_1^{1/(1-N)} \cdot a_1^{1/(N-1)}\, b_1 +a_2^{1/(1-N)} \cdot a_2^{1/(N-1)}\, b_2 \\
&\le (a_1+a_2)^{1/(1-N)}\, \big( a_1^{1/N} \,b_1^{(N-1)/N} +a_2^{1/N}\, b_2^{(N-1)/N} \big)^{N/(N-1)}
\end{align*}
for $a_1,a_2,b_1,b_2 >0$.
Hence, we have \eqref{eq:TCDneg}.

Next, in the case of $N=+\infty$, it follows from the Raychaudhuri inequality \eqref{eq:wRayineq} that $t\longmapsto -\log \det[\JJ_{\meas}(t)]$ is $K$-convex on $[0,1]$ in the sense that
\begin{align*}
\log \det[\JJ_{\meas}(t)]  &\ge (1-t)\log \det[\JJ_{\meas}(0)]  +t\log \det[\JJ_{\meas}(1)]  +\frac{K}{2}\,t\,(1-t)\,d(x)^2.  
\end{align*}
Since
\[ \log \det_{\meas}[\d\FF_t(x)]  =\psi_{\gamma}(0) +\log f(t) +\log \det[\JJ_{\meas}(t)] \]
and $\log f(t)$ is concave, we find that
\[ \log \det_{\meas}[\d \FF_t(x)]  \ge t\log \det_{\meas}[\d \FF_1(x)]  +\frac{K}{2}\,t\,(1-t)\,d(x)^2 \]
(which again holds true also for $\mu_0$-a.e.\ $x \in \{\d u=0\}$ since $d(x)=0$).
Then we deduce that
\begin{align*}
\Ent_{\meas}(\mu_t)
 &= \int_M \rho_t \big( \FF_t(x) \big) \log \rho_t \big( \FF_t(x) \big) \, \det_{\meas}[\d\FF_t(x)] \,\d\meas(x) \\
&= \int_M \rho_0(x) \log\frac{\rho_0(x)}{\det_{\meas}[\d\FF_t(x)]} \,\d\meas(x) \\
&\le (1-t)\,\Ent_{\meas}(\mu_0) +t\int_M \rho_0(x) \log\frac{\rho_0(x)}{\det_{\meas}[\d\FF_1(x)]} \,\d\meas(x) \\
&\qquad\qquad -\frac{K}{2}\,t\,(1-t)\, \int_M d(x)^2 \,\d\mu_0(x) \\
&= (1-t)\,\Ent_{\meas}(\mu_0) +t\,\Ent_{\meas}(\mu_1) -\frac{K}{2}\,t\,(1-t) \int_{M \times M} l(x,y)^2 \,\d\pi(x, y).
\end{align*}

Finally, when $N=0$, \eqref{eq:KN-1'} is still available, namely
\[ \Xi(t) \le \frac{\mathfrak{s}_{-K}((1-t)\,d)}{\mathfrak{s}_{-K}(d)}\, \Xi(0) + \frac{\mathfrak{s}_{-K}(t\,d)}{\mathfrak{s}_{-K}(d)}\, \Xi(1). \]
Recalling that $\Xi(t)=\det[\JJ_{\meas}(t)]^{-1}$, we obtain
\begin{align*}
&\big( (1-t)\,f(0)+t\,f(1) \big)\, \bigg( \frac{\mathfrak{s}_{-K}((1-t)\,d)}{\mathfrak{s}_{-K}(d)}\, \Xi(0) + \frac{\mathfrak{s}_{-K}(t\,d)}{\mathfrak{s}_{-K}(d)}\, \Xi(1) \bigg)^{-1} \\
&\qquad\qquad\ge \min\!\bigg\{ \frac{(1-t)\,\mathfrak{s}_{-K}(d)}{\mathfrak{s}_{-K}((1-t)\,d)}\, f(0)\, \det[\JJ_{\meas}(0)], \frac{t\,\mathfrak{s}_{-K}(d)}{\mathfrak{s}_{-K}(t\,d)}\, f(1)\, \det[\JJ_{\meas}(1)] \bigg\}
\end{align*}
as lower bounds on $\smash{f(t)\, \det[\JJ_{\meas}(t)]}$ since
\[ \frac{a_1 +a_2}{b_1 +b_2} =\frac{b_1}{b_1 +b_2} \frac{a_1}{b_1} +\frac{b_2}{b_1 +b_2} \frac{a_2}{b_2} \ge \min\bigg\{ \frac{a_1}{b_1},\frac{a_2}{b_2} \bigg\} \]
for $a_1,a_2,b_1,b_2 >0$.
This implies
\begin{align*}
\rho_t\big( \FF_t(x) \big)
&=\frac{\rho_0(x)}{\det_{\meas}[\d \FF_t(x)]}\\
&= \frac{\rho_0(x) \, \Xi(t)}{\e^{\psi_{\gamma}(0)} ((1-t)f(0)+tf(1))} \\
&\le \max\! \bigg\{ \frac{\mathfrak{s}_{-K}((1-t)\,d)}{(1-t)\,\mathfrak{s}_{-K}(d)} \,\rho_0(x),\frac{\mathfrak{s}_{-K}(t\,d)}{t\,\mathfrak{s}_{-K}(d)}\, \rho_1\big( \FF_1(x) \big) \bigg\},    
\end{align*}
while $\rho_t(\FF_t(x))=\rho_0(x)=\rho_1(\FF_1(x))$ for $\mu_0$-a.e.\ $x \in \{\d u=0\}$.
Therefore, we obtain \eqref{eq:TCD0}, and the proof is finished.
\fi
\end{proof}

We remark that, in the Riccati equation in \cite{ohta2009} concerning the Finsler case, as $R_{\alpha\beta}$ we employed $g_{\dot{\gamma}}(R_{\dot{\gamma}}(J_{\alpha}),J_{\beta})$ instead of $g_{\dot{\gamma}}(R_{\dot{\gamma}}(E_{\alpha}),E_{\beta})$.
They both work well in the positive definite case, however, we found that the latter is more convenient in the current setting (due to the fact that $g_{\dot{\gamma}}(E_{\alpha},E_{\beta})$ is not the identity matrix).

\begin{remark}\label{Re:N<0}
For negative $N$, one can define the entropic timelike curvature-dimension condition $\smash{\TCD_q^e(K,N)}$ akin to the approach \cite{cavalletti2020}. However, in this dimensional range it seems that $\smash{\TCD_q^e(K,N)}$ is stronger than $\Ric_N \ge K$ \cite[Remark~4.16]{ohta-negative}. On the other hand, if $n \leq N <+\infty$ these properties imply their entropic counterpart by \cite{braun2022}.
\end{remark}

\subsection{Timelike Brunn--Minkowski inequality}

An important consequence of Theorem \ref{th:Ric-TCD} is the timelike Brunn--Minkowski inequality. In either dimensional case, it follows quite directly from  $\smash{\TCD_q(K,N)}$ by Jensen's inequality. 
In the range $N\in [1,\infty)$, it has been derived first in \cite{cavalletti2020} from the \emph{entropic} timelike curvature-dimension condition after \cite{EKS}, and then in \cite{braun2022} in sharp form. See e.g.~\cite{ohta2009, ohta-negative, sturmII,villani2009} for the positive signature case.

\begin{theorem}[Timelike Brunn--Minkowski inequality]\label{Th:Brunn Minkowski}
Assume that $(M,L,\meas)$ is a globally hyperbolic weighted Finsler spacetime obeying  $\smash{\Ric_N\geq K}$ for some $K\in\R$ and $N\in (-\infty,0]\cup[n,+\infty]$.
Let $A_0,A_1\subset M$ be two relatively compact Borel sets with positive $\meas$-measure with $\smash{\overline{A}_0\times \overline{A}_1 \subset \{l>0\}}$. 
Then the following hold for every $t\in (0,1)$.
\begin{enumerate}[label=\textnormal{(\roman*)}]
    \item When $N = +\infty$, we have
    \begin{align*}
    \log\meas\big[Z_t(A_0,A_1)\big] &\geq (1-t)\log\meas[A_0] + t\log\meas[A_1]\\
    &\qquad\qquad + \inf_{x\in A_0,y\in A_1}\frac{K}{2}\,t\,(1-t)\,l(x,y)^2.
    \end{align*}
    \item When $n \leq N <+\infty$, we have
    \begin{align*}
        \meas\big[Z_t(A_0,A_1)\big]^{1/N} &\geq \inf_{x \in A_0, y\in A_1}\tau_{K,N}^{(1-t)}\big(l(x,y)\big)\cdot \meas[A_0]^{1/N}\\ 
        &\qquad\qquad + \inf_{x\in A_0, y\in A_1} \tau_{K,N}^{(t)}\big(l(x,y)\big)\cdot \meas[A_1]^{1/N}.
    \end{align*}
    \item When $N  <0$, we have 
    \begin{align*}
        \meas\big[Z_t(A_0,A_1)\big]^{1/N} &\leq \sup_{x\in A_0,y\in A_1} \tau_{K,N}^{(1-t)}\big(l(x,y)\big)\cdot \meas[A_0]^{1/N}\\
        &\qquad\qquad + \sup_{x\in A_0,y\in A_1} \tau_{K,N}^{(t)}\big(l(x,y)\big)\cdot \meas[A_1]^{1/N}.
    \end{align*}
    \item When $N = 0$, we have
    \begin{align*}
    \meas\big[Z_t(A_0,A_1)\big] &\geq \min\!\bigg\lbrace\inf_{x\in A_0,y\in A_1} \frac{(1-t)\,\mathfrak{s}_{-K}(l(x,y))}{\mathfrak{s}_{-K}((1-t)\,l(x,y))}\,\meas[A_0],\\
    &\qquad\qquad \inf_{x\in A_0,y\in A_1} \frac{t\,\mathfrak{s}_{-K}(l(x,y))}{\mathfrak{s}_{-K}(t\,l(x,y))}\,\meas[A_1]\bigg\rbrace.
    \end{align*}
\end{enumerate}
\end{theorem}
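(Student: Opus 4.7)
The plan is to deduce all four cases directly from $\TCD_q(K,N)$, which holds by Theorem \ref{th:Ric-TCD}, using a standard device: test the convexity inequality from Definition \ref{df:TCD} with the uniform probability measures on $A_0$ and $A_1$, invoke Jensen's inequality to bound the entropy of the interpolant $\mu_t$ from below by the corresponding entropy of a uniform measure on $Z_t(A_0,A_1)$, and bound the integrals of distortion coefficients on the right-hand side by their extrema over $A_0\times A_1$.

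Concretely, I would set $\mu_0 := \meas[A_0]^{-1}\,\meas\vert_{A_0}$ and $\mu_1 := \meas[A_1]^{-1}\,\meas\vert_{A_1}$. Both belong to $\Prob_\comp^\ac(M,\meas)$ since $A_0,A_1$ are relatively compact with positive $\meas$-measure, and the hypothesis $\supp\mu_0\times\supp\mu_1 \subset \overline{A}_0\times\overline{A}_1 \subset \{l>0\}$ supplies the $q$-separation of $(\mu_0,\mu_1)$ by Lemma \ref{Le:Existence separation}. Fix some $q\in (0,1)$; Theorem \ref{th:Ric-TCD} then yields a $q$-geodesic $(\mu_t)_{t\in[0,1]}$ and an $\ell_q$-optimal coupling $\pi\in\Pi(\mu_0,\mu_1)$ for which the relevant convexity estimate from Definition \ref{df:TCD} holds. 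By the converse direction of Proposition \ref{Pr:Reverse triangle} together with Lemma \ref{Le:Zs}, $\mu_t=\rho_t\,\meas$ is concentrated on the compact set $Z_t(A_0,A_1)$, while $\rho_0 = \meas[A_0]^{-1}\,1_{A_0}$ and $\rho_1=\meas[A_1]^{-1}\,1_{A_1}$.

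For $N\in[n,+\infty)$ I would apply Jensen's inequality to the concave function $r\longmapsto r^{(N-1)/N}$ with respect to the uniform probability measure on $Z_t(A_0,A_1)$ to obtain $-S_\meas^N(\mu_t) \le \meas[Z_t(A_0,A_1)]^{1/N}$; bounding the integrals of $\tau_{K,N}^{(1-t)}$ and $\tau_{K,N}^{(t)}$ against $\pi$ from below by their respective infima over $A_0\times A_1$ in \eqref{eq:TCDpos} and substituting the constant densities yields (ii). The case $N<0$ is analogous, the generator $r\longmapsto r^{(N-1)/N}$ being now convex so that Jensen delivers $S_\meas^N(\mu_t)\ge\meas[Z_t(A_0,A_1)]^{1/N}$; bounding $\tau_{K,N}^{(\cdot)}$ from above by suprema and noting that $1/N<0$ reverses directions as required then produces (iii). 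For $N=+\infty$, Jensen applied to $r\longmapsto r\log r$ delivers $\Ent_\meas(\mu_t) \ge -\log\meas[Z_t(A_0,A_1)]$, and the term $\frac{K}{2}\,t\,(1-t)\int l^2\,\d\pi$ in \eqref{eq:TCDinf} is bounded below by $\inf_{A_0\times A_1}\frac{K}{2}\,t\,(1-t)\,l^2$ after distinguishing the signs of $K$ and using $\inf l^2 \le \int l^2\,\d\pi \le \sup l^2$. For $N=0$, the elementary bound $\|\rho_t\|_{L^\infty(M,\meas)} \ge \meas[Z_t(A_0,A_1)]^{-1}$ (from $1=\int\rho_t\,\d\meas$) combined with \eqref{eq:TCD0} after taking suprema over $\supp\pi\subset A_0\times A_1$ and reciprocals produces (iv).

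There is no substantial obstacle here: once $q$-separation has been verified so that Theorem \ref{th:Ric-TCD} applies, the proof is essentially bookkeeping between infima, suprema, and the sign of $1/N$. The only point demanding a modicum of care is the sign juggling in cases (i) and (iii), where the statement is phrased so that the bound remains sharp regardless of the sign of $K$; this is a minor case distinction rather than a genuine difficulty.
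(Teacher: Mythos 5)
Your proposal is correct and follows essentially the same route as the paper's proof: uniform measures on $A_0, A_1$, $q$-separation via Lemma \ref{Le:Existence separation}, the $\smash{\TCD_q(K,N)}$ inequality from Theorem \ref{th:Ric-TCD}, Jensen's inequality on the interpolant, and pointwise bounds on the distortion coefficients. The one small inaccuracy is that no case distinction on the sign of $K$ is actually needed in cases (i) and (iii) — since the infima/suprema are taken over the \emph{full} $K$-dependent expressions (e.g.\ $\inf_{A_0\times A_1}\frac{K}{2}t(1-t)l^2$), the universally valid inequality $\int f\,\d\pi \geq \inf_{\supp\pi} f$ applies directly with the right sign built in.
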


\begin{proof} Let $\mu_0,\mu_1\in \Prob_\comp^\ac(M,\meas)$ be the uniform distributions of $A_0$ and $A_1$ with respect to $\meas$, respectively, i.e.~$\smash{\mu_0 := \meas[A_0]^{-1}\,\meas\vert_{A_0}}$ and $\smash{\mu_1 := \meas[A_1]^{-1}\,\meas\vert_{A_1}}$.
By our assumption on $A_0$ and $A_1$, Lemma \ref{Le:Existence separation}, and Theorem \ref{th:Ric-TCD} there exists a $q$-geodesic $(\mu_t)_{t\in [0,1]}$ from $\mu_0$ to $\mu_1$ and an $\smash{\ell_q}$-optimal coupling $\pi\in\Pi(\mu_0,\mu_1)$ witnessing the entropic displacement semi-convexity given by $\smash{\TCD_q(K,N)}$. Depending on the range of $N$, from the precise form of the latter we will derive the claims in each respective case. 

Some small preparations are in order. By Lemma \ref{Le:Existence separation}, the pair $(\mu_0,\mu_1)$ is $q$-separated. Therefore $(\mu_t)_{t\in[0,1]}$ is in fact the unique $q$-geodesic connecting $\mu_0$ to $\mu_1$ by Corollary \ref{Cor:Char geos}. By Proposition \ref{Pr:Absolute continuity}, it only consists of $\meas$-absolutely continuous measures, and we write $\mu_t = \rho_t\,\meas$. Moreover, recall that $\supp\mu_t \subset Z_t(A_0,A_1)$ by Corollary \ref{Cor:Compact support} for given $t\in (0,1)$.

(i) In the case $N=+\infty$, the hypothesis implies
\begin{align}\label{Eq:TCD for N = infty}
\begin{split}
    \Ent_\meas(\mu_t) &\leq (1-t)\,\Ent_\meas(\mu_0) + t\,\Ent_\meas(\mu_1)\\
    &\qquad\qquad - \frac{K}{2}\,t\,(1-t)\int_{M\times M} l(x,y)^2\,\d\pi(x,y).
    \end{split}
\end{align}
It is easy to see that
\begin{align*}
    \Ent_\meas(\mu_0) = -\log\meas[A_0],\qquad\Ent_\meas(\mu_1) = -\log\meas[A_1],
\end{align*}
and thus the right-hand side of \eqref{Eq:TCD for N = infty} is bounded from above by
\begin{align*}
    -(1-t)\log\meas[A_0] - t\log\meas[A_1] - \inf_{x\in A_0,y\in A_1} \frac{K}{2}\,t\,(1-t)\,l(x,y)^2.
\end{align*}
To bound the left-hand side of \eqref{Eq:TCD for N = infty} from below, by Jensen's inequality we find
\begin{align*}
    \Ent_\meas(\mu_t) &\geq -\log\meas[\supp\mu_t] \geq -\log\meas\big[Z_t(A_0,A_1)\big].
\end{align*}

(ii) In the case $n \leq N<+\infty$, the hypothesis implies
\begin{align*}
    S_\meas^N(\mu_t) &\leq -\int_{M\times M} \tau_{K,N}^{(1-t)}\big(l(x,y)\big)\,\rho_0(x)^{-1/N}\,\d\pi(x,y)\\
    &\qquad\qquad -\int_{M\times M} \tau_{K,N}^{(t)}\big(l(x,y)\big)\,\rho_1(y)^{-1/N}\,\d\pi(x,y).
\end{align*}
The right-hand side of this inequality is bounded from above by
\begin{align*}
    &-\inf_{x\in A_0,y\in A_1} \tau_{K,N}^{(1-t)}\big(l(x,y)\big)\,\meas[A_0]^{1/N} - \inf_{x\in A_0,y\in A_1} \tau_{K,N}^{(t)}\big(l(x,y)\big)\,\meas[A_1]^{1/N}.
\end{align*}
The claim follows again by applying Jensen's inequality to the convex function $\smash{r\longmapsto -r^{(N-1)/N}}$ on $[0,\infty)$, thereby obtaining
\begin{align*}
    S_\meas^N(\mu_t) \geq -\meas[\supp\mu_t]^{1/N} \geq -\meas\big[Z_t(A_0,A_1)\big]^{1/N}.
\end{align*}

(iii) This is argued analogously to the previous item and thus omitted.

(iv) In the case $N=0$, the hypothesis implies
\begin{align*}
    S_\meas^0(\mu_t) &\leq \esssup_{(x,y)\in \supp\pi} \max\!\bigg\lbrace  \frac{\mathfrak{s}_{-K}((1-t)\,l(x,y))}{(1-t)\,\mathfrak{s}_{-K}(l(x,y))}\,\rho_0(x), \frac{\mathfrak{s}_{-K}(t\,l(x,y))}{t\,\mathfrak{s}_{-K}(x,y)}\,\rho_1(y)\bigg\rbrace.
\end{align*}
The right-hand side of this inequality is bounded from above by
\begin{align*}
    &\max\!\bigg\lbrace\sup_{x\in A_0,y\in A_1}\frac{\mathfrak{s}_{-K}((1-t)\,l(x,y))}{(1-t)\,\mathfrak{s}_{-K}(l(x,y))}\,\meas[A_0]^{-1}, \\
    &\qquad\qquad\sup_{x\in A_0,y\in A_1} \frac{\mathfrak{s}_{-K}(t\,l(x,y))}{t\,\mathfrak{s}_{-K}(l(x,y))}\,\meas[A_1]^{-1}\bigg\rbrace.
\end{align*}
The claim follows by estimating
\begin{align*}
    \meas\big[Z_t(A_0,A_1)\big]^{-1} \leq \meas[\supp\mu_t]^{-1} \leq \esssup_{z\in\supp\mu_t} \rho_t(z) = S_\meas^0(\mu_t).
\end{align*}
This terminates the proof.
\end{proof}

\section[Ricci curvature bounds from curvature-dimension condition]{Ricci curvature bounds from curvature-\\dimension condition}\label{sc:TCD-Ric}

Now we turn to the converse of Theorem \ref{th:Ric-TCD}, i.e.~the derivation of the lower bound on the weighted Ricci curvature given the timelike curvature-dimension condition.
Our strategy follows the proofs of \cite[Theorem 7.3]{lott2009} and \cite[Theorem 4.3]{mondino-suhr}.

\begin{theorem}[$\smash{\TCD_q(K,N)}$ implies $\smash{\Ric_N\geq K}$]\label{Th:TCD-Ric} Let $(M,L,\meas)$ be a globally hyperbolic weighted Finsler spacetime satisfying $\smash{\TCD_q(K,N)}$ for some $q\in (0,1)$, $K\in \R$, and $N\in (-\infty,0]\cup [n,+\infty]$. Then we have $\smash{\Ric_N\geq K}$ in timelike directions.
\end{theorem}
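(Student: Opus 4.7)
I would argue by contradiction. Suppose there exist $x_0 \in M$ and a future-directed timelike $v_0 \in \Omega_{x_0}$ with $\Ric_N(v_0) < K\,F^2(v_0)$. The plan is to build a family of pairs $\smash{(\mu_0^{\varepsilon,\delta}, \mu_1^{\varepsilon,\delta}) \in \Prob_\comp^\ac(M,\meas) \times \Prob_\comp^\ac(M,\meas)}$ concentrated respectively near $x_0$ and near $\exp_{x_0}(\delta\,v_0)$, to apply $\TCD_q(K,N)$ along the unique $q$-geodesic joining them, and to extract an infinitesimal comparison at $(x_0, v_0)$ contradicting the hypothesis.

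For the construction, I would choose a smooth semi-convex function $u$ on a neighborhood $U$ of $x_0$ with $\nabla u(x_0)$ positively proportional to $v_0$, normalized so that $\smash{F^*(\d u(x_0))^{p-2}\,\mathscr{L}^*(\d u(x_0)) = v_0}$. The Hessian $\nabla^2 u(x_0)$ should be selected to realize equality at $x_0$ in the Raychaudhuri-type inequalities \eqref{eq:D11} and \eqref{eq:c_3} of the proof of Theorem \ref{th:Ric-TCD}; concretely, pick it conformal to $g_{v_0}$ on the $g_{v_0}$-orthogonal complement of $v_0$, with a conformal factor tuned to the $N$-correction term $\psi'_\gamma(0)/(N-n)$, so that the shear and Cauchy--Schwarz losses in that proof vanish at $x_0$. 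Define $\FF_t$ as in \eqref{Eq:Fs definition}. For $\delta > 0$ small and $\varepsilon \in (0, \delta/2)$ much smaller, let $\smash{\mu_0^{\varepsilon,\delta}}$ be a smooth probability measure with near-constant density supported in an $h$-ball $\smash{B_\varepsilon^h(x_0) \subset U}$ (with $h$ any auxiliary Riemannian metric), and set $\smash{\mu_1^{\varepsilon,\delta} := (\FF_\delta)_\sharp\mu_0^{\varepsilon,\delta}}$. For $\varepsilon$ sufficiently small one has $\smash{\supp\mu_0^{\varepsilon,\delta} \times \supp\mu_1^{\varepsilon,\delta} \subset \{l > 0\}}$, so Lemma \ref{Le:Existence separation} gives that the pair is $q$-separated, and by Theorem \ref{Th:Char optimal maps} and Corollary \ref{Cor:Char geos} the unique $q$-geodesic between them is $\smash{\mu_t^{\varepsilon,\delta} := (\FF_{t\delta})_\sharp\mu_0^{\varepsilon,\delta}}$.

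Next, apply $\TCD_q(K,N)$ to this $q$-geodesic at $t = 1/2$. The Monge--Amp\`ere identity of Corollary \ref{Cor:Monge Ampère} reduces the entropic terms to integrals of $\log\det_\meas[\d\FF_{t\delta}]$ (for $N = +\infty$) or of $\det_\meas[\d\FF_{t\delta}]^{1/N}$ (for finite $N$) against $\smash{\mu_0^{\varepsilon,\delta}}$; as $\varepsilon \to 0$ with $\delta$ fixed, each such integral tends to its value at $x_0$, while the $\ell_q$-cost integral tends to $\delta^2\,F^2(v_0)$ (or to the corresponding term involving $\tau_{K,N}^{(\cdot)}(\delta\,F(v_0))$ for finite $N$). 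By the Raychaudhuri-type computation in the proof of Theorem \ref{th:Ric-TCD}, combined with the calibrated choice of $\nabla^2 u(x_0)$, the second-order Taylor expansion in $\delta$ of the entropic side produces exactly $-\Ric_N(v_0)\,\delta^2/8 + o(\delta^2)$, whereas the $\TCD_q(K,N)$ right-hand side contributes $K\,F^2(v_0)\,\delta^2/8 + o(\delta^2)$. Dividing by $\delta^2$ and letting $\delta \to 0$ yields $\Ric_N(v_0) \ge K\,F^2(v_0)$, the desired contradiction. The cases $N = +\infty$ and $N = 0$ are handled identically via \eqref{eq:TCDinf} and \eqref{eq:TCD0}; for $N = 0$, the essential-supremum formulation collapses, in the same limit, to the same pointwise comparison at $(x_0, v_0)$.

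The main obstacle is the calibration of $\nabla^2 u(x_0)$: one must arrange for the second-order coefficient of the entropy functional at $x_0$ to be \emph{exactly} $-\Ric_N(v_0)$ (or its $N$-weighted counterpart), rather than $-\Ric_N(v_0)$ plus a non-negative correction from the shear tensor or from the $\phi_1/\phi_2$-decomposition of the proof of Theorem \ref{th:Ric-TCD}. This requires making the Cauchy--Schwarz estimate and the $(K, N-1)$-convexity bound on $c_3$ in that proof simultaneously sharp at $x_0$, which amounts to an algebraic condition on $\nabla^2 u(x_0)$ depending on $N$ and on the weight function $\psi_\meas$. Once this calibration is achieved, the remaining limits $\varepsilon \to 0$ and $\delta \to 0$ are routine, relying on smoothness of $\FF_t$, uniform near-constancy of the initial densities, and a uniform lower bound on $l$ over $\smash{\supp\mu_0^{\varepsilon,\delta} \times \supp\mu_1^{\varepsilon,\delta}}$ as $\varepsilon/\delta \to 0$, which secures uniform $q$-separation throughout the family.
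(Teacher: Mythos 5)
Your proposal follows the same overall strategy as the paper's proof: argue by contradiction, localize around a point $x_0$ and timelike direction $v_0$ where the curvature bound supposedly fails, construct a smooth potential $u$ with prescribed first and second derivatives at $x_0$, push a uniform measure forward by the induced transport maps $\FF_t$, and extract a pointwise Raychaudhuri--Jacobian comparison that violates $\TCD_q(K,N)$. Where you diverge is in how the final contradiction is squeezed out. You take a double limit --- first $\varepsilon\to0$ to collapse the supports to $x_0$, then a second-order Taylor expansion in the displacement parameter $\delta$ at $t=1/2$. The paper instead fixes the scale (absorbing it into the choice of $u$ via \eqref{Eq:psi trace}), shows the density comparison
\[
 \rho_t\big(\FF_t(x)\big)^{-1/N} \le \frac{\mathfrak{s}_{(K-\varepsilon)/N}((1-t)\,d(x))}{\mathfrak{s}_{(K-\varepsilon)/N}(d(x))}\,\rho_0(x)^{-1/N}+\frac{\mathfrak{s}_{(K-\varepsilon)/N}(t\,d(x))}{\mathfrak{s}_{(K-\varepsilon)/N}(d(x))}\,\rho_1\big(\FF_1(x)\big)^{-1/N}
\]
for $\mu_0$-a.e.\ $x$ and every $t$, and concludes that already the weaker reduced condition $\TCD_q^*(K,N)$ fails by invoking \cite[Theorem 1.5(v)]{braun2022}. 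Your version avoids that external characterization at the cost of more delicate error-term bookkeeping in the limits; both are viable.

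There are, however, three genuine gaps in your proposal. First, the calibration of $\nabla^2 u(x_0)$ is under-specified: prescribing it to be conformal to $g_{v_0}$ on $v_0^\perp$ and tuning the spatial trace to $\psi'_\gamma(0)/(N-n)$ only forces equality in \eqref{eq:D11}, the shear term, and the $(K,N-1)$-convexity bound on $c_3$; it leaves the radial component $H^u_{11}$ free. But the second-order coefficient you want then picks up an extra loss $\smash{-\big(a-(N-1)b\big)^2/\big(N(N-1)\big)}$ with $a:=(N-1)\,c_3'(0)/c_3(0)$ and $b:=D_{11}(0)=F^*(\d u)^{p-2}(1-p)H^u_{11}$, arising from the H\"older inequality that glues $c_3$ and $\e^{\phi_1}$. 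To make this vanish you must additionally impose $H^u_{11}=\mu/(1-p)$ where $\mu$ is the common value of the spatial diagonal entries; equivalently, $B(0)=F^*(\d u)^{p-2}H^u Q^2$ must be a scalar multiple of $I_n$, which is exactly what \eqref{Eq:psi trace} says via $QH^uQ=cI_n$. Without this, the Taylor argument yields only $-\Ric_N(v_0)-(\text{extra})\le -K\,F^2(v_0)$, which does not contradict $\Ric_N(v_0)<K\,F^2(v_0)$. Second, you nowhere verify that the constructed $u$ (or its appropriate rescaling) is $(l^q/q)$-convex, which the paper secures via \cite[Lemma 8.3]{Mc} after shrinking $s$. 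This is essential: it is what makes $\FF_\delta$ an optimal map and $(\FF_{t\delta})_\sharp\mu_0$ the $q$-geodesic to which $\TCD_q(K,N)$ applies. Third, your calibration factor $\psi'_\gamma(0)/(N-n)$ is undefined when $N=n$; the paper handles this endpoint case separately by reducing it to $N>n$ via the monotonicity of $\Ric_N$ and the implication $\TCD_q^*(K,n)\Rightarrow\TCD_q^*(K,N)$, and your proposal should include an analogous device.
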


\begin{proof} We first consider the case $n < N < +\infty$.
Let $p<0$ denote the dual exponent of $q$, i.e.~$p^{-1} + q^{-1}=1$.
Suppose to the contrary that for some $\varepsilon > 0$,  $z \in M$, and a timelike tangent vector $w \in \Omega_z$,
\begin{align*}
\Ric_N(w) < (K-2\,\varepsilon)\,F^2(w).
\end{align*}
For every sufficiently small $s>0$, we find $\delta > 0$ as well as a smooth function $\smash{u\colon B_\delta(z) \longrightarrow \R}$ defined on a Riemannian ball $\smash{B_\delta(z)}$ around $z$ such that its first and second derivatives at $z$ are prescribed as
\begin{align}\label{Eq:psi trace}
\begin{split}
F^*\big( \d u(z) \big)^{p-2}\,\mathscr{L}^*\big( \d u(z) \big) &= s\,w,\\
F^*\big( \d u (z) \big)^{p-2}\, Q\,H^u\,Q &=-\frac{\psi'_{\gamma}(0)}{N-n}\,I_n,
\end{split}
\end{align}
where we define $\gamma(t) := \exp_z(t\,s\,w)$ for $t \in [0,1]$, $Q$ and $H^u$ are the matrices defined in the proof of Theorem \ref{th:Ric-TCD}, and $I_n$ is the identity matrix.
We choose $s$ and $\delta$ such that $\FF_t\colon B_{\delta}(z) \longrightarrow M$, where 
\begin{align*}
    \FF_t(x) := \exp_x\!\big(t\,F^*(\d u)^{p-2}\,\mathscr{L}^*(\d u)\big),
\end{align*}
becomes a diffeomorphism onto its image for every $t \in [0,1]$.
By \eqref{Eq:psi trace} and \cite[Lemma 8.3]{Mc}, up to reducing $s$ and $\delta$ we may and will assume $u$ to be $(\smash{l^q/q})$-convex, so that the above map will eventually give an optimal transport. Lastly, by the continuity of $u$ and possibly reducing $\delta$ further, we may and will also assume that the time derivative of the curve $t \longmapsto \FF_t(x)$ is timelike and lies uniformly away from the light cone in $x\in B_\delta(z)$ and $t \in [0,1]$.

Define the measures $\smash{\mu_0 := \meas[B_\delta
(z)]^{-1}\,\meas\vert_{B_\delta(z)} = \rho_0\,\meas}$ and $\mu_t := (\FF_t)_\sharp\mu_0$ for $t\in[0,1]$.
By further reducing $\delta$ if necessary, we achieve $\supp\mu_0 \times\supp\mu_1 \subset \{l>0\}$. In particular,  $(\mu_0,\mu_1)$ is $q$-separated by Lemma \ref{Le:Existence separation} and the hypothesized $(l^q/q)$-convexity of $u$, and $(\mu_t)_{t\in[0,1]}$ constitutes the unique $q$-geodesic connecting $\mu_0$ to $\mu_1$ by Corollary \ref{Cor:Char geos}.
After rescaling,  $\mu_t$ is $\meas$-absolutely continuous for every $t\in[0,1]$ by Proposition \ref{Pr:Absolute continuity}.

Along $\gamma(t)=\exp_z(t\, s\, w)$ as above, we consider the decomposition
\begin{align*}
     c(t) := \det_\meas[\d\FF_t(z)]^{1/N} = c_1(t)^{(N-n)/N}\,c_2(t)^{n/N}
\end{align*}
as in the proof of Theorem \ref{th:Ric-TCD}, where
\[
c_1(t) := \exp \bigg(\frac{\psi_\gamma(0)- \psi_\gamma(t)}{N-n} \bigg),\qquad
c_2(t) := \e^{\phi(t)/n} =\det[\d\FF_t(z)]^{1/n}.
\]
Then we have, by $\phi'=\trace[B]$ and the Riccati equation \eqref{eq:trace-Riccati},
\begin{align*}
    N\,\frac{c''}{c} &= (N-n)\,\frac{c_1''}{c_1} + n\,\frac{c_2''}{c_2} - \frac{n(N-n)}{N}\,\bigg(\frac{c_1'}{c_1} - \frac{c_2'}{c_2}\bigg)^2\\
    &= -\Ric_N(\dot{\gamma}) -\trace[B^2] +\frac{(\trace[B])^2}{n}  - \frac{n(N-n)}{N}\,\bigg(\frac{c_1'}{c_1} - \frac{c_2'}{c_2}\bigg)^2.
\end{align*}
At $t=0$, recalling $B(0)=F^*(\d u(z))^{p-2}\,H^u\, Q^2$,
we deduce from \eqref{Eq:psi trace} that
\[
    \frac{c_1'(0)}{c_1(0)} = -\frac{\psi'_{\gamma}(0)}{N-n} = \frac{1}{n}\trace[B(0)] = \frac{c_2'(0)}{c_2(0)}
\]
and, since $H^u$ is symmetric,
\[
\trace[B(0)^2] =\frac{(\trace[B(0)])^2}{n}
=\frac{n\, \psi'_{\gamma}(0)^2}{(N-n)^2}.
\]
Hence, we obtain
\begin{align*}
    N\,\frac{c''(0)}{c(0)} = -\Ric_N(s\, w) > -(K-2\,\varepsilon)\,F^2(s\, w).
\end{align*}
Thus, by continuity and a further rescaling, we may and will assume
\begin{align*}
    N\,\frac{c''(t)}{c(t)}
    > -(K-\varepsilon ) \, F^2\big(\dot\FF_t(x)\big)
\end{align*}
for every $x\in B_\delta(z)$ and every $t\in[0,1]$.
This inequality implies, by putting $d(x):=F(\dot{\mathcal{F}}_t(x))$,
 \begin{equation}\label{Eq:Renyi comp}
 c(t) \le \frac{\mathfrak{s}_{(K-\varepsilon)/N}((1-t)\,d(x))}{\mathfrak{s}_{(K-\varepsilon)/N}(d(x))} \,c(0) +\frac{\mathfrak{s}_{(K-\varepsilon)/N}(t\,d(x))}{\mathfrak{s}_{(K-\varepsilon)/N}(d(x))}\, c(1)
 \end{equation}
thanks to (an analogue of) \cite[Lemma 2.2]{EKS}.
Then it follows from the Monge--Amp\`ere equation (Corollary \ref{Cor:Monge Ampère}) that
\begin{align*}
 \rho_t \big( \mathcal{F}_t(x) \big)^{-1/N} &\leq  \frac{\mathfrak{s}_{(K-\varepsilon)/N}((1-t)\,d(x))}{\mathfrak{s}_{(K-\varepsilon)/N}(d(x))}\, \rho_0(x)^{-1/N}\\
 &\qquad\qquad   +\frac{\mathfrak{s}_{(K-\varepsilon)/N}(t\,d(x))}{\mathfrak{s}_{(K-\varepsilon)/N}(d(x))}\, \rho_1 \big( \mathcal{F}_1(x) \big)^{-1/N}.
\end{align*}
Therefore, we find from \cite[Theorem~1.5(v)]{braun2022} the failure of the \emph{reduced} timelike curvature-dimension condition $\TCD_q^*(K,N)$, which is weaker than $\TCD_q(K,N)$ (see \cite[Proposition 3.6]{braun2022}).

The case $N=n$ can be reduced to the above argument by employing the smoothness of $(M,L,\meas)$.
Precisely, if $\Ric_n(w) <(K-2\,\varepsilon)F^2(w)$, then we have $\Ric_N(w) <(K-\varepsilon)F^2(w)$ for sufficiently small $N>n$.
This implies that $\TCD_q^*(K,N)$ fails, and hence the stronger condition $\TCD_q^*(K,n)$ does not hold as well (see \cite[Proposition~3.7]{braun2022}).

The case $N<0$ is also argued as in \eqref{Eq:Renyi comp}.
We have
\[
 c(t) \ge \frac{\mathfrak{s}_{(K-\varepsilon)/N}((1-t)\,d(x))}{\mathfrak{s}_{(K-\varepsilon)/N}(d(x))} \,c(0) +\frac{\mathfrak{s}_{(K-\varepsilon)/N}(t\,d(x))}{\mathfrak{s}_{(K-\varepsilon)/N}(d(x))}\, c(1),
\]
and hence
\begin{align*}
 \rho_t \big( \mathcal{F}_t(x) \big)^{-1/N} &\leq \frac{\mathfrak{s}_{(K-\varepsilon)/N}((1-t)\,d(x))}{\mathfrak{s}_{(K-\varepsilon)/N}(d(x))} \,\rho_0(x)^{-1/N}\\
 &\qquad\qquad   +\frac{\mathfrak{s}_{(K-\varepsilon)/N}(t\,d(x))}{\mathfrak{s}_{(K-\varepsilon)/N}(d(x))}\, \rho_1 \big( \mathcal{F}_1(x) \big)^{-1/N}.
\end{align*}
Recalling that the $N$-Rényi entropy has no minus sign for $N<0$, we deduce that $\TCD_q^*(K,N)$ fails, which is again weaker than $\TCD_q(K,N)$ (which is argued as for \cite[Proposition 4.7]{ohta-negative}).


In the case $N=+\infty$, we replace the second condition of \eqref{Eq:psi trace} with $H^u=0$.
Then we have, as in the proof of Theorem \ref{th:Ric-TCD},
\begin{align*}
\frac{\d^2}{\d t^2}\bigg\vert_0 \log\det_{\meas}[\d\FF_t(z)] 
&= \frac{\d^2}{\d t^2}\bigg\vert_0 \big[ \psi_{\gamma}(0) -\psi_{\gamma}(t) +\phi(t) \big] \\
&= -\Ric_{\infty}(s\, w) >-(K-2\,\varepsilon) F^2(s\, w),
\end{align*}
with which one can show the failure of $\TCD_q(K,\infty)$ in the same way.

The case $N=0$ uses the hierarchy from Remark \ref{rm:Ric_N}.
Upper bounds on $\Ric_0$ yield the same upper bounds on $\Ric_N$ for every $N<0$, and following the above arguments we have
\begin{align*}
 \rho_t \big( \mathcal{F}_t(x) \big)&\geq  \bigg( \frac{\mathfrak{s}_{(K-\varepsilon)/N}((1-t)\,d(x))}{\mathfrak{s}_{(K-\varepsilon)/N}(d(x))} \,\rho_0(x)^{-1/N}\\
 &\qquad\qquad   +\frac{\mathfrak{s}_{(K-\varepsilon)/N}(t\,d(x))}{\mathfrak{s}_{(K-\varepsilon)/N}(d(x))}\, \rho_1 \big( \mathcal{F}_1(x) \big)^{-1/N} \bigg)^{-N}.
\end{align*}
By virtue of \cite[Proposition 4.7]{ohta-negative}, this implies
\begin{align*}
 \rho_t \big( \mathcal{F}_t(x) \big) &\ge \Big( \tau^{(1-t)}_{K-\varepsilon,N}\big( d(x) \big)\, \rho_0(x)^{-1/N}
  +\tau^{(t)}_{K-\varepsilon,N} \big( d(x) \big)\, \rho_1 \big( \mathcal{F}_1(x) \big)^{-1/N} \Big)^{-N}.
\end{align*}
Therefore, letting $N \to 0$ (and therefore $-1/N \to +\infty$), we obtain
\[
\rho_t \big( \mathcal{F}_t(x) \big)
\ge \max\!\bigg\{ \frac{\mathfrak{s}_{-K+\varepsilon}((1-t)\,d(x))}{(1-t)\,\mathfrak{s}_{-K+\varepsilon}(d(x))}\, \rho_0(x),
\frac{\mathfrak{s}_{-K+\varepsilon}(t\,d(x))}{t\,\mathfrak{s}_{-K+\varepsilon}(d(x))} \,\rho_1 \big( \mathcal{F}_1(x) \big) \bigg\},
\]
which shows the failure of  $\smash{\TCD_q(K,0)}$.
\end{proof}

\end{document}